\numberwithin{equation}{section}
\newtheorem{thm}{Theorem}[section]
\newtheorem{lem}[thm]{Lemma}
\newtheorem{prop}[thm]{Proposition}
\theoremstyle{definition}
\newtheorem{exam}[thm]{Example}
\newtheorem{defn}[thm]{Definition}
\newtheorem{problem}[thm]{Problem}
\newtheorem{remark}[thm]{Remark}
\newcommand\fs{\operatorname{fs}}
\newcommand\bla{\boldsymbol\lambda}
\newcommand\bmu{\boldsymbol\mu}
\newcommand\ppm{\mathsf{pm}}
\newcommand\rpm{\mathsf{rpm}}
\newcommand\rstep{\mathsf{rstep}}
\newcommand{\ZZ}{\mathbb{Z}}
\newcommand\qand{\quad\text{and}\quad}
\newcommand\Qbinom[3]{\genfrac{[}{]}{0pt}{}{#1}{#2}_{#3}}
\definecolor{darkblue}{rgb}{0.0,0,0.7}
\renewcommand\emph[1]{\textcolor{darkblue}{\it #1}}
\title{Andrews--Gordon and Stanton type identities: bijective and Bailey lemma approaches}
\author{Jehanne Dousse}
\address{Université de Genève, 7--9, rue Conseil Général, 1205 Genève, Switzerland}
\email{jehanne.dousse@unige.ch}
\author{Jihyeug Jang}
\address{Université de Genève, 7--9, rue Conseil Général, 1205 Genève, Switzerland}
\email{jihyeug.jang@unige.ch}
\author{Fr\'ed\'eric Jouhet}
\address{Univ Lyon, Université Claude Bernard Lyon 1, UMR5208, Institut Camille Jordan, F-69622 Villeurbanne, France}
\email{jouhet@math.univ-lyon1.fr}
\subjclass[2020]{05A15, 05A17, 05A19, 05A30, 11P81, 11P84, 33D15, 33D90}
\begin{document}

\begin{abstract}
In 2018, Stanton proved two types of generalisations of the celebrated Andrews--Gordon and Bressoud identities (in their $q$-series version): one with a similar shape to the original identities, and one involving binomial coefficients. In this paper, we give new proofs of these identities. For the non-binomial identities, we give bijective proofs using the original Andrews--Gordon and Bressoud identities as key ingredients. These proofs are based on particle motion introduced by Warnaar and extended by the first and third authors and Konan. For the binomial identities, we use the Bailey lemma and key lemmas of McLaughlin and Lovejoy, and the order in which we apply the different lemmas plays a central role in the result. We also give an alternative proof of the non-binomial identities using the Bailey lattice. With each of these proofs, new Stanton-type generalisations of classical identities arise naturally, such as generalisations of Kur\c sung\"oz's analogue of Bressoud's identity with opposite parity conditions, and of the Bressoud--G\"ollnitz--Gordon identities.
\end{abstract}

\maketitle


\section{Introduction}\label{sec:intro}

A \emph{partition} \( \lambda \) of a positive integer \( n \) is a
weakly decreasing sequence of positive integers
\( \lambda_1 \ge \lambda_2 \ge  \cdots \ge \lambda_\ell > 0 \) such that
\( \sum_{i = 1} ^\ell\lambda_i = n \). For each partition \( \lambda \),
we associate the \emph{frequency sequence}
\( f = (f_1, f_2, \cdots ) \), where \( f_j \) denotes the number of
times the part \( j \) appears in \( \lambda \). Whenever convenient,
we identify a partition with its corresponding frequency sequence.

Many classical partition identities are elegantly expressed using
\( q \)-series. Throughout this paper, we use standard \(q\)-series
notation which can be found in~\cite{GR}:
\begin{equation*}
(a)_\infty = (a;q)_\infty:=\prod_{j\geq 0}(1-aq^j)\;\;\;\;\mbox{and}\;\;\;\;(a)_k = (a;q)_k:=\frac{(a;q)_\infty}{(aq^k;q)_\infty},
\end{equation*}
where \(k \in \ZZ\), and
\begin{equation*}
(a_1,\ldots,a_m)_k = (a_1,\ldots,a_m;q)_k:=(a_1)_k\cdots(a_m)_k,
\end{equation*}
where \(k\) is an integer or infinity, and as usual \(|q|<1\) to
ensure convergence of infinite products.

One of the most famous results in the theory of partitions is the
Rogers--Ramanujan identities~\cite{RR19}: for \( a \in \{0,1\} \),
\begin{equation}\label{eq:RR}
  \sum_{n \geq 0} \frac{q^{n^2+ (1-a)n}}{(q)_n} = \frac{1}{(q^{2-a},q^{3+a};q^5)_{\infty}}.
\end{equation}
The Rogers--Ramanujan identities have elegant interpretations in terms of partitions. For \( a \in \{0,1\} \), they state
that the number of partitions \( \lambda \) of \( n \) with
the difference condition \( \lambda_i - \lambda_{i+1} \geq 2 \) for
all \( i \), where the part \( 1 \) appears at most \( a \)
times, is equal to the number of partitions of \( n \) into parts
congruent to \( \pm (2-a) \) modulo \( 5 \).

Gordon later extended the Rogers--Ramanujan identities and proved what
is known as Gordon's partition theorem~\cite{Go}. For integers
\( k \geq 1 \) and \( 0 \leq r \leq k \), it states that the number of
partitions \( \lambda \) of \( n \) with the difference condition
\( \lambda_i - \lambda_{i+k} \geq 2 \) for all \( i \), and where the
part \( 1 \) appears at most \( (k-r) \) times, is equal to the
number of partitions of \( n \) into parts not congruent to
\( 0, \pm (k-r+1) \) modulo \( 2k+3 \). The partitions satisfying the
difference condition in Gordon's theorem can also be described easily in terms of
frequency sequences as: 
\begin{equation}\label{eq:comb_AG}
  \text{frequency sequences \( (f_i)_{i \geq 1} \) of \( n \) such that
    \( f_i + f_{i+1} \leq k \) for all \( i \), and
    \( f_1 \leq k-r \).}
\end{equation}
Throughout this paper, we describe such partitions
with difference conditions in terms of frequency sequences.

Andrews~\cite{A74} subsequently proved the following analytic analogue
of Gordon's theorem, now referred to as the Andrews--Gordon
identities.

\begin{thm}[Andrews--Gordon identities~\cite{A74}]\label{thm:AG}
Let \(k \geq 1\) and \(0 \leq r \leq k\) be two integers. We have
\begin{equation}\label{eq:AG}
\sum_{s_1\geq\dots\geq s_{k}\geq0}\frac{q^{s_1^2+\dots+s_{k}^2+s_{k-r+1}+\dots+s_{k}}}{(q)_{s_1-s_2}\dots(q)_{s_{k-1}-s_{k}}(q)_{s_{k}}}=\frac{(q^{2k+3},q^{k+1-r},q^{k+2+r};q^{2k+3})_\infty}{(q)_\infty}.
\end{equation}
\end{thm}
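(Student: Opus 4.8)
The plan is to derive \eqref{eq:AG} from the \emph{Bailey chain} machinery, using the Jacobi triple product to collapse the right-hand side into an infinite product. Recall that a pair of sequences $(\alpha_n,\beta_n)_{n\ge 0}$ is a \emph{Bailey pair relative to} $a$ if
$$\beta_n=\sum_{j=0}^n\frac{\alpha_j}{(q)_{n-j}(aq)_{n+j}}\qquad(n\ge 0),$$
and that the limiting case $\rho,\sigma\to\infty$ of the Bailey lemma produces from any such pair a new Bailey pair $(\alpha_n',\beta_n')$ relative to $a$ given by
$$\alpha_n'=a^nq^{n^2}\alpha_n,\qquad \beta_n'=\sum_{j=0}^n\frac{a^jq^{j^2}}{(q)_{n-j}}\beta_j.$$
Iterating this operation is exactly what manufactures the nested quadratic form $s_1^2+\cdots+s_k^2$ and the telescoping denominator $(q)_{s_1-s_2}\cdots(q)_{s_{k-1}-s_k}(q)_{s_k}$, one layer per iteration.

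First I would treat the base case $r=0$. Starting from the \emph{unit Bailey pair} relative to $a=1$, namely $\beta_n=\delta_{n,0}$ with $\alpha_0=1$ and $\alpha_n=(-1)^n(1+q^n)q^{\binom{n}{2}}$ for $n\ge 1$, I apply the limiting Bailey lemma $k+1$ times and then let $n\to\infty$ in the defining relation of the resulting pair. After the outermost factor $(q)_{n-\,\cdot}$ tends to $(q)_\infty$ and a common factor is cancelled, the relation reduces to the statement that the left-hand side of \eqref{eq:AG} with $r=0$ equals
$$\frac{1}{(q)_\infty}\sum_{m\in\ZZ}(-1)^m q^{(k+1)m^2+\binom{m}{2}}=\frac{1}{(q)_\infty}\sum_{m\in\ZZ}(-1)^m q^{\frac{(2k+3)m^2-m}{2}},$$
where the bilateral form arises from pairing the two summands of $\alpha_n$. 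The Jacobi triple product identity then folds this theta sum into $(q^{2k+3},q^{k+1},q^{k+2};q^{2k+3})_\infty$, which is precisely the $r=0$ right-hand side.

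To reach general $r$, the linear terms $s_{k-r+1}+\cdots+s_k$ must be inserted on exactly the $r$ innermost (smallest) summation variables, something the homogeneous chain above cannot achieve. The device is the \emph{Bailey lattice} of Agarwal--Andrews--Bressoud, which permits changing the base $a$ during the chain: one runs the $r$ innermost iterations relative to $a=q$, so that each carries the weight $a^nq^{n^2}=q^{n^2+n}$ and supplies the extra linear part, and the remaining $k+1-r$ iterations relative to $a=1$, with a controlled change-of-base step in between. The quadratic coefficient, hence the modulus $2k+3$, is unaffected, but the change of base shifts the linear exponent of the final theta sum, so that the Jacobi triple product now yields $(q^{2k+3},q^{k+1-r},q^{k+2+r};q^{2k+3})_\infty$. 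The main obstacle is precisely this change-of-base bookkeeping: I expect the delicate point to be verifying that the lattice step deposits the linear terms on the correct variables and shifts the two residues by exactly $\mp r$, while leaving the normalisation $(q)_\infty^{-1}$ and the telescoping denominators intact. The remaining ingredients — the quadratic exponents, the denominators, and the final triple-product evaluation — are then routine.
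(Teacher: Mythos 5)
Your outline is correct and coincides with the route the paper itself indicates for this (cited, classical) theorem: in Section~\ref{sec:bailey-pairs} the paper explains that \eqref{eq:AG} follows by applying the Agarwal--Andrews--Bressoud consequence of the Bailey lattice (Theorem~\ref{thm:corobaileylattice}, i.e.\ $r+1$ iterations of the Bailey lemma, one lattice step switching $a=q$ to $a=1$, then $k-r-1$ further iterations) to the unit Bailey pair with $a=q$ and finishing with the Jacobi triple product, which is exactly your plan. Your $r=0$ warm-up with the $a=1$ unit pair and the bilateralisation via the $(1+q^n)$ factor is also correct, and the "delicate bookkeeping" you flag (the off-by-one caused by $\beta_n=\delta_{n,0}$ absorbing the first iteration, so that $r+1$ rather than $r$ steps run at base $q$) is the only detail left to pin down.
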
 

The case \( k = 1 \) of \eqref{eq:AG} corresponds to the
Rogers--Ramanujan identities~\eqref{eq:RR}. The Andrews--Gordon
identities~\eqref{eq:AG} are also proved in~\cite{Br80} in pair with a
similar formula~\cite[(3.3)]{Br80}, valid for all integers
\(k \geq 1\) and \(0\leq j\leq k\) (we changed notation compared to Bressoud's paper):
\begin{equation}\label{eq:Br3.3}
\sum_{s_1\geq\dots\geq s_{k}\geq0}\frac{q^{s_1^2+\dots+s_{k}^2-s_{1}-\dots-s_{j}}}{(q)_{s_1-s_2}\dots(q)_{s_{k-1}-s_{k}}(q)_{s_{k}}}=\sum_{s=0}^{j}\frac{(q^{2k+3},q^{k+2-j+2s},q^{k+1+j-2s};q^{2k+3})_\infty}{(q)_\infty}.
\end{equation}
Note that there is a small typo in Bressoud's paper: in his
formula~\cite[(3.3)]{Br80}, \(\pm(k-r+i)\) (in his notation) has to be
changed to \(\pm(k-r+i+1)\). The identity~\eqref{eq:Br3.3} is derived
combinatorially from~\eqref{eq:AG} in~\cite{DJK}, while it is used
in~\cite{ADJM} to solve a combinatorial conjecture of Afsharijoo
arising from commutative algebra.

Bressoud also proved an even moduli counterpart of the Andrews--Gordon
identities.

\begin{thm}[Bressoud's identities, {\cite[(3.4)]{Br80}}]\label{thm:2}
  Let \( r \) and \( k \) be integers with \( k \geq 1 \) and
  \( 0 \leq r \leq k \). Then
  \begin{equation}\label{eq:Br}
    \sum_{s_1\geq\dots\geq s_{k}\geq0}\frac{q^{s_1^2+\dots+s_{k}^2+s_{k-r+1}+\dots+s_{k}}}{(q)_{s_1-s_2}\dots(q)_{s_{k-1}-s_{k}}(q^2;q^2)_{s_{k}}}=\frac{(q^{2k+2},q^{k+1-r},q^{k+1+r};q^{2k+2})_\infty}{(q)_\infty}.
  \end{equation}
\end{thm}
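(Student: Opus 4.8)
The plan is to prove Bressoud's even-moduli identity \eqref{eq:Br} by paralleling the structure of the Andrews--Gordon identity \eqref{eq:AG} as closely as possible, with the single key modification being the replacement of the base $q$ by $q^2$ in the innermost factorial $(q)_{s_k}\mapsto(q^2;q^2)_{s_k}$. Since the Andrews--Gordon identities \eqref{eq:AG} are available to me as \Cref{thm:AG}, the natural approach is to run the same Bailey-chain machinery but terminate the chain at the final step with the even-moduli version of the Bailey lemma. Concretely, I would recall that \eqref{eq:AG} arises by iterating the Bailey lemma $k$ times starting from the unit Bailey pair relative to $a=q^{2r-\ldots}$ (more precisely $a=1$ or $a=q$ depending on the residue $r$), and that each iteration inserts one factor $(q)_{s_i-s_{i+1}}^{-1}$ together with a factor $q^{s_{i+1}^2+\cdots}$ into the multisum.

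First I would set up the iterated Bailey lemma, following the standard argument, summing $k-1$ times with the ordinary weak Bailey lemma so as to build up the factors $(q)_{s_1-s_2}^{-1}\cdots(q)_{s_{k-1}-s_k}^{-1}$ exactly as in the Andrews--Gordon case. Second, at the last summation I would deviate: instead of summing to infinity against $(q)_{s_k}^{-1}$, I would apply a limiting form of the Bailey lemma (or equivalently Jacobi's triple product specialised appropriately) that produces the factor $(q^2;q^2)_{s_k}^{-1}$. The bookkeeping of exponents has to be arranged so that the linear term $s_{k-r+1}+\cdots+s_k$ emerges with the correct shift, and the quadratic form $s_1^2+\cdots+s_k^2$ is unaffected by the final even-base step. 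Third, I would identify the resulting infinite product on the right-hand side: iterating the Bailey lemma $k$ times with the $q\to q^2$ modification at the end changes the modulus from $2k+3$ to $2k+2$, and the triple product that closes off the computation yields exactly $(q^{2k+2},q^{k+1-r},q^{k+1+r};q^{2k+2})_\infty/(q)_\infty$.

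The main obstacle I expect is controlling the final step precisely: one must verify that switching the last base to $q^2$ is compatible with the Bailey pair relations, i.e. that the partial multisum obtained after $k-1$ iterations really is a valid Bailey pair against which the even version of the $q$-binomial theorem (or Jacobi triple product) can be applied, and that no spurious factors appear. Equivalently, in the purely $q$-series formulation one can bypass the Bailey chain altogether and instead manipulate \eqref{eq:AG} directly: replace $(q)_{s_k}$ by $(q^2;q^2)_{s_k}=(q)_{s_k}(-q;q)_{s_k}$, so that \eqref{eq:Br} differs from \eqref{eq:AG} by an extra factor $(-q;q)_{s_k}^{-1}$ inside the sum. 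The task then reduces to showing that introducing this factor shifts the product side from modulus $2k+3$ to $2k+2$; I anticipate this can be done by a finitisation argument combined with the Durfee-square/dissection technique and an appeal to Jacobi's triple product, which is the step requiring the most care. Whichever route I take, once the innermost factor is correctly handled the right-hand product follows mechanically from the triple product identity, so the crux is genuinely the treatment of the last summation index $s_k$.
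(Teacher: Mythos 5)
The paper never proves this theorem itself --- it is quoted from Bressoud --- but it does reprove it implicitly as the \(j=0\) case of \Cref{thm:Sta4.1} and \Cref{thm:Sta4.2}, and comparing your plan with that derivation exposes a genuine gap. In the Bailey-chain construction the innermost factorial of the multisum is contributed by the \emph{seed} Bailey pair, not by the last iteration: each application of \Cref{lem:BL} adds a new \emph{outermost} index, so after \(k\) steps the factor attached to \(s_k\) is whatever \(\beta_{s_k}\) you started from. Consequently your proposal to ``deviate at the last summation'' and sum against an even-base kernel cannot be implemented as stated; the correct device is to change the seed. The paper replaces the unit Bailey pair~\eqref{ubp} by the pair (D'4) of \Cref{def:D4}, obtained via the Bressoud--Ismail--Stanton change of base, which has \(\beta_n=1/(q^2;q^2)_n\) \emph{and} \(\alpha_n=(-1)^nq^{n^2}\frac{1-aq^{2n}}{1-a}\frac{(a^2;q^2)_n}{(q^2;q^2)_n}\). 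It is the replacement of the exponent \(\binom{n}{2}\) by \(n^2\) in \(\alpha_n\) that, after the Jacobi triple product~\eqref{eq:jtp}, lowers the modulus from \(2k+3\) to \(2k+2\). Your write-up never identifies this mechanism: the assertion that inserting the extra factor \((-q;q)_{s_k}^{-1}\) ``shifts the product side from modulus \(2k+3\) to \(2k+2\)'' is exactly the content of the theorem, and deferring it to an unspecified finitisation/Durfee-dissection argument leaves the crux unproved.

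A second omission concerns general \(r\). The plain Bailey chain only reaches the extreme cases \(r=0\) and \(r=k\); for \(0<r<k\) one must insert the Bailey lattice (\Cref{thm:baileylattice}) or McLaughlin's key lemma (\Cref{lem:key1}) at the appropriate point in the iteration so as to pass from parameter \(a=q\) to \(a=1\). Your remark that one starts from the unit pair relative to ``\(a=1\) or \(a=q\) depending on the residue \(r\)'' does not accomplish this, and without that step the linear term \(s_{k-r+1}+\cdots+s_k\) cannot be produced for intermediate \(r\). So while the toolbox you name (iterated Bailey lemma plus triple product) is the right one, the two steps that actually make the proof work --- the change of base at the seed and the lattice/key-lemma step for general \(r\) --- are both missing.
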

From the form of the identities~\eqref{eq:Br}, the partitions with a
congruence condition are readily obtained from the product side.
In~\cite{Br79}, Bressoud introduced the corresponding partitions
satisfying a difference condition along with an additional
restriction. In terms of frequency sequences, the partition identity
corresponding to~\eqref{eq:Br} can be formulated as follows. The
number of partitions of \( n \) into parts not congruent to
\( 0, \pm (k-r+1) \) modulo \( 2k+2 \), is equal to the number of
frequency sequences \( (f_i)_{i \geq 1} \) of \( n \) such that
\( f_i + f_{i+1} \leq k \) for all \( i \), \( f_1 \leq k-r \), and
whenever \( f_i + f_{i+1} = k \) for some \( i \), the parity
condition \( i f_i + (i+1)f_{i+1} \equiv k-r \pmod{2} \) holds.

Bressoud~\cite[(3.5)]{Br80} also proved a similar formula, which was recently derived
combinatorially from~\eqref{eq:Br} in~\cite{DJK}:
\begin{equation}\label{eq:Br3.5}
\sum_{s_1\geq\dots\geq s_{k}\geq0}\frac{q^{s_1^2+\dots+s_{k}^2-s_1-\dots-s_j}}{(q)_{s_1-s_2}\dots(q)_{s_{k-1}-s_{k}}(q^2;q^2)_{s_{k}}}=\sum_{s=0}^{j}\frac{(q^{2k+2},q^{k+1+j-2s},q^{k+1-j+2s};q^{2k+2})_\infty}{(q)_\infty}.
\end{equation}

Kur\c sung\"oz \cite{Kur} considered partitions with the same frequency conditions as in Bressoud's identity, but with the opposite
parity condition \( i f_i + (i+1)f_{i+1} \equiv k-r+1 \pmod{2} \).
He showed that their generating function is equal to
$$\frac{1}{(1+q)(q)_\infty}\left((q^{2k+2},q^{k+2-r},q^{k+r};q^{2k+2})_\infty+q(q^{2k+2},q^{k-r},q^{k+2+r};q^{2k+2})_\infty\right).$$
In~\cite{DJK}, a multisum formula was given for the same generating function, and the identity of Kur\c sung\"oz was proved combinatorially using this multisum and Bressoud's identity, leading to the following identities:

\begin{thm}[Kur\c sung\"oz identities, \cite{Kur} and \cite{DJK}]\label{thm:Kur}
  Let \( r \) and \( k \) be integers with \( k \geq 1 \) and
  \( 0 \leq r \leq k \). Then
  \begin{multline}\label{fij0}
    (1+q)\sum_{s_1\geq\dots\geq s_{k}\geq0}\frac{q^{s_1^2+\dots+s_{k}^2+s_{k-r+1}+\cdots+ s_{k-1}+2s_{k}}}{(q)_{s_1-s_2}\dots(q)_{s_{k-1}-s_{k}}(q^2;q^2)_{s_{k}}}\\
    =\frac{1}{(q)_\infty}\left((q^{2k+2},q^{k+r},q^{k-r+2};q^{2k+2})_\infty+q(q^{2k+2},q^{k+2+r},q^{k-r};q^{2k+2})_\infty\right).
  \end{multline}
  Let \( r \) and \( k \) be integers with \( k \geq 1 \) and
  \( 0 \leq j \leq k \). Then
  \begin{equation}\label{fij}
    \sum_{s_1\geq\dots\geq s_{k}\geq0}\frac{q^{s_1^2+\dots+s_{k}^2-s_1-\cdots- s_j+s_{k}}}{(q)_{s_1-s_2}\dots(q)_{s_{k-1}-s_{k}}(q^2;q^2)_{s_{k}}}=\sum_{s=0}^{j}\frac{(q^{2k+2},q^{k-j+2s},q^{k+2+j-2s};q^{2k+2})_\infty}{(q)_\infty}.
  \end{equation}
\end{thm}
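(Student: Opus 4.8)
The plan is to reduce both identities to Bressoud's identities \eqref{eq:Br} and \eqref{eq:Br3.5}, which I may assume. The key preliminary observation is structural: for $1\le r\le k$ the summand on the left of \eqref{fij0} is exactly the Bressoud summand in \eqref{eq:Br} multiplied by the single extra factor $q^{s_k}$, because $s_{k-r+1}+\cdots+s_{k-1}+2s_k=(s_{k-r+1}+\cdots+s_k)+s_k$; similarly the summand on the left of \eqref{fij} is the Bressoud summand in \eqref{eq:Br3.5} multiplied by $q^{s_k}$. Thus the whole theorem is about one operation, inserting an extra $q^{s_k}$ into a Bressoud multisum, and I would organise the proof around it. The case $r=0$ of \eqref{fij0}, where the inserted factor is $q^{2s_k}$ instead, I would treat separately.

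Write $M_{r}$ for the multisum on the left of \eqref{eq:Br}, $N_{j}$ for the one on the left of \eqref{eq:Br3.5}, and $\tilde M_j$ for the left-hand side of \eqref{fij}, with the convention that pieces with an out-of-range parameter vanish. A direct reindexing of the parameter in Bressoud's product sides, with no Jacobi triple product required, shows that the products claimed in the theorem are exactly the following combinations: \eqref{fij0} is equivalent to the multisum identity
\[(1+q)\sum_{s_1\ge\cdots\ge s_k\ge0}\frac{q^{s_1^2+\cdots+s_k^2+s_{k-r+1}+\cdots+s_{k-1}+2s_k}}{(q)_{s_1-s_2}\cdots(q)_{s_{k-1}-s_k}(q^2;q^2)_{s_k}}=M_{r-1}+q\,M_{r+1},\]
while \eqref{fij} is equivalent to $\tilde M_j=M_{j+1}+N_{j-1}$ (so at $j=0$ it reads simply $\tilde M_0=M_1$). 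Once these two multisum identities are in hand, substituting \eqref{eq:Br} and \eqref{eq:Br3.5} into each term on the right immediately yields the desired products.

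First I would prove these multisum identities, and here I would use the particle-motion bijection of Warnaar, in the form extended by the first and third authors and Konan. For \eqref{fij0} the sum $M_r$ is, as recalled in the introduction, the generating function of frequency sequences $(f_i)$ with $f_i+f_{i+1}\le k$, $f_1\le k-r$, and the parity condition $if_i+(i+1)f_{i+1}\equiv k-r\pmod 2$ on each saturated block $f_i+f_{i+1}=k$; inserting $q^{s_k}$ is precisely what switches this to Kur\c sung\"oz's opposite condition $\equiv k-r+1$. I would design the particle motion to push a single particle through such a configuration, trading the parity toggle for a unit shift of the boundary parameter, and thereby realise the opposite-parity partitions as a weight-preserving disjoint union of ordinary Bressoud families at $r-1$ and $r+1$, the companion factor $q$ and the global $(1+q)$ recording on which side of the toggle a configuration lands. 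The identity for \eqref{fij} I would obtain by the same mechanism, the extra $q^{s_k}$ again producing a unit shift, now of the set of admissible residues, matching $M_{j+1}+N_{j-1}$.

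The hard part will be the boundary analysis of this bijection. The opposite parity condition concerns the smallest parts, carried analytically by $s_k$ and the even base $(q^2;q^2)_{s_k}$, and it is exactly there that the particle motion must simultaneously preserve the total weight and produce the correct unit shift, including the two-term split of \eqref{fij0} and the degenerate endpoints. As an alternative that avoids the combinatorics, I would run the Bailey-lattice proof: the Bressoud multisums arise from a Bailey chain with the even base inserted at the final node, inserting $q^{s_k}$ amounts to changing that one node by a single lattice step, and the two-term right-hand sides then fall out of the corresponding change of base in the last summation. Either route reduces the theorem to Bressoud's identities; the decisive and most delicate point, common to both, is to verify that a single factor $q^{s_k}$ shifts the Bressoud parameter by exactly one unit and splits the product accordingly.
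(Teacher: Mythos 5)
Your plan is essentially the route the paper takes: this theorem is the $j=0$ (resp.\ $r=0$) specialisation of Theorem~\ref{thm:new_Kur}, which the paper proves both by the Bailey lattice (Section~\ref{sec:pf_non-binom}, via the Bailey pair (D'1)) and combinatorially by reducing to Bressoud's identities through the particle-motion bijection (Sections~\ref{sec:insert-map-mult}--\ref{sec:proof3.4}, following \cite{DJK}). Your two product-side reductions are correct as stated: writing $M_r$, $N_j$ for the Bressoud multisums, one checks directly that the right-hand side of \eqref{fij0} is $M_{r-1}+qM_{r+1}$ and that of \eqref{fij} is $M_{j+1}+N_{j-1}$ (reindex $s\mapsto j-s$ in \eqref{eq:Br3.5}), so the theorem does reduce to the two multisum identities you isolate. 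The one caveat is that the step you defer is where all the work lives, and your description of its mechanism is not quite how it actually goes: in the paper one does \emph{not} design a new particle motion that "trades a parity toggle for a unit shift". Instead, the existing bijection $\Lambda$ is shown (with a parity-tracking argument on the last block of moves, cf.\ \Cref{pro:bij X' and Z'}) to identify the $q^{s_k}$-modified multisum with the generating function of frequency sequences satisfying the \emph{opposite} parity condition, and then the identity $(1+q)\widetilde{M}_r=M_{r-1}+qM_{r+1}$ is an elementary set-theoretic argument on frequency sequences: the inclusions $\mathcal{Z}'_{j,r+1,k}\subseteq\widetilde{\mathcal{Z}}'_{j,r,k}\subseteq\mathcal{Z}'_{j,r-1,k}$ together with the weight-shifting map $(f_0,f_1,f_2,\dots)\mapsto(f_0,f_1-1,f_2,\dots)$ between the two set differences (\Cref{thm:gf Z tilde'}); similarly your $\widetilde{M}_j=M_{j+1}+N_{j-1}$ follows from splitting the model for \eqref{fij} according to $f_0\leq j-1$ versus $f_0=j$ and sending $f_0$ to $0$ in the latter case. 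So the proposal is sound and follows the paper's combinatorial route, but the decisive lemma --- that the extra $q^{s_k}$ flips the parity condition in the frequency-sequence model --- is asserted rather than proved.
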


Stanton~\cite{Stant} recently generalised both the
Andrews--Gordon identities~\eqref{eq:AG} and the related
identities~\eqref{eq:Br3.3} in two ways.

\begin{thm}\cite[Theorem~3.1]{Stant}\label{thm:Sta3.1}
  Let \( j,r \geq 0 \) and \( k \geq 1 \) be integers such that
  \( j+r \leq k \). Then
  \begin{multline*}
    \sum_{s_1 \geq \cdots \geq s_k \geq 0}q^{s_1^2 + \cdots + s_k^2 + s_{k-r+1} + \cdots + s_k} \cdot \frac{q^{-s_1 - \cdots - s_j} (1 + q^{s_1 + s_2})(1 + q^{s_2 + s_3}) \cdots (1 + q^{s_{j-1} + s_j})}{(q)_{s_1 - s_2} \cdots (q)_{s_{k-1} - s_k} (q)_{s_k}} \\
    = \sum_{s=0}^j \binom{j}{s} \frac{(q^{2k+3},q^{k+1-r+j-2s}, q^{k+2+r-j+2s} ; q^{2k+3})_\infty}{(q)_\infty}.
  \end{multline*}
  Moreover, the \(j\) factors \(q^{-s_1}\) and
  \(q^{-s_i}(1+q^{s_{i-1}+s_i})\), \(2 \leq i \leq j\) may be replaced
  by any \(j\)-element subset of
  \(\{q^{-s_1}\} \cup \{q^{-s_i}(1+q^{s_{i-1}+s_i}) : 2 \leq i \leq
  k-r\}\).
\end{thm}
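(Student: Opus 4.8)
The plan is to prove the identity by induction on $j$, showing that each inserted factor implements a single ``Pascal step'' on the product side. Write $P(\rho)=\frac{(q^{2k+3},q^{k+1-\rho},q^{k+2+\rho};q^{2k+3})_\infty}{(q)_\infty}$, so that the Andrews--Gordon identity \eqref{eq:AG} reads: the plain multisum with top block parameter $r$ equals $P(r)$ for $0\le r\le k$. Two features of $P$ will be used throughout: the reflection symmetry $P(\rho)=P(-\rho-1)$, immediate from the symmetry of $(q^{k+1-\rho},q^{k+2+\rho};q^{2k+3})_\infty$ under $\rho\mapsto-\rho-1$, and the vanishing $P(\rho)=0$ when $\rho\equiv k+1\pmod{2k+3}$. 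Letting $E$ denote the shift $\rho\mapsto\rho+1$ and $\Delta:=E+E^{-1}$, the right-hand side of the theorem is exactly $\Delta^jP(r)=\sum_{s=0}^j\binom js P(r-j+2s)$, by the binomial expansion of $(E+E^{-1})^j$. It therefore suffices to show that the left-hand side, which I write $H_j(r)$, equals $\Delta^jP(r)$.

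Next I would factor the inserted weight as $q^{-s_1}\prod_{i=2}^j(q^{-s_i}+q^{s_{i-1}})$, i.e. as a product of the $j$ factors $f_1:=q^{-s_1}$ and $f_i:=q^{-s_i}+q^{s_{i-1}}$ for $2\le i\le j$. The heart of the argument is a single-step lemma: for $0\le\rho\le k$, inserting $f_i$ at any position $i$ with $1\le i\le k-\rho$ into the (possibly already modified) multisum at parameter $\rho$ has the same effect as inserting the boundary factor $q^{s_{k-\rho}}+q^{-s_{k-\rho+1}}=f_{k-\rho+1}$. Since this boundary factor either adds $s_{k-\rho}$ to the exponent (turning $\mathrm{base}(\rho)$ into $\mathrm{base}(\rho+1)$) or deletes $s_{k-\rho+1}$ (turning $\mathrm{base}(\rho)$ into $\mathrm{base}(\rho-1)$), it realises $\Delta$ on the product side. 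In other words, the insertion is \emph{position independent} and each factor contributes one application of $\Delta$; this position independence is precisely the content of the ``Moreover'' statement. Granting it, the recursion $H_j(r)=H_{j-1}(r+1)+H_{j-1}(r-1)$ follows: peel off $f_j$ from $\Phi_j=\Phi_{j-1}f_j$, rewrite $f_j$ as $f_{k-r+1}$, and split. Induction on $j$ with base case $j=0$ given by \eqref{eq:AG} then yields $H_j(r)=\Delta^jP(r)$.

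To establish the single-step lemma it is enough, by transitivity, to prove the adjacent swap $f_a\equiv f_{a+1}$ inside the sum, namely that $\sum_{\mathbf s}\frac{q^{\mathrm{base}(\rho)}(q^{-s_a}+q^{s_{a-1}}-q^{-s_{a+1}}-q^{s_a})}{(q)_{s_1-s_2}\cdots(q)_{s_k}}$ vanishes. This is a local identity involving only the variables $s_{a-1},s_a,s_{a+1},s_{a+2}$ (with any neighbouring factors treated as spectators), provable by a single-variable $q$-binomial (Chu--Vandermonde) manipulation after collecting the two relevant denominators $(q)_{s_{a-1}-s_a}(q)_{s_a-s_{a+1}}$. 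I expect this local swap to be the main obstacle: the shifts $q^{\pm s_a}$ move mass between adjacent blocks, and verifying the cancellation cleanly for every $a$, including the degenerate top factor $f_1=q^{-s_1}$ (anchored instead by \eqref{eq:Br3.3}) and the convention $s_{k+1}:=0$, requires care, especially because adjacent factors share a variable and so the spectators do interact.

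Finally I would settle the range and boundary bookkeeping. The hypothesis $j+r\le k$ guarantees that throughout the induction every inserted factor sits strictly below the current top block (positions $\le k-\rho$), so the single-step lemma always applies; and when the recursion produces $H_{j-1}(-1)$ at $r=0$, the reflection symmetry $P(\rho)=P(-\rho-1)$ together with the vanishing at $\rho\equiv k+1$ reconciles it with the multisum, since $\mathrm{base}(-1)$ and $\mathrm{base}(0)$ coincide. An alternative, and probably cleaner, route to the single-step lemma is to phrase the whole argument through the Bailey lemma: realise the Andrews--Gordon side as an iterated Bailey chain and each factor insertion as one Bailey step with specially chosen parameters, so that the $\Delta$-action appears directly on the $\alpha$-sequence. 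There the position independence becomes the freedom to interleave the modified steps with the ordinary ones, which is exactly why the order in which the steps are applied governs the final shape of the identity.
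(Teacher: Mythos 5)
Your route is genuinely different from the one in the paper. There, Theorem \ref{thm:Sta3.1} is proved entirely inside the Bailey machinery: Lemma \ref{lem:star1} (obtained by composing Lemma \ref{lem:key1}, Lemma \ref{lem:BL} and Lemma \ref{lem:L1} in two different orders and adding the results) is iterated $j$ times on top of the usual chain, Proposition \ref{prop:common2} is applied to the unit Bailey pair, and the product side is produced from scratch by bilateralising the $\alpha$-sum, expanding $(1+q^{2\ell})^{j}$ by the binomial theorem and invoking the Jacobi triple product; the ``Moreover'' clause falls out of Remark \ref{rmk:commute}, i.e.\ from the fact that two compositions of Bailey-type steps with the same $\alpha'_n$ must have the same $\beta'_n$. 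You instead take \eqref{eq:AG} as the base case and realise each inserted factor as $\Delta=E+E^{-1}$ through a transport identity on the sum side. Your key observation --- that the boundary factor $q^{s_{k-\rho}}+q^{-s_{k-\rho+1}}$ turns $\mathrm{base}(\rho)$ into $\mathrm{base}(\rho+1)+\mathrm{base}(\rho-1)$ --- is correct, the recursion $H_j(r)=H_{j-1}(r+1)+H_{j-1}(r-1)$ together with the $r=0$ boundary via $P(\rho)=P(-\rho-1)$ does give $\Delta^jP(r)$, and the hypothesis $j+r\le k$ is indeed preserved along the recursion. What your approach buys is that the position-independence becomes an explicit, checkable local statement rather than a by-product of the injectivity of the Bailey transform.

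The one real gap is that the adjacent-swap identity, which carries all of the content, is asserted rather than proved (you yourself flag it as the main obstacle). It is in fact true and does reduce to the one-variable computation you anticipate: for fixed $N=s_{a-1}\ge M=s_{a+1}$ one needs
\[
\sum_{t=M}^{N}\frac{q^{t^2}\left(q^{-t}-q^{t}+q^{N}-q^{-M}\right)}{(q)_{N-t}\,(q)_{t-M}}=0,
\]
and writing $u=t-M$, $L=N-M$, $d=2M$ and $T_d=\sum_{u}\qbinom{L}{u}q^{u^2+du}$, this is the contiguous relation $T_{d-1}=(1-q^{d+L})T_d+q^{d}T_{d+1}$, which follows from the two $q$-Pascal rules; the $N\to\infty$ limit handles the degenerate top factor $q^{-s_1}$ and $M=0$ handles the bottom. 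You must also check that every swap sums over a variable $s_b$ with $b\le k-r$ (so that no linear term $q^{s_b}$ from the tail $s_{k-r+1}+\dots+s_k$ interferes) and that no spectator factor involves $s_b$ --- true because you only transport the last factor through unoccupied positions. With that lemma written out your argument closes; as submitted, the decisive step is left unverified.
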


\begin{thm}\cite[Theorem~3.2]{Stant}\label{thm:Sta3.2}
  Let \( j,r \geq 0 \) and \( k \geq 1 \) be integers such that
  \( j+r \leq k \). Then
\begin{equation}\label{eq:sAG}
\sum_{s_1\geq\dots\geq s_{k}\geq0}\frac{q^{s_1^2+\dots+s_{k}^2-s_1-\dots-s_j+s_{k-r+1}+\dots+s_{k}}}{(q)_{s_1-s_2}\dots(q)_{s_{k-1}-s_{k}}(q)_{s_{k}}}=\sum_{s=0}^j\frac{(q^{2k+3},q^{k+1-r+j-2s},q^{k+2+r-j+2s};q^{2k+3})_\infty}{(q)_\infty}.
\end{equation}
\end{thm}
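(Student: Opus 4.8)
The plan is to give a bijective proof that reduces \eqref{eq:sAG} to the Andrews--Gordon identities \eqref{eq:AG}, which serve as the key ingredient. First I would observe that \eqref{eq:sAG} is a common generalisation of \eqref{eq:AG} (recovered at $j=0$) and of Bressoud's companion \eqref{eq:Br3.3} (recovered at $r=0$, after swapping the two numerator factors). Since \eqref{eq:Br3.3} was obtained combinatorially from \eqref{eq:AG} in~\cite{DJK}, the goal is to extend that derivation so as to accommodate the extra term $s_{k-r+1}+\cdots+s_k$, i.e.\ a nonzero $r$, simultaneously with the term $-s_1-\cdots-s_j$.

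The first step is to rewrite the right-hand side in a purely combinatorial form. Writing $P(\rho):=(q^{2k+3},q^{k+1-\rho},q^{k+2+\rho};q^{2k+3})_\infty/(q)_\infty$, the right-hand side of \eqref{eq:sAG} is exactly $\sum_{s=0}^{j}P(r-j+2s)$. By \eqref{eq:AG}, each $P(\rho)$ with $0\le\rho\le k$ equals the Andrews--Gordon multisum with parameter $\rho$; the values $\rho=r-j+2s<0$ that occur for small $s$ are brought back into range using the symmetry $P(\rho)=P(-\rho-1)$ of the product side, which is legitimate here since $|r-j+2s|\le k$ throughout (because $0\le r$, $r+j\le k$, hence $j\le k$). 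Thus it suffices to prove the identity obtained by replacing each $P(r-j+2s)$ by the corresponding Andrews--Gordon multisum, an identity between generating functions involving no infinite products.

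Next I would set up the combinatorial model via successive Durfee dissections, in which the factors $(q)_{s_i-s_{i+1}}$ and $(q)_{s_k}$ generate the partitions filling the successive layers and $s_i^2$ is the area of the $i$-th Durfee square. In this language the modification $-s_1-\cdots-s_j$ turns the first $j$ Durfee squares into Durfee rectangles, each missing one column and thereby creating a ``defect'', while $+s_{k-r+1}+\cdots+s_k$ adjoins one column to each of the last $r$ squares. The bijection is then realised by particle motion in the sense of Warnaar, as extended in~\cite{DJK} and by the first and third authors with Konan: the $j$ defects are transported through the dissection toward the boundary, where each is resolved in one of two ways, and the number $s$ of resolutions of a given type indexes the summand $P(r-j+2s)$ on the right. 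Each move is weight-preserving, so summing over all configurations and tracking the weight yields the claimed identity.

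The main obstacle I anticipate is the interaction, near the boundary, between the $j$ defects coming from $-s_1-\cdots-s_j$ and the $r$ extra columns coming from $+s_{k-r+1}+\cdots+s_k$: one must check that the particle moves remain well defined, reversible and weight-preserving when both features are present, and that the edge cases, where resolving a defect would push a parameter out of the range $0\le\rho\le k$, are correctly matched to the reflected products $P(-\rho-1)$. As an independent verification, and following the second strategy announced in the introduction, I would also derive \eqref{eq:sAG} analytically from the Bailey lattice; this should reproduce the same finite sum $\sum_{s=0}^{j}P(r-j+2s)$ and thereby confirm the bijection.
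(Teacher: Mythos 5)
Your proposal follows essentially the same route as the paper's combinatorial proof: the right-hand side is rewritten as $\sum_{s=0}^{j}P(r-j+2s)$ with out-of-range values handled by the symmetry $P(\rho)=P(-\rho-1)$ (this is exactly Proposition~\ref{pro:gf Y}), each product is interpreted via the Andrews--Gordon/Gordon theorem as a set of frequency sequences, and the particle-motion bijection $\Lambda$ of \cite{DJK} connects this to the multipartition model for the sum side, with the boundary interaction you flag as the main obstacle being precisely what Lemmas~\ref{lem:Lambda} and~\ref{lem:Gamma} resolve. Your fallback via the Bailey lattice is the paper's other proof of the same theorem (Section~\ref{sec:pf_non-binom}), so both of your announced strategies coincide with the paper's.
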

We refer to Theorem \ref{thm:Sta3.1} and \ref{thm:Sta3.2} as the
binomial extension and non-binomial extension of the Andrews--Gordon identities, respectively.
As remarked by Stanton, for \(j=0\) both identities reduce to~\eqref{eq:AG}, while for
\(r=0\) Theorem \ref{thm:Sta3.2} yields~\eqref{eq:Br3.3}.

Furthermore, he also presented even moduli versions, generalising both
of Bressoud's identities~\eqref{eq:Br} and \eqref{eq:Br3.5} in a
binomial and a non-binomial extension as well.

\begin{thm}\cite[Theorem~4.1]{Stant}\label{thm:Sta4.1}
  Let \( j,r \geq 0 \) and \( k \geq 1 \) be integers such that
  \( j+r \leq k \). Then
  \begin{multline*}
    \sum_{s_1 \geq \cdots \geq s_k \geq 0}q^{s_1^2 + \cdots + s_k^2 + s_{k-r+1} + \cdots + s_k} \cdot \frac{q^{-s_1 - \cdots - s_j} (1 + q^{s_1 + s_2})(1 + q^{s_2 + s_3}) \cdots (1 + q^{s_{j-1} + s_j})}{(q)_{s_1 - s_2} \cdots (q)_{s_{k-1} - s_k} (q^2;q^2)_{s_k}} \\
    = \sum_{s=0}^j \binom{j}{s} \frac{(q^{2k+2}, q^{k+1-r+j-2s}, q^{k+1+r-j+2s} ; q^{2k+2})_\infty}{(q)_\infty}.
  \end{multline*}
  Moreover, the \(j\) factors \(q^{-s_1}\) and
  \(q^{-s_i}(1+q^{s_{i-1}+s_i})\), \(2 \leq i \leq j\) may be replaced
  by any \(j\)-element subset of
  \(\{q^{-s_1}\} \cup \{q^{-s_i}(1+q^{s_{i-1}+s_i}) : 2 \leq i \leq
  k-r\}\).
\end{thm}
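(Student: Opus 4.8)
My plan is to establish Theorem~\ref{thm:Sta4.1} by the Bailey-lemma method, running the very same Bailey chain that produces the binomial Andrews--Gordon extension of Theorem~\ref{thm:Sta3.1} and altering only the innermost (seed) end of the chain so as to pass from the odd modulus $2k+3$ to the even modulus $2k+2$. The two statements have identical left-hand sides apart from the replacement of the last denominator factor $(q)_{s_k}$ by $(q^2;q^2)_{s_k}=(q)_{s_k}(-q)_{s_k}$, and identical right-hand sides apart from the modulus $2k+3\to 2k+2$ and the exponent $k+2+r-j+2s\to k+1+r-j+2s$ (the other exponent $k+1-r+j-2s$ being forced to stay, since the two product exponents must sum to the modulus). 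This is exactly the relationship between the Andrews--Gordon identities of Theorem~\ref{thm:AG} and Bressoud's identities of Theorem~\ref{thm:2}, so the whole even-moduli proof should reduce to the odd-moduli one plus a single change of seed Bailey pair.

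First I would set up the chain with $s_1\ge\cdots\ge s_k\ge0$ built from $k$ applications of the limiting Bailey lemma, as in the proof of Theorem~\ref{thm:Sta3.1}. The innermost part of the chain, encoding the parameter $r$, accounts for the terms $s_{k-r+1}+\cdots+s_k$ in the exponent and for $r$ in the final product. At $j$ of the outer steps (positions $2,\dots,k-r$) I would apply the McLaughlin--Lovejoy lemmas in place of the ordinary limiting step: on the $\beta$-side these produce the shifts $q^{-s_i}$ and the factors $(1+q^{s_{i-1}+s_i})$, while on the $\alpha$-side each one inserts a factor of the form $1+q^{(\cdot)}$, so that expanding the resulting $j$ factors yields the binomial sum $\sum_{s=0}^{j}\binom{j}{s}$ together with the shift $j-2s$ in the arguments of the theta functions. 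Because $j+r\le k$, the $j$ binomial-producing steps lie in positions disjoint from the $r$ seed-level steps, so this portion of the computation is word for word the same as for Theorem~\ref{thm:Sta3.1} and reproduces the factors $(1+q^{s_{i-1}+s_i})$, the shifts $q^{-s_i}$ and the coefficients $\binom{j}{s}$ verbatim.

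The single genuine modification is to replace the seed Bailey pair underlying Theorem~\ref{thm:AG} by the even-moduli seed pair underlying Bressoud's Theorem~\ref{thm:2}. This changes the innermost denominator from $(q)_{s_k}$ to $(q^2;q^2)_{s_k}$ and, after evaluating the resulting $\alpha$-sum by the Jacobi triple product, turns the modulus $2k+3$ into $2k+2$ and shifts the exponent $k+2+r-j+2s$ to $k+1+r-j+2s$, while leaving the binomial coefficients $\binom{j}{s}$ intact. Combining the even-moduli seed evaluation with the binomial structure carried by the outer steps then gives the stated identity.

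I expect the main obstacle to be checking that the even-moduli seed is genuinely compatible with the $j$ McLaughlin--Lovejoy insertions, in particular when (for small $r$) an insertion is allowed at an innermost position adjacent to the modified seed: one must verify that interleaving the ordinary steps, the binomial-producing steps and the even seed leaves both the binomial coefficients and the Jacobi triple product evaluation unchanged, and that the order in which these lemmas are applied is the correct one, since (as for all the binomial identities) this order is precisely what fixes the final shape of the right-hand side. Granting this compatibility, the ``Moreover'' clause follows exactly as for Theorem~\ref{thm:Sta3.1}: the ordinary limiting steps at positions $2,\dots,k-r$ are interchangeable, so the $j$ binomial-producing steps may be assigned to any $j$-element subset of these positions without altering the right-hand side.
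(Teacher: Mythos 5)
Your proposal is correct and follows essentially the same route as the paper: the authors prove \Cref{thm:Sta4.1} by applying the very same chain used for \Cref{thm:Sta3.1} (packaged as \Cref{prop:common2}, with the $j$ binomial-producing steps given by \Cref{lem:star1}) to the even-moduli seed, namely the Bailey pair (D'4) of \Cref{def:D4} with $a=q$ and the parameter shifts $k\to k-1$, $r\to r-1$, and the compatibility issue you flag is dispensed with by noting that (D'4) arises from the unit Bailey pair by one change of base, so the ranges of $j,r,k$ remain valid.
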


\begin{thm}\cite[Theorem~4.2]{Stant}\label{thm:Sta4.2}
  Let \( j,r \geq 0 \) and \( k \geq 1 \) be integers such that
  \( j+r \leq k \). Then
\begin{equation}\label{eq:sBr}
\sum_{s_1\geq\dots\geq s_{k}\geq0}\frac{q^{s_1^2+\dots+s_{k}^2-s_1-\dots-s_j+s_{k-r+1}+\dots+s_{k}}}{(q)_{s_1-s_2}\dots(q)_{s_{k-1}-s_{k}}(q^2;q^2)_{s_{k}}}=\sum_{s=0}^j\frac{(q^{2k+2},q^{k+1-r+j-2s},q^{k+1+r-j+2s};q^{2k+2})_\infty}{(q)_\infty}.
\end{equation}
\end{thm}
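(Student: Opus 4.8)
The plan is to reduce \eqref{eq:sBr} to the basic Bressoud identity \eqref{eq:Br} of \Cref{thm:2}, whose combinatorial meaning is already recorded after that theorem, and then to build a bijection that accounts for the extra parameter $j$. Set $r_s := r - j + 2s$ for $0 \le s \le j$; since $r + j \le k$ we have $|r_s| \le r + j \le k$, so, using the symmetry of the product in \eqref{eq:Br} under $r \mapsto -r$, applying \eqref{eq:Br} with $r$ replaced by $|r_s|$ rewrites the $s$-th summand on the right of \eqref{eq:sBr} as a basic Bressoud multisum. Thus the right-hand side of \eqref{eq:sBr} is the generating function for the disjoint union over $0 \le s \le j$ of the sets $B_s$ of frequency sequences $(f_i)$ with $f_i + f_{i+1} \le k$ for all $i$, with $f_1 \le k - |r_s|$, and with $i f_i + (i+1) f_{i+1} \equiv k - |r_s| \pmod 2$ whenever $f_i + f_{i+1} = k$. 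A convenient feature is that $k - |r_s| \equiv k - r + j \pmod 2$ for every $s$, so the families $B_s$ share a single parity class and differ only through the bound on $f_1$.

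It then remains to interpret the left-hand side of \eqref{eq:sBr} and to match it with $\bigsqcup_s B_s$. The factor $q^{-s_1 - \dots - s_j}$ is the only difference between the summand of \eqref{eq:sBr} and that of \eqref{eq:Br}, and in the successive Durfee-square picture underlying these multisums one may read the replacement of $s_i^2$ by $s_i^2 - s_i$ as turning the $i$-th Durfee square into a Durfee rectangle, for $1 \le i \le j$. The main step is therefore a particle-motion bijection, in the spirit of Warnaar's constructions and of the method of \cite{DJK}, that absorbs these $j$ rectangles: starting from an element of $B_s$, one performs $j$ successive particle moves at the large-value ($s_1$) end of the profile, each move either displacing a particle outward or not, and the index $s$ records the number of outward moves. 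This extends the bijection of \cite{DJK}, which treats exactly the case $r = 0$, namely \eqref{eq:Br3.5}, to arbitrary $r$; the new ingredient is the $r$ positive linear terms $s_{k-r+1} + \dots + s_k$ sitting at the opposite ($s_k$) end, which the motion must accommodate while keeping the global constraints $f_i + f_{i+1} \le k$ and $f_1 \le k - |r_s|$ intact.

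The step I expect to be the main obstacle is the bookkeeping of the parity condition along the motion. The even-modulus denominator $(q^2;q^2)_{s_k}$ forces the constraint $i f_i + (i+1) f_{i+1} \equiv k - r + j \pmod 2$ on every saturated pair $f_i + f_{i+1} = k$, and a single particle move can create or destroy such saturated pairs, thereby relocating where the constraint applies; one must verify that each move preserves the prescribed parity class and that the whole procedure is reversible. A second, more technical, wrinkle is that $r_s$ is negative for small $s$ when $j > r$, so the combinatorial reading of \eqref{eq:Br} is only legitimate after the symmetry $r_s \mapsto -r_s$ is invoked, which makes the bound $f_1 \le k - |r_s|$ vary in a tent-shaped rather than monotone way with $s$.

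As an alternative, purely analytic route, one can derive \eqref{eq:sBr} from the Bailey lattice. The even modulus is built into the seed Bailey pair responsible for \eqref{eq:Br}, the $j$ factors $q^{-s_i}$ together with the outer sum $\sum_{s=0}^j$ arise from $j$ change-of-base (lattice) steps whose telescoping leaves no binomial coefficient---in contrast to the companion binomial identity of \Cref{thm:Sta4.1}, where iterating a single lemma produces the factor $\binom{j}{s}$---and the remaining ordinary Bailey-chain steps, together with a suitable seed and specialization, build up the $r$ positive terms and the full profile of $k$ summation variables. Here the hypothesis $j + r \le k$ is precisely what guarantees that the $j$ lattice steps and the chain steps coexist within the $k$ indices $s_1, \dots, s_k$.
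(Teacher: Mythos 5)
Your proposal correctly identifies both viable strategies (a combinatorial proof resting on Bressoud's identity \eqref{eq:Br} plus particle motion, and an analytic proof via the Bailey lattice), and your reduction of the product side is sound: each summand on the right of \eqref{eq:sBr} is indeed the Bressoud product with $r$ replaced by $|r_s|$, $r_s=r-j+2s$, and all these share the parity class $k-r+j$. But the proof has a genuine gap exactly where you flag "the main obstacle": the particle-motion bijection that is supposed to match the sum side with $\bigsqcup_s B_s$ is never constructed, and the three verifications you defer are precisely the content of the proof. Concretely: (i) the sets $B_s$ are nested (they differ only by the bound $f_1\leq k-|r_s|$), so their union is a multiset, and you need an explicit statistic on the sum side that splits it into $j+1$ classes of the right sizes --- the paper does this by passing through an intermediate set $\mathcal{Y}'_{j,r,k}$ in which $f_0$ takes $j+1$ \emph{distinct} values (so the union is genuinely disjoint, via \Cref{lem:Y'}), and then a bijection modifying only $f_0$ onto the set $\mathcal{Z}'_{j,r,k}$ cut out by $f_0\leq j-\max\{f_0+f_1-(k-r),0\}$ (\Cref{pro:gf Z'}); (ii) the sum side must first be realised as the generating function of a multipartition model $\mathcal{X}'_{j,r,k}$ with part-size lower bounds $m-j+\max\{m-(k-r),0\}$ and a parity restriction on $\lambda^{(k)}$, which is what the map $\Lambda$ actually acts on (\Cref{pro:bij X' and Z'}); (iii) the parity bookkeeping is handled by the observation that a run of particle motions started at a pair $(h,0)$ changes $if_i+(i+1)f_{i+1}$ at the landing pair by exactly the number of moves $\lambda_i$ modulo $2$, so the constraint on saturated pairs reduces to the parity of the parts of $\lambda^{(k)}$. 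Without these three pieces the argument does not close.

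Your alternative analytic sketch also misdescribes the mechanism. The outer sum $\sum_{s=0}^j$ in \eqref{eq:sBr} does not come from $j$ lattice steps: the paper's \Cref{prop:coro2baileylattice} uses the Bailey lattice exactly \emph{twice} (once after $r+1$ Bailey-lemma iterations and once after $k-r-j-1$ more), applied to the Bailey pair (D'4) at $a=q$; the variables $s_1,\dots,s_j$ acquire $-2s_i$ from sitting past both lattice steps, which combines with $a^{s_i}=q^{s_i}$ to give $q^{-s_i}$, and the sum over $s$ arises from expanding the geometric quotient $\bigl(1-q^{2\ell(j+1)}\bigr)/\bigl(1-q^{2\ell}\bigr)$ before the Jacobi triple product --- not from a telescoping of $j$ change-of-base steps. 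Iterating a lattice-type step $j$ times would move the parameter to $a/q^{j+1}$ and produce the wrong exponent pattern (indeed, iterating the key lemma alone leaves $\beta_n$ unchanged and cannot create the $q^{-s_i}$ factors). So neither route, as written, constitutes a proof.
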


Again, for \(j=0\) these two identities give~\eqref{eq:Br}, while for
\(r=0\) Theorem \ref{thm:Sta4.2} yields Bressoud's
formula~\eqref{eq:Br3.5}. Stanton proved his results by using some
(new properties of) Laurent polynomials \(H_{2n}(z,a|q)\) and some
iterative process.

\medskip
In the first part of this paper, we give alternative proofs of
Stanton's results using the Bailey pair machinery.

The Bailey lemma~\cite{Ba}, whose iterative nature was highlighted by
Andrews~\cite{A1, A2, AAR} through the Bailey chain, provides an
efficient framework to prove \(q\)-series identities. For more details
on the Bailey lemma and its extensions, see \Cref{sec:bailey-pairs}.
Using Bailey-type techniques, we prove a binomial and a non-binomial
generalisation of Theorem~\ref{thm:Kur}, in the same style as
Stanton's results.

\begin{thm}[Binomial extension of the Kur\c sung\"oz identities]\label{thm:binom_Kur}
  Let \( j,r \geq 0 \) and \( k \geq 1 \) be integers such that
  \( j+r \leq k \). Then
  \begin{multline*}
    \sum_{s_1 \geq \cdots \geq s_k \geq 0}q^{s_1^2 + \cdots + s_k^2 + s_{k-r+1} + \cdots + s_{k-1}+2s_k} \cdot \frac{q^{-s_1 - \cdots - s_j} (1 + q^{s_1 + s_2})(1 + q^{s_2 + s_3}) \cdots (1 + q^{s_{j-1} + s_j})}{(q)_{s_1 - s_2} \cdots (q)_{s_{k-1} - s_k} (q^2;q^2)_{s_k}} \\
    = \frac{1}{1+q}\left(\sum_{s=0}^j \binom{j}{s} \left( \frac{(q^{2k+2}, q^{k+2-r+j-2s}, q^{k+r-j+2s} ; q^{2k+2})_\infty}{(q)_\infty}
        + q \frac{(q^{2k+2}, q^{k-r+j-2s}, q^{k+2+r-j+2s} ; q^{2k+2})_\infty}{(q)_\infty}\right) \right).
  \end{multline*}
  Moreover, the \(j\) factors \(q^{-s_1}\) and \(q^{-s_i}(1+q^{s_{i-1}+s_i})\), \(2 \leq i \leq j\) may be replaced by any \(j\)-element subset of \(\{q^{-s_1}\} \cup \{q^{-s_i}(1+q^{s_{i-1}+s_i}) : 2 \leq i \leq k-r\}\).
\end{thm}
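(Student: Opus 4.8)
The plan is to reduce \Cref{thm:binom_Kur} to the already established Kur\c sung\"oz identity \eqref{fij0} (the case $j=0$) by grafting onto it the same binomial machinery that passes from Bressoud's identity \eqref{eq:Br} to its binomial extension \Cref{thm:Sta4.1}. The key observation is that the left-hand side of \Cref{thm:binom_Kur} has exactly the same innermost even-modulus structure as \eqref{fij0}: the prefactor $1/(1+q)$, the denominator $(q^2;q^2)_{s_k}$, and the exponent contribution $2s_k$ of the innermost variable are common to both, and none of them is touched by the insertions $q^{-s_1}$ and $q^{-s_i}(1+q^{s_{i-1}+s_i})$, which only involve the outer variables $s_1,\dots,s_j$. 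Since $j\le k-r$, these insertions live at levels $1,\dots,k-r$ of the Bailey chain and, when $r\ge 1$, never reach the innermost level $k$. I would therefore build the very same chain that produces \eqref{fij0} — the seed Bailey pair relative to $a$ attached to residue $r$, the ordinary Bailey chain steps, and the final even-modulus (opposite-parity) step — but replace $j$ of the outer standard steps by the special steps of McLaughlin and Lovejoy from \Cref{sec:bailey-pairs}. As emphasised there, the order in which the ordinary and special steps are applied is what governs which variables carry the factors, and it is precisely this freedom that yields the ``Moreover'' clause allowing any $j$-element subset of $\{q^{-s_1}\}\cup\{q^{-s_i}(1+q^{s_{i-1}+s_i}):2\le i\le k-r\}$.

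The binomial coefficients then arise exactly as in the proof of \Cref{thm:Sta4.1}. Each of the $j$ McLaughlin--Lovejoy insertions splits the factor $q^{-s_i}(1+q^{s_{i-1}+s_i})=q^{-s_i}+q^{s_{i-1}}$ into two terms, which on the $\alpha$-side amounts to a binary $\pm$ shift of the theta characteristic. Expanding the resulting $2^{j}$ choices and collecting them according to the number $s$ of shifts of one given sign produces the weight $\binom{j}{s}$ together with a net shift $j-2s$. Because these insertions act on the same outer variables and in the same way as in the Bressoud case, I expect to transport that computation essentially verbatim, so that the whole effect of the insertions is to translate the theta characteristics by $j-2s$ and to attach the factor $\binom{j}{s}$.

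It remains to combine this with the even-modulus evaluation. In \eqref{fij0} the innermost variable is summed, via the Jacobi triple product, into the pair of theta quotients $\frac{(q^{2k+2},q^{k+r},q^{k-r+2};q^{2k+2})_\infty}{(q)_\infty}$ and $q\,\frac{(q^{2k+2},q^{k+2+r},q^{k-r};q^{2k+2})_\infty}{(q)_\infty}$, normalised by $1/(1+q)$; this is the opposite-parity analogue of the single Bressoud theta quotient. Applying the $j-2s$ translation of the preceding paragraph to the residue $r$ inside each of these two thetas (that is, replacing $r$ by $r-j+2s$ in every characteristic) and then summing over $s$ with weight $\binom{j}{s}$ produces precisely the right-hand side claimed in \Cref{thm:binom_Kur}; one checks directly that $k+r\mapsto k+r-j+2s$ and $k-r+2\mapsto k+2-r+j-2s$, and similarly for the second theta.

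The hard part will be the bookkeeping where the outer insertions meet the inner even-modulus step. For $r\ge 1$ the two are cleanly separated, but when $r=0$ (so that $j$ may equal $k$) the last admissible insertion, at level $k$, interacts with the final opposite-parity summation and with the $(q^2;q^2)_{s_k}$ denominator; one must check that the $\pm$ shift produced by that insertion is compatible with the two-term $(1+q)$ splitting rather than destroying it. Equivalently, the whole argument can be run as an induction on $j$ at fixed $k$ and $r$ with $j+r\le k$: the base case $j=0$ is \eqref{fij0}, and a single application of the McLaughlin--Lovejoy lemma passes from the $j$-version to the $(j+1)$-version, the new coefficients being assembled by Pascal's rule $\binom{j}{s}+\binom{j}{s-1}=\binom{j+1}{s}$ acting on the two shifted theta quotients. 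Either way, the crux is to verify that the chosen order of standard and special steps reproduces exactly the stated family of admissible insertion factors.
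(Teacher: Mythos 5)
Your plan is essentially the paper's proof: the authors apply exactly the chain you describe --- $r$ Bailey-lemma steps, one McLaughlin key-lemma step, then $k-r-j$ Bailey-lemma steps interleaved (in any order, which is what yields the ``Moreover'' clause) with $j$ special steps built from the McLaughlin and Lovejoy lemmas --- to the Bailey pair (D'1) relative to $a=q$ via Proposition~\ref{prop:common2} with $k\to k-1$, $r\to r-1$, and the binomial coefficients and characteristic shifts $r\mapsto r-j+2s$ arise precisely as you say, from expanding $(q^{-\ell}+q^{\ell})^{j}$ on the $\alpha$-side before applying the Jacobi triple product, with the extra factor $(1+q^{2\ell+1})$ from (D'1) producing the two-term $1/(1+q)$ structure uniformly in all cases, including $r=0$. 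The one caveat is your ``induction on $j$'' alternative: one cannot pass from the $j$-version to the $(j+1)$-version by applying one further lemma to the finished Bailey pair (that would add a summation variable); one must instead swap a standard step for a special step in the interior of the chain, which is exactly the uniform direct construction you already give as your primary route.
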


\begin{thm}[Non-binomial extension of the Kur\c sung\"oz identities]\label{thm:new_Kur}
  Let \( j,r \geq 0 \) and \( k \geq 1 \) be integers such that
  \( j+r \leq k \). Then
\begin{multline}\label{eq:new_Kur}
\sum_{s_1\geq\dots\geq s_{k}\geq0}\frac{q^{s_1^2+\dots+s_{k}^2-s_1-\dots-s_j+s_{k-r+1}+\dots+s_{k-1}+2s_{k}}}{(q)_{s_1-s_2}\dots(q)_{s_{k-1}-s_{k}}(q^2;q^2)_{s_{k}}}\\
=\frac{1}{1+q}\sum_{s=0}^j\left(\frac{(q^{2k+2},q^{k+2-r+j-2s},q^{k+r-j+2s};q^{2k+2})_\infty}{(q)_\infty}+q\frac{(q^{2k+2},q^{k-r+j-2s},q^{k+2+r-j+2s};q^{2k+2})_\infty}{(q)_\infty}\right).
\end{multline}
\end{thm}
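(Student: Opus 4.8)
The plan is to obtain \eqref{eq:new_Kur} by reusing the Bailey-pair proof of Stanton's non-binomial identity \eqref{eq:sBr} (\Cref{thm:Sta4.2}), which is already at our disposal, and modifying only its innermost ingredient. The starting observation is that the summands of \eqref{eq:new_Kur} and of \eqref{eq:sBr} coincide in the variables $s_1,\dots,s_{k-1}$: the coefficient of each $s_i$ with $i<k$, whether coming from the quadratic part, from the shift $-s_1-\dots-s_j$, or from the tail $s_{k-r+1}+\dots+s_{k-1}$, is the same in both sums, and the only discrepancy is that the coefficient of the innermost variable $s_k$ in the exponent is larger by one in \eqref{eq:new_Kur} than in \eqref{eq:sBr} (for every $r\ge 0$). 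Consequently, whatever sequence of Bailey lemma applications produces \eqref{eq:sBr}---the $j$ steps of McLaughlin--Lovejoy type generating the factors $q^{-s_1-\dots-s_j}$, the remaining steps generating the squares $q^{s_i^2}$, and the $r$ steps responsible for the tail---carries over verbatim to the outer variables of \eqref{eq:new_Kur}. Only the seed (initial) Bailey pair sitting at $s_k$, which is also where the factor $(q^2;q^2)_{s_k}$ and the even modulus $2k+2$ originate (see \Cref{sec:bailey-pairs}), has to be changed.

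Concretely, I would first isolate, inside the proof of \Cref{thm:Sta4.2}, the seed Bailey pair relative to $a=1$ whose $\beta$-side produces $1/(q^2;q^2)_{s_k}$ and whose $\alpha$-side, after the chain and a Jacobi triple product evaluation, yields the single Bressoud-type product of \eqref{eq:sBr}. I would then replace it by the seed carrying the additional factor $q^{s_k}$. The effect of this extra factor is to split the $\alpha$-side according to the parity of the summation index; this parity split is exactly the mechanism behind the opposite parity condition of Kur\c sung\"oz, and it is the reason why the right-hand side of \eqref{eq:new_Kur} is a sum of two infinite products weighted by $1$ and $q$, with the prefactor $1/(1+q)$ arising from the normalisation of the new seed. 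I would verify that the modified pair is genuinely a Bailey pair by checking its defining summation directly, which reduces to a terminating $q$-series evaluation.

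Feeding this modified seed through the identical chain and letting the bounding index tend to infinity, the limiting $\alpha$-side is again summed by the Jacobi triple product, now in its two-term form. A clean way to organise and cross-check the bookkeeping is the product-side reformulation: writing $P_{r'}$ for the right-hand side of \eqref{eq:sBr} with parameter $r'$ (and the same $j,k$), one checks by shifting $r\mapsto r\mp1$ in the two families of products that the right-hand side of \eqref{eq:new_Kur} equals $\frac{1}{1+q}\bigl(P_{r-1}+qP_{r+1}\bigr)$. Thus it suffices to show that the two pieces of the parity-split $\alpha$-side evaluate to $P_{r-1}$ and $qP_{r+1}$; at the boundary values $r=0$ and $r=k-j$, where $P_{r-1}$ or $P_{r+1}$ is nominally out of range, the apparent products are handled by the vanishing and reflection symmetry of the Jacobi triple product. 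Specialising $j=0$ should return \eqref{fij0}, providing a useful consistency check.

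The main obstacle I anticipate is the previous step: pinning down the correct two-term seed Bailey pair and verifying it, and then tracking the parity split all the way through the chain so that the Jacobi triple product delivers exactly the residues $k+2-r+j-2s$, $k+r-j+2s$ and their $q$-shifted counterparts $k-r+j-2s$, $k+2+r-j+2s$ modulo $2k+2$, each with coefficient $1$ (so that no binomial coefficient appears, in contrast with the binomial extension). As emphasised for the binomial identities, the order in which the $j$ McLaughlin--Lovejoy steps are interleaved with the $r$ tail steps is delicate and must be kept consistent with the proof of \Cref{thm:Sta4.2}, since the wrong order would scramble these residues. Finally, an alternative realisation of the same combination $\frac{1}{1+q}\bigl(P_{r-1}+qP_{r+1}\bigr)$ that avoids constructing the explicit two-term seed would be to use the Bailey lattice, where changing the parameter $a$ between two consecutive steps produces precisely the $r\mapsto r\pm1$ shift.
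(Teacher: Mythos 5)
Your proposal is correct and is essentially the paper's own proof: the seed with the extra factor $q^{s_k}$ is exactly the Bailey pair (D'1) of \Cref{def:D1}, which the paper feeds through the very same double-Bailey-lattice chain (\Cref{prop:coro2baileylattice}) used for \Cref{thm:Sta4.2}, and the factor $\tfrac{1+q^{2\ell+1}}{1+q}$ in its $\alpha$-side produces precisely the two-term Jacobi triple product split and the prefactor $\tfrac{1}{1+q}$ you describe, matching your identity $\tfrac{1}{1+q}(P_{r-1}+qP_{r+1})$. The only slight mislabel is attributing the $q^{-s_1-\cdots-s_j}$ factors to McLaughlin--Lovejoy steps (those drive the binomial extensions), whereas here they come from the second use of the Bailey lattice, but since you defer to whatever chain proves \eqref{eq:sBr}, this does not affect the argument.
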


Again,  for \( j = 0 \) both theorems give~\eqref{fij0}, while for
\( r = 0 \) Theorem \ref{thm:new_Kur} yields~\eqref{fij}.

Furthermore, we explore analogues of these constructions for the
Bressoud--G\"ollnitz--Gordon identities. In~\cite[(3.6)--(3.9)]{Br80},
Bressoud proved four extensions of the famous G\"ollnitz--Gordon
identities, introduced in \cite{Go65} and \cite{Gol} independently,
\begin{equation}\label{eq:GG}
  \sum_{n\geq0}\frac{q^{n^2}(-q;q^2)_n}{(q^2;q^2)_n}=\frac{1}{(q,q^4,q^7;q^8)_\infty}, \qand \sum_{n\geq0}\frac{q^{n^2+2n}(-q;q^2)_n}{(q^2;q^2)_n}=\frac{1}{(q^3,q^4,q^5;q^8)_\infty},
\end{equation}
which are modulo \(8\) Rogers--Ramanujan-type
identities. Among them, the identities~\cite[(3.6)]{Br80} state the following.
\begin{thm}[Bressoud--G\"ollnitz--Gordon]\label{thm:BGG}
For all integers \( k \geq 1 \) and \( 0 \leq j \leq k \),
\begin{multline} \label{eq:BGG}
  \sum_{s_1\geq\dots\geq s_{k}\geq0}\frac{q^{2(s_1^2+\dots+s_{k}^2-s_1-\dots-s_{j})}(-q^{1+2s_k};q^2)_\infty}{(q^2;q^2)_{s_1-s_2}\dots(q^2;q^2)_{s_{k-1}-s_{k}}(q^2;q^2)_{s_k}}\\
  =\frac{(-q;q^2)_\infty}{(q^2;q^2)_\infty}\sum_{s=0}^{j}(q^{4k+4},q^{2k+1-2j+2s},q^{2k+3+2j-2s};q^{4k+4})_\infty.
\end{multline}
\end{thm}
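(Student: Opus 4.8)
The plan is to derive \eqref{eq:BGG} from the Bailey machinery developed in \Cref{sec:bailey-pairs}, working throughout with base $q^2$ rather than $q$. First I would clear the factor $(-q;q^2)_\infty$ from both sides. Since $(-q^{1+2s_k};q^2)_\infty = (-q;q^2)_\infty/(-q;q^2)_{s_k}$, dividing \eqref{eq:BGG} by $(-q;q^2)_\infty$ reduces it to the equivalent cleaner identity
\begin{equation*}
\sum_{s_1\geq\dots\geq s_{k}\geq0}\frac{q^{2(s_1^2+\dots+s_{k}^2-s_1-\dots-s_{j})}}{(q^2;q^2)_{s_1-s_2}\dots(q^2;q^2)_{s_{k-1}-s_{k}}(-q;q^2)_{s_k}(q^2;q^2)_{s_k}}=\frac{1}{(q^2;q^2)_\infty}\sum_{s=0}^{j}(q^{4k+4},q^{2k+1-2j+2s},q^{2k+3+2j-2s};q^{4k+4})_\infty.
\end{equation*}
In this form the innermost factor $1/\bigl((-q;q^2)_{s_k}(q^2;q^2)_{s_k}\bigr)$ is precisely the $\beta$-part of a Göllnitz--Gordon Bailey pair relative to base $q^2$, which makes the statement an exact base-$q^2$ analogue of the situation treated for \eqref{eq:Br3.5} and Theorem \ref{thm:Sta4.2}.

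Next I would take as seed the classical Göllnitz--Gordon Bailey pair $(\alpha^{(0)}_n,\beta^{(0)}_n)$ with $\beta^{(0)}_n = 1/\bigl((-q;q^2)_n(q^2;q^2)_n\bigr)$ (relative to a suitable $a$ and base $q^2$), whose $\alpha$-part is an explicit theta-type sequence; this is the pair underlying \eqref{eq:GG}, and I would either quote it from Slater's list or verify it directly from a terminating $q$-summation. Applying the base-$q^2$ Bailey lemma in its $\rho,\sigma\to\infty$ specialisation then inserts one factor $q^{2s_i^2}/(q^2;q^2)_{s_i-s_{i+1}}$ per iteration; performing $k-1$ such steps builds up the full sum of squares $q^{2(s_1^2+\dots+s_k^2)}$ together with all the denominators $(q^2;q^2)_{s_i-s_{i+1}}$, exactly as in the derivation of the Andrews--Gordon and Bressoud multisums.

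To produce the negative linear terms $q^{-2(s_1+\dots+s_j)}$ and, simultaneously, the sum $\sum_{s=0}^{j}$ on the right, I would run $j$ of the iterations through the Bailey lattice rather than the ordinary Bailey chain, changing the base point from $a$ to $aq^{-2}$ at each of these $j$ steps. Each lattice step contributes a factor $q^{-2s_i}$ to the summand and shifts the modular parameter, so that after $j$ lattice steps and $k-j$ plain steps the limiting $\alpha$-side becomes a sum of $j+1$ theta quotients; evaluating these by the Jacobi triple product should yield precisely the products $(q^{4k+4},q^{2k+1-2j+2s},q^{2k+3+2j-2s};q^{4k+4})_\infty$ for $s=0,\dots,j$. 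This is the same lattice mechanism used in the paper to prove the non-binomial Stanton identities, now seeded by the Göllnitz--Gordon pair; the case $j=0$ (no lattice steps) recovers Bressoud's pure identity as the single-term specialisation.

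The main obstacle I anticipate is twofold. First, I must pin down the correct seed value of $a$ and verify that $1/\bigl((-q;q^2)_n(q^2;q^2)_n\bigr)$ genuinely extends to a Bailey pair relative to base $q^2$ with a theta-type $\alpha^{(0)}$ that the Jacobi triple product can close. Second, the bookkeeping of the lattice steps is delicate: one must track how each change $a\mapsto aq^{-2}$ shifts the two interior arguments of the theta product by $\pm2$, and how the $j+1$ terms assemble into exactly $\sum_{s=0}^{j}(q^{4k+4},q^{2k+1-2j+2s},q^{2k+3+2j-2s};q^{4k+4})_\infty$ with no spurious binomial coefficients. As the paper emphasises elsewhere, the order in which the plain chain steps and the lattice steps are applied controls the final shape, so I would fix that order at the outset and propagate the theta arguments step by step.
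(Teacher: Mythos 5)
Your outline — peeling off $(-q;q^2)_\infty$ via $(-q^{1+2s_k};q^2)_\infty=(-q;q^2)_\infty/(-q;q^2)_{s_k}$, seeding with the G\"ollnitz--Gordon Bailey pair whose $\beta_n=1/\bigl((-q;q^2)_n(q^2;q^2)_n\bigr)$, iterating in base $q^2$, and closing with the Jacobi triple product — is the right general framework; for the record, the paper does not prove \Cref{thm:BGG} from scratch but quotes it from \cite[(3.6)]{Br80} and re-derives it as the $r=0$ case of \Cref{thm:GGrj}, whose proof applies \Cref{prop:coro3baileylattice} to the unit Bailey pair with $a=q$, $b\to\infty$, $c=-q^{3/2}$ and then replaces $q$ by $q^2$ (your seed pair is exactly what one gets from the unit pair by one application of \Cref{lem:BL2} with $\rho=-q^{3/2}$, so the starting points coincide).

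The genuine gap is your mechanism for the linear terms and for the $(j+1)$-term product side. You propose to run $j$ of the iterations through the Bailey lattice and assert that ``each lattice step contributes a factor $q^{-2s_i}$.'' That is not how the lattice acts: a lattice step at current parameter $a$ contributes $a^{s_i}q^{2(s_i^2-s_i)}$ for its own variable and, crucially, permanently shifts the parameter, so every \emph{subsequent} (outer) plain step also acquires an extra $q^{-2s_i}$ relative to the unshifted chain. With $j$ consecutive lattice steps the variable $s_i$ ($i\leq j$) would therefore carry a linear coefficient growing like $-2(j-i)s_i$ rather than the flat $-2s_i$ required, and the $\alpha$-side would be a $j$-fold composition of two-term difference operators rather than the single geometric factor $(1-x^{j+1})/(1-x)$ that actually produces the $j+1$ products in \eqref{eq:BGG}. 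The correct structure, as in \Cref{prop:coro2baileylattice} and \Cref{prop:coro3baileylattice}, uses exactly \emph{two} lattice steps: one at the innermost position, so that $s_k,\dots,s_{j+1}$ are introduced at the once-shifted parameter and carry no linear term, and one between $s_{j+1}$ and $s_j$, so that $s_j,\dots,s_1$ are introduced at the twice-shifted parameter and each carry $-2s_i$. Your own consistency check exposes the problem: for $j=0$ your scheme has no lattice steps, and a pure chain on a pair relative to $a=q^2$ yields the exponent $2(s_1^2+\cdots+s_k^2+s_1+\cdots+s_k)$, not $2(s_1^2+\cdots+s_k^2)$. Finally, $j$-fold iteration of lattice-type difference operators is precisely the mechanism that generates the binomial coefficients $\binom{j}{s}$ in \Cref{thm:binom_BGG}; the ``spurious binomial coefficients'' you hope to rule out are exactly what your construction would produce.
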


Note that taking \(k=1\) and \(j=0\), this formula becomes
\[  
\sum_{s_1\geq0}\frac{q^{2s_1^2}(-q^{1+2s_1};q^2)_\infty}{(q^2;q^2)_{s_1}}=\frac{(q^2;q^4)_\infty}{(q)_\infty}(q^8,q^3,q^5;q^8)_\infty,
\]
therefore the product side is the same as in the first
G\"ollnitz--Gordon identity in~\eqref{eq:GG}. Using the infinite
\(q\)-binomial theorem~\cite[Theorem~10.2.1]{AAR}, the left-hand side
is
\[  
  \sum_{m,\ell\geq0}\frac{q^{2m^2+\ell^2+2m\ell}}{(q^2;q^2)_m(q^2;q^2)_\ell}
  = \sum_{m, \ell \geq 0} \frac{q^{(m+ \ell)^2}}{(q^2; q^2)_{m + \ell}} \cdot q^{m^2}\Qbinom{m + \ell}{m}{q^2}
  =\sum_{n\geq0}\frac{q^{n^2}}{(q^2;q^2)_n}\sum_{m=0}^nq^{m^2} \Qbinom{n}{m}{q^2},
\]
which, by using the finite \(q\)-binomial theorem~\cite[Corollary~10.2.2.~(c)]{AAR}, is the left-hand
side of the first identity in~\eqref{eq:GG}. Similarly, when
\(k=j=1\), Formula~\eqref{eq:BGG} is equivalent to the sum of both
G\"ollnitz--Gordon identities in~\eqref{eq:GG}.

In~\cite{DJK25}, \(m\)-versions of the formulas
\cite[(3.6)--(3.9)]{Br80} are proved using the Bailey lattice, and two
more identities of the same kind are discovered in passing. A natural
question is whether it is possible to prove ``Stanton type'' formulas
generalising these Bressoud--G\"ollnitz--Gordon identities. We answer
positively this question by providing binomial and non-binomial
extensions of the Bresssoud--G\"ollnitz--Gordon identities.

\begin{thm}[Binomial extension of the Bresssoud--G\"ollnitz--Gordon identities]\label{thm:binom_BGG}
  Let \( j,r \geq 0 \) and \( k \geq 1 \) be integers such that
  \( j+r \leq k \). Then
  \begin{multline*}
    \sum_{s_1 \geq \cdots \geq s_{k} \geq 0}
    \frac{q^{2(s_{1}^2 + \cdots + s_{k}^2 + s_{k-r+1} + \cdots + s_{k})} q^{-2s_1}(q^{2s_1}+q^{-2s_2}) \cdots (q^{2s_{j-1}}+q^{-2s_j}) (-q^{1+2s_k}; q^2)_\infty}{(q^2;q^2)_{s_1 - s_{2}}  \cdots (q^2;q^2)_{s_{k-1} - s_{k}} (q^2;q^2)_{s_k} } \\
    = \frac{(-q^3;q^2)_\infty}{(q^2;q^2)_{\infty}} \sum_{s = 0}^{j} \binom{j}{s} \bigg( (q^{4k+4}, q^{2k+3-2r+2j-4s}, q^{2k+1+2r-2j+4s}; q^{4k+4})_\infty  \\
    + q (q^{4k+4}, q^{2k+1-2r+2j-4s}, q^{2k+3+2r-2j+4s}; q^{4k+4})_\infty \bigg).
  \end{multline*}
  Moreover,
  \(q^{-2s_1}(q^{2s_1}+q^{-2s_2}) \cdots (q^{2s_{j-1}}+q^{-2s_j})\) can be
  replaced by any product of \(j\) factors taken from
  \(\{q^{-2s_1}\} \cup \{(q^{2s_{i-1}}+q^{-2s_i}) : 2 \leq i \leq
  k-r\}\).
\end{thm}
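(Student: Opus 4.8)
The plan is to derive the binomial extension of the Bressoud--G\"ollnitz--Gordon identities (\Cref{thm:binom_BGG}) from the Bailey lemma machinery, following the same strategy as for the binomial extensions in \Cref{thm:Sta3.1,thm:Sta4.1}, but with the modulus and base adapted to the G\"ollnitz--Gordon setting. The key observation is that the factor $q^{-2s_1}(q^{2s_1}+q^{-2s_2})\cdots(q^{2s_{j-1}}+q^{-2s_j})$ is precisely the G\"ollnitz--Gordon analogue (in base $q^2$) of Stanton's binomial factor $q^{-s_1}(1+q^{s_1+s_2})\cdots(1+q^{s_{j-1}+s_j})$, and that each factor $(q^{2s_{i-1}}+q^{-2s_i})$ splits the iterated sum into two branches, one contributing $q^{-2s_i}$ and one contributing $q^{2s_{i-1}}$. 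Summing over all $2^j$ choices and collecting terms will produce the binomial coefficients $\binom{j}{s}$ on the right-hand side.

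First I would set up the Bailey pair relative to the base $q^2$ that underlies the Bressoud--G\"ollnitz--Gordon identities \eqref{eq:BGG}, together with the infinite product $(-q^{1+2s_k};q^2)_\infty$ which signals the presence of a $(-q;q^2)$-type ``conjugate'' Bailey chain step. I would identify the seed Bailey pair $(\alpha_n,\beta_n)$ whose iteration under the standard Bailey chain operator $\mathcal B$, applied $k$ times in base $q^2$, reproduces the left-hand side of \eqref{eq:binom_BGG} in the case $j=0$, recovering \Cref{thm:BGG}. The role of $r$ is handled by inserting the extra exponent $2(s_{k-r+1}+\cdots+s_k)$, which corresponds to a shift in the Bailey chain at the last $r$ steps, exactly as in \Cref{thm:Sta4.1}.

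Next I would introduce the $j$ ``negative'' factors. At each of the first $j$ steps of the Bailey chain I would insert a factor of the form $q^{-2s_i}$ or, via the binomial splitting, $(q^{2s_{i-1}}+q^{-2s_i})$, using the key lemmas of McLaughlin and Lovejoy that describe how such factors act on a Bailey pair. The crucial point, as emphasised in the abstract, is that the order in which these lemmas are applied matters: I would apply the shift producing the $-s_i$ exponents before the shift producing the $+s_{k-r+1}+\cdots+s_k$ exponents, so that the constraint $j+r\le k$ ensures all the modified steps are distinct and the insertion remains valid. Expanding each $(q^{2s_{i-1}}+q^{-2s_i})$ and summing the $2^j$ resulting terms, the binomial coefficient $\binom{j}{s}$ emerges naturally on the product side, with $s$ counting the number of factors in which the $q^{2s_{i-1}}$ branch is chosen; this shifts the product exponents by $2j-4s$, matching the right-hand side.

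The main obstacle I expect is the bookkeeping of the product side after the full Bailey iteration: one must evaluate the terminal sum using the appropriate limiting case of the $q$-analogue of a Whipple- or Jacobi-triple-product-type evaluation in base $q^2$, and check that the two families of triple products, namely $(q^{4k+4},q^{2k+3-2r+2j-4s},q^{2k+1+2r-2j+4s};q^{4k+4})_\infty$ and its companion with the roles of the two inner exponents interchanged and an extra factor of $q$, arise with the correct weights. Confirming that the prefactor collapses to $(-q^3;q^2)_\infty/(q^2;q^2)_\infty$ rather than the $(-q;q^2)_\infty/(q^2;q^2)_\infty$ of \eqref{eq:BGG} --- the discrepancy coming from the $q^{-2s_1}$ in the leading binomial factor absorbing one factor of the product $(-q^{1+2s_k};q^2)_\infty$ --- is the delicate step, and I would verify it by specialising to $j=0$ to recover \Cref{thm:BGG} and to small values of $k$ as an independent consistency check.
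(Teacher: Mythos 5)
Your overall strategy (Bailey machinery, the McLaughlin/Lovejoy key lemmas with attention to the order in which they are applied, binomial expansion of the $j$ factors, Jacobi triple product) is indeed the paper's strategy, and the $r$- and $j$-insertions you describe are exactly what Proposition~\ref{prop:common2} packages. However, there is a genuine gap at the step that is specific to this theorem: you never pin down the mechanism that produces the infinite product $(-q^{1+2s_k};q^2)_\infty$ in the numerator and the prefactor $(-q^3;q^2)_\infty$. A ``seed Bailey pair in base $q^2$'' of the kind you gesture at is not what is used and is not readily available; the paper instead stays in base $q$ with the unit Bailey pair~\eqref{ubp} at $a=q$, but first applies one instance of Lemma~\ref{lem:BL2} keeping the parameter $\rho$ \emph{finite}, and only then feeds the resulting pair into Proposition~\ref{prop:common2} (with $k\to k-1$, $r\to r-1$). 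This leaves a free $\rho$ in the identity, with $(q^2/\rho)_{s_k}$ in the denominator on the sum side and $(\rho)_\ell/\bigl((q^2/\rho)_\ell\rho^\ell\bigr)$ on the $\alpha$-side; the specialisation $\rho=-q^{3/2}$ followed by $q\mapsto q^2$ and multiplication of both sides by $(-q;q^2)_\infty$ is what turns $(-q;q^2)_\infty/(-q;q^2)_{s_k}$ into $(-q^{1+2s_k};q^2)_\infty$. Without introducing this finite parameter, your plan has no way to generate that factor.

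Relatedly, your explanation of why the prefactor is $(-q^3;q^2)_\infty$ rather than $(-q;q^2)_\infty$ is incorrect: it has nothing to do with $q^{-2s_1}$ ``absorbing'' a factor of $(-q^{1+2s_k};q^2)_\infty$ (these involve different summation variables). The missing $1+q$ comes from the $\alpha$-side: after the $\rho=-q^{3/2}$ specialisation each term acquires a factor $(1+q^{2\ell+1})/(1+q)$, exactly as in the proof of Theorem~\ref{thm:binom_Kur}, and $(-q;q^2)_\infty/(1+q)=(-q^3;q^2)_\infty$; the factor $1+q^{2\ell+1}$ is also what splits each triple product into the two companions (one carrying the extra $q$) on the right-hand side. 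Finally, note that in the Bailey chain the transformations applied first govern the innermost indices, so the $j$ ``binomial'' steps producing the factors attached to $s_1,\dots,s_j$ must be applied \emph{last}, after the steps that produce $s_{k-r+1}+\cdots+s_k$ --- the opposite of the order you propose.
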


\begin{thm}[Non-binomial extension of the Bresssoud--G\"ollnitz--Gordon identities]\label{thm:GGrj}
  Let \( j,r \geq 0 \) and \( k \geq 1 \) be integers such that
  \( j+r \leq k \). Then
\begin{multline}\label{GGrj}
  \sum_{s_1\geq\dots\geq s_{k}\geq0}\frac{q^{2(s_1^2+\dots+s_{k}^2-s_1-\dots-s_{j}+s_{k-r+1}+\dots+s_{k})}(-q^{1+2s_k};q^2)_\infty}{(q^2;q^2)_{s_1-s_2}\dots(q^2;q^2)_{s_{k-1}-s_{k}}(q^2;q^2)_{s_k}}\\ =\frac{(-q^3;q^2)_\infty}{(q^2;q^2)_\infty}\sum_{s=0}^{j}\bigg( (q^{4k+4}, q^{2k+3-2r+2j-4s}, q^{2k+1+2r-2j+4s}; q^{4k+4})_\infty  \\
    + q (q^{4k+4}, q^{2k+1-2r+2j-4s}, q^{2k+3+2r-2j+4s}; q^{4k+4})_\infty \bigg).
\end{multline}
\end{thm}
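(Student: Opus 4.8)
The plan is to prove \eqref{GGrj} with the Bailey lattice over the base $q^2$, in the spirit of the Bressoud--Göllnitz--Gordon treatment of \cite{DJK25}. The key observation is that \eqref{GGrj} is the base-$q^2$, $(-q;q^2)$-twisted analogue of the non-binomial Bressoud-type Stanton identity \eqref{eq:sBr} (Theorem~\ref{thm:Sta4.2}), and that its right-hand side is \emph{exactly} the theta sum appearing in the binomial companion Theorem~\ref{thm:binom_BGG}: the two results share the same product side and differ only in that \eqref{GGrj} carries no binomial coefficient and replaces the factor $q^{-2s_1}(q^{2s_1}+q^{-2s_2})\cdots(q^{2s_{j-1}}+q^{-2s_j})$ by the clean exponential weight $q^{-2(s_1+\cdots+s_j)}$. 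Thus the task reduces to producing the same theta sum from the clean multisum, and --- as flagged for the binomial identities --- it is the \emph{order} in which the raising and lowering steps are performed that decides whether the binomial coefficient appears.

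First I would fix the seed. The factor $(-q^{1+2s_k};q^2)_\infty$ in the summand equals $(-q;q^2)_\infty/(-q;q^2)_{s_k}$, so after pulling out the global constant $(-q;q^2)_\infty=(1+q)(-q^3;q^2)_\infty$ the innermost weight is $1/\bigl((-q;q^2)_{s_k}(q^2;q^2)_{s_k}\bigr)$; this is the $\beta$ of the standard Göllnitz--Gordon Bailey pair relative to $a=1$ with base $q^2$, whose $\alpha$ is a theta quotient evaluated by the Jacobi triple product. The exponents $2(s_1^2+\cdots+s_k^2)$ identify the base as $q^2$, with half-modulus $2k+2$ matching the Bressoud case, and the twist is carried entirely by this seed. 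The specialisation $r=0$ of the resulting chain must recover \eqref{eq:BGG} of Theorem~\ref{thm:BGG}, which I would use as the base case and sanity check.

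Next I would iterate. The contribution $+2(s_{k-r+1}+\cdots+s_k)$ is generated by $r$ ordinary Bailey-chain steps (the base-$q^2$ Bailey lemma with $\rho_1,\rho_2\to\infty$) applied first, which build the innermost variables and shift the parameter $a$ upward; the middle $k-r-j$ steps are neutral chain steps; and the contribution $-2(s_1+\cdots+s_j)$ is generated by the final $j$ steps, which are Bailey-\emph{lattice} lowering steps $a\mapsto a q^{-2}$ (the base-$q^2$ analogue of the change-of-base step). Performing these $j$ lowering steps \emph{after} the raising and neutral steps, rather than interleaving them, is precisely what yields the pure weight $q^{-2(s_1+\cdots+s_j)}$ with no $(q^{2s_{i-1}}+q^{-2s_i})$ factors, and hence no binomial coefficient on the product side. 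The constraint $j+r\le k$ is exactly what guarantees that the raising block, the neutral block, and the lowering block fit into the $k$ available steps without overlap, mirroring the disjointness of the index ranges $\{1,\dots,j\}$ and $\{k-r+1,\dots,k\}$.

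Finally, the product side: after the $j$ lattice steps the surviving $\alpha_n$ is summed by the Jacobi triple product, and each lowering step shifts the two theta residues by $\pm 2$ modulo $4k+4$. Accumulating these $j$ shifts, together with the intrinsic two-term Göllnitz--Gordon splitting $(\cdots)+q(\cdots)$ coming from the twist, should collapse into the single sum $\sum_{s=0}^{j}$ with residues $2k+3-2r+2j-4s$ and $2k+1+2r-2j+4s$ (and their parity-shifted partners), and reproduce the prefactor $(-q^3;q^2)_\infty/(q^2;q^2)_\infty$. The main obstacle is this last collapse: the delicate point is verifying that the $j$-fold accumulation of lattice shifts produces \emph{exactly} the range $0\le s\le j$ with unit multiplicities, which hinges both on the chosen order of steps and on the quasi-periodicity of the Jacobi theta function used to fold shifted or negative residues back into the stated range. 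Cross-checking against the $j=0$ (the $r$-only version) and $r=0$ (recovering \eqref{eq:BGG}) specialisations, and against the binomial companion Theorem~\ref{thm:binom_BGG}, provides the needed controls.
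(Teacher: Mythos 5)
Your toolbox is the right one (Bailey lemma, Bailey lattice, a finite parameter to generate the twist $(-q^{1+2s_k};q^2)_\infty$, Jacobi triple product at the end), but the architecture you propose would not produce either side of \eqref{GGrj}, and the step you yourself flag as ``the main obstacle'' is exactly where it breaks. The paper's proof uses the Bailey lattice exactly \emph{twice}, not $j$ times: Proposition~\ref{prop:coro3baileylattice} is built by inserting one lattice step after the first $r+1$ applications (lowering $a$ to $a/q$) and a second one after $k-r-j-1$ further applications (lowering $a/q$ to $a/q^2$), with Lemma~\ref{lem:BL2} retaining a finite $\rho$ at the first and last steps; specialising $a=q$, $b\to\infty$, $c=-q^{3/2}$ and then $q\mapsto q^2$ gives \eqref{GGrj}. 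The three blocks of linear exponents ($-s_i$ for $i\le j$, $0$ for $j<i\le k-r$, $+s_i$ for $i>k-r$, before doubling) record which of the three parameter regimes $a$, $a/q$, $a/q^2$ each variable is created in. Your scheme of $j$ consecutive lattice steps at the end cannot reproduce the uniform weight $q^{-2(s_1+\cdots+s_j)}$: each lattice application lowers the parameter by one further power of the base, so the $j$ outermost variables would acquire the \emph{staircase} weights $q^{-2s_j},q^{-4s_{j-1}},\dots,q^{-2js_1}$ rather than a common $q^{-2s_i}$.

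The product side fails for a parallel reason. The unit multiplicities in $\sum_{s=0}^{j}$ do not arise from ``accumulating $j$ residue shifts of $\pm 2$'': $j$ independent binary shifts produce exactly the binomial multiplicities $\binom{j}{s}$ --- that is precisely how the companion Theorem~\ref{thm:binom_BGG} is proved, via the factor $(1+q^{4\ell})^j$ and the binomial theorem. In the non-binomial proof the sum over $s$ comes instead from a single geometric expansion
\[
\frac{1-q^{4\ell(j+1)}}{1-q^{4\ell}}=\sum_{s=0}^{j}q^{4\ell s},
\]
whose numerator is produced by telescoping the two-term lattice recursion between the two lattice insertions; the extra $(\cdots)+q(\cdots)$ splitting then comes from the factor $(1+q^{2\ell+1})$ contributed by the specialisation $c=-q^{3/2}$. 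So the mechanism you would need is qualitatively different from the one you describe, and the binomial/non-binomial distinction is not a matter of reordering the same steps but of using different steps altogether. Two smaller points: the paper works relative to $a=q$ and explicitly notes that the classical choice $a=1$ (which your proposed G\"ollnitz--Gordon seed would impose) causes convergence problems here; and the $r=0$ check against \eqref{eq:BGG} is not a direct specialisation of the chain but requires splitting the $s$-sum, reindexing, and factoring out $1+q$.
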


If we take \(r=0\) in~\eqref{GGrj}, some elementary manipulation
yields~\eqref{eq:BGG}: it is clear for the left-hand sides, while splitting the right-hand side of~\eqref{GGrj} (with $r=0$) into two sums and replacing $s$ by $j-s$ in the second one, we see that both sums are the same, therefore one can factorise by $1+q$. The remaining sum is the one on the left-hand side of~\eqref{eq:BGG} in which even and odd values of $s$ have been split. On the other hand, setting \(j=0\) in~\eqref{GGrj} gives
\begin{multline}\label{GGr0}
\sum_{s_1\geq\dots\geq s_{k}\geq0}\frac{q^{2(s_1^2+\dots+s_{k}^2+s_{k-r+1}+\dots+s_{k})}(-q^{1+2s_k};q^2)_\infty}{(q^2;q^2)_{s_1-s_2}\dots(q^2;q^2)_{s_{k-1}-s_{k}}(q^2;q^2)_{s_k}}=\frac{(-q^3;q^2)_\infty}{(q^2;q^2)_\infty}\\
\times\bigg((q^{4k+4},q^{2k+3-2r},q^{2k+1+2r},q^{4k+4})_\infty+q(q^{4k+4},q^{2k+1-2r},q^{2k+3+2r},q^{4k+4})_\infty\bigg),
\end{multline}
which seems to be new. Note that taking \(k=1\) in~\eqref{GGrj}, we have three possible choices for our integral parameters \(r,j\), namely \((0,0)\), \((1,0)\) and
\((0,1)\). As explained below~\eqref{eq:BGG}, the cases $r=0$ and $j=0$, $j=1$ yield the G\"ollnitz--Gordon identities~\eqref{eq:GG}. For $r=1$ and $j=0$, the formula is the one obtained by taking $k=r=1$ in~\eqref{GGr0}, and we see similarly that it is equivalent to the first G\"ollnitz--Gordon identity plus $q$ times the second one, divided by $1+q$.

 Moreover, we prove two additional general
identities similar to Theorem~\ref{thm:GGrj} (see Theorems~\ref{thm:newSlater} and~\ref{thm:newSlater2}), which
extend Slater's identities~\cite[(8), (12), (13)]{Sl}.

\medskip

In the second part of this paper, we turn to a combinatorial
perspective on Stanton's identities. 
While Bailey pairs are a remarkably powerful and flexible tool for
proving partition identities, they do not provide a combinatorial interpretation of these identities. Combinatorial
approaches to partition identities are generally more difficult. A
famous example is the lack of a simple bijection for the combinatorial version of the Rogers--Ramanujan
identities. Several efforts have been made to understand there
identities from a combinatorial perspective, including work on
bijective proofs (see, e.g., \cite{BP2006}, \cite{BZ1982},
\cite{Corteel2017}, \cite{GM1981}).

It is always simple to interpret the product side of the identities as partitions with congruence conditions. In the case of Rogers--Ramanujan, it is also relatively simple to see that the sum side is the generating function for partitions where parts differ by at least two, or equivalently $f_i +f_{i+1} \leq 1$. However, in the Andrews--Gordon identities and the other identities we consider here, it is not at all obvious combinatorially that the sum side is the generating function for partitions with difference conditions. In the case of Andrews--Gordon, a more natural interpretation has been given by Andrews in terms of Durfee squares \cite{AndrewsDurfee}.

To prove combinatorially that the sum side of the Andrews--Gordon
identities is the generating function for partitions with difference
conditions, Warnaar~\cite{Warnaar1997} introduced a bijection based on
particle motions. He used it to derive a finitisation of the
Andrews--Gordon identities. It was formulated as a one-dimensional
lattice-gas of fermionic particles using slightly different notation.
He then used this finisation to prove the identities. Inspired by
Warnaar's work, the authors of~\cite{DJK}, including the first and
third authors, generalised his approach by adding parts equal to $0$
to the reasoning and introducing an explicit bijection \( \Lambda \)
and its inverse \( \Gamma \), which provided a combinatorial framework
for proving several partition identities. Rather than using a
finitisation, they took a different approach: by combining the
infinite version with the classical Andrews--Gordon and Bressoud
identities, they established several identities, among
which~\eqref{eq:Br3.3},~\eqref{eq:Br3.5},~\eqref{fij0},
and~\eqref{fij}.

Although Stanton
proved Theorems~\ref{thm:Sta3.1}--\ref{thm:Sta4.2} in \cite{Stant}, his proofs did not provide combinatorial interpretations for the sum sides. After reading \cite{DJK}, he asked the authors the following problem, hoping that their techniques could be applied to his identities.

\begin{problem}[Stanton]\label{prob:stanton}
  Give partition-theoretic interpretations and combinatorial proofs of 
  Theorems~\ref{thm:Sta3.1}--\ref{thm:Sta4.2}.
\end{problem}
Indeed, the ideas of~\cite{Warnaar1997} and~\cite{DJK} can also be used to give combinatorial interpretations and proofs of the identities in Theorems~\ref{thm:Sta3.2},
~\ref{thm:Sta4.2}, and~\ref{thm:new_Kur}. We give a suitable interpretation of the sum sides of the identities using particle motion and a combinatorial reasoning on parts equal to $0$. Then we give combinatorial proofs of these identities, using this interpretation and the Andrews--Gordon and Bressoud identities.

Following~\cite{DJK}, we allow partitions to have non-negative parts,
not just positive ones. That is, a \emph{partition} \( \lambda \) of
\( n \) is a weakly decreasing sequence of non-negative integers
\( \lambda_1 \geq \lambda_2 \geq \cdots \geq \lambda_{\ell} \geq 0 \)
whose sum is \( n \). The difference from the previous definition is
that $0$ is now allowed as a part, and is taken into account in the
length of the partition. Accordingly, the associated frequency
sequence becomes \( f = (f_0, f_1, \cdots ) \), rather than
\( (f_1, f_2, \cdots ) \). With this extension, the authors in
\cite{DJK} provided combinatorial proofs of \eqref{eq:Br3.3},
\eqref{eq:Br3.5}, and \eqref{fij}, and gave combinatorial
interpretations in terms of partitions with difference conditions. For
example, they proved that the combinatorial model for \eqref{eq:Br3.3}
is given by
\begin{equation}\label{eq:comb_Br3.3}
  \text{frequency sequences \( (f_i)_{i \geq 0} \) of \( n \) such that
    \( f_i + f_{i+1} \leq k \) for all \( i \), and
    \( f_0 \leq j \).}
\end{equation}

Following this framework, we use particle motion and Theorems~\ref{thm:AG} and~\ref{thm:2} to give combinatorial proofs of Theorems~\ref{thm:Sta3.2},~\ref{thm:Sta4.2}, and~\ref{thm:new_Kur}, and we also provide their following partition-theoretic interpretations.
\begin{thm}\label{thm:Sta3.2_partition}
  The left-hand side of \eqref{eq:sAG} in \Cref{thm:Sta3.2} is the generating
  function for the frequency sequences \( (f_i)_{i \geq 0} \) such
  that
  \begin{itemize}
  \item \( f_i + f_{i+1} \leq k \) for all \( i \), and
  \item \( f_0 \leq j - \max\{f_0 + f_1 - (k-r),0\} \).
  \end{itemize}
\end{thm}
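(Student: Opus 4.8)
The plan is to prove the interpretation through the particle-motion bijection of Warnaar~\cite{Warnaar1997}, in the form extended to parts equal to $0$ in~\cite{DJK}, by determining which boundary condition the linear part of the exponent imposes. Recall the correspondence in the form I need: for each fixed tuple $s_1 \ge \cdots \ge s_k \ge 0$, the bijection $\Lambda$ (with inverse $\Gamma$) of~\cite{DJK} assembles, from the minimal frequency sequence attached to $s$ --- whose weight is $s_1^2 + \cdots + s_k^2$ up to the shift coming from the linear terms --- together with the particle moves generated by $1/\big((q)_{s_1-s_2}\cdots(q)_{s_{k-1}-s_k}(q)_{s_k}\big)$, all frequency sequences sharing the same profile (the number of particles at each level) and satisfying $f_i + f_{i+1} \le k$. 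Thus the multisum is a generating function for frequency sequences obeying $f_i + f_{i+1} \le k$, subject to a bottom boundary condition dictated by the linear exponent, and proving the theorem reduces to identifying exactly which condition on $(f_0,f_1)$ the exponent $-s_1 - \cdots - s_j + s_{k-r+1} + \cdots + s_k$ selects.

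Next I would treat the two blocks of linear terms, each already understood in isolation. The positive block $s_{k-r+1} + \cdots + s_k$ is the Andrews--Gordon shift of Theorem~\ref{thm:AG}: in the absence of a part equal to $0$ it imposes exactly the boundary condition $f_1 \le k-r$ of~\eqref{eq:comb_AG}. The negative block $-s_1 - \cdots - s_j$ is the Bressoud-type shift of~\eqref{eq:Br3.3}: by the part-$0$ reasoning of~\cite{DJK} it relaxes the bottom of the partition to allow up to $j$ parts equal to $0$, which is the condition $f_0 \le j$ of~\eqref{eq:comb_Br3.3}. Restating both effects uniformly through $\Lambda$ lets me read them off simultaneously.

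The heart of the argument is to combine the two blocks and follow their interaction at the two smallest frequencies. The essential point is that activating parts equal to $0$ converts the Andrews--Gordon restriction, which is local to level $1$ when every part is positive, into a restriction on the combined bottom mass $f_0 + f_1$: each unit by which $f_0 + f_1$ exceeds the threshold $k-r$ must be paid out of the part-$0$ budget $j$ freed by the negative block. I would show that the admissible configurations are precisely those with $f_0 + \max\{f_0 + f_1 - (k-r),0\} \le j$, equivalently $f_0 \le j - \max\{f_0 + f_1 - (k-r),0\}$, while the inequalities $f_i + f_{i+1} \le k$ at every higher level remain untouched. A convenient way to organise this is to stratify by the value $s = f_0$ of the part-$0$ multiplicity: in the stratum $f_0 = s$ the parts $\ge 1$ obey $f_1 \le k-r+j-2s$, i.e.\ an Andrews--Gordon boundary with parameter $r-j+2s$, which foreshadows the term-by-term structure of the right-hand side of~\eqref{eq:sAG} used afterwards to derive the identity itself from Theorem~\ref{thm:AG}.

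The main obstacle is exactly this coupling: proving rigorously that the bound $f_1 \le k-r$, local to level $1$ in the all-positive case, becomes the cost $\max\{f_0 + f_1 - (k-r),0\}$ charged against the part-$0$ budget once level $0$ is switched on. I expect to settle it by tracking the particle moves in the bottom two rows of the configuration under $\Lambda$ and by checking the stratification above against the established interpretations~\eqref{eq:comb_AG} and~\eqref{eq:comb_Br3.3}; as a consistency check I would confirm the bookkeeping in the cases $k \le 2$, where, for instance, the pair $(f_0,f_1) = (1,1)$ becomes forbidden once $j = r = 1$.
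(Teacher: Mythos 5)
Your proposal follows essentially the same route as the paper: there, the multisum is first interpreted (\Cref{pro:gf X}) as the generating function of multipartition/frame-sequence pairs in which each part of \(\lambda^{(m)}\) is bounded below by \(m-j+\max\{m-(k-r),0\}\), and then the insertion map \(\Lambda\) of \cite{DJK} is shown to carry this set bijectively onto the frequency sequences with \(f_0 \le j-\max\{f_0+f_1-(k-r),0\}\), precisely by tracking the bottom two entries through the particle motions via the inductive invariant \(\theta^{(i)}_{2i}\le j\) and \(2\theta^{(i)}_{2i}+\theta^{(i)}_{2i+1}\le k-r+j\) (\Cref{lem:Lambda}, with the converse for \(\Gamma\) in \Cref{lem:Gamma}). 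Your identification of the coupling \(f_0+\max\{f_0+f_1-(k-r),0\}\le j\) as the crux, and your plan to verify it by following the particle moves in the bottom two rows, match this exactly; the only piece left implicit in your write-up is that explicit invariant and its preservation in both directions.
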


\begin{thm}\label{thm:Sta4.2_partition}
  The left-hand side of \eqref{eq:sBr} in \Cref{thm:Sta4.2} is the generating
  function for the frequency sequences \( (f_i)_{i \geq 0} \) such
  that
  \begin{itemize}
  \item \( f_i + f_{i+1} \leq k \) for all \( i \),
  \item \( f_0 \leq j - \max\{f_0 + f_1 - (k-r),0\} \), and
  \item if \( f_i + f_{i+1} = k \) for some \( i \), then \( i f_i + (i+1)f_{i+1} \equiv k+r-j \pmod{2} \).
  \end{itemize}
\end{thm}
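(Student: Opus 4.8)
The plan is to prove \Cref{thm:Sta4.2_partition} by adapting the particle-motion machinery of~\cite{DJK} to track the extra parity constraint. First I would recall that the combinatorial model for Bressoud's identity~\eqref{eq:Br} counts frequency sequences with $f_i+f_{i+1}\le k$ for all $i$, $f_1\le k-r$, and the parity condition $if_i+(i+1)f_{i+1}\equiv k-r\pmod 2$ whenever $f_i+f_{i+1}=k$; this is exactly the $j=0$ specialisation. The goal is to interpret the insertion of the $j$ factors $-s_1-\dots-s_j$ in the exponent of~\eqref{eq:sBr} as a controlled modification of the small parts of the partition, specifically the addition of parts equal to $0$ in the extended (non-negative parts) framework of~\cite{DJK}.

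The key technical device is the bijection $\Lambda$ (and its inverse $\Gamma$) from~\cite{DJK}, built from Warnaar's particle motion. I would first establish, exactly as in the non-binomial Andrews--Gordon case leading to \Cref{thm:Sta3.2_partition}, that the left-hand side of~\eqref{eq:sBr} is the generating function for frequency sequences $(f_i)_{i\ge0}$ with $f_i+f_{i+1}\le k$ for all $i$ and $f_0\le j-\max\{f_0+f_1-(k-r),0\}$; this reuses the combinatorial reasoning on parts equal to $0$ verbatim, since the exponent structure $-s_1-\dots-s_j$ and the quadratic part are identical to the odd-moduli case. The only difference between~\eqref{eq:sAG} and~\eqref{eq:sBr} is the replacement of $(q)_{s_k}$ by $(q^2;q^2)_{s_k}$, which, under particle motion, is precisely what injects the parity condition on the boundary where a frequency pair saturates the bound $k$.

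Concretely, the plan is: track how the factor $(q^2;q^2)_{s_k}$ forces, after applying $\Gamma$, the congruence $if_i+(i+1)f_{i+1}\equiv c\pmod 2$ for some constant $c$ at each saturated site $f_i+f_{i+1}=k$, and then pin down $c$ by examining the base case or a single boundary site. For the $j=0$ Bressoud model the constant is $k-r$; I expect the shift in the small parts induced by the $-s_1-\dots-s_j$ terms to translate the relevant weights by $j$, changing the residue to $k-r-j\equiv k+r-j\pmod 2$ (since $2r\equiv 0$), which is the claimed constant $k+r-j$. Verifying this residue bookkeeping through the particle motion is the step I expect to be the main obstacle, because one must check that the parity is preserved consistently across \emph{all} saturated sites simultaneously and not merely at a single boundary, and that the interaction between the $f_0\le j-\max\{\dots,0\}$ constraint and a saturated pair involving the part $0$ does not spoil the congruence. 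The final step is to confirm that $\Lambda$ and $\Gamma$ restrict to mutually inverse bijections between the two families of frequency sequences (the one from \Cref{thm:Sta4.2_partition} and the parity-decorated Bressoud partitions), thereby completing the weight-preserving identification.
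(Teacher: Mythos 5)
Your proposal follows essentially the same route as the paper: the sum side is realised as the generating function of multipartitions whose parts of $\lambda^{(m)}$ are at least $m-j+\max\{m-(k-r),0\}$ and whose last component $\lambda^{(k)}$ has all parts of the fixed parity $k+r-j$ (this is where $(q^2;q^2)_{s_k}$ enters), and the bijection $\Lambda$ of~\cite{DJK} transports this to frequency sequences satisfying the three stated conditions, the parity at each saturated site $f_i+f_{i+1}=k$ being congruent to the inserted part of $\lambda^{(k)}$ because a particle motion only shifts entries by two positions and the saturated pairs created in the last $\ell(\lambda^{(k)})$ steps are never disturbed afterwards. The residue bookkeeping you anticipated is exactly what the paper verifies, so your plan is correct and matches the paper's proof.
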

In addition, we obtain new identities of Kur\c sung\"oz type using the
combinatorial model. Interestingly, these formulas coincide with those
previously obtained using the Bailey pairs, thus offering a purely
combinatorial approach.
\begin{thm}\label{thm:Kur_partition}
  The left-hand side of \eqref{eq:new_Kur} in \Cref{thm:new_Kur} is the
  generating function for the frequency sequences
  \( (f_i)_{i \geq 0} \) such that
  \begin{itemize}
  \item \( f_i + f_{i+1} \leq k \) for all \( i \),
  \item \( f_0 \leq j - \max\{f_0 + f_1 - (k-r),0\} \), and
  \item if \( f_i + f_{i+1} = k \) for some \( i \), then \( i f_i + (i+1)f_{i+1} \equiv k+r-j+1 \pmod{2} \).
  \end{itemize}
\end{thm}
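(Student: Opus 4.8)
The plan is to prove the interpretation combinatorially, within the particle-motion framework of \cite{Warnaar1997} and \cite{DJK}, by treating the left-hand side of \eqref{eq:new_Kur} as a one-parameter deformation of the Bressoud-type sum in \eqref{eq:sBr}, whose combinatorial model is already recorded in \Cref{thm:Sta4.2_partition}. The key observation is that each summand of \eqref{eq:new_Kur} differs from the corresponding summand of \eqref{eq:sBr} by a single extra power $q^{s_k}$, since the linear part of the exponent carries one additional $s_k$. I therefore expect the whole analysis of \Cref{thm:Sta4.2_partition} to carry over unchanged except at the very bottom of the partition, where this extra factor should reverse the parity condition from $k+r-j$ to $k+r-j+1$.

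First I would recall the ground-state description underlying the bijection $\Lambda$ and its inverse $\Gamma$. To each admissible vector $s_1 \ge \dots \ge s_k \ge 0$ one attaches a minimal frequency sequence obeying $f_i + f_{i+1} \le k$ together with the coupled bound $f_0 \le j - \max\{f_0 + f_1 - (k-r), 0\}$, whose weight matches the quadratic-and-linear exponent of the summand. The factors $1/(q)_{s_i - s_{i+1}}$ then encode, through $\Lambda$, all partitions reachable from this ground state by admissible particle motion, while the bottom factor $1/(q^2;q^2)_{s_k}$ constrains the moves of the lowest block to even steps. By the Euler-type evaluation $\sum_s q^{s^2 + c s}/(q^2;q^2)_s = (-q^{c+1};q^2)_\infty$, this bottom factor forces a fixed parity on the lowest block — even part-sizes when $c=1$ and odd part-sizes when $c=2$ — which is exactly the parity of $i f_i + (i+1) f_{i+1}$ on the lowest saturated chain; and since two consecutive saturated chains differ by $2 f_i \equiv 0$, this residue propagates to a global invariant shared by all chains with $f_i + f_{i+1} = k$.

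It then remains to read off the invariant in the two cases. In \Cref{thm:Sta4.2_partition} the Bressoud summand has $c=1$, producing even parts at the bottom and the residue $k+r-j \bmod 2$. Multiplying by the extra $q^{s_k}$ promotes $c$ from $1$ to $2$, so the bottom block now consists of odd parts and the invariant becomes $k+r-j+1 \bmod 2$, exactly as in the statement. The terms $-s_1 - \dots - s_j$ and the accompanying reasoning on parts equal to $0$ are untouched by this modification, so the bound $f_0 \le j - \max\{f_0 + f_1 - (k-r), 0\}$ and the difference conditions $f_i + f_{i+1} \le k$ are inherited verbatim from the Bressoud analysis. Assembling these facts shows that the deformed sum is the generating function for exactly the frequency sequences described in the theorem.

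The main obstacle I anticipate is the bottom-level bookkeeping. One must check that the extra $q^{s_k}$ corresponds to a clean structural shift of the ground state, compatible with the even-step particle motion governed by $(q^2;q^2)_{s_k}$, rather than a mere re-weighting of already-counted configurations, and that the resulting parity flip is uniform across all saturated chains and not only the lowest one. A secondary subtlety lies in the boundary regime $r=0$, where the literal linear term collapses from $2s_k$ to $s_k$ and the extra factor must be matched against the reduction to \eqref{fij}, and in the regime where $f_0 + f_1$ approaches $k-r$, where the $\max$ in the $f_0$-bound becomes active and one must verify that the shift neither creates nor destroys admissible ground states.
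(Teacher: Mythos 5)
Your plan follows the paper's proof in all essentials: the left-hand side of \eqref{eq:new_Kur} is realised as the generating function of pairs \( (\bla,\fs(\bla)) \) in which each part of \( \lambda^{(m)} \) is at least \( m-j+\max\{m-(k-r),0\} \) and the parts of the bottom partition \( \lambda^{(k)} \) all have parity \( k+r-j+1 \) rather than \( k+r-j \) --- the extra \( q^{s_k} \) simply turns the bottom factor \( q^{(k+r-j)s_k}/(q^2;q^2)_{s_k} \) into \( q^{(k+r-j+1)s_k}/(q^2;q^2)_{s_k} \), exactly as in \Cref{pro:gf X'} --- and the bijection \( \Lambda \) then carries this set onto the frequency sequences of the statement, the bound on \( f_0 \) and the conditions \( f_i+f_{i+1}\le k \) being inherited unchanged from the Bressoud case (Lemmas~\ref{lem:Lambda} and~\ref{lem:Gamma}).

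The one step where your mechanism as written would fail is the ``propagation'' of the parity. The relation \( \bigl((i+1)f_{i+1}+(i+2)f_{i+2}\bigr)-\bigl(if_i+(i+1)f_{i+1}\bigr)=2f_i \) only forces equal residues on \emph{overlapping} saturated pairs; two saturated pairs separated by a stretch of non-saturated positions are not linked by any such local relation, and a sequence in \( \mathcal{A}_k \) can perfectly well contain saturated pairs of opposite residues (for \( k=2 \), take \( f=(1,1,0,0,2,0,\dots) \): the pair \( (f_0,f_1) \) has odd residue while \( (f_4,f_5) \) has even residue). So the uniformity of the parity across all saturated pairs is a genuine restriction on the sequences being counted, not an invariant that can be read off the lowest chain and then propagated. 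The paper closes this point inside the particle motion itself: when the pair \( (h,0) \) at position \( 2i \) is moved by \( \lambda_i \) steps to \( (\theta^{(i)}_v,\theta^{(i)}_{v+1}) \), one has \( v\theta^{(i)}_v+(v+1)\theta^{(i)}_{v+1}\equiv 2i\cdot h+\lambda_i\equiv\lambda_i\pmod 2 \), so \emph{every} saturated pair of the image individually inherits the parity of the corresponding part of \( \lambda^{(k)} \), and conversely \( \Gamma \) returns parts of the correct parity (\Cref{pro:bij X' and Z'}). With that computation in place of the propagation heuristic, your argument becomes the paper's proof.
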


The combinatorial interpretation in~\Cref{thm:Sta3.2_partition} is new. When \( j = 0 \), the second
condition becomes \( f_0 = 0 \) and \( f_1 \leq k-r \), which yields
the interpretation~\eqref{eq:comb_AG}, and when \( r = 0 \), it
satisfies \( f_0 \leq j \), yielding~\eqref{eq:comb_Br3.3}.
Furthermore, the same model extends naturally to the Bressoud
(\Cref{thm:Sta4.2_partition}) and Kur\c sung\"oz
(\Cref{thm:Kur_partition}) cases, which shows its flexibility
and confirms that it provides a natural combinatorial interpretation.

This paper is organised as follows. In \Cref{sec:bailey-pairs}, we
recall some basic results and tools from the Bailey machinery that we
will use later in our proofs. In Section~\ref{sec:pf_binom}, we use
combinations of the simplest of these tools (the Bailey lemma and key
lemmas of Lovejoy and McLaughlin) to prove all the binomial extensions
in Theorems~\ref{thm:Sta3.1},~\ref{thm:Sta4.1},~\ref{thm:binom_Kur}
and~\ref{thm:binom_BGG}. In Section~\ref{sec:pf_non-binom}, some
combinations of the Bailey lemma and lattice are used to provide
proofs of all the non-binomial extensions in
Theorems~\ref{thm:Sta3.2},~\ref{thm:Sta4.2},~\ref{thm:new_Kur},~\ref{thm:GGrj},~\ref{thm:newSlater}
and~\ref{thm:newSlater2}. In Section~\ref{sec:insert-map-mult}, we
recall the particle motion bijection to prepare the combinatorial
proofs of the theorems in the next sections. In
Section~\ref{sec:combi-proof-1.2}, using this bijection and the
Andrews--Gordon identities, a combinatorial proof of
Theorem~\ref{thm:Sta3.2} is provided. In Section~\ref{sec:proof3.4},
we use the bijection and the Bressoud identities to give combinatorial
proofs for Theorems~\ref{thm:Sta4.2} and ~\ref{thm:new_Kur}. In
Section~\ref{sec:final-rmk}, we conclude the paper by a list of
remarks and questions.

\section{Bailey pairs}
\label{sec:bailey-pairs}

Fix complex numbers \(a\) and \(q\). Recall~\cite{Ba} that a
\emph{Bailey pair} \(((\alpha_n)_{n\geq0},\,(\beta_n)_{n\geq0})\)
(\((\alpha_n,\,\beta_n)\) for short) relative to \(a\) is a pair of
sequences satisfying:
\begin{equation}\label{bp}
\beta_n=\sum_{\ell=0}^n\frac{\alpha_\ell}{(q)_{n-\ell}(aq)_{n+\ell}}\;\;\;\;\forall\,n\in\mathbb{N}.
\end{equation}
By convention, we set \( \alpha_\ell = 0  \) for all \( \ell < 0 \).

Given a Bailey pair, the Bailey lemma~\cite{Ba} allows one to produce infinitely new Bailey pairs. Bailey~\cite{Ba} originally applied it without iterating it, and Andrews~\cite{A1} generalised Bailey's approach to exhibit its iterative nature with the concept of Bailey chain. 
\begin{thm}[Bailey lemma, Andrews' version]\label{thm:baileylemma}
If \((\alpha_n,\,\beta_n)\) is a Bailey pair relative to \(a\), then so is \((\alpha'_n,\,\beta'_n)\), where
\[
  \alpha'_n=\frac{(\rho,\sigma)_n(aq/\rho\sigma)^n}{(aq/\rho,aq/\sigma)_n}\,\alpha_n,
\]
and
\[
  \beta'_n=\sum_{\ell=0}^n\frac{(\rho,\sigma)_\ell(aq/\rho\sigma)_{n-\ell}(aq/\rho\sigma)^\ell}{(q)_{n-\ell}(aq/\rho,aq/\sigma)_n}\,\beta_\ell.
\]
\end{thm}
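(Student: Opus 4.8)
The statement to be proved is that the Bailey lemma construction sends a Bailey pair to a Bailey pair. The plan is to verify directly that $(\alpha_n', \beta_n')$ satisfies the defining relation \eqref{bp} relative to $a$, i.e. that
$$\beta_n' = \sum_{r=0}^{n}\frac{\alpha_r'}{(q)_{n-r}(aq)_{n+r}}.$$
First I would substitute into the given formula for $\beta_n'$ the hypothesis that $(\alpha_n, \beta_n)$ is itself a Bailey pair, replacing each $\beta_\ell$ by $\sum_{r=0}^{\ell}\alpha_r/\big((q)_{\ell-r}(aq)_{\ell+r}\big)$. This expresses $\beta_n'$ as a double sum over the range $0 \le r \le \ell \le n$. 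I would then interchange the order of summation, summing first over $\ell$ for fixed $r$, which lets me factor $\alpha_r$ and the $r$-dependent Pochhammer symbols out of the inner sum. What remains inside is, after the shift $\ell = r + m$ with $0 \le m \le n-r$, a single sum in $m$ of ratios of $q$-Pochhammer symbols in the bases $q$, $aq$, and $aq/\rho\sigma$, weighted by the geometric factor $(aq/\rho\sigma)^{\ell}$.

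The key step, which I expect to be the main obstacle, is to evaluate this inner sum in closed form. Using the standard contiguity relations for $q$-Pochhammer symbols (splitting $(\rho)_{r+m} = (\rho)_r(\rho q^r)_m$ and similarly for $\sigma$, and rewriting $(q)_{n-r-m}$ and $(aq/\rho\sigma)_{n-r-m}$ in terms of $(q^{-(n-r)})_m$ and the like), the inner sum becomes a terminating, balanced (Saalschützian) ${}_3\phi_2$ series. It can therefore be summed by the $q$-Pfaff–Saalschütz summation (see \cite{GR}), and the delicate part is precisely the bookkeeping required to match the numerator and denominator parameters to the hypotheses of that summation.

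Carrying out the summation collapses the entire inner sum, after accounting for the common prefactor $1/(aq/\rho,aq/\sigma)_n$, to
$$\frac{(\rho,\sigma)_r\,(aq/\rho\sigma)^r}{(aq/\rho,aq/\sigma)_r\,(q)_{n-r}(aq)_{n+r}}.$$
Recognising the numerator factors as exactly $\alpha_r'/\alpha_r$, I conclude that the double sum equals $\sum_{r=0}^{n}\alpha_r'/\big((q)_{n-r}(aq)_{n+r}\big)$, which is the desired Bailey pair relation for $(\alpha_n', \beta_n')$. Everything beyond the single application of the $q$-Saalschütz summation is routine manipulation of Pochhammer symbols.
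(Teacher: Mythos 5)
Your proposal is correct and is precisely the classical argument the paper points to: the paper does not prove Theorem~\ref{thm:baileylemma} itself but refers the reader to the $q$-Pfaff--Saalsch\"utz summation (\cite[Appendix~(II.12)]{GR}), and your interchange of summation followed by recognising the inner sum as a terminating balanced ${}_3\phi_2$ and summing it by $q$-Saalsch\"utz is exactly that standard proof. The closed form you state for the inner sum is the right one, so the argument goes through as described.
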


In this paper, we use the following two particular cases.
\begin{lem}[Bailey lemma with \( \rho,\sigma \to \infty \)]\label{lem:BL}
  If \((\alpha_n,\,\beta_n)\) is a Bailey pair relative to \(a\), then
  so is \((\alpha'_n,\,\beta'_n)\), where
\begin{equation*}
  \alpha'_n=a^n q^{n^2}\alpha_n, \qand
  \beta'_n=\sum_{\ell=0}^n\frac{a^\ell q^{\ell^2}}{(q)_{n-\ell}}\,\beta_\ell.
\end{equation*}
\end{lem}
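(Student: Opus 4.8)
The plan is to derive Lemma~\ref{lem:BL} directly from the general Bailey lemma (Theorem~\ref{thm:baileylemma}) by taking the limit $\rho,\sigma\to\infty$ in the formulas for $\alpha'_n$ and $\beta'_n$. Since the limiting values are already asserted in the statement, the entire task reduces to a careful computation of these two limits, term by term. I would treat $\alpha'_n$ and $\beta'_n$ separately, and the only genuine content is tracking the asymptotics of the various $q$-shifted factorials and the power $(aq/\rho\sigma)$ as $\rho,\sigma\to\infty$.

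First I would handle $\alpha'_n$. Writing out $(\rho;q)_n=\prod_{i=0}^{n-1}(1-\rho q^i)$ and similarly for the other Pochhammer symbols, I would group the factors so that the diverging parts cancel. Concretely, $(\rho,\sigma)_n(aq/\rho\sigma)^n$ expands to $\prod_{i=0}^{n-1}(1-\rho q^i)(1-\sigma q^i)\cdot(aq/\rho\sigma)^n$; distributing one factor of $1/(\rho\sigma)$ across each of the $n$ index values $i$, and noting that $(1-\rho q^i)/\rho\to -q^i$ and $(1-\sigma q^i)/\sigma\to -q^i$ as $\rho,\sigma\to\infty$, each pair contributes $(-q^i)(-q^i)=q^{2i}$. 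Multiplying by the surviving $(aq)^n$ from $(aq/\rho\sigma)^n$ and using $\prod_{i=0}^{n-1}q^{2i}=q^{2\binom n2}=q^{n^2-n}$, the numerator tends to $(aq)^n q^{n^2-n}=a^n q^{n^2}$. Meanwhile the denominator $(aq/\rho,aq/\sigma)_n\to\prod_{i=0}^{n-1}(1-0)(1-0)=1$ since $aq/\rho$ and $aq/\sigma$ each tend to $0$. Hence $\alpha'_n\to a^n q^{n^2}\alpha_n$, as claimed.

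Next I would treat $\beta'_n$. Inside the sum, the denominator factor $(aq/\rho,aq/\sigma)_n\to 1$ by the same reasoning, and $(q)_{n-\ell}$ is independent of $\rho,\sigma$, so I only need the limit of the numerator factor $(\rho,\sigma)_\ell(aq/\rho\sigma)_{n-\ell}(aq/\rho\sigma)^\ell$. The factor $(aq/\rho\sigma)_{n-\ell}=\prod_{i=0}^{n-\ell-1}(1-(aq/\rho\sigma)q^i)\to 1$, while $(\rho,\sigma)_\ell(aq/\rho\sigma)^\ell$ is exactly the $\alpha'$-type product with $n$ replaced by $\ell$, hence tends to $a^\ell q^{\ell^2}$ by the computation above. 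Therefore each summand tends to $a^\ell q^{\ell^2}\beta_\ell/(q)_{n-\ell}$, giving $\beta'_n\to\sum_{\ell=0}^n a^\ell q^{\ell^2}\beta_\ell/(q)_{n-\ell}$, exactly the stated formula. Since the sum is finite, interchanging limit and summation requires no justification.

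The main (and in truth only) subtlety is bookkeeping in the $\alpha'_n$ limit: one must correctly pair each diverging factor of $\rho$ and $\sigma$ in the numerator with the single available factor of $1/(\rho\sigma)$ per index, and keep track of the resulting power $q^{2\binom n2}$ together with the $(aq)^n$. The rest is routine, since every factor that does not cancel against a divergence simply tends to $1$. No interchange-of-limit issues arise because both $\alpha'_n$ and $\beta'_n$ are finite products and sums, so the limits pass through directly.
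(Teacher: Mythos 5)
Your proof is correct and is exactly what the paper intends: the paper states Lemma~\ref{lem:BL} without further argument as the $\rho,\sigma\to\infty$ specialisation of Theorem~\ref{thm:baileylemma}, and your term-by-term limit computation (each pair $(1-\rho q^i)(1-\sigma q^i)/(\rho\sigma)\to q^{2i}$, giving $q^{n^2-n}(aq)^n=a^nq^{n^2}$, with all remaining factors tending to $1$) is precisely that routine verification carried out in full.
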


\begin{lem}[Bailey lemma with \( \sigma \to \infty \)]\label{lem:BL2}
  If \( (\alpha_n,\,\beta_n) \) is a Bailey pair relative to \(a\),
  then so is \((\alpha'_n,\,\beta'_n)\), where
\begin{equation*}
  \alpha'_n=\frac{(-1)^n (\rho)_n(aq/\rho)^n q^{\binom{n}{2}}}{(aq/\rho)_n}\,\alpha_n , \qand
  \beta'_n=\sum_{\ell=0}^n\frac{(-1)^\ell(\rho)_\ell (aq/\rho)^\ell q^{\binom{\ell}{2}}}{(aq/\rho)_n (q)_{n-\ell}}\,\beta_\ell.
\end{equation*}
\end{lem}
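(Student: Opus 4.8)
The plan is to obtain Lemma~\ref{lem:BL2} directly from the general Bailey lemma (Theorem~\ref{thm:baileylemma}) by letting $\sigma \to \infty$ while keeping $a$, $q$, $\rho$ and the index $n$ fixed. For each finite $\sigma$, Theorem~\ref{thm:baileylemma} guarantees that the pair $(\alpha'_{n,\sigma}, \beta'_{n,\sigma})$ it produces is a Bailey pair relative to $a$, i.e.\ it satisfies~\eqref{bp}. First I would record the asymptotics of the $q$-Pochhammer symbols in which $\sigma$ appears, then show that both $\alpha'_{n,\sigma}$ and $\beta'_{n,\sigma}$ converge termwise to the expressions in the statement, and finally conclude that the limiting pair is again a Bailey pair.

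The key computation is the large-$\sigma$ behaviour of the relevant factors. For a fixed nonnegative integer $m$ one has
\[
  (\sigma;q)_m = \prod_{i=0}^{m-1}\bigl(1-\sigma q^i\bigr) = (-\sigma)^m q^{\binom{m}{2}}\bigl(1+O(\sigma^{-1})\bigr),
\]
while $(aq/\sigma;q)_m \to 1$ and $(aq/\rho\sigma;q)_m \to 1$ as $\sigma \to \infty$. Feeding these into $\alpha'_{n,\sigma}$, the $\sigma$-dependent factor $(\sigma)_n\,(aq/\rho\sigma)^n/(aq/\sigma)_n$ equals $(\sigma)_n\,(aq/\rho)^n\sigma^{-n}/(aq/\sigma)_n$, which tends to $(-1)^n (aq/\rho)^n q^{\binom{n}{2}}$; this is exactly the $\sigma$-free prefactor of the claimed $\alpha'_n$. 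The same substitution in $\beta'_{n,\sigma}$ turns $(\sigma)_\ell\,(aq/\rho\sigma)^\ell$ into $(-1)^\ell (aq/\rho)^\ell q^{\binom{\ell}{2}}$ and sends the factors $(aq/\rho\sigma)_{n-\ell}$ and $(aq/\sigma)_n$ to $1$, leaving precisely the stated $\beta'_n$.

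The only point requiring care is the legitimacy of passing to the limit. Since $n$ is fixed, each $\beta'_{n,\sigma}$ is a finite sum over $0 \le \ell \le n$ of rational functions of $\sigma$, so the limit can be taken term by term, and $\alpha'_{n,\sigma}$ is a single such term. The Bailey relation~\eqref{bp} linking $(\alpha'_{n,\sigma})$ and $(\beta'_{n,\sigma})$ is, for each fixed $n$, a finite identity of rational functions of $\sigma$ valid for all large $\sigma$, hence it persists in the limit; therefore $(\alpha'_n,\beta'_n)$ satisfies~\eqref{bp} and is the asserted Bailey pair. I expect no genuine obstacle beyond bookkeeping the powers of $\sigma$; the main thing to check is that the $\sigma^{\pm}$ powers cancel to yield a finite, nonzero limit, which the computation above confirms.
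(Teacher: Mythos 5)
Your proposal is correct and is exactly the derivation the paper intends: Lemma~\ref{lem:BL2} is stated as the $\sigma\to\infty$ specialisation of Theorem~\ref{thm:baileylemma}, and the paper gives no further detail. Your asymptotics $(\sigma)_m\sim(-\sigma)^mq^{\binom{m}{2}}$, the cancellation of the powers of $\sigma$ against $(aq/\rho\sigma)^m$, and the observation that the finite relation~\eqref{bp} in rational functions of $\sigma$ persists in the limit are all accurate and constitute the standard justification.
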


Despite its simple form and quite elementary proof (see~\cite[Appendix~(II.12)]{GR}), the Bailey lemma can be used to prove many $q$-series identities. One of the most widely used Bailey pairs is the unit Bailey pair, defined in~\cite[(2.12) and (2.13)]{A1}.
\begin{defn}[\cite{A1}] \label{def:ubp}
  The \emph{unit Bailey pair}
  relative to \(a\) is the pair \((\alpha_n,\,\beta_n)\) defined by
  \begin{equation}\label{ubp}
    \alpha_n=(-1)^nq^{\binom{n}{2}}\frac{1-aq^{2n}}{1-a}\frac{(a)_n}{(q)_n},  \qand \beta_n=\delta_{n,0}.
  \end{equation}
\end{defn}
The Rogers--Ramanujan identities can be proved easily by applying \Cref{thm:baileylemma} twice to the unit Bailey pair~\eqref{ubp}, and the \(r=0\) and \(r=k\) special instances of the Andrews--Gordon identities in~\Cref{thm:AG} follow by iterating \(k+1\geq 2\) times this process.

But the Bailey chain is not sufficient to prove the cases \(0 < r < k\) of the Andrews--Gordon identities, and the Bailey lattice was developed in~\cite{AAB} to remedy this problem by switching the parameter \(a\) to \(a/q\) at some point before iterating the Bailey lemma.
 
\begin{thm}[Bailey lattice]\label{thm:baileylattice}
If \((\alpha_n,\,\beta_n)\) is a Bailey pair relative to \(a\), then  \((\alpha'_n,\,\beta'_n)\)  is a Bailey pair relative to \(a/q\), where
\[\alpha'_n=\frac{(\rho,\sigma)_n(a/\rho\sigma)^n}{(a/\rho,a/\sigma)_n}(1-a)\left(\frac{\alpha_n}{1-aq^{2n}}-\frac{aq^{2n-2}\alpha_{n-1}}{1-aq^{2n-2}}\right),\]
and
\[\beta'_n=\sum_{\ell=0}^n\frac{(\rho,\sigma)_\ell(a/\rho\sigma)_{n-\ell}(a/\rho\sigma)^\ell}{(q)_{n-\ell}(a/\rho,a/\sigma)_n}\,\beta_\ell.\]
\end{thm}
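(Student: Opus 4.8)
The plan is to factor the Bailey lattice transformation into two familiar pieces: a base-changing \emph{shift} from $a$ to $a/q$ that leaves the $\beta$-sequence untouched, followed by an application of the ordinary Bailey lemma (Theorem~\ref{thm:baileylemma}) at the new base $a/q$. This is the structure underlying the original derivation in~\cite{AAB}.

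First I would establish the following shift lemma: if $(\alpha_n,\beta_n)$ is a Bailey pair relative to $a$, then $(\tilde\alpha_n,\beta_n)$ is a Bailey pair relative to $a/q$, where
\[
\tilde\alpha_n=(1-a)\left(\frac{\alpha_n}{1-aq^{2n}}-\frac{aq^{2n-2}\alpha_{n-1}}{1-aq^{2n-2}}\right).
\]
Since the defining relation~\eqref{bp} relative to $a/q$ involves $((a/q)q)_{n+\ell}=(a)_{n+\ell}$, this reduces to the single identity $\beta_n=\sum_{\ell=0}^n \tilde\alpha_\ell/((q)_{n-\ell}(a)_{n+\ell})$. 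To prove it, I would substitute the definition of $\tilde\alpha_\ell$, split the sum in two, and in the part carrying $\alpha_{\ell-1}$ reindex $\ell\mapsto\ell+1$; the boundary term $\alpha_{-1}=0$ drops out by the convention fixed after~\eqref{bp}. Collecting the coefficient of $\alpha_\ell$ (for $\ell<n$) leaves the difference
\[
\frac{1}{(q)_{n-\ell}(a)_{n+\ell}}-\frac{aq^{2\ell}}{(q)_{n-\ell-1}(a)_{n+\ell+1}},
\]
which, after pulling out $1/((q)_{n-\ell-1}(a)_{n+\ell})$, collapses to $(1-aq^{2\ell})/((1-q^{n-\ell})(1-aq^{n+\ell}))$, the numerator telescoping to $1-aq^{2\ell}$. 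This factor cancels the $1-aq^{2\ell}$ in the denominator of $\tilde\alpha_\ell$, and using $(a)_{n+\ell+1}=(1-a)(aq)_{n+\ell}$ the whole coefficient becomes $1/((q)_{n-\ell}(aq)_{n+\ell})$. Summing over $\ell$ reproduces the Bailey-pair relation~\eqref{bp} for $\beta_n$ relative to $a$, proving the shift lemma.

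With the shift lemma in hand, I would apply the Bailey lemma (Theorem~\ref{thm:baileylemma}) to the pair $(\tilde\alpha_n,\beta_n)$ at base $a/q$. Replacing $a$ by $a/q$ throughout the lemma's formulas turns $(aq/\rho,aq/\sigma)_n$ into $(a/\rho,a/\sigma)_n$ and $aq/\rho\sigma$ into $a/\rho\sigma$, while the $\beta$-sequence is unchanged; matching the outputs against the stated $\alpha'_n$ and $\beta'_n$ is then immediate and yields exactly the Bailey lattice formulas.

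The main obstacle is the algebraic simplification in the shift lemma, namely verifying that after reindexing the coefficients of $\alpha_\ell$ recombine into $1/((q)_{n-\ell}(aq)_{n+\ell})$. Some care is needed at the boundary $\ell=n$, where the $\alpha_{\ell-1}$-sum contributes nothing: there one checks separately that the coefficient equals $1/(aq)_{2n}$, using $(a)_{2n+1}=(1-a)(aq)_{2n}$. Once the generic simplification and this boundary case are confirmed, the remaining steps are purely formal.
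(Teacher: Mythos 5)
Your proposal is correct, and it is precisely the factorisation the paper itself endorses: your ``shift lemma'' is exactly Lemma~\ref{lem:key1} (McLaughlin's Key Lemma 1), and the paper notes, following~\cite{DJK25}, that the Bailey lattice is obtained by composing that lemma with the Bailey lemma at base \(a/q\). Your telescoping computation of the coefficient of \(\alpha_\ell\) — the numerator collapsing to \(1-aq^{2\ell}\), the cancellation against the denominator of \(\tilde\alpha_\ell\), the use of \((a)_{n+\ell+1}=(1-a)(aq)_{n+\ell}\), and the boundary case \(\ell=n\) — all check out.
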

We use the particular case when \( \rho, \sigma \to \infty \).
\begin{lem}[Bailey lattice with \( \rho, \sigma \to \infty \)]\label{lem:Baileylattice}
  If \((\alpha_n,\,\beta_n)\) is a Bailey pair relative to \(a\), then
  \((\alpha'_n,\,\beta'_n)\) is a Bailey pair relative to \(a/q\),
  where
  \[
    \alpha'_n=a^n q^{n^2-n} (1-a) \left(\frac{\alpha_n}{1-aq^{2n}}-\frac{aq^{2n-2}\alpha_{n-1}}{1-aq^{2n-2}}\right),\qand
    \beta'_n=\sum_{\ell=0}^n \frac{a^\ell q^{\ell^2-\ell}}{(q)_{n-\ell}} \,\beta_\ell.
  \]
\end{lem}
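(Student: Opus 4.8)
The plan is to obtain Lemma~\ref{lem:Baileylattice} directly from the general Bailey lattice (Theorem~\ref{thm:baileylattice}) by letting $\rho,\sigma\to\infty$. Since Theorem~\ref{thm:baileylattice} asserts that for \emph{every} choice of $\rho,\sigma$ the pair $(\alpha'_n,\beta'_n)$ it produces is a Bailey pair relative to $a/q$, it suffices to compute the termwise limits of these $\alpha'_n$ and $\beta'_n$ as $\rho,\sigma\to\infty$ and to verify that the limiting sequences coincide with those in the statement. Because the defining relation~\eqref{bp} of a Bailey pair is, for each fixed $n$, a \emph{finite} sum, the limit may be taken term by term and the relation is preserved; hence the limiting pair is again a Bailey pair relative to $a/q$, which is exactly the assertion.

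The computation of the limits rests on the single asymptotic fact that, for fixed $n$,
\[
(\rho;q)_n=\prod_{i=0}^{n-1}(1-\rho q^i)\sim(-\rho)^n q^{\binom{n}{2}}\qquad(\rho\to\infty),
\]
together with $(a/\rho;q)_m\to 1$ and $(a/\rho\sigma;q)_m\to 1$, which hold because their arguments tend to $0$. First I would apply this to the prefactor of $\alpha'_n$: the numerator $(\rho,\sigma)_n(a/\rho\sigma)^n$ behaves like $(-\rho)^n(-\sigma)^n q^{2\binom{n}{2}}\,a^n(\rho\sigma)^{-n}$, the powers of $\rho$ and $\sigma$ cancel, and $(a/\rho,a/\sigma)_n\to1$ in the denominator; using $2\binom{n}{2}=n^2-n$ this leaves precisely the factor $a^n q^{n^2-n}$, giving the claimed $\alpha'_n$. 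The same bookkeeping applied to the $\ell$-th summand of $\beta'_n$ — where one additionally uses $(a/\rho\sigma)_{n-\ell}\to1$ — turns the prefactor
\[
\frac{(\rho,\sigma)_\ell\,(a/\rho\sigma)_{n-\ell}\,(a/\rho\sigma)^\ell}{(q)_{n-\ell}\,(a/\rho,a/\sigma)_n}
\]
into $a^\ell q^{\ell^2-\ell}/(q)_{n-\ell}$, which is the asserted summand.

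I expect no serious obstacle here. The only points requiring care are the clean cancellation $(-\rho)^n(-\sigma)^n(\rho\sigma)^{-n}=1$ and the exponent identity $2\binom{n}{2}=n^2-n$, together with the routine observation that every denominator factor which could in principle blow up, namely $(a/\rho)_n$, $(a/\sigma)_n$ and $(a/\rho\sigma)_{n-\ell}$, in fact tends to $1$ rather than to $0$. This guarantees that the termwise limits are finite and that the interchange of limit and (finite) summation is legitimate, so that the limiting sequences $(\alpha'_n,\beta'_n)$ form a Bailey pair relative to $a/q$ as required.
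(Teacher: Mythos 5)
Your proposal is correct and is exactly the argument the paper intends: the paper states Lemma~\ref{lem:Baileylattice} as the particular case \( \rho,\sigma\to\infty \) of Theorem~\ref{thm:baileylattice} without writing out the details, and your computation (using \( (\rho;q)_n\sim(-\rho)^n q^{\binom{n}{2}} \), the cancellation of the powers of \( \rho\sigma \), the convergence of the remaining Pochhammer symbols to \( 1 \), and the preservation of the finite relation~\eqref{bp} under termwise limits) supplies precisely those details. No gaps.
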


Authors have been interested in ways to avoid using the Bailey lattice. For example,
Andrews, Schilling and Warnaar \cite[Section~3]{ASW} proved~\eqref{eq:AG} 
using the Bailey lemma and bypassing the Bailey lattice, Bressoud, Ismail and Stanton~\cite{BIS} used a change of
base to avoid the lattice, and
McLaughlin~\cite{M} showed that~\eqref{eq:AG} can be proved
easily by combining the Bailey Lemma with a simple lemma,
which gives a Bailey pair relative to \(a/q\) given a Bailey pair
relative to \(a\).

\begin{lem}[McLaughlin, Key lemma 1]\label{lem:key1}
  \label{lem:Mac}
  If \((\alpha_n,\,\beta_n)\) is a Bailey pair relative to \(a\), then
  \((\alpha'_n,\,\beta'_n)\) is a Bailey pair relative to \(a/q\),
  where
  \begin{equation*}
    \alpha'_n=(1-a)\left(\frac{\alpha_n}{1-aq^{2n}}-\frac{aq^{2n-2}\alpha_{n-1}}{1-aq^{2n-2}}\right), \qand \beta'_n=\beta_n.
  \end{equation*}
\end{lem}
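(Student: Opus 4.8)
The plan is to verify directly that the pair \((\alpha'_n,\beta'_n)\) satisfies the defining relation~\eqref{bp} relative to \(a/q\). Since a Bailey pair relative to \(a/q\) must satisfy \(\beta'_n=\sum_{\ell=0}^n\alpha'_\ell/\big((q)_{n-\ell}(a)_{n+\ell}\big)\) (because \((a/q)\,q=a\)), and since we are given \(\beta'_n=\beta_n\), the whole statement reduces to proving the single identity
\[
\beta_n=\sum_{\ell=0}^n\frac{\alpha'_\ell}{(q)_{n-\ell}(a)_{n+\ell}},
\]
where on the right \(\alpha'_\ell\) is replaced by its defining expression, and on the left \(\beta_n\) is expanded using the original relation \(\beta_n=\sum_\ell \alpha_\ell/\big((q)_{n-\ell}(aq)_{n+\ell}\big)\). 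So the proof is entirely a manipulation of the right-hand side.

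First I would substitute the formula for \(\alpha'_\ell\) and break the resulting sum into two pieces, one carrying \(\alpha_\ell\) and the other \(\alpha_{\ell-1}\). In the second piece the \(\ell=0\) term vanishes by the convention \(\alpha_{-1}=0\), so I can shift \(\ell\mapsto\ell+1\) to make both sums range over a common index, with each featuring the factor \(\alpha_\ell/(1-aq^{2\ell})\). The \(\ell=n\) term of the shifted sum also disappears automatically, since it produces a factor \(1/(q)_{-1}=0\); keeping track of these two boundary conventions is the only genuinely delicate bookkeeping.

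After collecting terms, the summand becomes \(\tfrac{(1-a)\alpha_\ell}{1-aq^{2\ell}}\) times a bracketed difference of two \(q\)-Pochhammer ratios, namely \(\tfrac{1}{(q)_{n-\ell}(a)_{n+\ell}}-\tfrac{aq^{2\ell}}{(q)_{n-\ell-1}(a)_{n+\ell+1}}\). The heart of the computation is to simplify this bracket, and it is the step I expect to require the most care. Using the factorisations \((q)_{n-\ell}=(1-q^{n-\ell})(q)_{n-\ell-1}\) and \((a)_{n+\ell+1}=(1-aq^{n+\ell})(a)_{n+\ell}\), the bracket collapses: after reducing over the common factor \(1-aq^{n+\ell}\), its numerator simplifies to exactly \(1-aq^{2\ell}\), so the bracket equals \(\tfrac{1-aq^{2\ell}}{(q)_{n-\ell}(a)_{n+\ell+1}}\). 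This \(1-aq^{2\ell}\) then cancels the denominator coming from \(\alpha'_\ell\); this cancellation is precisely what makes the lemma work.

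What remains is \((1-a)\sum_{\ell=0}^n \alpha_\ell/\big((q)_{n-\ell}(a)_{n+\ell+1}\big)\). Applying the elementary identity \((a)_{n+\ell+1}=(1-a)(aq)_{n+\ell}\) cancels the prefactor \((1-a)\) and leaves exactly \(\sum_{\ell=0}^n \alpha_\ell/\big((q)_{n-\ell}(aq)_{n+\ell}\big)\), which is \(\beta_n\) by the hypothesis that \((\alpha_n,\beta_n)\) is a Bailey pair relative to \(a\). This closes the chain of equalities and, together with \(\beta'_n=\beta_n\), establishes the lemma.
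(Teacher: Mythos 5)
Your verification is correct: substituting the definition of \(\alpha'_\ell\) into the relation \(\beta'_n=\sum_{\ell=0}^n\alpha'_\ell/\bigl((q)_{n-\ell}(a)_{n+\ell}\bigr)\), shifting the index in the \(\alpha_{\ell-1}\) piece, collapsing the bracket to \((1-aq^{2\ell})/\bigl((q)_{n-\ell}(a)_{n+\ell+1}\bigr)\), and then using \((a)_{n+\ell+1}=(1-a)(aq)_{n+\ell}\) does recover \(\beta_n\), and the boundary conventions \(\alpha_{-1}=0\) and \(1/(q)_{-1}=0\) are handled properly (at \(\ell=n\) the simplified bracket should be checked directly rather than via the factorisation \((q)_{n-\ell}=(1-q^{n-\ell})(q)_{n-\ell-1}\), but the formula does hold there). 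Note that the paper itself gives no proof of this lemma — it is quoted from McLaughlin~\cite{M} — so your direct computation is exactly the standard argument one would supply.
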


In \cite{DJK25}, the first and third authors, together with Konan, showed that the Bailey lattice follows directly from  this key lemma and the Bailey lemma.
They also extended it to a bilateral version and deduced a bilateral
Bailey lattice which they used to prove \(m\)-versions of the
Andrews--Gordon and Bressoud identities. The following lemma, also due
to McLaughlin~\cite{M}, is similar to \Cref{lem:key1} (as it also
transforms \(a\) into \(a/q\)) and led in \cite{DJK25} to a different
bilateral Bailey lattice.

\begin{lem}[McLaughlin, Key lemma 2]\label{lem:key2}
  If \((\alpha_n,\,\beta_n)\) is a Bailey pair relative to \(a\), then
  \((\alpha'_n,\,\beta'_n)\) is a Bailey pair relative to \(a/q\),
  where
  \begin{equation*}
    \alpha'_n=(1-a)\left(\frac{q^n\alpha_n}{1-aq^{2n}}-\frac{q^{n-1}\alpha_{n-1}}{1-aq^{2n-2}}\right), \qand \beta'_n=q^n\beta_n.
  \end{equation*}
  \end{lem}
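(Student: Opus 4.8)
The plan is to verify directly the defining relation \eqref{bp} for the candidate pair $(\alpha'_n,\beta'_n)$, now relative to $a/q$. Since replacing $a$ by $a/q$ turns the factor $(aq)_{n+\ell}$ into $(a)_{n+\ell}$, what must be shown is that
\[
q^n\beta_n=\sum_{\ell=0}^n\frac{\alpha'_\ell}{(q)_{n-\ell}(a)_{n+\ell}}\qquad\text{for all }n.
\]
First I would substitute the expression for $\alpha'_\ell$ into the right-hand side and split it into two sums, one carrying $q^\ell\alpha_\ell/(1-aq^{2\ell})$ and the other $q^{\ell-1}\alpha_{\ell-1}/(1-aq^{2\ell-2})$. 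In the second sum I shift the summation index $\ell\mapsto\ell+1$; the boundary term coming from $\ell=-1$ vanishes because $\alpha_{-1}=0$ by convention, so both sums can be written with the common factor $q^\ell\alpha_\ell/(1-aq^{2\ell})$ and combined term by term, with the $\ell=n$ term of the first sum left over.

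The crux of the argument is the simplification of the resulting bracket. After factoring out $q^\ell\alpha_\ell/(1-aq^{2\ell})$, the $\ell$-th term involves
\[
\frac{1}{(q)_{n-\ell}(a)_{n+\ell}}-\frac{1}{(q)_{n-\ell-1}(a)_{n+\ell+1}},
\]
which, using $(q)_{n-\ell}=(1-q^{n-\ell})(q)_{n-\ell-1}$ and $(a)_{n+\ell+1}=(1-aq^{n+\ell})(a)_{n+\ell}$, reduces to a single fraction whose numerator is $q^{n-\ell}-aq^{n+\ell}=q^{n-\ell}(1-aq^{2\ell})$. This is the key cancellation: the factor $1-aq^{2\ell}$ clears the denominator introduced by $\alpha'_\ell$, while $q^{n-\ell}$ combines with the $q^\ell$ already present to produce $q^n$. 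Recollecting terms and using $(a)_{n+\ell+1}=(1-a)(aq)_{n+\ell}$, I expect each combined term to collapse to $q^n\alpha_\ell/\bigl((1-a)(q)_{n-\ell}(aq)_{n+\ell}\bigr)$; a direct check shows the leftover $\ell=n$ term fits the same formula, so the overall prefactor $(1-a)$ cancels and the whole right-hand side equals $q^n\sum_{\ell=0}^n\alpha_\ell/\bigl((q)_{n-\ell}(aq)_{n+\ell}\bigr)$.

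Finally, the inner sum is exactly the Bailey-pair relation \eqref{bp} for $(\alpha_n,\beta_n)$ relative to $a$, so it equals $\beta_n$, giving $q^n\beta_n=\beta'_n$ as required. The approach mirrors the verification of \Cref{lem:key1}, the only real obstacle being the bookkeeping in the bracket simplification — in particular spotting the factorisation $q^{n-\ell}-aq^{n+\ell}=q^{n-\ell}(1-aq^{2\ell})$ that drives the cancellation — together with the care needed to handle the boundary term at $\ell=n$ and the empty term at $\ell=-1$ correctly.
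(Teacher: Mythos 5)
Your verification is correct: the index shift, the telescoping of the bracket via $(q)_{n-\ell}=(1-q^{n-\ell})(q)_{n-\ell-1}$ and $(a)_{n+\ell+1}=(1-aq^{n+\ell})(a)_{n+\ell}$, the key factorisation $q^{n-\ell}-aq^{n+\ell}=q^{n-\ell}(1-aq^{2\ell})$, and the cancellation of the $(1-a)$ prefactor through $(a)_{n+\ell+1}=(1-a)(aq)_{n+\ell}$ all check out, including the boundary terms at $\ell=-1$ and $\ell=n$. The paper itself states this lemma without proof, citing McLaughlin, so there is nothing to compare against; your direct verification of the defining relation~\eqref{bp} is the standard argument (the same one that proves Lemma~\ref{lem:key1}) and is complete.
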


On the other hand, Lovejoy \cite[(2.4) and (2.5)]{Lo04} proved a lemma
which transforms a Bailey pair relative to \(a\) into a Bailey pair
relative to \(aq\).

\begin{lem}[Lovejoy]\label{lem:love}
  If \((\alpha_n,\,\beta_n)\) is a Bailey pair relative to \(a\), then
  \((\alpha'_n,\,\beta'_n)\) is a Bailey pair relative to \(aq\),
  where
  \[
    \alpha'_n= \frac{(1-aq^{2n+1})(aq/b)_n(-b)^nq^{n(n-1)/2}}{(1-aq)(bq)_n} \sum_{\ell=0}^n \frac{(b)_\ell}{(aq/b)_\ell}(-b)^{-\ell}q^{-\ell(\ell-1)/2}\alpha_\ell,
  \]
  and
  \[
    \beta'_n=\frac{1-b}{1-bq^n}\beta_n.
  \]
\end{lem}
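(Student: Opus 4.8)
The plan is to verify directly that $(\alpha'_n,\beta'_n)$ satisfies the defining relation~\eqref{bp} relative to $aq$, namely that
\[
\beta'_n=\sum_{\ell=0}^n\frac{\alpha'_\ell}{(q)_{n-\ell}(aq^2)_{n+\ell}}.
\]
Since by hypothesis $\beta_n=\sum_{m=0}^n\alpha_m/\big((q)_{n-m}(aq)_{n+m}\big)$ and we must have $\beta'_n=\frac{1-b}{1-bq^n}\beta_n$, both sides of the target relation are explicit finite linear combinations of the entries $\alpha_0,\dots,\alpha_n$ of the original pair. Because the triangular matrix $1/\big((q)_{n-m}(aq)_{n+m}\big)$ is invertible, it suffices to match the coefficient of each $\alpha_m$. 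First I would substitute the given expression for $\alpha'_\ell$ into the right-hand side, interchange the two finite summations (over $\ell$ and the inner index $m$ defining $\alpha'_\ell$), and thereby reduce the whole statement to a single coefficient-wise $q$-identity that is independent of which Bailey pair we started from.

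After pulling out the common factor $\frac{(b)_m}{(aq/b)_m}(-b)^{-m}q^{-m(m-1)/2}$, this coefficient identity amounts to evaluating the inner sum
\[
\sum_{\ell=m}^n\frac{(1-aq^{2\ell+1})(aq/b)_\ell(-b)^\ell q^{\ell(\ell-1)/2}}{(1-aq)(bq)_\ell(q)_{n-\ell}(aq^2)_{n+\ell}}
\]
in closed form. The crucial observation is that the factor $\frac{1-aq^{2\ell+1}}{1-aq}$ is the signature of a \emph{very-well-poised} series: after the shift $\ell=m+t$, and after rewriting $1/(q)_{n-\ell}$ through the terminating factor $(q^{-(n-m)})_t$, the sum over $t$ becomes a terminating very-well-poised basic hypergeometric series with leading parameter $aq^{2m+1}$, free parameters matching $(aq/b)_\ell\mapsto(aq^{m+1}/b)_t$ and $(q^{-(n-m)})_t$, and denominator partners $(bq^{m+1})_t$ and $(aq^{n+m+2})_t$. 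The free Gaussian factor $q^{\ell(\ell-1)/2}$ indicates that this is in fact the confluent $D\to\infty$ limit of a terminating ${}_6\phi_5$, so I would apply the corresponding limiting case of the $q$-analogue of Dougall's very-well-poised ${}_6\phi_5$ summation from~\cite{GR} to collapse it to a quotient of finite $q$-Pochhammer symbols.

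The remaining work is purely computational: performing the summation and checking that the resulting product of shifted factorials simplifies exactly to $\frac{1-b}{1-bq^n}\cdot\frac{1}{(q)_{n-m}(aq)_{n+m}}$, the coefficient of $\alpha_m$ on the left. The main obstacle I anticipate is the bookkeeping of the $q$-Pochhammer symbols: correctly identifying the numerator and denominator parameters of the (confluent) ${}_6\phi_5$—in particular tracking how the prefactors $(aq/b)_\ell$, $(bq)_\ell$ and $(aq^2)_{n+\ell}$ split into well-poised pairs and how the sign and power from $(-b)^\ell q^{\ell(\ell-1)/2}$ combine with the terminating parameter—and then reconciling the output of the summation with the target after undoing the shift by $m$. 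Everything else reduces to routine use of the standard splitting identities $(x)_{n+\ell}=(x)_n(xq^n)_\ell$ together with the reversal relating $1/(q)_{n-\ell}$ to $(q^{-(n-m)})_t$.
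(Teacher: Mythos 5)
The paper does not prove this lemma at all: it is quoted verbatim from Lovejoy \cite[(2.4) and (2.5)]{Lo04}, so there is no internal proof to compare against. Your direct-verification strategy is nevertheless sound and does go through. After interchanging the two finite sums, the coefficient of $\alpha_m$ reduces (with $\ell=m+t$, $N=n-m$, $A=aq^{2m+1}$, $B=aq^{m+1}/b$) to
\[
\sum_{t=0}^{N}\frac{1-Aq^{2t}}{1-A}\,\frac{(B)_t(q^{-N})_t}{(Aq/B)_t(Aq^{N+1})_t}\,(bq^{n})^{t},
\]
and the terminating $q$-Dougall ${}_6\phi_5$ summation \cite[Appendix (II.21)]{GR} with third upper parameter $C=q$ evaluates this to $\frac{(1-aq^{n+m+1})(1-bq^{m})}{(1-aq^{2m+1})(1-bq^{n})}$; the remaining Pochhammer bookkeeping, together with $(1-aq^{n+m+1})/\bigl((1-aq)(aq^2)_{n+m}\bigr)=1/(aq)_{n+m}$, yields exactly $\frac{1-b}{1-bq^{n}}\cdot\frac{1}{(q)_{n-m}(aq)_{n+m}}$, as required. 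One correction to your plan: the sum is \emph{not} a confluent $D\to\infty$ limit. The Gaussian factor $q^{\binom{t}{2}}$ coming from $q^{\ell(\ell-1)/2}$ is cancelled exactly by the $q^{-\binom{t}{2}}$ produced when you rewrite $1/(q)_{n-\ell}$ via $(q^{-N})_t$, leaving the genuine argument $(bq^{n})^{t}$ of a full (non-confluent) very-well-poised series in which the pair $(A)_t(C)_t/\bigl((q)_t(Aq/C)_t\bigr)$ collapses to $1$ because $C=q$. If you instead chase the $C\to\infty$ limiting form you will not be able to match the argument, so this is the one place where your anticipated bookkeeping difficulty genuinely bites; with the identification $C=q$ everything closes up.
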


Again, we will mostly need in our calculations a particular case of this lemma (namely
\(b= 0\)), which can be seen as the inverse of Lemma~\ref{lem:key1}.

\begin{lem}[Lovejoy's lemma with $b=0$]\label{lem:L1}
  If \((\alpha_n,\,\beta_n)\) is a Bailey pair relative to \(a\), then
  \((\alpha'_n,\,\beta'_n)\) is a Bailey pair relative to \(aq\),
  where
  \[
    \alpha'_n= \frac{(1-aq^{2n+1})a^nq^{n^2}}{1-aq} \sum_{\ell=0}^n a^{-\ell} q^{-\ell^2} \alpha_\ell,\qand \beta'_n= \beta_n.
  \]
\end{lem}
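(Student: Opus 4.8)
The plan is to obtain \Cref{lem:L1} directly as the $b\to0$ specialisation of the general Lovejoy lemma (\Cref{lem:love}), which I may assume. For each admissible nonzero $b$, \Cref{lem:love} already guarantees that the pair $(\alpha'_n,\beta'_n)$ it produces is a Bailey pair relative to $aq$; the idea is to let $b\to0$ and check that every entry converges, so that the limiting pair remains a Bailey pair relative to $aq$. For the $\beta$-part this is immediate, since $\beta'_n=\frac{1-b}{1-bq^n}\beta_n\to\beta_n$ as $b\to0$, matching the claimed $\beta'_n=\beta_n$.

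The substantive step is the asymptotic analysis of $\alpha'_n$ as $b\to0$: several factors individually blow up or vanish, and the point is that these divergences cancel. Writing $(aq/b;q)_m=\prod_{i=0}^{m-1}(1-aq^{i+1}/b)$, each factor behaves like $-aq^{i+1}/b$, so $(aq/b)_m\sim(-a/b)^m q^{m(m+1)/2}$. First I would treat the prefactor sitting outside the sum,
\[
(aq/b)_n\,(-b)^n q^{n(n-1)/2}\;\longrightarrow\;a^n q^{n(n+1)/2}\,q^{n(n-1)/2}=a^n q^{n^2},
\]
since $(-a/b)^n(-b)^n=a^n$ and the $q$-powers add to $n^2$. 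Next, inside the sum, using $(b)_\ell\to1$ together with the same estimate for $(aq/b)_\ell$,
\[
\frac{(b)_\ell}{(aq/b)_\ell}\,(-b)^{-\ell}q^{-\ell(\ell-1)/2}\;\longrightarrow\;a^{-\ell}q^{-\ell^2},
\]
the powers of $b$ cancelling exactly and the $q$-powers combining to $-\ell^2$. Multiplying the surviving factor $\frac{1-aq^{2n+1}}{1-aq}$ by these two limits then reproduces precisely
\[
\alpha'_n=\frac{(1-aq^{2n+1})a^n q^{n^2}}{1-aq}\sum_{\ell=0}^n a^{-\ell}q^{-\ell^2}\alpha_\ell,
\]
as required.

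It remains to justify that the limit indeed yields a Bailey pair relative to $aq$. Since the defining relation \eqref{bp} (with $a$ replaced by $aq$) is, for each fixed $n$, a finite linear combination of the entries $\alpha'_0,\dots,\alpha'_n$ with coefficients independent of $b$, and since all these entries together with $\beta'_n$ converge as $b\to0$, the relation is preserved in the limit by continuity; hence $(\alpha'_n,\beta'_n)$ is a Bailey pair relative to $aq$. The only delicate point is the bookkeeping in the $b\to0$ asymptotics of $\alpha'_n$, namely checking that the powers of $b$ cancel and that the residual powers of $a$ and $q$ assemble into $a^nq^{n^2}$ and $a^{-\ell}q^{-\ell^2}$; this is routine once the estimate $(aq/b)_m\sim(-a/b)^m q^{m(m+1)/2}$ is in hand. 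Alternatively, one could bypass \Cref{lem:love} entirely and verify \eqref{bp} directly for $(\alpha'_n,\beta_n)$ relative to $aq$, reducing to a terminating $q$-hypergeometric summation, but the limiting argument is shorter and makes the relation to \Cref{lem:love} transparent.
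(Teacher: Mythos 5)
Your proof is correct and takes essentially the same route as the paper: Lemma~\ref{lem:L1} is introduced there precisely as the $b=0$ specialisation of Lovejoy's Lemma~\ref{lem:love}, with the (routine) limit bookkeeping left implicit. Your explicit computation of the $b\to0$ limits, using $(aq/b)_m(-b)^m=\prod_{i=0}^{m-1}(aq^{i+1}-b)\to a^m q^{m(m+1)/2}$, and the continuity argument for the finite linear relation~\eqref{bp} are both accurate.
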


In~\cite{AAB}, the following result is obtained (in a more general form) by iterating \(r+1\) times \Cref{thm:baileylemma}, using Theorem~\ref{thm:baileylattice}, and concluding with \(k-r-1\) times \Cref{thm:baileylemma} with \(a\) replaced by \(a/q\).
\begin{thm}[Agarwal--Andrews--Bressoud, new notation]\label{thm:corobaileylattice}
If \((\alpha_n,\,\beta_n)\) is a Bailey pair relative to \(a\), then for all integers $k\geq 1$ and  \(-1\leq r\leq k\), we have:
\begin{multline}\label{corolattice}
\sum_{s_1\geq\dots\geq s_{k+1}\geq0}\frac{a^{s_1+\dots+s_{k+1}}q^{s_1^2+\dots+s_{k+1}^2-s_1-\dots-s_{k-r}}}{(q)_{s_1-s_2}\dots(q)_{s_{k}-s_{k+1}}}\beta_{s_{k+1}}\\
=\frac{1}{(aq)_\infty}\sum_{\ell\geq0}a^{(k+1)\ell}q^{(k+1)\ell^2-(k-r)\ell}\frac{1-a^{k-r+1}q^{2\ell(k-r+1)}}{1-aq^{2\ell}}\alpha_\ell.
\end{multline}
\end{thm}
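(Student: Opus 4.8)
\section*{Proof proposal}

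The plan is to manufacture, out of the given Bailey pair $(\alpha_n,\beta_n)$ relative to $a$, a new Bailey pair relative to $a/q$ whose defining relation \eqref{bp}, read in the limit as the free index tends to infinity, is exactly \eqref{corolattice}. Two features of the left-hand side dictate the recipe: the multisum has $k+1$ telescoped summation layers, and the relative parameter is shifted from $a$ to $a/q$ exactly once. First I would apply \Cref{lem:BL} (the Bailey lemma with $\rho,\sigma\to\infty$) $r+1$ times to $(\alpha_n,\beta_n)$ relative to $a$. Each application multiplies the $\alpha$-sequence by $a^nq^{n^2}$ and grafts on one innermost $\beta$-layer of weight $a^{s}q^{s^2}/(q)_{\bullet-s}$ (where $\bullet$ denotes the adjacent larger index); after $r+1$ steps the $\alpha$-sequence carries $a^{(r+1)n}q^{(r+1)n^2}$ and the layers $s_{k+1},\dots,s_{k-r+1}$ have been created, precisely the indices that carry no linear term in the exponent.

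Next I would apply the Bailey lattice \Cref{lem:Baileylattice} a single time, switching the relative parameter to $a/q$. On the $\beta$-side this adds the layer $s_{k-r}$ whose weight $a^{s}q^{s^2-s}/(q)_{\bullet-s}$ now carries the extra factor $q^{-s}$ (the first linear term $-s_{k-r}$), while on the $\alpha$-side it produces the telescoping difference $(1-a)\bigl(\alpha_n/(1-aq^{2n})-aq^{2n-2}\alpha_{n-1}/(1-aq^{2n-2})\bigr)$ together with the prefactor $a^nq^{n^2-n}$. I would then apply \Cref{lem:BL} a further $k-r-1$ times, now relative to $a/q$: since the relative parameter is $a/q$, each application contributes $(a/q)^sq^{s^2}/(q)_{\bullet-s}=a^sq^{s^2-s}/(q)_{\bullet-s}$, generating the remaining layers $s_{k-r-1},\dots,s_1$ (each with its linear term $-s_i$) and multiplying the $\alpha$-sequence by $a^{(k-r-1)n}q^{(k-r-1)(n^2-n)}$. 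Collecting the $\beta$-side and invoking \eqref{bp} for the final pair relative to $a/q$ — for which $((a/q)q)_{n+\ell}=(a)_{n+\ell}$ — I would let the free top index tend to infinity termwise (licit for $|q|<1$), so that $(q)_{n-s_1}\to(q)_\infty$ and $(a)_{n+\ell}\to(a)_\infty$; equating the two expressions for the limit and cancelling $(q)_\infty$ gives the multisum on the left of \eqref{corolattice} against $\tfrac{1}{(q)_\infty(a)_\infty}\sum_\ell\tilde\alpha_\ell$, where $\tilde\alpha$ is the final $\alpha$-sequence.

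The main obstacle is the final simplification of that $\alpha$-side sum. Here I would substitute the accumulated prefactor $a^{(k-r)n}q^{(k-r)(n^2-n)}$ into the lattice difference, reindex the $\alpha_{n-1}$ piece by the substitution $n\to n+1$ (so that $\alpha_{n-1}\to\alpha_n$), and check that the two pieces then share the common denominator $1-aq^{2n}$ and the common sequence $\alpha_n$. A direct exponent count shows the first piece carries $a^{(k+1)n}q^{(k+1)n^2-(k-r)n}$ and the reindexed second piece carries $-a^{(k+1)n}q^{(k+1)n^2-(k-r)n}\cdot a^{k-r+1}q^{2n(k-r+1)}$, so that they combine into $(1-a)\,a^{(k+1)n}q^{(k+1)n^2-(k-r)n}\,\dfrac{1-a^{k-r+1}q^{2n(k-r+1)}}{1-aq^{2n}}\,\alpha_n$. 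Finally $(a)_\infty=(1-a)(aq)_\infty$ absorbs the leftover $(1-a)$ and yields the advertised $\tfrac{1}{(aq)_\infty}$. I expect matching these two exponent vectors after the shift to be the one genuinely fiddly computation, and I would isolate it as a short lemma.

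Lastly I would treat the boundary values separately: for $r=k$ the linear terms and the lattice disappear (the factor $\tfrac{1-a^{k-r+1}q^{2\ell(k-r+1)}}{1-aq^{2\ell}}$ collapses to $1$), so one simply iterates \Cref{lem:BL} $k+1$ times with no parameter switch; for $r=-1$ the lattice is applied directly to $(\alpha_n,\beta_n)$ before any iteration relative to $a$. I would also remark that, following \cite{DJK25}, the single use of the Bailey lattice can be replaced by McLaughlin's \Cref{lem:key1} (which performs the shift $a\to a/q$ with $\beta'_n=\beta_n$, adding no layer) followed by one further application of \Cref{lem:BL} relative to $a/q$; the two routes produce the identical intermediate pair, so the computation flagged as the main obstacle is unchanged.
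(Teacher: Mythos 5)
Your proposal is correct and follows exactly the route the paper indicates for \eqref{corolattice}: iterate \Cref{lem:BL} $r+1$ times, apply the Bailey lattice (\Cref{lem:Baileylattice}) once to switch from $a$ to $a/q$, iterate \Cref{lem:BL} a further $k-r-1$ times relative to $a/q$, and let $n\to\infty$ in \eqref{bp}; your reindexing $n\to n+1$ of the $\alpha_{n-1}$ term and the resulting exponent bookkeeping are accurate, as is the absorption of $(1-a)$ via $(a)_\infty=(1-a)(aq)_\infty$. The treatment of the boundary cases $r=k$ and $r=-1$ and the remark that the lattice step can be replaced by \Cref{lem:key1} followed by \Cref{lem:BL} are likewise consistent with the paper.
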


In~\cite{AAB}, Agarwal, Andrews and Bressoud prove the Andrews--Gordon identities~\eqref{eq:AG} by applying Theorem~\ref{thm:corobaileylattice} to the unit Bailey pair~\eqref{ubp} with \(a=q\) and factorising the right-hand side using the Jacobi triple product identity~\cite[Appendix, (II.28)]{GR}
\begin{equation}\label{eq:jtp}
\sum_{\ell\in\mathbb{Z}}(-1)^\ell z^\ell q^{\ell(\ell-1)/2}=(q,z,q/z;q)_\infty.
\end{equation}

Moreover, it is explained in~\cite{DJK25} how~\eqref{eq:Br3.3} is
simply a consequence of~\eqref{corolattice} with \(a=1\)
and~\eqref{eq:jtp}. In the same paper,~\eqref{eq:AG}
and~\eqref{eq:Br3.3} are embedded through a bilateral version of the
Bailey lattice in a single generalisation, which is called
``\(m\)-version of the Andrews--Gordon identities'', where the
parameter \(m\) is a non-negative integer. Interestingly, Stanton's
non-binomial formula~\eqref{eq:sAG} provides another embedding
of~\eqref{eq:AG} and~\eqref{eq:Br3.3}.

The structure of our proofs of Theorems
\ref{thm:Sta3.1}--\ref{thm:GGrj} using Bailey pairs is the following.
We take a well-chosen Bailey pair and apply $k+1$ times either the
Bailey lemma or Bailey lattice or one of the key lemmas to obtain a
new Bailey pair, and then we let $n$ tend to infinity in the relation
\eqref{bp} corresponding to that new Bailey pair. We then simplify the
right-hand side and apply the Jacobi triple product
identity~\eqref{eq:jtp} to obtain the corresponding product
expression. This type of techniques allows us to obtain both the
binomial and non-binomial extensions of the identities under
consideration.

We conclude this section by introducing two Bailey pairs, which will
be used later in our proofs in addition to the unit Bailey pair. The first one is
obtained from the change of base given in~\cite[(D4)]{BIS}, which
asserts that if \((\alpha_n,\,\beta_n)\) is a Bailey pair relative to
\(a\), then so is \((\alpha'_n,\,\beta'_n)\), where
\begin{equation*}
\alpha'_n=\frac{1+a}{1+aq^{2n}}q^n\alpha_n(a^2,q^2)\quad\mbox{and}\quad\beta'_n=\sum_{\ell=0}^n\frac{(-a)_{2\ell}}{(q^2;q^2)_{n-\ell}}q^{\ell}\beta_\ell(a^2,q^2).
\end{equation*}
Applying this to the unit Bailey pair~\eqref{ubp} gives the following Bailey pair.
\begin{defn}[\cite{BIS}]\label{def:D4}
  The \emph{Bailey pair (D'4)} relative to \( a \) is
  \( (\alpha_n, \beta_n) \) where
\begin{equation}\label{D4ubp}
\alpha_n=(-1)^nq^{n^2}\frac{1-aq^{2n}}{1-a}\frac{(a^2;q^2)_n}{(q^2;q^2)_n}, \qand \beta_n=\frac{1}{(q^2;q^2)_{n}}.
\end{equation}
\end{defn}
Similarly, the change of base from~\cite[(D1)]{BIS} states that if
\((\alpha_n,\,\beta_n)\) is a Bailey pair relative to \(a\), then so
is \((\alpha'_n,\,\beta'_n)\), where
\begin{equation*}
\alpha'_n=\alpha_n(a^2,q^2)\quad\mbox{and}\quad\beta'_n=\sum_{\ell=0}^n\frac{(-aq)_{2\ell}}{(q^2;q^2)_{n-\ell}}q^{n-\ell}\beta_\ell(a^2,q^2).
\end{equation*}
Applying this to the unit Bailey pair~\eqref{ubp} gives the following Bailey pair.
\begin{defn}[\cite{BIS}]\label{def:D1}
  The \emph{Bailey pair (D'1)} relative to \( a \) is
  \( (\alpha_n, \beta_n) \) where
\begin{equation}\label{D1ubp}
\alpha_n=(-1)^nq^{n^2-n}\frac{1-a^2q^{4n}}{1-a^2}\frac{(a^2;q^2)_n}{(q^2;q^2)_n}, \qand \beta_n=\frac{q^n}{(q^2;q^2)_{n}}.
\end{equation}
\end{defn}

\section{Proofs of binomial extensions} \label{sec:pf_binom}

\subsection{Preliminary results}
In this section, we prove preliminary results that will be useful in
the proofs of Theorems~\ref{thm:Sta3.1}, \ref{thm:Sta4.1},
\ref{thm:binom_Kur}, and \ref{thm:binom_BGG}. We first combine several
simple lemmas mentioned in Section~\ref{sec:bailey-pairs} to prove a slightly more
involved one.

\begin{lem}\label{lem:star}
  If \((\alpha_n,\,\beta_n)\) is a Bailey pair relative to \(a\), then
  so is \((\alpha'_n,\,\beta'_n)\), where
  \[
    \alpha'_n=a^n q^{n^2-n} \left( (1+q^{2n}) \alpha_n +
      (1-aq^{2n})(1-a^{-1}) \sum_{\ell=0}^{n-1} \alpha_\ell\right),
  \]
  and
  \[
    \beta'_n=\sum_{\ell=0}^n\frac{a^\ell q^{\ell^2}}{(q)_{n-\ell}} (q^n + q^{-\ell})
    \beta_\ell.
  \]
\end{lem}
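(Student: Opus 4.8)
The plan is to realise $(\alpha'_n,\beta'_n)$ as the termwise \emph{sum} of two Bailey pairs relative to $a$, each obtained by composing the elementary transformations of \Cref{sec:bailey-pairs}, and then to conclude by linearity: since the defining relation~\eqref{bp} is linear in the pair $(\alpha,\beta)$, the termwise sum of two Bailey pairs relative to $a$ is again a Bailey pair relative to $a$. The decomposition is read off from the $\beta$-side by splitting the factor $q^n+q^{-\ell}$:
\[
  \beta'_n = q^n\sum_{\ell=0}^n\frac{a^\ell q^{\ell^2}}{(q)_{n-\ell}}\,\beta_\ell + \sum_{\ell=0}^n\frac{a^\ell q^{\ell^2-\ell}}{(q)_{n-\ell}}\,\beta_\ell,
\]
whose first summand is $q^n$ times the $\beta$ produced by \Cref{lem:BL}, and whose second summand is exactly the $\beta$ produced by the Bailey-lattice route, namely \Cref{lem:key1} followed by \Cref{lem:BL}.

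Concretely, I would build the first pair $P_1$ by applying, in this order, \Cref{lem:BL} (staying relative to $a$), then \Cref{lem:key2} (sending $a\mapsto a/q$ and multiplying $\beta$ by $q^n$), and finally \Cref{lem:L1} (sending $a/q\mapsto a$ and leaving $\beta$ unchanged); this produces the pair relative to $a$ whose $\beta$-part is the first summand above. I would build the second pair $P_2$ by applying \Cref{lem:key1} ($a\mapsto a/q$, $\beta$ unchanged), then \Cref{lem:BL} relative to $a/q$, then again \Cref{lem:L1} ($a/q\mapsto a$); this produces the pair relative to $a$ whose $\beta$-part is the second summand. The orders are important: performing \Cref{lem:BL} \emph{before} \Cref{lem:key2} in $P_1$ is what places the factor $q^n$ outside the sum, and the reverse order would yield the wrong component.

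By construction the $\beta$-parts of $P_1$ and $P_2$ sum to $\beta'_n$, so it remains to verify the same for the $\alpha$-parts. Tracking the three transformations gives, for $P_2$, the contribution $a^nq^{n^2-n}\alpha_n+a^nq^{n^2-n}(1-aq^{2n})\sum_{\ell=0}^{n-1}\alpha_\ell$, and for $P_1$ the contribution $a^nq^{n^2+n}\alpha_n-a^{n-1}q^{n^2-n}(1-aq^{2n})\sum_{\ell=0}^{n-1}\alpha_\ell$; adding these and factoring recovers $a^nq^{n^2-n}\big((1+q^{2n})\alpha_n+(1-aq^{2n})(1-a^{-1})\sum_{\ell=0}^{n-1}\alpha_\ell\big)$, as required.

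The main obstacle is the $\alpha$-side bookkeeping in the concluding Lovejoy step, where the inner sum $\sum_{\ell=0}^n a^{-\ell}q^{\ell-\ell^2}\alpha^{\mathrm{mid}}_\ell$ (with $\alpha^{\mathrm{mid}}$ the intermediate $\alpha$ relative to $a/q$) must be evaluated in closed form. After substituting $\alpha^{\mathrm{mid}}_\ell$, the evaluation rests on shifting the index in the $\alpha_{\ell-1}$ term (using $\alpha_{-1}=0$) and on the cancellations $\frac{q^{2\ell}-a^{-1}}{1-aq^{2\ell}}=-a^{-1}$ for $P_1$ and $\frac{1-aq^{2\ell}}{1-aq^{2\ell}}=1$ for $P_2$, which collapse the partial sums into the single term $\alpha_n$ and the tail $\sum_{\ell=0}^{n-1}\alpha_\ell$. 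Once these are in place the two $\alpha$-computations are routine, and linearity of~\eqref{bp} completes the proof.
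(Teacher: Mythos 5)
Your proposal is correct and follows essentially the same route as the paper's own proof: the same decomposition of $(\alpha'_n,\beta'_n)$ into the sum of two Bailey pairs relative to $a$, one built from the chain \Cref{lem:key1}--\Cref{lem:BL}--\Cref{lem:L1} and the other from \Cref{lem:BL}--\Cref{lem:key2}--\Cref{lem:L1}, concluded by linearity of~\eqref{bp}. The $\alpha$-side computations you sketch (the index shift with $\alpha_{-1}=0$ and the two cancellations in the Lovejoy step) check out and match the paper's intermediate expressions~\eqref{eq:star1} and~\eqref{eq:star2}.
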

\begin{proof}
  Let \((\alpha_n,\,\beta_n)\) be a Bailey pair relative to \(a\). By
  Lemma \ref{lem:key1}, \((\alpha^{(1)}_n,\,\beta^{(1)}_n)\) is a Bailey pair
  relative to \(a/q\), where
  \begin{equation*} \alpha^{(1)}_n=(1-a)\left(\frac{\alpha_n}{1-aq^{2n}}-\frac{aq^{2n-2}\alpha_{n-1}}{1-aq^{2n-2}}\right), \quad \beta^{(1)}_n=\beta_n.
  \end{equation*}
  Then by Lemma \ref{lem:BL} with \(a\) replaced by \(a/q\),
  \((\alpha^{(2)}_n,\,\beta^{(2)}_n)\) is a Bailey pair relative to \(a/q\),
  where
  \[
    \alpha^{(2)}_n=a^n q^{n^2-n}
    (1-a)\left(\frac{\alpha_n}{1-aq^{2n}}-\frac{aq^{2n-2}\alpha_{n-1}}{1-aq^{2n-2}}\right),
    \quad \beta^{(2)}_n=\sum_{\ell=0}^n\frac{a^\ell q^{\ell^2-\ell}}{(q)_{n-\ell}}\,\beta_\ell.
  \]
  Finally, by Lemma \ref{lem:L1} with \(a\) replaced by \(a/q\),
  \((\alpha^{(3)}_n,\,\beta^{(3)}_n)\) is a Bailey pair relative to \(a\),
  where
  \begin{equation}
    \label{eq:star1}
    \alpha^{(3)}_n= a^nq^{n^2-n} \left( \alpha_n+ (1-aq^{2n}) \sum_{\ell=0}^{n-1} \alpha_\ell \right), \quad
    \beta^{(3)}_n= \sum_{\ell=0}^n\frac{a^\ell q^{\ell^2-\ell}}{(q)_{n-\ell}}\,\beta_\ell.
  \end{equation}

  Now start again with \((\alpha_n,\,\beta_n)\) Bailey pair relative
  to \(a\). By Lemma \ref{lem:BL}, so is
  \((\tilde{\alpha}^{(1)}_n,\,\tilde{\beta}^{(1)}_n)\), where
  \[
    \tilde{\alpha}^{(1)}_n=a^n q^{n^2}\alpha_n, \quad
    \tilde{\beta}^{(1)}_n=\sum_{\ell=0}^n\frac{a^\ell q^{\ell^2}}{(q)_{n-\ell}}\,\beta_\ell.
  \]
  By Lemma \ref{lem:key2},
  \((\tilde{\alpha}^{(2)}_n,\,\tilde{\beta}^{(2)}_n)\) is a Bailey pair
  relative to \(a/q\), where
  \[
    \tilde{\alpha}^{(2)}_n=(1-a)\left(\frac{a^n
        q^{n^2+n}\alpha_n}{1-aq^{2n}}-\frac{a^{n-1}
        q^{n^2-n}\alpha_{n-1}}{1-aq^{2n-2}}\right), \quad
    \tilde{\beta}^{(2)}_n=q^n\sum_{\ell=0}^n\frac{a^\ell
      q^{\ell^2}}{(q)_{n-\ell}}\,\beta_\ell.
  \]
  Finally, by Lemma \ref{lem:L1} with \(a\) replaced by \(a/q\),
  \((\tilde{\alpha}^{(3)}_n,\,\tilde{\beta}^{(3)}_n)\) is a Bailey pair
  relative to \(a\), where
  \begin{equation}
    \label{eq:star2}
    \tilde{\alpha}^{(3)}_n= a^nq^{n^2-n} \left(q^{2n} \alpha_n - a^{-1}(1-aq^{2n}) \sum_{\ell=0}^{n-1} \alpha_\ell \right), \quad
    \tilde{\beta}^{(3)}_n = q^n\sum_{\ell=0}^n\frac{a^\ell q^{\ell^2}}{(q)_{n-\ell}}\,\beta_\ell.
  \end{equation}
  Adding \eqref{eq:star1} and \eqref{eq:star2} gives the desired
  result, by linearity of the Bailey pair relation~\eqref{bp}.
\end{proof}

When \(a=1\), Lemma \ref{lem:star} simplifies and gives the following,
which we will use in our proofs.

\begin{lem}\label{lem:star1}
  If \((\alpha_n,\,\beta_n)\) is a Bailey pair relative to \(1\), then
  so is \((\alpha'_n,\,\beta'_n)\), where
  \[
    \alpha'_n=q^{n^2-n} (1+q^{2n}) \alpha_n , \qand
    \beta'_n=\sum_{\ell=0}^n\frac{q^{\ell^2}}{(q)_{n-\ell}} (q^n + q^{-\ell})
    \beta_\ell.
  \]
\end{lem}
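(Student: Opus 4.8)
The plan is to obtain \Cref{lem:star1} simply as the specialisation \(a=1\) of \Cref{lem:star}, which has just been established. First I would substitute \(a=1\) into the expression for \(\beta'_n\). Since every factor \(a^\ell\) becomes \(1\), the formula collapses immediately to \(\beta'_n=\sum_{\ell=0}^n\frac{q^{\ell^2}}{(q)_{n-\ell}}(q^n+q^{-\ell})\beta_\ell\), which is already exactly the claimed expression; no further manipulation is needed here.

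Next I would treat \(\alpha'_n\). Setting \(a=1\) makes the prefactor \(a^n q^{n^2-n}\) equal to \(q^{n^2-n}\), and turns the first summand into \((1+q^{2n})\alpha_n\). The decisive observation is that the coefficient \((1-a^{-1})\) multiplying the inner sum \(\sum_{\ell=0}^{n-1}\alpha_\ell\) vanishes at \(a=1\). Consequently the entire sum term drops out, \emph{independently} of the values of the \(\alpha_\ell\), and one is left with \(\alpha'_n=q^{n^2-n}(1+q^{2n})\alpha_n\), as asserted.

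The only point requiring genuine care — and the step I expect to be the main, if minor, obstacle — is justifying that \Cref{lem:star} may legitimately be evaluated at \(a=1\). Its proof routes through an application of \Cref{lem:L1} with \(a\) replaced by \(a/q\), whose prefactor \(1/(1-a)\) is singular precisely at \(a=1\), so the intermediate Bailey pairs are not individually defined there. I would resolve this by a continuity argument in \(a\): the conclusion of \Cref{lem:star}, namely that \((\alpha'_n,\beta'_n)\) satisfies the Bailey relation~\eqref{bp}, is a finite identity whose two sides are rational functions of \(a\) that are regular at \(a=1\). Indeed the offending factor \(1/(1-a)\) cancels against the factor \((1-a)\) produced by \Cref{lem:key1}, so the closed forms~\eqref{eq:star1} and~\eqref{eq:star2}, and hence their sum \(\alpha'_n\), have no pole at \(a=1\). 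Since the identity holds for all \(a\) in a punctured neighbourhood of \(1\), it extends to \(a=1\) by continuity. Alternatively, and perhaps more transparently, one can re-read the derivation of~\eqref{eq:star1} and~\eqref{eq:star2}: the two inner sums \(\sum_{\ell=0}^{n-1}\alpha_\ell\) appear there with coefficients \((1-aq^{2n})\) and \(-a^{-1}(1-aq^{2n})\) respectively, which become exactly opposite at \(a=1\), so they cancel upon addition and yield the stated \(\alpha'_n\) directly.
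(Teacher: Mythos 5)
Your proposal is correct and matches the paper's approach: \Cref{lem:star1} is obtained there simply by setting \(a=1\) in \Cref{lem:star}, exactly as you do, with the factor \((1-a^{-1})\) annihilating the sum term in \(\alpha'_n\) and the \(\beta'_n\) formula specialising verbatim. Your additional justification of why the evaluation at \(a=1\) is legitimate — the intermediate pairs relative to \(a/q\) degenerate there, but the closed forms~\eqref{eq:star1} and~\eqref{eq:star2} are regular at \(a=1\) and the Bailey relation extends by continuity (or, more directly, the two inner sums carry opposite coefficients at \(a=1\) and cancel) — is a point the paper passes over in silence, and is a correct and welcome extra precaution rather than a deviation from its argument.
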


\begin{remark}\label{rmk:commute}
  If \((\alpha_n,\,\beta_n)\) is a Bailey pair relative to \(1\), and
  we apply first Lemma \ref{lem:BL} and then Lemma \ref{lem:star1} to
  it, we obtain the Bailey pair \((\alpha^{(1)}_n,\,\beta^{(1)}_n)\),
  where
  \[
    \alpha^{(1)}_n= q^{2n^2-n} (1+q^{2n}) \alpha_n , \quad
    \beta^{(1)}_n=\sum_{\ell=0}^n \sum_{m=0}^\ell
    \frac{q^{\ell^2+m^2}}{(q)_{n-\ell}(q)_{\ell-m}} (q^n + q^{-\ell}) \,\beta_m.
  \]

  If we apply first Lemma \ref{lem:star1} and then Lemma \ref{lem:BL}
  to it, we obtain the Bailey pair
  \((\alpha^{(2)}_n,\,\beta^{(2)}_n)\), where
  \[
    \alpha^{(2)}_n= q^{2n^2-n} (1+q^{2n}) \alpha_n , \quad
    \beta^{(2)}_n = \sum_{\ell=0}^n \sum_{m=0}^\ell
    \frac{q^{\ell^2+m^2}}{(q)_{n-\ell}(q)_{\ell-m}} (q^\ell + q^{-m}) \beta_m.
  \]
  Given that \(\alpha^{(1)}_n=\alpha^{(2)}_n\) and that
  \((\alpha^{(1)}_n,\,\beta^{(1)}_n)\) and
  \((\alpha^{(2)}_n,\,\beta^{(2)}_n)\) are Bailey pairs, we know that
  \(\beta^{(1)}_n=\beta^{(2)}_n\), even if it is less obvious from the
  expressions for \(\beta^{(1)}_n\) and \(\beta^{(2)}_n\) given above.
\end{remark}

This remark plays a key role in proving the ``Moreover, the \(j\)
factors \(q^{-s_1}\) and \(q^{-s_i}(1+q^{s_{i-1}+s_i})\),
\(2 \leq i \leq j\) may be replaced by any \(j\)-element subset of
\(\{q^{-s_1}\} \cup \{q^{-s_i}(1+q^{s_{i-1}+s_i}) : 2 \leq i \leq
k-r\}\)" part of the four main theorems.

\medskip
Now we use Lemma \ref{lem:star1} to prove two propositions.

\begin{prop}
  \label{prop:common1}
  If \((\alpha_n,\,\beta_n)\) is a Bailey pair relative to \(q\),
  then, for all integers \(j,r \geq 0\) and \(k\geq1\) such that
  \(r+j\leq k\), \((\alpha^{(k+1)}_n,\,\beta^{(k+1)}_n)\) is a Bailey pair
  relative to \(1\), where
  \begin{equation}
    \label{eq:proof4.1}
    \begin{aligned}
      \alpha^{(k+1)}_n &= (1-q)(1+q^{2n})^j q^{(k+1)n^2+(r+1-j)n} \left(\frac{\alpha_n}{1-q^{2n+1}}-\frac{q^{-2rn-1}\alpha_{n-1}}{1-q^{2n-1}}\right), \\
      \beta^{(k+1)}_n &= \sum_{n \geq s_1 \geq \cdots \geq s_{k+1} \geq 0} \frac{q^{s_{1}^2 + \cdots + s_{k+1}^2 + s_{k-r+1} + \cdots + s_{k+1}} (q^n+q^{-s_1})(q^{s_1}+q^{-s_2}) \cdots (q^{s_{j-1}}+q^{-s_j})}{(q)_{n - s_{1}} (q)_{s_1 - s_{2}}  \cdots (q)_{s_{k} - s_{k+1}}} \,\beta_{s_{k+1}}.
    \end{aligned}
  \end{equation}
  Moreover,
  \((q^n+q^{-s_1})(q^{s_1}+q^{-s_2}) \cdots (q^{s_{j-1}}+q^{-s_j})\)
  can be replaced by any product of \(j\) factors taken from
  \(\{(q^n+q^{-s_1})\} \cup \{(q^{s_{i-1}}+q^{-s_i}) : 2 \leq i \leq
  k-r\}\).
\end{prop}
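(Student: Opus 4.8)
The plan is to realise $(\alpha^{(k+1)}_n,\beta^{(k+1)}_n)$ as a carefully ordered composition of the elementary Bailey transformations from \Cref{sec:bailey-pairs}, applied to the starting pair relative to $q$, proceeding in three phases. First, I apply \Cref{lem:BL} (the $\rho,\sigma\to\infty$ Bailey lemma) relative to $a=q$ exactly $r+1$ times; since $a^\ell q^{\ell^2}=q^{\ell^2+\ell}$ when $a=q$, each application contributes both a square and a linear term, so after this phase the innermost variables $s_{k+1},\dots,s_{k-r+1}$ carry the exponents $s_i^2+s_i$, producing precisely the linear part $s_{k-r+1}+\cdots+s_{k+1}$ of the target. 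Second, I apply \Cref{lem:key1} once to switch the base from $q$ to $1$; this introduces no new summation variable but replaces the running $\alpha$ by $(1-q)\bigl(\frac{\alpha_n}{1-q^{2n+1}}-\frac{q^{2n-1}\alpha_{n-1}}{1-q^{2n-1}}\bigr)$. Third, working now relative to $a=1$, I apply $k-r$ further transformations: $k-r-j$ copies of \Cref{lem:BL} (each multiplying $\alpha_n$ by $q^{n^2}$ and adding a new variable with no linear term) and $j$ copies of \Cref{lem:star1}, placed as the outermost applications so that they generate $s_1,\dots,s_j$ together with the factors $(q^n+q^{-s_1})(q^{s_1}+q^{-s_2})\cdots(q^{s_{j-1}}+q^{-s_j})$.

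For the $\beta$ side, each of the $k+1$ variable-introducing steps contributes exactly one denominator $(q)_{s_{i-1}-s_i}$ (with $s_0:=n$) and one factor $q^{s_i^2}$, while \Cref{lem:key1} leaves $\beta$ unchanged; collecting these reproduces the nested sum in \eqref{eq:proof4.1} verbatim. For the $\alpha$ side, the bookkeeping is the main point to get right: the phase-one multiplier is $q^{(r+1)(n^2+n)}$, and after \Cref{lem:key1} the shift $n\mapsto n-1$ in the $\alpha_{n-1}$ branch turns this factor into $q^{(r+1)(n^2-n)}$, so that comparing the two branches after factoring out $q^{(r+1)n^2+(r+1)n}$ produces exactly the asymmetry $q^{-2rn-1}$ in front of $\alpha_{n-1}$. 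The third phase then multiplies everything by $q^{(k-r)n^2-jn}(1+q^{2n})^j$ (each plain step contributing $q^{n^2}$ and each star step $q^{n^2-n}(1+q^{2n})$), and the total diagonal $q$-power becomes $(k+1)n^2+(r+1-j)n$, matching $\alpha^{(k+1)}_n$ including the overall $(1-q)$ and the denominators $1-q^{2n+1}$, $1-q^{2n-1}$.

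The ``Moreover'' assertion is where \Cref{rmk:commute} does the work. All $k-r$ steps of the third phase are relative to $1$, and on the $\alpha$-level both \Cref{lem:BL} and \Cref{lem:star1} act diagonally, multiplying $\alpha_n$ by a Laurent polynomial in $q^n$; these multiplications commute, so the final $\alpha$ is independent of the order and of which $j$ of the $k-r$ slots are chosen to be star steps. Since a Bailey pair relative to a fixed base is determined by its $\alpha$-sequence through \eqref{bp}, the final $\beta$ is likewise independent of these choices. Swapping two adjacent third-phase steps thus leaves $(\alpha^{(k+1)}_n,\beta^{(k+1)}_n)$ unchanged (this is exactly the content of \Cref{rmk:commute}), and iterating adjacent swaps lets me move the $j$ star steps to any $j$ positions; placing the star step at the slot producing $s_i$ replaces the corresponding factor by $(q^{s_{i-1}}+q^{-s_i})$, yielding every $j$-element subset of $\{q^n+q^{-s_1}\}\cup\{q^{s_{i-1}}+q^{-s_i}:2\le i\le k-r\}$.

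The part I expect to require the most care is the exponent tracking through \Cref{lem:key1}: ensuring that the interaction between the $n\mapsto n-1$ shift from phase one and the phase-three multiplier produces precisely $q^{-2rn-1}$ (rather than a shifted variant) and the correct denominators $1-q^{2n+1}$, $1-q^{2n-1}$. A secondary subtlety is justifying that the commutativity of \Cref{rmk:commute}, stated there for the two innermost steps, propagates to arbitrary adjacent pairs in the third phase; this is immediate from the diagonal-multiplication argument above, but should be stated explicitly so that the reduction to arbitrary $j$-subsets is complete.
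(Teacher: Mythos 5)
Your proposal is correct and follows essentially the same route as the paper: $r+1$ applications of \Cref{lem:BL} at $a=q$, one application of \Cref{lem:key1} to pass to base $1$ (with the same computation showing the $n\mapsto n-1$ shift of $q^{(r+1)(n^2+n)}$ yields the factor $q^{-2rn-1}$), then $k-r-j$ applications of \Cref{lem:BL} and $j$ of \Cref{lem:star1} at $a=1$, with the ``Moreover'' part handled exactly as in \Cref{rmk:commute} by noting that these last $k-r$ steps act diagonally on $\alpha$ and hence commute.
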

\begin{proof}
  Let \((\alpha_n,\,\beta_n)\) be a Bailey pair relative to \(q\).
  First apply Lemma \ref{lem:BL} to it \(r+1\) times with \(a=q\). This
  gives a Bailey pair \((\alpha^{(r+1)}_n,\,\beta^{(r+1)}_n)\) relative to
  \(q\), where
  \[
    \alpha^{(r+1)}_n= q^{(r+1)n^2+(r+1)n}\alpha_n,
  \]
  and
  \[
    \beta^{(r+1)}_n=\beta^{(r+1)}_{s_{k-r}}
    = \sum_{s_{k-r} \geq \cdots \geq s_{k+1} \geq 0} \frac{q^{s_{k-r+1}^2 + \cdots + s_{k+1}^2 + s_{k-r+1} + \cdots + s_{k+1}}}{(q)_{s_{k-r} - s_{k-r+1}} \cdots (q)_{s_{k} - s_{k+1}}} \,\beta_{s_{k+1}}.
  \]

  Next apply Lemma \ref{lem:key1} with \(a=q\) once. This yields a
  Bailey pair \((\tilde{\alpha}^{(r+1)}_n,\,\tilde{\beta}^{(r+1)}_n)\)
  relative to \(1\), where
  \[
    \tilde{\alpha}^{(r+1)}_n= (1-q)q^{(r+1)n^2+(r+1)n} \left(\frac{\alpha_n}{1-q^{2n+1}}-\frac{q^{-2rn-1}\alpha_{n-1}}{1-q^{2n-1}}\right),
  \]
  and
  \[
    \tilde{\beta}^{(r+1)}_n=\tilde{\beta}^{(r+1)}_{s_{k-r}}=\sum_{s_{k-r} \geq \cdots \geq s_{k+1} \geq 0} \frac{q^{s_{k-r+1}^2 + \cdots + s_{k+1}^2 + s_{k-r+1} + \cdots + s_{k+1}}}{(q)_{s_{k-r} - s_{k-r+1}} \cdots (q)_{s_{k} - s_{k+1}}} \,\beta_{s_{k+1}}.
  \]

  Now apply Lemma \ref{lem:BL} \(k-r-j\) times with \(a=1\). This
  gives a Bailey pair \((\alpha^{(k-j+1)}_n,\,\beta^{(k-j+1)}_n)\)
  relative to \(1\), where
  \[
    \alpha^{(k-j+1)}_n= (1-q)q^{(k-j+1)n^2+(r+1)n} \left(\frac{\alpha_n}{1-q^{2n+1}}-\frac{q^{-2rn-1}\alpha_{n-1}}{1-q^{2n-1}}\right),
  \]
  and
  \[
    \beta^{(k-j+1)}_n= \beta^{(k-j+1)}_{s_{j}} = \sum_{s_{j} \geq \cdots \geq s_{k+1} \geq 0} \frac{q^{s_{j+1}^2 + \cdots + s_{k+1}^2 + s_{k-r+1} + \cdots + s_{k+1}}}{(q)_{s_{j} - s_{j+1}} \cdots (q)_{s_{k} - s_{k+1}}} \,\beta_{s_{k+1}}.
  \]

  Finally, apply \(j\) times Lemma \ref{lem:star1}. This results in a
  Bailey pair \((\alpha^{(k+1)}_n,\,\beta^{(k+1)}_n)\) relative to \(1\),
  where
  \begin{align*}
    \alpha^{(k+1)}_n
    &= (1-q)(1+q^{2n})^j q^{(k+1)n^2+(r-j+1)n} \left(\frac{\alpha_n}{1-q^{2n+1}}-\frac{q^{-2rn-1}\alpha_{n-1}}{1-q^{2n-1}}\right), \\
    \beta^{(k+1)}_n
    &= \sum_{n \geq s_1 \geq \cdots \geq s_{k+1} \geq 0} \frac{q^{s_{1}^2 + \cdots + s_{k+1}^2 + s_{k-r+1} + \cdots + s_{k+1}} (q^n+q^{-s_1})(q^{s_1}+q^{-s_2}) \cdots (q^{s_{j-1}}+q^{-s_j})}{(q)_{n - s_{1}} (q)_{s_1 - s_{2}}  \cdots (q)_{s_{k} - s_{k+1}}} \,\beta_{s_{k+1}}.
  \end{align*}

  By Remark~\ref{rmk:commute}, the \(k-r-j\) applications of Lemma
  \ref{lem:BL} and the \(j\) applications of Lemma \ref{lem:star1} can
  be done in any order. Hence, the product
  \((q^n+q^{-s_1})(q^{s_1}+q^{-s_2}) \cdots (q^{s_{j-1}}+q^{-s_j})\)
  can be replaced by any product of \(j\) factors taken from
  \(\{(q^n+q^{-s_1})\} \cup \{(q^{s_{i-1}}+q^{-s_i}) : 2 \leq i \leq
  k-r\}\).
\end{proof}

We can now take a limit and obtain the following.
\begin{prop}
  \label{prop:common2}
  Let \((\alpha_n,\,\beta_n)\) be a Bailey pair relative to \(q\).
  Then, for all integers \(j,r \geq 0\) and \(k\geq1\) such that
  \(r+j\leq k\), we have
  \begin{multline*}
    \sum_{s_1 \geq \cdots \geq s_{k+1} \geq 0}  \frac{q^{s_{1}^2 + \cdots + s_{k+1}^2 + s_{k-r+1} + \cdots + s_{k+1}} q^{-s_1}(q^{s_1}+q^{-s_2}) \cdots (q^{s_{j-1}}+q^{-s_j})}{(q)_{s_1 - s_{2}}  \cdots (q)_{s_{k} - s_{k+1}}} \,\beta_{s_{k+1}}\\
                                             = \frac{1}{(q)_{\infty}} \sum_{\ell=0}^{\infty}  q^{(k+1)\ell^2+(r-j+1)\ell} \frac{1-q}{1-q^{2\ell+1}} \left((1+q^{2\ell})^j - (1+q^{2\ell+2})^j q^{(k-r+1)(2\ell+1)-j}\right) \alpha_{\ell}.
  \end{multline*}
  Moreover
  \(q^{-s_1}(q^{s_1}+q^{-s_2}) \cdots (q^{s_{j-1}}+q^{-s_j})\) can be
  replaced by any product of \(j\) factors taken from
  \(\{q^{-s_1}\} \cup \{(q^{s_{i-1}}+q^{-s_i}) : 2 \leq i \leq
  k-r\}\).
\end{prop}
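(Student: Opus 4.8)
The plan is to feed the Bailey pair $(\alpha^{(k+1)}_n,\,\beta^{(k+1)}_n)$ relative to $1$ constructed in \Cref{prop:common1} into the defining relation~\eqref{bp} and then let $n\to\infty$. Since this pair is relative to $a=1$, relation~\eqref{bp} reads $\beta^{(k+1)}_n=\sum_{\ell=0}^n\alpha^{(k+1)}_\ell/\big((q)_{n-\ell}(q)_{n+\ell}\big)$. For $|q|<1$ the term-by-term limit is justified in the usual way, and as $n\to\infty$ both $(q)_{n-\ell}$ and $(q)_{n+\ell}$ tend to $(q)_\infty$, so the right-hand side converges to $\frac{1}{(q)_\infty^2}\sum_{\ell\geq0}\alpha^{(k+1)}_\ell$. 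On the left-hand side, in the explicit formula for $\beta^{(k+1)}_n$ from \Cref{prop:common1} the only $n$-dependent features surviving the limit are the factor $(q^n+q^{-s_1})$, which tends to $q^{-s_1}$, and the denominator factor $(q)_{n-s_1}$, which tends to $(q)_\infty$. Hence $\lim_{n\to\infty}\beta^{(k+1)}_n$ equals $\frac{1}{(q)_\infty}$ times the left-hand side of the proposition. Equating the two limits and clearing one copy of $(q)_\infty$ yields
\[
\text{(LHS of the proposition)}=\frac{1}{(q)_\infty}\sum_{\ell\geq0}\alpha^{(k+1)}_\ell.
\]

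It then remains to evaluate $\sum_{\ell\geq0}\alpha^{(k+1)}_\ell$ and identify it with the claimed right-hand side. I would substitute the expression for $\alpha^{(k+1)}_\ell$ from \Cref{prop:common1} and split the sum as $A-B$, where $A$ gathers the terms carrying $\alpha_\ell$ and $B$ the terms carrying $\alpha_{\ell-1}$. In $B$ I perform the index shift $\ell\mapsto\ell+1$; the boundary term vanishes because $\alpha_{-1}=0$ by convention, so after the shift both sums run over $\ell\geq0$ and share the common prefactor $(1-q)\alpha_\ell\big/\big((1-q^{2\ell+1})\big)$ together with $q^{(k+1)\ell^2+(r+1-j)\ell}$. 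Factoring these out collapses $A-B$ into a single sum whose summand contains the bracket $\big((1+q^{2\ell})^j-(1+q^{2\ell+2})^j q^{E(\ell)}\big)$, with $E(\ell)$ the residual exponent produced by the shift.

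The one step requiring genuine care is the bookkeeping of $q$-exponents in $E(\ell)$: expanding $(k+1)(\ell+1)^2+(r+1-j)(\ell+1)-2r(\ell+1)-1$ and subtracting the common exponent $(k+1)\ell^2+(r+1-j)\ell$ should give $E(\ell)=(2k+2-2r)\ell+(k+1-r-j)$, which one recognises as exactly $(k-r+1)(2\ell+1)-j$. This identification is precisely what converts $A-B$ into the summand on the right-hand side of the proposition, completing the argument. Finally, the \emph{Moreover} clause is immediate: the latitude to replace $q^{-s_1}(q^{s_1}+q^{-s_2})\cdots(q^{s_{j-1}}+q^{-s_j})$ by any $j$-element subproduct of $\{q^{-s_1}\}\cup\{(q^{s_{i-1}}+q^{-s_i}):2\le i\le k-r\}$ is inherited verbatim from the corresponding statement in \Cref{prop:common1}, since taking $n\to\infty$ (which merely sends the factor $(q^n+q^{-s_1})$ to $q^{-s_1}$) does not interfere with the choice of factors.
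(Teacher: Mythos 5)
Your proposal is correct and follows essentially the same route as the paper: let $n\to\infty$ in the Bailey pair relation \eqref{bp} for the pair of \Cref{prop:common1} to get $\beta^{(k+1)}_\infty=(q)_\infty^{-2}\sum_{\ell\ge0}\alpha^{(k+1)}_\ell$, then shift the index in the $\alpha_{\ell-1}$ terms to collapse the sum into the claimed bracket, with the exponent check $E(\ell)=(2k+2-2r)\ell+k+1-r-j=(k-r+1)(2\ell+1)-j$ matching the paper's rearrangement exactly.
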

\begin{proof}
  Letting \(n \rightarrow \infty\) in \eqref{bp} gives that if
  \((\alpha_n, \beta_n)\) is a Bailey pair relative to \(1\), then
  \begin{equation}
    \label{eq:lim}
    \beta_{\infty} = \frac{1}{(q)_{\infty}^2} \sum_{\ell=0}^{\infty} \alpha_{\ell}.
  \end{equation}

  Let \((\alpha_n, \beta_n)\) be a Bailey pair relative to \(q\).
  Keeping the notation of Proposition \ref{prop:common1},
  \((\alpha^{(k+1)}_n,\,\beta^{(k+1)}_n)\) is a Bailey pair relative to
  \(1\). Thus we combine \eqref{eq:lim} and \eqref{eq:proof4.1}
  with \(n \rightarrow \infty\) to obtain
  \begin{align}
    \beta^{(k+1)}_{\infty}
    &= \sum_{s_1 \geq \cdots \geq s_{k+1} \geq 0} \frac{q^{s_{1}^2 + \cdots + s_{k+1}^2 + s_{k-r+1} + \cdots + s_{k+1}} q^{-s_1}(q^{s_1}+q^{-s_2}) \cdots (q^{s_{j-1}}+q^{-s_j})}{(q)_{\infty} (q)_{s_1 - s_{2}}  \cdots (q)_{s_{k} - s_{k+1}}} \,\beta_{s_{k+1}} \label{eq:lhs2}\\
    &= \frac{1-q}{(q)_{\infty}^2} \sum_{\ell=0}^{\infty} (1+q^{2\ell})^j q^{(k+1)\ell^2+(r-j+1)\ell} \left(\frac{\alpha_{\ell}}{1-q^{2\ell+1}}-\frac{q^{-2r\ell-1}\alpha_{\ell-1}}{1-q^{2\ell-1}}\right), \nonumber
  \end{align}
  where we recall that \(\alpha_{-\ell}=0\) for \(\ell>0\). Rearranging the last expression
  gives
  \begin{equation}
    \label{eq:rhs2}
    \beta^{(k+1)}_{\infty}= \frac{1}{(q)_{\infty}^2} \sum_{\ell=0}^{\infty}  q^{(k+1)\ell^2+(r-j+1)\ell} \frac{1-q}{1-q^{2\ell+1}} \left((1+q^{2\ell})^j - (1+q^{2\ell+2})^j q^{(k-r+1)(2\ell+1)-j}\right) \alpha_{\ell}.
  \end{equation}
  Equating \eqref{eq:lhs2} and \eqref{eq:rhs2} gives the desired
  result.
\end{proof}

\subsection{Proofs of Theorems~\ref{thm:Sta3.1}, \ref{thm:Sta4.1}, \ref{thm:binom_Kur}, and \ref{thm:binom_BGG}}

The proofs of all four of our main theorems will use Proposition
\ref{prop:common2}, but applied to different Bailey pairs (and some
modifications in \(k\) and \(r\)). We start by proving Theorem
\ref{thm:Sta3.1}.

\begin{proof}[Proof of Theorem \ref{thm:Sta3.1}]
  Apply Proposition \ref{prop:common2} to the unit Bailey pair for
  \(a=q\) (see Definition \ref{def:ubp}):
  \[\alpha_n=(-1)^nq^{\binom{n}{2}}\frac{1-q^{2n+1}}{1-q},\qquad \beta_n=\delta_{n,0}.\]
  This  yields
  \begin{align}\label{eq:proof1}
    \sum_{s_1 \geq \cdots \geq s_{k} \geq 0}
    & \frac{q^{s_{1}^2 + \cdots + s_{k}^2 + s_{k-r+1} + \cdots + s_{k}} q^{-s_1}(q^{s_1}+q^{-s_2}) \cdots (q^{s_{j-1}}+q^{-s_j})}{(q)_{s_1 - s_{2}}  \cdots (q)_{s_{k}}}\\
    &= \frac{1}{(q)_{\infty}} \sum_{\ell=0}^{\infty}  q^{(k+\frac{3}{2})\ell^2+(r-j+\frac{1}{2})\ell} (-1)^{\ell} \left((1+q^{2\ell})^j - (1+q^{2\ell+2})^j q^{(k-r+1)(2\ell+1)-j}\right)\nonumber.
  \end{align}
  Split the sum on the right-hand side into two parts. In the second
  sum, apply the change of variables \( \ell \mapsto -\ell -1 \).
  This transforms the expression into
  \[
    \frac{1}{(q)_{\infty}} \sum_{\ell \in \ZZ}  q^{(k+\frac{3}{2})\ell^2+(r-j+\frac{1}{2})\ell} (-1)^{\ell} (1+q^{2\ell})^j.
  \]
  Now apply the binomial theorem to expand \( (1+q^{2 \ell})^j \):
  \[
    \frac{1}{(q)_{\infty}} \sum_{s=0}^j \binom{j}{s} \sum_{\ell \in \ZZ} (-1)^{\ell} q^{(2k+3)\binom{\ell}{2} +(k+r-j+2s+2)\ell}.
  \]
  Finally, apply the Jacobi triple product identity~\eqref{eq:jtp} with
  \( q \mapsto q^{2k+3} \) and \( z = q^{k+2+r-j+2s} \), yielding
  \[
    \frac{1}{(q)_{\infty}} \sum_{s=0}^j \binom{j}{s} \frac{(q^{2k+3}, q^{k+1-r+j-2s}, q^{k+2+r-j+2s}; q^{2k+3})_\infty}{(q)_\infty}.
  \]
  Note that, in the first sum in \eqref{eq:proof1},
  \(q^{-s_1}(q^{s_1}+q^{-s_2}) \cdots (q^{s_{j-1}}+q^{-s_j})\) can be
  replaced by any product of \(j\) factors taken from
  \(\{q^{-s_1}\} \cup \{(q^{s_{i-1}}+q^{-s_i}) : 2 \leq i \leq
  k-r\}\). This completes the proof.
\end{proof}

We also prove Theorems \ref{thm:Sta4.1}, \ref{thm:binom_Kur}, and
\ref{thm:binom_BGG} in a similar way. We prove \Cref{thm:Sta4.1}
by applying our procedure not to the unit Bailey pair, but to the
Bailey pair (D'4) in~\Cref{def:D4}.

\begin{proof}[Proof of \Cref{thm:Sta4.1}]
  Consider the Bailey pair (D'4) in~\eqref{D4ubp} with \(a=q\):
\begin{equation}\label{D4ubpq}
    \alpha_n = (-1)^n q^{n^2} \frac{1-q^{2n+1}}{1-q}, \qquad \beta_n =
    \frac{1}{(q^2;q^2)_n},
\end{equation}
  and apply Proposition \ref{prop:common2} with $r\to r-1, k\to k-1$ to it. (Note that, as (D'4) is obtained from the unit Bailey pair by one instance of (D4), the ranges for $k,r,j$ are still valid.) This yields
  \begin{align*}
    \sum_{s_1 \geq \cdots \geq s_{k} \geq 0}
    & \frac{q^{s_{1}^2 + \cdots + s_{k}^2 + s_{k-r+1} + \cdots + s_{k}} q^{-s_1}(q^{s_1}+q^{-s_2}) \cdots (q^{s_{j-1}}+q^{-s_j})}{(q)_{s_1 - s_{2}}  \cdots (q)_{s_{k-1} - s_{k}} (q^2;q^2)_{s_k}} \\
    &= \frac{1}{(q)_{\infty}} \sum_{\ell=0}^{\infty}  q^{(k+1)\ell^2+(r-j)\ell}  (-1)^{\ell} \left((1+q^{2\ell})^j - (1+q^{2\ell+2})^j q^{(k-r+1)(2\ell+1)-j}\right)\\
    &= \frac{1}{(q)_{\infty}} \sum_{\ell \in \ZZ}  q^{(k+1)\ell^2+(r-j)\ell} (-1)^{\ell} (1+q^{2\ell})^j\\
    &= \frac{1}{(q)_{\infty}} \sum_{s=0}^j \binom{j}{s} \sum_{\ell \in \ZZ} (-1)^{\ell} q^{(2k+2)\binom{\ell}{2} +(k+r-j+2s+1)\ell}\\
    &= \frac{1}{(q)_{\infty}} \sum_{s=0}^j \binom{j}{s} \frac{(q^{k+1-r+j-2s}, q^{k+2+r-j+2s}, q^{2k+2} ; q^{2k+2})_\infty}{(q)_\infty},
  \end{align*}
  where again the penultimate equality follows from the binomial
  theorem, and the last one from the Jacobi triple
  product~\eqref{eq:jtp}. As before, in the first line, the product
  \(q^{-s_1}(q^{s_1}+q^{-s_2}) \cdots (q^{s_{j-1}}+q^{-s_j})\) may be
  replaced by any product of \(j\) factors chosen from
  \(\{q^{-s_1}\} \cup \{(q^{s_{i-1}}+q^{-s_i}) : 2 \leq i \leq
  k-r\}\).
\end{proof}

Now we give a proof of Theorem \ref{thm:binom_Kur} which comes from
the Bailey pair (D'1) in~\Cref{def:D1}.

\begin{proof}[Proof of Theorem \ref{thm:binom_Kur}]
  Consider the Bailey pair (D'1) in~\eqref{D1ubp} with \(a=q\):
\begin{equation}\label{D1ubpq}
\alpha_n = (-1)^n q^{n^2-n} \frac{1-q^{4n+2}}{1-q^2}, \qquad \beta_n = \frac{q^n}{(q^2;q^2)_n},
\end{equation}
  and apply Proposition \ref{prop:common2} with $r\to r-1, k\to k-1$ to it (again the ranges for $r,j,k$ are valid). This gives
  \begin{align*}
    \sum_{s_1 \geq \cdots \geq s_{k} \geq 0}
    & \frac{q^{s_{1}^2 + \cdots + s_{k}^2 + s_{k-r+1} + \cdots + 2s_{k}} q^{-s_1}(q^{s_1}+q^{-s_2}) \cdots (q^{s_{j-1}}+q^{-s_j})}{(q)_{s_1 - s_{2}}  \cdots (q)_{s_{k-1} - s_{k}} (q^2;q^2)_{s_k}} \\
    &= \frac{1}{(q)_{\infty}} \sum_{\ell=0}^{\infty}  q^{(k+1)\ell^2+(r-j-1)\ell} (-1)^{\ell} \left((1+q^{2\ell})^j - (1+q^{2\ell+2})^j q^{(k-r+1)(2\ell+1)-j}\right)\frac{1+q^{2\ell+1}}{1+q} \\
    &= \frac{1}{(1+q)(q)_{\infty}}  \sum_{\ell \in \ZZ}  q^{(k+1)\ell^2+(r-j-1)\ell} (-1)^{\ell} (1+q^{2\ell})^j (1+q^{2 \ell +1}) \\
    &= \frac{1}{(1+q)(q)_{\infty}}  \sum_{s=0}^j \binom{j}{s} \sum_{\ell \in \ZZ} \left[(-1)^{\ell} q^{(2k+2)\binom{\ell}{2} +(k+r-j+2s)\ell} + q(-1)^{\ell} q^{(2k+2)\binom{\ell}{2} +(k+r-j+2s+2)\ell} \right] \\
    &= \frac{1}{(1+q)(q)_{\infty}} \sum_{s=0}^j \binom{j}{s} \left[ (q^{k+2-r+j-2s}, q^{k+r-j+2s}, q^{2k+2} ; q^{2k+2})_\infty \right.\\ &\qquad \qquad \qquad \qquad \qquad \ \left.  + q (q^{k-r+j-2s}, q^{k+2+r-j+2s}, q^{2k+2} ; q^{2k+2})_\infty \right].
  \end{align*}
  As before, in the first line, the product
  \(q^{-s_1}(q^{s_1}+q^{-s_2}) \cdots (q^{s_{j-1}}+q^{-s_j})\) may be
  replaced by any product of \(j\) factors chosen from
  \(\{q^{-s_1}\} \cup \{(q^{s_{i-1}}+q^{-s_i}) : 2 \leq i \leq
  k-r\}\).
\end{proof}

We conclude with the proof of \Cref{thm:binom_BGG}.

\begin{proof}[Proof of \Cref{thm:binom_BGG}]
  Let \((\alpha_n,\,\beta_n)\) be a Bailey pair relative to \(q\).
  Apply \Cref{prop:common2}  with $r\to r-1, k\to k-1$ to the Bailey pair
  \( (\alpha_n', \beta_n') \) obtained from \Cref{lem:BL2} with
  \( a \) replaced by \( q \):
  \[
    \alpha'_n=\frac{(-1)^n (\rho)_nq^{\frac{n^2}{2} + 3\frac{n}{2}}}{(q^2/\rho)_n \rho^n}\,\alpha_n , \qand
    \beta'_n=\sum_{\ell=0}^n\frac{(-1)^\ell(\rho)_\ell q^{\frac{\ell^2}{2} + 3\frac{\ell}{2}}}{(q^2/\rho)_n (q)_{n-\ell} \rho^\ell}\,\beta_\ell.
  \]
  This gives
  \begin{multline*}
    \sum_{s_1 \geq \cdots \geq s_{k} \geq s_{k+1} \geq 0}  \frac{q^{s_{1}^2 + \cdots + s_{k}^2 + s_{k-r+1} + \cdots + s_{k}} q^{-s_1}(q^{s_1}+q^{-s_2}) \cdots (q^{s_{j-1}}+q^{-s_j})}{(q)_{s_1 - s_{2}}  \cdots (q)_{s_{k-1} - s_{k}} (q)_{s_k - s_{k+1}} (q^2/\rho)_{s_k}} \frac{(-1)^{s_{k+1}}(\rho)_{s_{k+1}} q^{\frac{s_{k+1}^2}{2} + 3\frac{s_{k+1}}{2}}}{\rho^{s_{k+1}}}\beta_{s_{k+1}}\\
    = \frac{1}{(q)_{\infty}} \sum_{\ell=0}^{\infty}  q^{k\ell^2+(r-j)\ell} \frac{1-q}{1-q^{2\ell+1}} \left((1+q^{2\ell})^j - (1+q^{2\ell+2})^j q^{(k-r+1)(2\ell+1)-j}\right) \frac{(\rho)_\ell (-1)^\ell q^{\frac{\ell^2}{2} + 3 \frac{\ell}{2}}}{(q^2/\rho)_\ell \rho^\ell}\alpha_{\ell},
  \end{multline*}
  with the usual ranges for $r,j,k$.   Now take \( (\alpha_n, \beta_n) \) to be the unit Bailey
  pair~\eqref{ubp} with \( a = q \):
  \[
    \alpha_n=(-1)^nq^{\binom{n}{2}}\frac{1-q^{2n+1}}{1-q}, \qand \beta_n=\delta_{n,0}.
  \]
  Inserting this in the above equation yields
  \begin{multline*}
    \sum_{s_1 \geq \cdots \geq s_{k} \geq 0}  \frac{q^{s_{1}^2 + \cdots + s_{k}^2 + s_{k-r+1} + \cdots + s_{k}} q^{-s_1}(q^{s_1}+q^{-s_2}) \cdots (q^{s_{j-1}}+q^{-s_j})}{(q)_{s_1 - s_{2}}  \cdots (q)_{s_{k-1} - s_{k}} (q)_{s_k} (q^2/\rho)_{s_k}} \\
    = \frac{1}{(q)_{\infty}} \sum_{\ell=0}^{\infty}  q^{k\ell^2+(r-j)\ell}  \left((1+q^{2\ell})^j - (1+q^{2\ell+2})^j q^{(k-r+1)(2\ell+1)-j}\right) \frac{(\rho)_\ell q^{ \ell^2 + \ell}}{(q^2/\rho)_\ell \rho^\ell}.
  \end{multline*}
  Now set \( \rho = -q^{3/2} \), and then replace \( q \) by
  \( q^2 \). This gives
  \begin{align*}
    \sum_{s_1 \geq \cdots \geq s_{k} \geq 0}
    &\frac{q^{2(s_{1}^2 + \cdots + s_{k}^2 + s_{k-r+1} + \cdots + s_{k})} q^{-2s_1}(q^{2s_1}+q^{-2s_2}) \cdots (q^{2s_{j-1}}+q^{-2s_j})}{(q^2;q^2)_{s_1 - s_{2}}  \cdots (q^2;q^2)_{s_{k-1} - s_{k}} (q^2;q^2)_{s_k} (-q;q^2)_{s_k}} \\
    &= \frac{1}{(q^2;q^2)_{\infty}} \sum_{\ell=0}^{\infty}  q^{(2k+2)\ell^2+(2r-2j-1)\ell}  \left((1+q^{4\ell})^j - (1+q^{4\ell+4})^j q^{2(k-r+1)(2\ell+1)-2j}\right) (-1)^\ell \frac{1+q^{2 \ell +1}}{1+q } \\
    &= \frac{1}{(1+q)(q^2;q^2)_{\infty}} \sum_{\ell\in \ZZ} q^{(2k+2)\ell^2+(2r-2j-1)\ell}  (1+q^{4\ell})^j (-1)^\ell (1+q^{2 \ell +1}) \\
    &= \frac{1}{(1+q)(q^2;q^2)_{\infty}} \sum_{s = 0}^{j} \binom{j}{s} \sum_{\ell\in \ZZ} \left[ (-1)^\ell  q^{(4k+4)\binom{\ell}{2}+(2k+ 2r-2j+4s+1)\ell} + q (-1)^\ell  q^{(4k+4)\binom{\ell}{2}+(2k + 2r-2j+4s+3)\ell} \right] \\
    &= \frac{1}{(1+q)(q^2;q^2)_{\infty}} \sum_{s = 0}^{j} \binom{j}{s} \Big[ (q^{4k+4}, q^{2k+2r-2j+4s+1}, q^{2k-2r+2j-4s+3}; q^{4k+4})_\infty  \\
    & \qquad\qquad\qquad\qquad\qquad\qquad  + q (q^{4k+4}, q^{2k+2r-2j+4s+3}, q^{2k-2r+2j-4s+1}; q^{4k+4})_\infty \Big].
  \end{align*}
  Multiplying both sides by \( (-q;q^2)_\infty \) completes the proof.
  Here, the product
  \(q^{-2s_1}(q^{2s_1}+q^{-2s_2}) \cdots (q^{2s_{j-1}}+q^{-2s_j})\)
  may be replaced by any product of \(j\) factors chosen from
  \(\{q^{-2s_1}\} \cup \{(q^{2s_{i-1}}+q^{-2s_i}) : 2 \leq i \leq
  k-r\}\).
\end{proof}

\section{Proofs of non-binomial extensions}\label{sec:pf_non-binom}

\subsection{Proofs of Theorems~\ref{thm:Sta3.2}, \ref{thm:Sta4.2}, and \ref{thm:new_Kur}}
\label{sec:proofs-theorems-1.6}

Recall that to get Theorem~\ref{thm:corobaileylattice}, one uses once
the Bailey lattice after a few iterations of the Bailey lemma
(actually the special instances given in Lemmas~\ref{lem:BL}
and~\ref{lem:Baileylattice}). The clue here is to use twice the Bailey
lattice instead of once. More precisely, iterate \(r+1\) times
\Cref{lem:BL}, then use Lemma~\ref{lem:Baileylattice}. Next, iterate
\(k-r-j-1\) times \Cref{lem:BL} with \(a\) replaced by \(a/q\) and use
once again Lemma~\ref{lem:Baileylattice}. Finally, iterate \(j-1\)
times \Cref{lem:BL} with \(a\) replaced by \(a/q^2\). (If $j=0$, the
two last steps have to be omitted. If $r+j=k$, the second and third
steps have to be omitted, and $a$ needs to be replaced by $aq$ in the
remaining steps.) Letting \( n \to \infty \) in the
relation~\eqref{bp} for the resulting Bailey pair
\( (\alpha_n, \beta_n) \) relative to \( a/q^2 \) gives the following.

\begin{prop}[New consequence of the Bailey lattice]\label{prop:coro2baileylattice}
  Let \((\alpha_n,\,\beta_n)\) be a Bailey pair relative to \(a\).
  Then, for all integers \(j,r \geq 0\) and \(k\geq1\) such that
  \(r+j\leq k\), we have
  \begin{multline}\label{coro2lattice}
    \sum_{s_1\geq\dots\geq s_{k+1}\geq0}\frac{a^{s_1+\dots+s_{k+1}}q^{s_1^2+\dots+s_{k+1}^2-2s_1-\dots-2s_{j}-s_{j+1}-\dots-s_{k-r}}}{(q)_{s_1-s_2}\dots(q)_{s_{k}-s_{k+1}}}\beta_{s_{k+1}} \\
    =\frac{1}{(aq)_\infty}\sum_{\ell\geq0}a^{(k+1)\ell}q^{(k+1)\ell^2+(r-j-k)\ell} \frac{1}{1-aq^{2\ell}}\left(\frac{1-(aq^{2\ell-1})^{j+1}}{1-aq^{2\ell-1}}-a^{k+1-r}q^{(2k+2-2r)\ell-j}\frac{1-(aq^{2\ell+1})^{j+1}}{1-aq^{2\ell+1}}\right)\alpha_\ell.
  \end{multline}
\end{prop}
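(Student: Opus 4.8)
The plan is to follow literally the recipe given just before the statement: starting from $(\alpha_n,\beta_n)$ relative to $a$, apply in order $r+1$ instances of \Cref{lem:BL} (base $a$), one instance of \Cref{lem:Baileylattice} (passing to base $a/q$), then $k-r-j-1$ instances of \Cref{lem:BL} (base $a/q$), a second instance of \Cref{lem:Baileylattice} (passing to base $a/q^2$), and finally $j-1$ instances of \Cref{lem:BL} (base $a/q^2$). This is $k+1$ operations, each introducing one new summation variable (the $m$-th operation creating $s_{k+2-m}$), so $\beta^{(k+1)}_n$ is a $(k+1)$-fold sum over $n\ge s_1\ge\cdots\ge s_{k+1}\ge 0$ with $\beta_{s_{k+1}}$ at the bottom. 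First I would record the effect on $\beta$: each \Cref{lem:BL} at base $b$ contributes $b^{s}q^{s^2}$ for the new variable $s$, while each \Cref{lem:Baileylattice} at base $b$ contributes $b^{s}q^{s^2-s}$ and shifts the base to $b/q$. Keeping track of the base at which each variable is created, the total power of $a$ is $s_1+\cdots+s_{k+1}$, the quadratic part of the $q$-exponent is $s_1^2+\cdots+s_{k+1}^2$, and its linear part is $-2(s_1+\cdots+s_j)-(s_{j+1}+\cdots+s_{k-r})$, exactly as in the left-hand side of \eqref{coro2lattice}. Letting $n\to\infty$ in \eqref{bp} for this pair, the outer factor $(q)_{n-s_1}$ tends to $(q)_\infty$, so $\beta^{(k+1)}_\infty$ equals $(q)_\infty^{-1}$ times the left-hand side of \eqref{coro2lattice}.

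For the right-hand side I would run the same operations on the $\alpha$-sequence, which needs no summation but two reindexings. Writing $g_\ell=a^{(r+1)\ell}q^{(r+1)\ell^2}$, $P_\ell=(a^\ell q^{\ell^2-\ell})^{k-r-j}$ and $Q_\ell=(a^\ell q^{\ell^2-2\ell})^{j}$ for the accumulated multiplicative factors of the three plateaus, the two lattice steps produce the nested difference
\[
  \alpha^{(k+1)}_\ell=(1-a/q)(1-a)\,Q_\ell\left(\frac{P_\ell B_\ell}{1-aq^{2\ell-1}}-\frac{aq^{2\ell-3}P_{\ell-1}B_{\ell-1}}{1-aq^{2\ell-3}}\right),\quad B_\ell=\frac{g_\ell\alpha_\ell}{1-aq^{2\ell}}-\frac{aq^{2\ell-2}g_{\ell-1}\alpha_{\ell-1}}{1-aq^{2\ell-2}}.
\]
The key step is to evaluate $\sum_{\ell\ge0}\alpha^{(k+1)}_\ell$ by collecting coefficients in two passes. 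Collecting the outer difference first, the identity $Q_{\ell+1}=(aq^{2\ell-1})^{j}Q_\ell$ turns $Q_\ell-aq^{2\ell-1}Q_{\ell+1}$ into $Q_\ell\big(1-(aq^{2\ell-1})^{j+1}\big)$, which against the denominators of the second lattice yields the factor $\frac{1-(aq^{2\ell-1})^{j+1}}{1-aq^{2\ell-1}}$ in front of each $B_\ell$. Collecting the inner difference $B_\ell$ next combines consecutive indices, so that the two geometric sums $\frac{1-(aq^{2\ell-1})^{j+1}}{1-aq^{2\ell-1}}$ and $\frac{1-(aq^{2\ell+1})^{j+1}}{1-aq^{2\ell+1}}$ appearing in \eqref{coro2lattice} come respectively from the $B_\ell$ and $B_{\ell+1}$ contributions, while the first lattice supplies the overall $\frac{1}{1-aq^{2\ell}}$. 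A direct check of the accumulated powers, $g_\ell P_\ell Q_\ell=a^{(k+1)\ell}q^{(k+1)\ell^2+(r-j-k)\ell}$ and $aq^{2\ell}P_{\ell+1}Q_{\ell+1}/(P_\ell Q_\ell)=a^{k+1-r}q^{(2k+2-2r)\ell-j}$, then reproduces exactly the prefactors inside the bracket of \eqref{coro2lattice}.

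Finally I would reconcile the two expressions for $\beta^{(k+1)}_\infty$. As the final pair is relative to $a/q^2$, the limit of \eqref{bp} gives $\beta^{(k+1)}_\infty=(q)_\infty^{-1}(a/q)_\infty^{-1}\sum_\ell\alpha^{(k+1)}_\ell$; since $(a/q)_\infty=(1-a/q)(1-a)(aq)_\infty$, the factor $(1-a/q)(1-a)$ carried by every $\alpha^{(k+1)}_\ell$ is absorbed and leaves the clean $(aq)_\infty^{-1}$ of \eqref{coro2lattice}. I expect the main obstacle to be purely organisational: tracking, through two successive base changes, the power of $q$ and of $a$ attached to each variable, and carrying out the two reindexings so that the two distinct geometric sums emerge with the correct prefactors. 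The boundary cases must be handled by the obvious truncations indicated before the statement: for $j=0$ the second lattice and \Cref{lem:BL} iterations at base $a/q^2$ are dropped (final base $a/q$, both geometric sums collapsing to $1$, recovering \Cref{thm:corobaileylattice}), and for $r+j=k$ the first lattice and the intermediate \Cref{lem:BL} iterations are dropped; in each case \eqref{coro2lattice} specialises consistently.
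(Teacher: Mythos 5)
Your proposal is correct and follows exactly the construction the paper itself uses (the paper only sketches it in the paragraph preceding the statement: $r+1$ applications of Lemma~\ref{lem:BL}, Lemma~\ref{lem:Baileylattice}, $k-r-j-1$ applications of Lemma~\ref{lem:BL} at base $a/q$, Lemma~\ref{lem:Baileylattice} again, $j-1$ applications of Lemma~\ref{lem:BL} at base $a/q^2$, then $n\to\infty$ in~\eqref{bp}); your bookkeeping of the factors $g_\ell$, $P_\ell$, $Q_\ell$, the two reindexings, and the absorption of $(1-a/q)(1-a)$ into $(a/q)_\infty$ all check out and fill in the computation the paper leaves implicit.
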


We now prove Theorems~\ref{thm:Sta3.2}, \ref{thm:Sta4.2}, and
\ref{thm:new_Kur} using \Cref{prop:coro2baileylattice}.

\begin{proof}[Proof of \Cref{thm:Sta3.2}]
  We apply \Cref{prop:coro2baileylattice}. Inserting the unit Bailey
  pair~\eqref{ubp} in~\eqref{coro2lattice} gives
\begin{multline}\label{corolatticeubp}
\sum_{s_1\geq\dots\geq s_{k}\geq0}\frac{a^{s_1+\dots+s_{k}}q^{s_1^2+\dots+s_{k}^2-2s_1-\dots-2s_{j}-s_{j+1}-\dots-s_{k-r}}}{(q)_{s_1-s_2}\dots(q)_{s_{k-1}-s_{k}}(q)_{s_{k}}}=\frac{1}{(a)_\infty}\sum_{\ell\geq0}(-1)^\ell a^{(k+1)\ell}q^{(k+1)\ell^2+\binom{\ell}{2}+(r-j-k)\ell}\\
\times\frac{(a)_\ell}{(q)_\ell}\left(\frac{1-(aq^{2\ell-1})^{j+1}}{1-aq^{2\ell-1}}-a^{k+1-r}q^{(2k+2-2r)\ell-j}\frac{1-(aq^{2\ell+1})^{j+1}}{1-aq^{2\ell+1}}\right).
\end{multline}
Now take \(a=q\). Then the left-hand side corresponds to that
of~\eqref{eq:sAG}, while the right-hand side becomes
\[
\frac{1}{(q)_\infty}\sum_{\ell\geq0}(-1)^\ell q^{(k+\frac{3}{2})\ell^2+(r-j+\frac{1}{2})\ell}\left(\frac{1-q^{2\ell(j+1)}}{1-q^{2\ell}}-q^{(2k+2-2r)\ell+k+1-r-j}\frac{1-q^{(2\ell+2)(j+1)}}{1-q^{2\ell+2}}\right).
\]
Split the sum into two parts, and apply the change of variables
\(\ell\mapsto-\ell-1\) in the second sum. This yields
\[
\frac{1}{(q)_\infty}\sum_{\ell\in\mathbb{Z}}(-1)^\ell q^{(k+\frac{3}{2})\ell^2+(r-j+\frac{1}{2})\ell}\frac{1-q^{2\ell(j+1)}}{1-q^{2\ell}}.
\]
Expanding the quotient yields
\[
\frac{1}{(q)_\infty}\sum_{s=0}^j\sum_{\ell\in\mathbb{Z}}(-1)^\ell q^{(2k+3)\binom{\ell}{2}}q^{(k+r-j+2s+2)\ell}.
\]
Finally, applying the Jacobi triple product identity~\eqref{eq:jtp} with
\(q\mapsto q^{2k+3}\) and \(z=q^{k+r-j+2s+2}\), we obtain the right-hand
side of~\eqref{eq:sAG}.
\end{proof}

We give a similar proof of Theorem~\ref{thm:Sta4.2}, starting this time from the Bailey pair (D'4).

\begin{proof}[Proof of \Cref{thm:Sta4.2}]
  We now apply~\Cref{prop:coro2baileylattice} with $a=q$, $k\to k-1$ and $r\to r-1$ to the Bailey pair~\eqref{D4ubpq}.  Note that the ranges $r\geq0$, $k\geq1$ and $r+j\leq k$ are still valid, as the above Bailey pair is obtained by using one instance of the change of base~\cite[(D4)]{BIS} to the unit Bailey pair.   This gives
\begin{multline*}
\sum_{s_1\geq\dots\geq s_{k}\geq0}\frac{q^{s_1^2+\dots+s_{k}^2-s_1-\dots-s_j+s_{k-r+1}+\dots+s_{k}}}{(q)_{s_1-s_2}\dots(q)_{s_{k-1}-s_{k}}(q^2;q^2)_{s_{k}}}=\frac{1}{(q)_\infty}\sum_{\ell\geq0}(-1)^\ell q^{(k+1)\ell^2+(r-j)\ell}\\
\times\left(\frac{1-q^{2\ell(j+1)}}{1-q^{2\ell}}-q^{(2k+2-2r)\ell+k+1-r-j}\frac{1-q^{(2\ell+2)(j+1)}}{1-q^{2\ell+2}}\right).
\end{multline*}
The left-hand side is the one of~\eqref{eq:sBr}. On the right-hand side, split the sum over \(\ell\) into two sums, and shift \(\ell\mapsto-\ell-1\) in the second one to get
\[
\frac{1}{(q)_\infty}\sum_{\ell\in\mathbb{Z}}(-1)^\ell q^{(k+1)\ell^2+(r-j)\ell}\frac{1-q^{2\ell(j+1)}}{1-q^{2\ell}},
\]
which by expanding the quotient yields
\[
\frac{1}{(q)_\infty}\sum_{s=0}^j\sum_{\ell\in\mathbb{Z}}(-1)^\ell q^{(2k+2)\binom{\ell}{2}}q^{(k+r+1-j+2s)\ell}.
\]
We get the right-hand side of~\eqref{eq:sAG} by applying the Jacobi
triple product identity~\eqref{eq:jtp} with \(q\mapsto q^{2k+2}\) and
\(z=q^{k+r+1-j+2s}\).

\end{proof}

We finally prove Theorem~\ref{thm:new_Kur}, starting this time from the Bailey pair (D'1).

\begin{proof}[Proof of \Cref{thm:new_Kur}]
  We now apply~\Cref{prop:coro2baileylattice}  with $a=q$, $k\to k-1$ and $r\to r-1$ to the Bailey pair~\eqref{D1ubpq}.
Again the ranges $r\geq0$, $k\geq1$ and $r+j\leq k$ are still valid, and this gives
\begin{multline*}
\sum_{s_1\geq\dots\geq s_{k}\geq0}\frac{q^{s_1^2+\dots+s_{k}^2-s_1-\dots-s_j+s_{k-r+1}+\dots+s_{k-1}+2s_{k}}}{(q)_{s_1-s_2}\dots(q)_{s_{k-1}-s_{k}}(q^2;q^2)_{s_{k}}}=\frac{1}{(q)_\infty}\sum_{\ell\geq0}(-1)^\ell q^{(k+1)\ell^2+(r-j-1)\ell}\frac{1+q^{2\ell+1}}{1+q}\\
\times\left(\frac{1-q^{2\ell(j+1)}}{1-q^{2\ell}}-q^{(2k+2-2r)\ell+k+1-r-j}\frac{1-q^{(2\ell+2)(j+1)}}{1-q^{2\ell+2}}\right).
\end{multline*}
The left-hand side becomes the one of~\eqref{eq:new_Kur}. The right-hand side can be written as
\begin{multline*}
\frac{1}{(1+q)(q)_\infty}\sum_{\ell\geq0}(-1)^\ell q^{(k+1)\ell^2+(r-j-1)\ell}\left(\frac{1-q^{2\ell(j+1)}}{1-q^{2\ell}}-q^{(2k+4-2r)\ell+k+2-r-j}\frac{1-q^{(2\ell+2)(j+1)}}{1-q^{2\ell+2}}\right)\\
+\frac{1}{(1+q)(q)_\infty}\sum_{\ell\geq0}(-1)^\ell q^{(k+1)\ell^2+(r-j+1)\ell+1}\left(\frac{1-q^{2\ell(j+1)}}{1-q^{2\ell}}-q^{(2k-2r)\ell+k-r-j}\frac{1-q^{(2\ell+2)(j+1)}}{1-q^{2\ell+2}}\right).
\end{multline*}
Split each sum and shift \(\ell\mapsto-\ell-1\) in the second and fourth parts.
This gives
\[
\frac{1}{(1+q)(q)_\infty}\left(\sum_{\ell\in\mathbb{Z}}(-1)^\ell q^{(k+1)\ell^2+(r-j-1)\ell}\frac{1-q^{2\ell(j+1)}}{1-q^{2\ell}}+ q \sum_{\ell\in\mathbb{Z}}(-1)^\ell q^{(k+1)\ell^2+(r-j+1)\ell}\frac{1-q^{2\ell(j+1)}}{1-q^{2\ell}}\right).
\]
Expanding the quotients gives
\[
\frac{1}{(1+q)(q)_\infty}\left(\sum_{s=0}^j\sum_{\ell\in\mathbb{Z}}(-1)^\ell q^{(2k+2)\binom{\ell}{2}}q^{(k+r-j+2s)\ell}+ q \sum_{s=0}^j\sum_{\ell\in\mathbb{Z}}(-1)^\ell q^{(2k+2)\binom{\ell}{2}}q^{(k+r-j+2s+2)\ell}\right).
\]
Applying the Jacobi triple product identity~\eqref{eq:jtp} with
\(q\mapsto q^{2k+2}\) and \(z=q^{k+r-j+2s}\) or \(z=q^{k+r-j+2s+2}\)
yields the right-hand side of~\eqref{eq:sAG}.

\end{proof}

\subsection{Non-binomial extension of the Bresssoud--G\"ollnitz--Gordon identities}\label{sec:GG}

Inspired by the methods in~\cite{DJK25}, we see that an extension of \Cref{prop:coro2baileylattice} is needed, through the following process (the method is the same as for \Cref{prop:coro2baileylattice}, except that at two appropriate steps we keep one finite parameter \(\rho\)) (therefore using Lemma~\ref{lem:BL2} instead of Lemma~\ref{lem:BL}). 

First use Lemma~\ref{lem:BL2} with $\rho=b$ and iterate \(r\) times
Lemma~\ref{lem:BL}. Then use Lemma~\ref{lem:Baileylattice}. Next
iterate \(k-r-j-1\) times Lemma~\ref{lem:BL} with \(a\) replaced by
\(a/q\) and use once again Lemma~\ref{lem:Baileylattice}. Iterate
\(j-2\) times Lemma~\ref{lem:BL} with \(a\) replaced by \(a/q^2\), and
finally once Lemma~\ref{lem:BL2} with \(a\) replaced by \(a/q^2\) and
$\rho=c$. As for Proposition~\ref{prop:coro2baileylattice}, one has to
examine what we should do when the number of steps described above is
negative: If $j=0$ and $r=k$, we do not use any lattice in the $k+1$
steps of the process. If $j=0$ and $r<k$ or if $j=1$, we only use one
lattice. Note that for some extremal cases, one has to replace
Lemma~\ref{lem:BL2} in the first or last step by the use of the Bailey
lattice in Theorem~\ref{thm:baileylattice} in which $\sigma\to\infty$
while we keep $\rho$. Again, letting \( n \to \infty \) in the
relation~\eqref{bp} for the resulting Bailey pair
\( (\alpha_n, \beta_n) \) relative to \( a/q^2 \) gives the following.

\begin{prop}[Another new consequence of the Bailey lattice]\label{prop:coro3baileylattice}
  If \((\alpha_n,\,\beta_n)\) is a Bailey pair relative to \(a\), then for all integers \(k\geq1\) and \(0\leq r,j\leq k\) with \(r+j\leq k\), we have
\begin{multline}\label{coro3lattice}
\sum_{s_1\geq\dots\geq s_{k+1}\geq0}\frac{(-1)^{s_1+s_{k+1}}(b)_{s_1}(c)_{s_{k+1}}}{b^{s_1}c^{s_{k+1}}(aq/c)_{s_k}}\frac{a^{s_1+\dots+s_{k+1}}q^{\frac{s_1^2}{2}+s_2^2+\dots+s_{k}^2+\frac{s_{k+1}^2}{2}+\frac{s_1}{2}-2s_1-\dots-2s_{j}-s_{j+1}-\dots-s_{k-r}+\frac{s_{k+1}}{2}}}{(q)_{s_1-s_2}\dots(q)_{s_{k}-s_{k+1}}}\beta_{s_{k+1}}\\
=\frac{(a/bq)_\infty}{(aq)_\infty}\sum_{\ell\geq0}\frac{a^{(k+1)\ell}}{(bc)^\ell}q^{k\ell^2+(r+1-j-k)\ell}\frac{(b,c)_\ell}{(a/bq,aq/c)_\ell}\frac{1}{1-aq^{2\ell}}\\
\times\left(\frac{1+a^{j+1}q^{(2j+1)\ell-j-1}\frac{1-bq^\ell}{b-aq^{\ell-1}}}{1-aq^{2\ell-1}}+a^{k+1-r}q^{(2k+1-2r)\ell-j}\frac{1-bq^\ell}{b-aq^{\ell-1}}\frac{1+a^{j+1}q^{(2j+1)\ell+j}\frac{1-bq^{\ell+1}}{b-aq^{\ell}}}{1-aq^{2\ell+1}}\right)\alpha_\ell.
\end{multline}
\end{prop}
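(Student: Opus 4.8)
The plan is to follow exactly the iterative procedure described in the paragraph immediately preceding Proposition~\ref{prop:coro3baileylattice}, tracking the successive Bailey pairs through each application of Lemmas~\ref{lem:BL}, \ref{lem:BL2} and~\ref{lem:Baileylattice}, and then take the limit $n\to\infty$ in~\eqref{bp}. This is the direct analogue of how Proposition~\ref{prop:coro2baileylattice} is obtained, the only differences being that the two extremal iterations of Lemma~\ref{lem:BL} are replaced by Lemma~\ref{lem:BL2} with the finite parameters $\rho=b$ (first step) and $\rho=c$ (last step). Concretely, starting from $(\alpha_n,\beta_n)$ relative to $a$, I would first apply Lemma~\ref{lem:BL2} with $\rho=b$, then iterate Lemma~\ref{lem:BL} $r$ times (staying relative to $a$), then apply Lemma~\ref{lem:Baileylattice} once to switch to $a/q$, then iterate Lemma~\ref{lem:BL} $k-r-j-1$ times relative to $a/q$, then apply Lemma~\ref{lem:Baileylattice} again to switch to $a/q^2$, then iterate Lemma~\ref{lem:BL} $j-2$ times relative to $a/q^2$, and finally apply Lemma~\ref{lem:BL2} with $\rho=c$ and $a$ replaced by $a/q^2$.

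The key bookkeeping is to accumulate the quadratic exponents and the $(q)$-factorial denominators introduced at each step. Each application of Lemma~\ref{lem:BL} relative to a base $\tilde a$ multiplies $\alpha_\ell$ by $\tilde a^{\ell}q^{\ell^2}$ and convolves $\beta$ against $\tilde a^{\ell}q^{\ell^2}/(q)_{n-\ell}$, thereby producing one more summation index $s_i$ and one more factor $(q)_{s_i-s_{i+1}}$ in the denominator together with a contribution $s_i^2$ to the quadratic form. Lemma~\ref{lem:BL2} applied at the two ends produces instead the factors $(-1)^{s_1}(b)_{s_1}/b^{s_1}$ and $(-1)^{s_{k+1}}(c)_{s_{k+1}}/c^{s_{k+1}}$ together with the half-integer exponents $\tfrac{s_1^2}{2}+\tfrac{s_1}{2}$ and $\tfrac{s_{k+1}^2}{2}+\tfrac{s_{k+1}}{2}$ and the $(aq/c)_{s_k}$ denominator, which is exactly what appears on the left-hand side of~\eqref{coro3lattice}. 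The two applications of Lemma~\ref{lem:Baileylattice} are responsible for the two telescoping-type factors $\dfrac{1-bq^{\ell}}{b-aq^{\ell-1}}$ and $\dfrac{1-bq^{\ell+1}}{b-aq^{\ell}}$ on the right-hand side, since each lattice step leaves, after the $\rho,\sigma\to\infty$ specialisation, a difference $\dfrac{\alpha_\ell}{1-aq^{2\ell}}-\dfrac{aq^{2\ell-2}\alpha_{\ell-1}}{1-aq^{2\ell-2}}$ whose effect is recorded in these quotients.

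For the limit, I would invoke the relation $\displaystyle\beta_\infty=\frac{1}{(aq)_\infty}\sum_{\ell\geq0}\alpha_\ell$ obtained by letting $n\to\infty$ in~\eqref{bp} (the base being $a/q^2$ at the end, and the appropriate $(a/bq)_\infty$ prefactor arising from the final Lemma~\ref{lem:BL2} specialisation, exactly as the $1/(aq)_\infty$ prefactor does in~\eqref{coro2lattice}). Collecting the accumulated $\alpha_\ell$-coefficient then yields the right-hand side of~\eqref{coro3lattice}. Finally I would address the degenerate ranges exactly as stated in the preceding paragraph: if $j=0$ and $r=k$ no lattice is used, if $j=0,r<k$ or $j=1$ only one lattice is used, and in certain extremal cases Lemma~\ref{lem:BL2} must be replaced by the full Bailey lattice of Theorem~\ref{thm:baileylattice} with $\sigma\to\infty$ while retaining $\rho$; one checks that the resulting formula agrees with~\eqref{coro3lattice} in each case.

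The main obstacle I anticipate is the algebraic simplification produced by the two lattice steps combined with the two Lemma~\ref{lem:BL2} specialisations: reconciling the raw output of Theorem~\ref{thm:baileylattice} (which carries factors $(1-a)\bigl(\tfrac{\alpha_n}{1-aq^{2n}}-\tfrac{aq^{2n-2}\alpha_{n-1}}{1-aq^{2n-2}}\bigr)$ relative to the shifting base) with the compact form featuring $\dfrac{1+a^{j+1}q^{(2j+1)\ell-j-1}\,\frac{1-bq^\ell}{b-aq^{\ell-1}}}{1-aq^{2\ell-1}}$ requires carefully tracking how the $\alpha_{\ell-1}$ term from each lattice reindexes and telescopes against the $\alpha_\ell$ term. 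Keeping the powers of $a$, $b$, $c$ and $q$ correct through the base shifts $a\mapsto a/q\mapsto a/q^2$, and verifying that the half-integer exponents coming from the two Lemma~\ref{lem:BL2} steps assemble into the stated quadratic form $\tfrac{s_1^2}{2}+s_2^2+\cdots+s_k^2+\tfrac{s_{k+1}^2}{2}$, is the delicate part; everything else is a routine, if lengthy, iteration.
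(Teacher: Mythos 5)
Your strategy is exactly the one the paper uses: Proposition~\ref{prop:coro3baileylattice} is obtained there precisely by the chain of $k+1$ applications you describe (Lemma~\ref{lem:BL2} at the two ends with the finite parameters, Lemma~\ref{lem:BL} in between, two insertions of Lemma~\ref{lem:Baileylattice}), followed by letting $n\to\infty$ in~\eqref{bp}; the paper gives essentially no more detail than your sketch, including the same treatment of the degenerate ranges.

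One bookkeeping point is off, however, and you should fix it before carrying out the computation. In a Bailey chain the \emph{first} lemma applied governs the \emph{innermost} summation index: its $\beta'$-sum contributes the factor attached to $s_{k+1}$ and the denominator $(\cdot)_{s_k}$, while the \emph{last} lemma applied contributes the factor attached to $s_1$ and the denominator $(\cdot)_n$ that survives as an infinite product when $n\to\infty$. With your stated order (Lemma~\ref{lem:BL2} with $\rho=b$ first, relative to $a$; Lemma~\ref{lem:BL2} with $\rho=c$ last, relative to $a/q^2$) you would therefore obtain $(b)_{s_{k+1}}$, $(aq/b)_{s_k}$ in the denominator, $(c)_{s_1}$, and a prefactor $(a/cq)_\infty$ --- that is, \eqref{coro3lattice} with $b$ and $c$ interchanged. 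To land on the formula exactly as displayed, the $\rho=c$ step must be the innermost one (performed relative to $a$, producing $(c)_{s_{k+1}}$ and $(aq/c)_{s_k}$) and the $\rho=b$ step the outermost one (performed relative to $a/q^2$, producing $(b)_{s_1}$ and the prefactor $(a/bq)_\infty$); the factors $\frac{1-bq^{\ell}}{b-aq^{\ell-1}}$ on the right-hand side then arise from the shifted ratio of $(b)_\ell/(a/bq)_\ell$ coming from this outermost step interacting with the $\alpha_{\ell-1}$ terms created by the two lattice insertions, not from the lattice steps alone. Since $b$ and $c$ are free parameters the discrepancy is only a relabelling (and the paper's own prose description suffers from the same swap relative to its displayed formula), but as written your assignment of output factors to steps is internally inconsistent and would derail a line-by-line verification.
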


Applying~\eqref{coro3lattice} to the unit Bailey pair~\eqref{ubp}, we get
\begin{multline}\label{coro3latticeubp}
\sum_{s_1\geq\dots\geq s_{k}\geq0}\frac{(-1)^{s_1}(b)_{s_1}}{b^{s_1}(aq/c)_{s_k}}\frac{a^{s_1+\dots+s_{k}}q^{\frac{s_1^2}{2}+s_2^2+\dots+s_{k}^2+\frac{s_1}{2}-2s_1-\dots-2s_{j}-s_{j+1}-\dots-s_{k-r}}}{(q)_{s_1-s_2}\dots(q)_{s_{k-1}-s_{k}}(q)_{s_k}}\\
=\frac{(a/bq)_\infty}{(a)_\infty}\sum_{\ell\geq0}(-1)^\ell\frac{a^{(k+1)\ell}}{(bc)^\ell}q^{(k+\frac{1}{2})\ell^2+(r-j-k+\frac{1}{2})\ell}\frac{(a,b,c)_\ell}{(q,a/bq,aq/c)_\ell}\\
\times\left(\frac{1+a^{j+1}q^{(2j+1)\ell-j-1}\frac{1-bq^\ell}{b-aq^{\ell-1}}}{1-aq^{2\ell-1}}+a^{k+1-r}q^{(2k+1-2r)\ell-j}\frac{1-bq^\ell}{b-aq^{\ell-1}}\frac{1+a^{j+1}q^{(2j+1)\ell+j}\frac{1-bq^{\ell+1}}{b-aq^{\ell}}}{1-aq^{2\ell+1}}\right).
\end{multline}

As for~\eqref{corolatticeubp}, the appropriate choice for \(a\) to derive Stanton type formulas is \(a=q\) (this is clear by inspecting the powers of \(a\) and \(q\) on the left-hand side of~\eqref{coro3latticeubp}). Note that the alternative classical choice is usually \(a=1\), which creates problems of convergence here. When \(a=q\), we get
\begin{multline}\label{coro3latticeubpa=q}
\sum_{s_1\geq\dots\geq s_{k}\geq0}\frac{(-1)^{s_1}(b)_{s_1}}{b^{s_1}(q^2/c)_{s_k}}\frac{q^{\frac{s_1^2}{2}+s_2^2+\dots+s_{k}^2+\frac{s_1}{2}-s_1-\dots-s_{j}+s_{k-r+1}+\dots+s_{k}}}{(q)_{s_1-s_2}\dots(q)_{s_{k-1}-s_{k}}(q)_{s_k}}\\
=\frac{(1/b)_\infty}{(q)_\infty}\sum_{\ell\geq0}\frac{(-1)^\ell}{(bc)^\ell}q^{(k+\frac{1}{2})\ell^2+(r-j+\frac{3}{2})\ell}\frac{(b,c)_\ell}{(1/b,q^2/c)_\ell}\\
\times\left(\frac{1+q^{(2j+1)\ell}\frac{1-bq^\ell}{b-q^{\ell}}}{1-q^{2\ell}}+q^{(2k+1-2r)\ell+k+1-r-j}\frac{1-bq^\ell}{b-q^{\ell}}\frac{1+q^{(2j+1)(\ell+1)}\frac{1-bq^{\ell+1}}{b-q^{\ell+1}}}{1-q^{2\ell+2}}\right).
\end{multline}

As mentioned in the introduction, \(m\)-versions of Bressoud's extensions~\cite[(3.6)--(3.9)]{Br80} of the G\"ollnitz--Gordon identities are derived in~\cite{DJK25}. The method uses a bilateral version of a simpler case of \Cref{prop:coro3baileylattice} (in which only one instance of the Bailey lattice is used instead of two). As a result, it is shown that, as for the cases of the Andrews--Gordon and Bressoud identities, all results come in pairs,  arising from the two choices \(a=1\) and \(a=q\). Surprisingly, it is for instance noticed that~\cite[(3.6)]{Br80} (which extends one of the G\"ollnitz--Gordon identities) arises in pair with Bressoud's identity~\eqref{eq:Br}, while~\cite[(3.7)]{Br80} (which extends another of the G\"ollnitz--Gordon identities) arises in pair with Bressoud's identity~\eqref{eq:Br3.5}. Similarly,~\cite[(3.8)]{Br80} (resp.~\cite[(3.9)]{Br80} arises in pair with a new formula expressed in~\cite[Corollary 2.10]{DJK25} (resp.~\cite[Corollary 2.12]{DJK25}.

As can be seen in the cases of the Andrews--Gordon and Bressoud
formulas, their extensions discovered by Stanton now embed both
choices \(a=1\) and \(a=q\) in formulas involving two integral
parameters \(j\) and \( r \), instead of only one parameter as usual.
As seen in the previous sections, these formulas can be derived from
the Bailey lattice with only one choice \(a=q\), but one needs to use
twice the Bailey lattice instead of once.

Our goal is to do the same here for Bressoud's extensions of the G\"ollnitz--Gordon identities, by using \Cref{prop:coro3baileylattice}. Nevertheless, as explained above, the only reasonable choice for \(a\) is \(q\), resulting in formula~\eqref{coro3latticeubpa=q}.

 The first specialisation used in~\cite{DJK25}, namely \(b\to\infty,c=-q\), applied to~\eqref{coro3latticeubpa=q}, yields Stanton's formula~\eqref{eq:sBr}. The second specialisation in~\cite{DJK25} is \(b\to\infty,c=-q^{1/2}\), which does not seem to yield interesting formulas. Alternatively, we are able to prove \Cref{thm:GGrj}.

\begin{proof}[Proof of \Cref{thm:GGrj}]
  Take \(b\to\infty,c=-q^{3/2}\) in~\eqref{coro3latticeubpa=q},
  replace \(q\) by \(q^2\) and multiply both sides by
  \((-q;q^2)_\infty\). Then the left-hand side becomes the desired
  expression. On the right-hand side, we obtain
\[
  \frac{(-q^3;q^2)_\infty}{(q^2;q^2)_\infty}
\sum_{\ell\geq0}(-1)^\ell q^{(2k+2)\ell^2+(2r-2j-1)\ell}(1+q^{2\ell+1})\left(\frac{1-q^{4\ell(j+1)}}{1-q^{4\ell}}-\frac{1-q^{(2j+1)(\ell+1)}}{1-q^{2\ell+2}}q^{(2k+1-2r)\ell+k+1-r-j}\right).
\]
Split the sum and apply the change of variables
\( \ell \mapsto - \ell -1 \) in the second sum. This gives
\[
  \frac{(-q^3;q^2)_\infty}{(q^2;q^2)_\infty}
\sum_{\ell\in\mathbb{Z}}(-1)^\ell q^{(2k+2)\ell^2+(2r-2j-1)\ell}(1+q^{2\ell+1})\frac{1-q^{4\ell(j+1)}}{1-q^{4\ell}}.
\]
Expanding the quotient yields
\[
  \frac{(-q^3;q^2)_\infty}{(q^2;q^2)_\infty}
\sum_{s=0}^j\sum_{\ell\in\mathbb{Z}}(-1)^\ell q^{(4k+4)\binom{\ell}{2}+(2k+2r-2j+4s+1)\ell}(1+q^{2\ell+1}).
\]
Splitting the sum over \(\ell\) into two sums and applying twice the Jacobi
triple product identity~\eqref{eq:jtp} gives the desired result.
\end{proof}

Now we turn to the two last specialisations used in~\cite{DJK25} to
extend~\cite[(3.8)--(3.9)]{Br80}, namely \(b=-q^{1/2},c\to\infty\) and
\(b=-q^{1/2},c=-q\), respectively. In view
of~\eqref{coro3latticeubpa=q}, it seems hopeless to handle the
right-hand side nicely with these specialisations. Alternatively, the
choice \(b=-1\) gives the next two results.

\begin{thm}\label{thm:newSlater}
  Let \( j,r \geq 0 \) and \( k \geq 1 \) be integers such that
  \( j+r \leq k \). Then
\begin{multline}\label{newslater}
\sum_{s_1\geq\dots\geq s_{k}\geq0}\frac{q^{\frac{s_1^2}{2}+s_2^2+\dots+s_{k}^2+\frac{s_1}{2}-s_1-\dots-s_{j}+s_{k-r+1}+\dots+s_{k}}(-1)_{s_1}}{(q)_{s_1-s_2}\dots(q)_{s_{k-1}-s_{k}}(q)_{s_k}}\\
=\frac{(-q)_\infty}{(q)_\infty}\sum_{s=0}^{2j}\sum_{t=0}^{2r}(-1)^t(q^{2k+2},q^{k+1-r-j+s+t},q^{k+1+r+j-s-t};q^{2k+2})_\infty.
\end{multline}
\end{thm}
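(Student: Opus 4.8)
The plan is to derive~\eqref{newslater} from the master identity~\eqref{coro3latticeubpa=q}, exactly as Stanton's~\eqref{eq:sBr} comes from $b\to\infty$, $c=-q$ and Theorem~\ref{thm:GGrj} comes from $b\to\infty$, $c=-q^{3/2}$; here the relevant choice is $b=-1$ and $c\to\infty$. With $b=-1$ the prefactor $(-1)^{s_1}(b)_{s_1}/b^{s_1}$ collapses to $(-1)_{s_1}$, while $c\to\infty$ kills $1/(q^2/c)_{s_k}$, so the left-hand side of~\eqref{coro3latticeubpa=q} becomes precisely the sum side of~\eqref{newslater}. On the right, $b=-1$ forces every factor $(1-bq^\ell)/(b-q^\ell)$ to equal $-1$, and $c\to\infty$ contributes $(-1)^\ell q^{\binom{\ell}{2}}$ through $(c)_\ell/c^\ell$ while trivialising $(q^2/c)_\ell$. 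After collecting the powers of $q$ (the exponent simplifies to $(k+1)\ell^2+(r-j+1)\ell$), I am left with
\[
\frac{(-1)_\infty}{(q)_\infty}\sum_{\ell\geq0}(-1)^\ell q^{(k+1)\ell^2+(r-j+1)\ell}\left(\frac{1-q^{(2j+1)\ell}}{1-q^{2\ell}}-q^{(2k+1-2r)\ell+k+1-r-j}\frac{1-q^{(2j+1)(\ell+1)}}{1-q^{2\ell+2}}\right).
\]

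Following the bilateralisation used in the proof of Theorem~\ref{thm:GGrj}, I would split this sum and apply $\ell\mapsto-\ell-1$ to the second piece; a direct check shows the two pieces glue into the single bilateral sum
\[
\frac{(-1)_\infty}{(q)_\infty}\sum_{\ell\in\ZZ}(-1)^\ell q^{(k+1)\ell^2+(r-j+1)\ell}\,\frac{1-q^{(2j+1)\ell}}{1-q^{2\ell}}.
\]

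The decisive step differs from the $b\to\infty$ specialisations. There the numerator exponent is the \emph{even} number $2(j+1)\ell$, so $\frac{1-q^{2(j+1)\ell}}{1-q^{2\ell}}$ is already a finite geometric sum; here the exponent $(2j+1)\ell$ is odd and the ratio does not expand directly. The remedy is to symmetrise under $\ell\mapsto-\ell$. Setting $P(\ell)=(-1)^\ell q^{(k+1)\ell^2}\frac{1-q^{(2j+1)\ell}}{1-q^{2\ell}}$, one computes $P(-\ell)=q^{(1-2j)\ell}P(\ell)$, from which $\sum_{\ell\in\ZZ}q^{-(r+j)\ell}P(\ell)=\sum_{\ell\in\ZZ}q^{(r-j+1)\ell}P(\ell)$. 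Hence I may replace $2\sum_\ell q^{(r-j+1)\ell}P(\ell)$ by $\sum_\ell\bigl(q^{(r-j+1)\ell}+q^{-(r+j)\ell}\bigr)P(\ell)$, and since $(-1)_\infty=2(-q)_\infty$ this spare factor $2$ is exactly what turns the prefactor $(-1)_\infty/(q)_\infty$ into the $(-q)_\infty/(q)_\infty$ of~\eqref{newslater}. The symmetrised summand factors as
\[
(-1)^\ell q^{(k+1)\ell^2-(r+j)\ell}\,\frac{\bigl(1-q^{(2j+1)\ell}\bigr)\bigl(1+q^{(2r+1)\ell}\bigr)}{(1-q^\ell)(1+q^\ell)}=(-1)^\ell q^{(k+1)\ell^2-(r+j)\ell}\Bigl(\sum_{s=0}^{2j}q^{s\ell}\Bigr)\Bigl(\sum_{t=0}^{2r}(-1)^t q^{t\ell}\Bigr),
\]
producing both finite sums $\sum_{s=0}^{2j}$ and $\sum_{t=0}^{2r}(-1)^t$ at once.

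To conclude, I would interchange the three summations, write $(k+1)\ell^2=(2k+2)\binom{\ell}{2}+(k+1)\ell$ so that the inner exponent reads $(2k+2)\binom{\ell}{2}+(k+1-r-j+s+t)\ell$, and apply the Jacobi triple product~\eqref{eq:jtp} with $q\mapsto q^{2k+2}$ and $z=q^{k+1-r-j+s+t}$ to each inner bilateral sum; this yields $(q^{2k+2},q^{k+1-r-j+s+t},q^{k+1+r+j-s-t};q^{2k+2})_\infty$ and hence the right-hand side of~\eqref{newslater}. The main obstacle is exactly the odd numerator exponent $(2j+1)\ell$: one must recognise that the ``second half'' of the double sum and the correct $(-q)_\infty$ normalisation are both recovered from the $\ell\mapsto-\ell$ symmetry of $P$, rather than from a plain geometric expansion as in the even cases; a minor point to keep in mind is the apparent $0/0$ at $\ell=0$, which is harmless since the symmetrised summand is manifestly a polynomial in $q^\ell$ there.
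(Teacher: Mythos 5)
Your proposal is correct and follows essentially the same route as the paper's proof: the same specialisation $b=-1$, $c\to\infty$ of~\eqref{coro3latticeubpa=q}, the same bilateralisation via $\ell\mapsto-\ell-1$, the same symmetrisation under $\ell\mapsto-\ell$ (which the paper phrases as adding the reflected sum and dividing by $2$, absorbing the factor $2$ into $(-1)_\infty=2(-q)_\infty$), and the same factorisation of the summand into the two finite sums before applying the Jacobi triple product.
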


\begin{proof}
  Take \(b=-1,c\to\infty\) in~\eqref{coro3latticeubpa=q}. We get the
  desired left-hand side, while the right-hand side becomes
\[
\frac{(-1)_\infty}{(q)_\infty}\sum_{\ell\geq0}(-1)^\ell q^{(k+1)\ell^2+(r-j+1)\ell}\left(\frac{1-q^{(2j+1)\ell}}{1-q^{2\ell}}-\frac{1-q^{(2j+1)(\ell+1)}}{1-q^{2\ell+2}}q^{(2k+1-2r)\ell+k+1-r-j}\right).
\] 
Split the sum into two parts, and in the second one, apply the change
of variables \(\ell\mapsto -\ell-1\). This transforms the sum into
\[
\frac{(-1)_\infty}{(q)_\infty}\sum_{\ell\in\mathbb{Z}}(-1)^\ell q^{(k+1)\ell^2+(r-j+1)\ell}\frac{1-q^{(2j+1)\ell}}{1-q^{2\ell}}.
\] 
Now apply the substitution \( \ell \mapsto - \ell \) to this expression, obtaining
\[
  \frac{(-1)_\infty}{(q)_\infty}\sum_{\ell\in\mathbb{Z}}(-1)^\ell
  q^{(k+1)\ell^2+(r-j+1)\ell}\frac{1-q^{(2j+1)\ell}}{1-q^{2\ell}}q^{-(2r+1)\ell}.
\]
Adding the two above expressions and dividing by \(2\) gives
\[
\frac{(-q)_\infty}{(q)_\infty}\sum_{\ell\in\mathbb{Z}}(-1)^\ell q^{(2k+2)\binom{\ell}{2}+(k+1-r-j)\ell}\frac{1-q^{(2j+1)\ell}}{1-q^{\ell}}\frac{1+q^{(2r+1)\ell}}{1+q^{\ell}}.
\] 
Expanding the quotients and applying the Jacobi triple product
identitiy~\eqref{eq:jtp} completes the proof.
\end{proof}

For \(r=0\), Formula~\eqref{newslater} is~\cite[Corollary 2.10]{DJK25} with \((r,i)\to (k+1,j)\). When \(k=1\), the three possible cases \((j,r)=(0,0)\), \((1,0)\) and \((0,1)\) are equivalent to Slater's identities~\cite[(8), (12), (13)]{Sl}.

\begin{thm}\label{thm:newSlater2}
  Let \( j,r \geq 0 \) and \( k \geq 1 \) be integers such that
  \( j+r \leq k \). If \(r\geq1\), then we have
\begin{multline}\label{newslater2}
(1+q^{1/2})\sum_{s_1\geq\dots\geq s_{k}\geq0}\frac{q^{\frac{s_1^2}{2}+s_2^2+\dots+s_{k}^2+\frac{s_1}{2}-s_1-\dots-s_{j}+s_{k-r+1}+\dots+s_{k}}(-1)_{s_1}}{(q)_{s_1-s_2}\dots(q)_{s_{k-1}-s_{k}}(q,-q^{1/2})_{s_k}}\\
=\frac{(-q)_\infty}{(q)_\infty}\bigg(\sum_{s=0}^{2j}\sum_{t=0}^{2r-2}(-1)^t(q^{2k+1},q^{k+3/2-r-j+s+t},q^{k-1/2+r+j-s-t};q^{2k+1})_\infty\\
+q^{1/2}\sum_{s=0}^{2j}\sum_{t=0}^{2r}(-1)^t(q^{2k+1},q^{k+1/2-r-j+s+t},q^{k+1/2+r+j-s-t};q^{2k+1})_\infty\bigg),
\end{multline}
and if \(r=0\), then we get~\cite[Corollary~2.12]{DJK25} with
\((r,i)\to (k+1,j)\).
\end{thm}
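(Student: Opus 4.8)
The plan is to specialise the master identity~\eqref{coro3latticeubpa=q}, just as in the proof of \Cref{thm:newSlater}, but with a different value of the free parameter $c$. Since the left-hand side of~\eqref{newslater2} carries $(q,-q^{1/2})_{s_k}$ rather than $(q)_{s_k}$ in its denominator, I would force $(q^2/c)_{s_k}=(-q^{1/2})_{s_k}$ by taking $c=-q^{3/2}$, while keeping $b=-1$. As before, $b=-1$ gives three simplifications: $(-1)^{s_1}(b)_{s_1}/b^{s_1}=(-1)_{s_1}$ on the left, $(b)_\ell/(1/b)_\ell=1$ on the right, and $\tfrac{1-bq^\ell}{b-q^\ell}=-1$, which collapses the large parenthesis of~\eqref{coro3latticeubpa=q} into the familiar two-term bracket
\[
P(\ell)=\frac{1-q^{(2j+1)\ell}}{1-q^{2\ell}}-q^{(2k+1-2r)\ell+k+1-r-j}\,\frac{1-q^{(2j+1)(\ell+1)}}{1-q^{2\ell+2}}.
\]
The only surviving $c$-dependence is the telescoping quotient $\tfrac{(c)_\ell}{(q^2/c)_\ell\,c^\ell}=\tfrac{1+q^{\ell+1/2}}{1+q^{1/2}}(-1)^\ell q^{-3\ell/2}$, so multiplying~\eqref{coro3latticeubpa=q} through by $1+q^{1/2}$ clears the denominator $1+q^{1/2}$ and reproduces the prefactor on the left of~\eqref{newslater2}.

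After these reductions the right-hand side reads $\tfrac{(-1)_\infty}{(q)_\infty}\sum_{\ell\ge0}(-1)^\ell(1+q^{\ell+1/2})\,q^{(k+\frac12)\ell^2+(r-j)\ell}\,P(\ell)$. I would split $P(\ell)$ and apply the shift $\ell\mapsto-\ell-1$ to the second summand. The genuinely new ingredient compared with \Cref{thm:newSlater} is the factor $1+q^{\ell+1/2}$: under the shift it turns into $q^{-\ell-1/2}(1+q^{\ell+1/2})$, and this extra $q^{-\ell-1/2}$ is exactly what absorbs the residual power of $q$ created by the half-integer coefficient $k+\tfrac12$ of $\ell^2$, so that the two summands merge into the single bilateral sum
\[
S=\sum_{\ell\in\ZZ}(-1)^\ell(1+q^{\ell+1/2})\,q^{(k+\frac12)\ell^2+(r-j)\ell}\,\frac{1-q^{(2j+1)\ell}}{1-q^{2\ell}}.
\]

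To evaluate $S$ I would symmetrise by writing $S=\tfrac12(S+S')$, where $S'$ is the image of $S$ under $\ell\mapsto-\ell$. Setting $u=q^\ell$ and using $\tfrac{1-u^{2j+1}}{1-u^2}=\tfrac{1}{1+u}\sum_{s=0}^{2j}u^s$, the bracket that combines the two halves factors as
\[
u^{r-j}+u^{1-r-j}+q^{1/2}\big(u^{r-j+1}+u^{-r-j}\big)=u^{1-r-j}\big(1+u^{2r-1}\big)+q^{1/2}u^{-r-j}\big(1+u^{2r+1}\big).
\]
Substituting $\tfrac{1+u^{2r-1}}{1+u}=\sum_{t=0}^{2r-2}(-1)^tu^t$ and $\tfrac{1+u^{2r+1}}{1+u}=\sum_{t=0}^{2r}(-1)^tu^t$ turns $S+S'$ into the two families on the right of~\eqref{newslater2}, once I rewrite $q^{(k+\frac12)\ell^2}=q^{(2k+1)\binom{\ell}{2}}q^{(k+\frac12)\ell}$ and apply the Jacobi triple product~\eqref{eq:jtp} with base $q^{2k+1}$ for each of the $2j+1$ values of $s$ and the two ranges of $t$. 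Since $(-1)_\infty=2(-q)_\infty$ and $S=\tfrac12(S+S')$, the constants collapse to the prefactor $\tfrac{(-q)_\infty}{(q)_\infty}$.

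I expect the factorisation displayed in the previous step to be the crux of the argument: it is what resolves the answer into two products of modulus $q^{2k+1}$, the first carried by $1+u^{2r-1}$ (hence $0\le t\le 2r-2$) and the second by $q^{1/2}(1+u^{2r+1})$ (hence $0\le t\le 2r$). This is also exactly where the hypothesis $r\ge1$ enters, since $\tfrac{1+u^{2r-1}}{1+u}$ is a genuine polynomial in $u$ only when $r\ge1$. For $r=0$ the same bracket instead collapses to $(1+q^{1/2})\big(u^{-j}+u^{1-j}\big)$; the spurious factor $1+q^{1/2}$ then cancels the one introduced on the left, the two families coalesce, and the identity degenerates to \cite[Corollary~2.12]{DJK25}, as asserted.
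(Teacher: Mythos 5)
Your proposal is correct and follows essentially the same route as the paper: specialise \eqref{coro3latticeubpa=q} at $b=-1$, $c=-q^{3/2}$, multiply by $1+q^{1/2}$, bilateralise via $\ell\mapsto-\ell-1$, symmetrise under $\ell\mapsto-\ell$, expand the resulting quotients $\tfrac{1-u^{2j+1}}{1-u}$ and $\tfrac{1+u^{2r\mp1}}{1+u}$ as finite sums, and finish with the Jacobi triple product. The only (cosmetic) difference is that you keep the factor $1+q^{\ell+1/2}$ bundled in a single bilateral sum before factoring the combined bracket, whereas the paper splits it into two sums immediately after the shift; the identification of the $r\ge1$ hypothesis with the polynomiality of $\tfrac{1+u^{2r-1}}{1+u}$ and the degeneration at $r=0$ also match the paper.
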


\begin{proof}
Take \(b=-1,c=-q^{3/2}\) in~\eqref{coro3latticeubpa=q} and multiply both sides by \(1+q^{1/2}\), then we get the desired left-hand side, while the right-hand side is
\[
\frac{(-1)_\infty}{(q)_\infty}\sum_{\ell\geq0}(-1)^\ell q^{(k+1/2)\ell^2+(r-j)\ell}(1+q^{\ell+1/2})\left(\frac{1-q^{(2j+1)\ell}}{1-q^{2\ell}}-\frac{1-q^{(2j+1)(\ell+1)}}{1-q^{2\ell+2}}q^{(2k+1-2r)\ell+k+1-r-j}\right).
\] 
Split the sum into two parts, and in the second sum apply the
substitution \(\ell\mapsto -\ell-1\). This yields
\[
\frac{(-1)_\infty}{(q)_\infty}\left(\sum_{\ell\in\mathbb{Z}}(-1)^\ell q^{(k+1/2)\ell^2+(r-j)\ell}\frac{1-q^{(2j+1)\ell}}{1-q^{2\ell}}+q^{1/2}\sum_{\ell\in\mathbb{Z}}(-1)^\ell q^{(k+1/2)\ell^2+(r-j+1)\ell}\frac{1-q^{(2j+1)\ell}}{1-q^{2\ell}}\right).
\] 
Now replace \( \ell \mapsto - \ell \) in each sum, add to the original
expressions, and divide by \( 2 \). We obtain
\begin{multline*}
\frac{(-q)_\infty}{(q)_\infty}\bigg(\sum_{\ell\in\mathbb{Z}}(-1)^\ell q^{(2k+1)\binom{\ell}{2}+(k-r-j+3/2)\ell}\frac{1-q^{(2j+1)\ell}}{1-q^{\ell}}\frac{1+q^{(2r-1)\ell}}{1+q^\ell}\\
+q^{1/2}\sum_{\ell\in\mathbb{Z}}(-1)^\ell q^{(2k+1)\binom{\ell}{2}+(k-r-j+1/2)\ell}\frac{1-q^{(2j+1)\ell}}{1-q^{\ell}}\frac{1+q^{(2r+1)\ell}}{1+q^\ell}\bigg).
\end{multline*}
If \(r=0\), the two sums are the same, so one can extract a factor
\(1+q^{1/2}\). Expanding both quotients and applying the Jacobi triple
product identity~\eqref{eq:jtp} yields~\cite[Corollary~2.12]{DJK25}
with \((r,i)\to (k+1,j)\). If \(r\geq1\), expanding the four quotients
and applying~\eqref{eq:jtp} twice gives the result.
\end{proof}

\section{Insertion map for multipartitions revisited}
\label{sec:insert-map-mult}

In this section we revisit the particle motion bijection, which was
first used in~\cite{Warnaar1997}, and generalised recently
in~\cite{DJK}, to make it more systematic and easier to apply.
Warnaar's approach had a simpler combinatorial description but was
less general, while the approach of~\cite{DJK} was more general and
gave rise to useful explicit formulas but was less intuitive to
understand. Our goal here is to keep the best of both worlds by
reformulating the approach of~\cite{DJK} in the style of Warnaar,
which leads both to a simple combinatorial description and explicit
formulas.

In the next sections, we will use this bijection to give
partition-theoretic interpretations of Stanton's non-binomial identities and prove Theorems \ref{thm:Sta3.2}, \ref{thm:Sta4.2} and \ref{thm:new_Kur} by combining it with the Andrews--Gordon and Bressoud identities.

\medskip

Recall from the introduction that a \emph{partition} \( \lambda = (\lambda_1,\dots,\lambda_\ell) \) is a
weakly decreasing finite sequence of non-negative integers, that is,
\( \lambda_1 \geq \lambda_2 \ge \cdots \ge \lambda_\ell \geq 0 \).
Each non-negative integer \( \lambda_i \) is called a part of
\( \lambda \) and \( \ell(\lambda) = \ell \) is called the length of
\( \lambda \). A \emph{frequency sequence} \( (f_i)_{i \geq 0} \) is a
sequence of non-negative integers. Given a partition, its frequency
sequence is defined by setting \( f_i \) to be the number of parts of
size \( i \) for all $i \geq 0$. This yields a one-to-one correspondence between
frequency sequences and partitions. The \emph{size} and \emph{length}
of a frequency sequence \( (f_i)_{i \geq 0} \) are defined as
\( |f| := \sum_{i \geq 0} if_i \) and
\( \ell(f) := \sum_{i \geq 0} f_i \), respectively, which coincide
with the size and length of the corresponding partition.

From now on, we use the notation of a multipartition, which simply
refers to a finite sequence of partitions. For an integer $k \geq 1$,
a \emph{\( k \)-multipartition}
\( \bla = (\lambda^{(1)}, \lambda^{(2)}, \dots, \lambda^{(k)}) \) is a
tuple of partitions \( \lambda^{(i)} \), and each of them may be
empty. The \emph{size} \( |\bla| \) of \( \bla \) is defined by
\( \sum_{i=1}^k |\lambda^{(i)}| \), and the \emph{length} \( \ell(\bla) \)
of \( \bla \) is defined by \( \sum_{i=1}^k \ell(\lambda^{(i)}) \).

A \emph{frame sequence} is a frequency sequence
\( (f_i)_{i \geq 0} \) such that \( f_{2i} \geq f_{2i+2} \) and
\( f_{2i+1} = 0 \) for all \( i \geq 0 \). We can associate a frame
sequence to each multipartition as follows. For a
\( k \)-multipartition
\( \bla = (\lambda^{(1)}, \dots, \lambda^{(k)}) \), let
\( s_1,\dots,s_k \) be integers with
\( s_1 \ge \cdots \ge s_k \geq 0 \) such that the length of
\( \lambda^{(i)} \) is \( s_i - s_{i+1} \) for all \( i \), where we
set \( s_{k+1} := 0 \). Then, the frame sequence \( \fs(\bla) \)
corresponding to \( \bla \) is defined as
\( (f_0, f_1, f_2, \dots) \), where the entries
\( f_0, f_2, f_4,\dots \) are, in this order, \( s_k \) copies of \( k \),
\( (s_{k-1} - s_k) \) copies of \( k-1 \), \dots, and
\( (s_1 - s_2) \) copies of \( 1 \). That is,
\[
  \fs(\bla) := (\underbrace{k,0,\dots,k,0}_{s_k ~\text{pairs}},\dots,\underbrace{i,0,\dots,i,0}_{s_i - s_{i+1} ~\text{pairs}},\dots,\underbrace{1,0,\dots,1,0}_{s_1 - s_2 ~\text{pairs}},0, \dots ).
\]

The following two sets are used throughout the paper.

\begin{defn}\label{def:P and A}
  For a non-negative integer $k$, let \( \mathcal{P}_k \) denote the set of all pairs
 \((\bla, \fs(\bla)) \), where \( \bla \) is a
  \( k \)-multipartition and \( \fs(\bla) \) is the frame sequence
  corresponding to \( \bla \). Let \( \mathcal{A}_k \) denote the set
  of all frequency sequences \( (f_i)_{i \geq 0} \) such that
  \( f_i + f_{i+1} \leq k \) for all \( i \geq 0 \).
\end{defn}

In this section, we introduce an \emph{insertion map} \( \Lambda \)
for multipartitions, which defines a size-preserving bijection between
the sets \( \mathcal{P}_k \) and \( \mathcal{A}_k \), where the size
of an element \((\bla, \fs(\bla))  \in \mathcal{P}_k \) is defined as
\( |(\bla, \fs(\bla)) | := |\bla| + |\fs(\bla)| \). Although
\( \fs(\bla) \) is uniquely determined by \( \bla \), we regard
\( \Lambda \) as a map from \( \mathcal{P}_k \) to
\( \mathcal{A}_k \), rather than directly from the set of
multipartitions \( \bla \), in order to emphasize that it preserves the
size.

\subsection*{Particle motion}
Let \( f = (f_0,f_1,\dots ) \) be a frequency sequence. Suppose that
\( u \) is a non-negative integer such that there exists
\( h \geq 1 \) with \( f_u + f_{u+1} = h \) and
\( f_i + f_{i+1} \leq h \) for all \( i \geq u \). We now describe the
procedure for applying \( m \) particle motions in \( f \), starting
from the pair \( (f_u, f_{u+1}) \). Consider the pair
\( (f_u, f_{u+1}) \). If the local condition
\( f_{u+1} + f_{u+2} < h \) is satisfied, we perform the following
(single) \emph{particle motion}:
\[
  (f_u, f_{u+1}) \mapsto (f_u-1, f_{u+1}+1).
\]
As long as the local condition remains satisfied for the current pair
\( (f_u, f_{u+1}) \), we continue to apply this particle motion
repeatedly at the current pair. Once the local condition is no longer
satisfied, that is, if \( f_{u+1} + f_{u+2} = h \), then we increment
\( u \) by \( 1 \), shift our focus to the next pair
\( (f_{u+1}, f_{u+2}) \), and repeat the same procedure. This process
continues until exactly \( m \) particle motions are performed.

The resulting sequence remains a frequency sequence; that is, all
entries are non-negative integers. This follows from the fact that the
single particle motion at \( (f_u, f_{u+1}) \) can only be performed
when \( f_u \geq 1 \).

\begin{defn}\label{def:pm}
  Let \( f = (f_0,f_1,\dots ) \) be a frequency sequence and $m$ be a non-negative integer. Suppose that
  \( u \) is a non-negative integer such that there exists
  \( h \geq 1 \) with \( f_u + f_{u+1} = h \) and
  \( f_i + f_{i+1} \leq h \) for all \( i \geq u \). We define
  \( \ppm_u^{(m)}(f) \) to be the resulting frequency sequence by
  applying \( m \) particle motions starting from \( (f_u, f_{u+1}) \)
  in \( f \). Moreover, if
  \( \ppm_u^{(m)}(f) = (\overline{f}_0, \overline{f}_1,\dots) \) and
  the final focus is on the pair
  \( (\overline{f}_v, \overline{f}_{v+1}) \), then we say that the
  pair \( (f_u, f_{u+1}) \) moves to
  \( (\overline{f}_v, \overline{f}_{v+1}) \).
\end{defn}

Note that shifting the focus is part of the procedure for locating the
next applicable pair and does not affect the frequency sequence. The
single particle motion \( (f_u, f_{u+1}) \mapsto (f_u-1, f_{u+1}+1) \)
implies both \( f_u + f_{u+1} \leq h \) and
\( f_{u+1} + f_{u+2} \leq h \). Therefore, by construction of the
particle motion, the frequency sequence
\( \overline{f} = \ppm_u^{(m)}(f) \) satisfies
\( \overline{f}_i + \overline{f}_{i+1} \leq h \) for all
\( i \geq u \). Note also that if the pair \( (f_u, f_{u+1}) \) moves
to \( (\overline{f}_v, \overline{f}_{v+1}) \), then
\( \overline{f}_v + \overline{f}_{v+1}= f_u + f_{u+1} = h \).

\newcommand{\boxes}[3][white]{
  \foreach \i in {0,...,\numexpr#3-1}{
    \draw[fill=#1, draw = black] (#2,\i) rectangle ++(1,1);
  }
}

\begin{exam}\label{exa:particle_motion}
  A frequency sequence \( (f_i)_{i \geq 0} \) is represented by
  placing boxes above the \( x \)-axis. Starting from the \( 0 \)th
  column, the number of boxes in the \( i \)th column corresponds to
  \( f_i \). The current focus is indicated by shading the
  corresponding boxes in gray and marking the corresponding position
  on the \( x \)-axis with a bold line.

  Let \( f = (4,0,2,0,3,1,0,0,\dots) \) be a frequency sequence.
  By~\Cref{fig:2}, we obtain \( \overline{f} = \ppm_0^{(9)}(f) \)
  where
  \[
    \overline{f} = (2,0,3,1,0,3,1,0,\dots).
  \]
  Here, \( (f_0, f_1) = (4,0) \) moves to
  \( (\overline{f}_5, \overline{f}_6) = (3,1) \).
  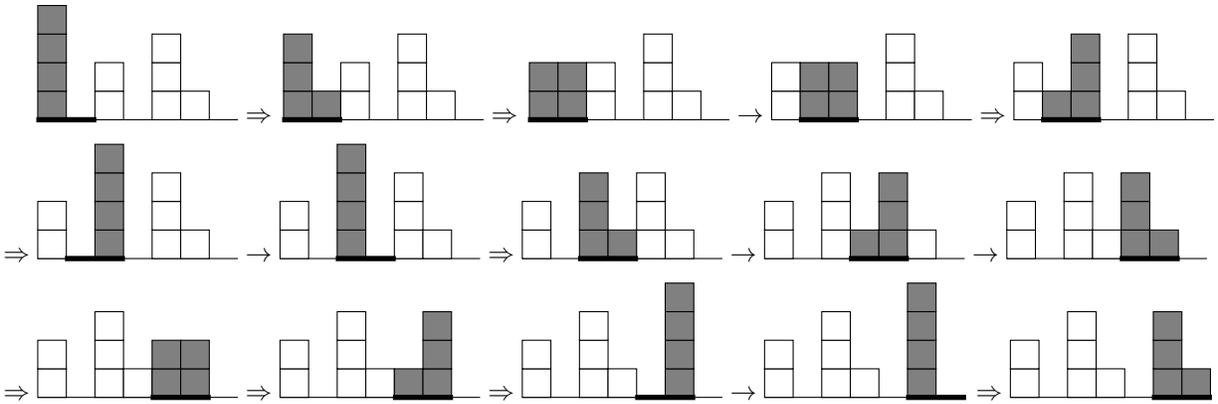
\begin{figure}[h]
    \centering
    \begin{align*}
      \begin{tikzpicture}[scale = .38]
        \draw (1,0) -- (8,0);
        \boxes[gray]{1}{4}   
        \boxes{3}{2}   
        \boxes{5}{3}   
        \boxes{6}{1}   
        \draw[line width = 2pt] (1- .05,0) -- (3 + .05,0);
      \end{tikzpicture}
      &\Rightarrow
        \begin{tikzpicture}[scale = .38]
          \draw (1,0) -- (8,0);
          \boxes[gray]{1}{3}   
          \boxes[gray]{2}{1}   
          \boxes{3}{2}   
          \boxes{5}{3}   
          \boxes{6}{1}   
          \draw[line width = 2pt] (1- .05,0) -- (3 + .05,0);
        \end{tikzpicture}
        \Rightarrow
        \begin{tikzpicture}[scale = .38]
          \draw (1,0) -- (8,0);
          \boxes[gray]{1}{2}   
          \boxes[gray]{2}{2}   
          \boxes{3}{2}   
          \boxes{5}{3}   
          \boxes{6}{1}   
          \draw[line width = 2pt] (1- .05,0) -- (3 + .05,0);
        \end{tikzpicture}
        \rightarrow
        \begin{tikzpicture}[scale = .38]
          \draw (1,0) -- (8,0);
          \boxes{1}{2}   
          \boxes[gray]{2}{2}   
          \boxes[gray]{3}{2}   
          \boxes{5}{3}   
          \boxes{6}{1}   
          \draw[line width = 2pt] (2- .05,0) -- (4 + .05,0);
        \end{tikzpicture}
        \Rightarrow
        \begin{tikzpicture}[scale = .38]
          \draw (1,0) -- (8,0);
          \boxes{1}{2}   
          \boxes[gray]{2}{1}   
          \boxes[gray]{3}{3}   
          \boxes{5}{3}   
          \boxes{6}{1}   
          \draw[line width = 2pt] (2- .05,0) -- (4 + .05,0);
        \end{tikzpicture}\\
      \Rightarrow
      \begin{tikzpicture}[scale = .38]
        \draw (1,0) -- (8,0);
        \boxes{1}{2}   
        \boxes[gray]{3}{4}   
        \boxes{5}{3}   
        \boxes{6}{1}   
        \draw[line width = 2pt] (2- .05,0) -- (4 + .05,0);
      \end{tikzpicture}
      &\rightarrow
        \begin{tikzpicture}[scale = .38]
          \draw (1,0) -- (8,0);
          \boxes{1}{2}   
          \boxes[gray]{3}{4}   
          \boxes{5}{3}   
          \boxes{6}{1}   
          \draw[line width = 2pt] (3- .05,0) -- (5 + .05,0);
        \end{tikzpicture}
        \Rightarrow
        \begin{tikzpicture}[scale = .38]
          \draw (1,0) -- (8,0);
          \boxes{1}{2}   
          \boxes[gray]{3}{3}   
          \boxes[gray]{4}{1}   
          \boxes{5}{3}   
          \boxes{6}{1}   
          \draw[line width = 2pt] (3- .05,0) -- (5 + .05,0);
        \end{tikzpicture}
        \rightarrow
        \begin{tikzpicture}[scale = .38]
          \draw (1,0) -- (8,0);
          \boxes{1}{2}   
          \boxes{3}{3}   
          \boxes[gray]{4}{1}   
          \boxes[gray]{5}{3}   
          \boxes{6}{1}   
          \draw[line width = 2pt] (4- .05,0) -- (6 + .05,0);
        \end{tikzpicture}
        \rightarrow
        \begin{tikzpicture}[scale = .38]
          \draw (1,0) -- (8,0);
          \boxes{1}{2}   
          \boxes{3}{3}   
          \boxes{4}{1}   
          \boxes[gray]{5}{3}   
          \boxes[gray]{6}{1}   
          \draw[line width = 2pt] (5- .05,0) -- (7 + .05,0);
        \end{tikzpicture}\\
      \Rightarrow
      \begin{tikzpicture}[scale = .38]
        \draw (1,0) -- (8,0);
        \boxes{1}{2}   
        \boxes{3}{3}   
        \boxes{4}{1}   
        \boxes[gray]{5}{2}   
        \boxes[gray]{6}{2}   
        \draw[line width = 2pt] (5- .05,0) -- (7 + .05,0);
      \end{tikzpicture} 
      &\Rightarrow
        \begin{tikzpicture}[scale = .38]
          \draw (1,0) -- (8,0);
          \boxes{1}{2}   
          \boxes{3}{3}   
          \boxes{4}{1}   
          \boxes[gray]{5}{1}   
          \boxes[gray]{6}{3}   
          \draw[line width = 2pt] (5- .05,0) -- (7 + .05,0);
        \end{tikzpicture}
        \Rightarrow
        \begin{tikzpicture}[scale = .38]
          \draw (1,0) -- (8,0);
          \boxes{1}{2}   
          \boxes{3}{3}   
          \boxes{4}{1}   
          \boxes[gray]{6}{4}   
          \draw[line width = 2pt] (5- .05,0) -- (7 + .05,0);
        \end{tikzpicture}
        \rightarrow
        \begin{tikzpicture}[scale = .38]
          \draw (1,0) -- (8,0);
          \boxes{1}{2}   
          \boxes{3}{3}   
          \boxes{4}{1}   
          \boxes[gray]{6}{4}   
          \draw[line width = 2pt] (6- .05,0) -- (8 + .05,0);
        \end{tikzpicture}
        \Rightarrow
        \begin{tikzpicture}[scale = .38]
          \draw (1,0) -- (8,0);
          \boxes{1}{2}   
          \boxes{3}{3}   
          \boxes{4}{1}   
          \boxes[gray]{6}{3}   
          \boxes[gray]{7}{1}   
          \draw[line width = 2pt] (6- .05,0) -- (8 + .05,0);
        \end{tikzpicture}
    \end{align*}
    \caption{Illustration of applying \( 9 \) particle motions
      starting from \( (f_0, f_1) \) in the frequency sequence
      \( f = (4,0,2,0,3,1,0,0, \cdots ) \). The symbol
      \( \Rightarrow \) indicates a single particle motion, and
      \( \rightarrow \) indicates a focus shift.}
    \label{fig:2}
  \end{figure}
\end{exam}

In this paper, we only apply the particle motions to pairs
\( (f_u,f_{u+1}) \) of the form \( (h,0) \) with a positive integer
\( h \) that occurs in the frame sequence. Therefore, from now on, we
assume that the starting pair \( (f_u, f_{u+1}) \) is of the form
\( (h,0) \) with a positive integer \( h \).

In \cite{DJK}, the authors describe the frequency sequence
obtained by iteratively applying the particle motion. As a result, the
resulting frequency sequence can be computed directly in a single
step, without performing each individual particle motion. In the next proposition, we show that applying $m$ particle motions as described above gives exactly the same frequency sequence.

\begin{prop}\label{pro:explicit_pm}
  Let \( f = (f_0,f_1,\dots ) \) be a frequency sequence. Suppose that
  \( u \) is a non-negative integer such that there exists
  \( h \geq 1 \) with \( (f_u, f_{u+1}) = (h,0) \) and
  \( f_i + f_{i+1} \leq h \) for all \( i \geq u \). For a non-negative integer $m$, let
  \( \overline{f} = (\overline{f}_0, \overline{f}_1, \dots ) \) be the
  frequency sequence obtained by applying \( m \) particle motions in
  \( f \), starting from the pair \( (f_u, f_{u+1}) \). To determine
  the position where the pair \( (f_u, f_{u+1}) \) moves, define
  \begin{equation}\label{eq:u_bar}
    v := \min \left\{ t \geq u+2 : \sum_{i = u+2}^{t} \left( h - (f_{i -1} + f_{i}) \right) \geq m \right\}.
  \end{equation}
  Then \( (f_u, f_{u+1}) \) moves to
  \( (\overline{f}_{v-2}, \overline{f}_{v -1})
  \). The frequency sequence \( \overline{f} = \ppm_u^{(m)}(f) \) is
  given explicitly by:
  \begin{align}
    \label{eq:f_bar}
    \overline{f}_i =
    \begin{cases}
      f_i & \mbox{if \( 0 \leq i < u \),}\\
      f_{i+2} & \mbox{if \( u \leq i < v -2 \),} \\
      f_{v} + \sum_{j = u+2}^{v} (h - (f_{j-1} + f_j)) - m & \mbox{if \( i = v -2 \),} \\
      f_{v -1} + m - \sum_{j = u+2}^{v -1} (h - (f_{j-1} + f_j)) & \mbox{if \( i = v -1 \),} \\
      f_i & \mbox{if \( i \geq v \).}
    \end{cases}
  \end{align}
\end{prop}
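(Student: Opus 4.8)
The plan is to analyze the process one \emph{phase} at a time, where a phase is the maximal block of consecutive single motions performed while the focus remains at a single position $p$, and to pin down the state of the frequency sequence at the start of each phase by induction on $p$. Write $c_i := h - (f_{i-1}+f_i)$ for the ``capacity'' at position $i$, which is $\geq 0$ by the hypothesis $f_{i-1}+f_i\leq h$. The first observation is that a phase with focus on $(f_p,f_{p+1})$, when it runs to completion, consists of exactly $c_{p+2}$ motions: during the phase $f_{p+1}$ is raised one unit at a time while $f_{p+2}$ is untouched, and the phase stops precisely when $f_{p+1}+f_{p+2}=h$ first holds, i.e.\ when $f_{p+1}$ reaches $h-f_{p+2}$.

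The crux is the following invariant, which I would prove by induction on $p\geq u$: at the moment the focus arrives at position $p$ (all phases $u,\dots,p-1$ having completed), the current sequence $g$ satisfies $g_i=f_i$ for $i<u$, $\;g_i=f_{i+2}$ for $u\leq i\leq p-1$, $\;g_p=h-f_{p+1}$, and $g_i=f_i$ for $i\geq p+1$. The base case $p=u$ is exactly the hypothesis $(f_u,f_{u+1})=(h,0)$, since $h-f_{u+1}=h$. For the inductive step I run the $c_{p+2}$ motions of phase $p$ on $g$: the focus entry $g_p=h-f_{p+1}$ decreases to $f_{p+2}$ while $g_{p+1}=f_{p+1}$ increases to $h-f_{p+2}$, which is precisely the start-of-phase-$(p{+}1)$ state; non-negativity throughout follows from $h-f_{p+1}\geq f_{p+2}\geq 0$. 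Summing phase lengths, the number of motions needed to complete phases $u,\dots,p$ is $\sum_{i=u+2}^{p+2}c_i$, and comparison with the definition \eqref{eq:u_bar} of $v$ (giving $\sum_{i=u+2}^{v-1}c_i<m\leq\sum_{i=u+2}^{v}c_i$) identifies $v-2$ as the position of the final, possibly incomplete, phase: phases $u,\dots,v-3$ complete using $\sum_{i=u+2}^{v-1}c_i$ motions, and the remaining $r:=m-\sum_{i=u+2}^{v-1}c_i$ motions, with $0\leq r\leq c_v$, occur in phase $v-2$.

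It then remains to apply these $r$ partial motions to the start-of-phase-$(v-2)$ state provided by the invariant: the entry at $v-1$ rises from $f_{v-1}$ to $f_{v-1}+r$, the entry at $v-2$ falls from $h-f_{v-1}$ to $h-f_{v-1}-r$, and every other entry is frozen. Substituting $r$ gives the value $f_{v-1}+m-\sum_{j=u+2}^{v-1}(h-(f_{j-1}+f_j))$ at $i=v-1$ directly, while a one-line rewriting using $c_v=h-(f_{v-1}+f_v)$ turns $h-f_{v-1}-r$ into $f_v+\sum_{j=u+2}^{v}(h-(f_{j-1}+f_j))-m$, matching the $i=v-2$ case of \eqref{eq:f_bar}. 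The cases $i<u$, $u\leq i<v-2$, and $i\geq v$ of \eqref{eq:f_bar} are read off the invariant, and the final focus sitting at $(g_{v-2},g_{v-1})=(\overline f_{v-2},\overline f_{v-1})$ is exactly the ``moves to'' claim.

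The main obstacle I expect is organizational rather than conceptual: one must treat the degenerate phases with $c_i=0$, where the focus shifts with no motion (as at $c_5=0$ in \Cref{exa:particle_motion}), the trivial case $m=0$ (where $r=0$ and no partial phase occurs, yet the formula still returns $\overline f=f$), and the boundary case $r=c_v$, where phase $v-2$ completes exactly at the $m$-th motion so that the focus is poised to shift but the count halts first; here one checks that \eqref{eq:u_bar} still reports position $v-2$ and that the formula stays correct (it gives $\overline f_{v-2}=f_v$, $\overline f_{v-1}=h-f_v$). The only other points of care are verifying that $v$ exists, which holds because $f$ has finite support so $c_i=h$ for large $i$ and the partial sums tend to infinity, and keeping the off-by-one bookkeeping between ``phase $p$'' and ``summand $i=p+2$'' consistent throughout.
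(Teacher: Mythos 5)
Your proof is correct, but it takes a genuinely different route from the paper's. You run a direct simulation: you prove by induction on the focus position $p$ an explicit invariant describing the entire sequence at the start of each phase, deduce that a completed phase at position $p$ uses exactly $h-(f_{p+1}+f_{p+2})$ motions, locate $v$ by comparing cumulative phase lengths with $m$, and finish by executing the final partial phase; I checked the invariant and the bookkeeping ($r=m-\sum_{i=u+2}^{v-1}(h-(f_{i-1}+f_i))$ with $0\leq r\leq h-(f_{v-1}+f_v)$) and they are sound. The paper instead argues by a conservation law: it takes the landing position $v$ as given, records three structural facts (the moving pair's sum stays equal to $h$, the intermediate entries are shifted two steps to the left, all other entries are unchanged), and then pins down the two unknown values $\overline f_{v-2},\overline f_{v-1}$ from the single equation $|\overline f|=|f|+m$, since each motion adds exactly $1$ to the size; the characterization \eqref{eq:u_bar} of $v$ is left as a check for the reader. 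Your route is longer but more self-contained: it actually proves \eqref{eq:u_bar} rather than asserting it, makes the intermediate states explicit (which is essentially what is needed again for the reverse particle motions), and addresses the existence of $v$ and the degenerate cases ($m=0$, zero-length phases, $r$ equal to the full capacity of the last phase) that the paper's argument silently absorbs. What the paper's approach buys is brevity: once its facts (1)--(3) are granted, both values drop out of a two-line weight computation.
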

\begin{proof}
  Let \( (f_u, f_{u+1}) = (h,0) \) move to
  \( (\overline{f}_{v-2}, \overline{f}_{v-1}) \) for some
  \( v-2 \geq u \). We use the following facts: (1)
  \( \overline{f}_{v-2} + \overline{f}_{v-1} = h \); (2)~the entries
  originally between \( f_{u+2} \) and \( f_{v-1} \) are shifted two
  steps to the left; and (3) all other entries remain unchanged.
  Facts~(1) and (3) are straightforward. Fact~(2) requires a brief
  explanation. The focus moves from \( (f_u, f_{u+1}) \) to
  \( (f_{u+1}, f_{u+2}) \) only when
  \( f_u + f_{u+1} = f_{u+1} + f_{u+2} \), that is, when
  \( f_u = f_{u+2} \). From this point on, the value of the \( u \)th
  entry remains equal to \( f_{u+2} \). Therefore, we may regard the
  original value of \( f_{u+2} \) as being shifted two steps to the
  left.

  Using the facts above, we have
  \begin{align*}
    |\overline{f}|
    &= \sum_{i =0}^{u-1} i f_i + \sum_{i = u}^{v-3} i f_{i+2} + (v-2)\overline{f}_{v-2} + (v-1) \overline{f}_{v-1} + \sum_{i \geq v} i f_i \\
    &= \sum_{i \geq 0} i f_i - 2 \sum_{i = u}^{v-3} f_{i+2} - u f_u - (u+1) f_{u+1} + (v-2)\overline{f}_{v-2} + (v-1) \overline{f}_{v-1} \\
    &= |f| - 2\sum_{i = u+2}^{v-1}f_i - uh + (v-2)\overline{f}_{v-2} + (v-1)\overline{f}_{v-1}.   
  \end{align*}
  Since \( \overline{f} \) is obtained from \( f \) via \( m \)
  particle motions, its size is \( | \overline{f} | = | f | + m \).
  Therefore, we obtain
  \[
    (v-2)\overline{f}_{v-2} + (v-1)\overline{f}_{v-1} = m + uh + \sum_{i = u+2}^{v-1} f_i.
  \]
  Using \( \overline{f}_{v-2} + \overline{f}_{v-1} = h \), we can
  express the left-hand side as either
  \( (v-2)h + \overline{f}_{v-1} \) or
  \( (v-1)h - \overline{f}_{v-2} \). These two expressions determine
  \( \overline{f}_{v-2} \) and \( \overline{f}_{v-1} \), as given
  in~\eqref{eq:f_bar}. One can also check the formula~\eqref{eq:u_bar}.
\end{proof}

\begin{exam}\label{exa:3}
  Let \( f = (4,0,2,0,3,1,0,0, \dots) \), \( u = 0 \) and \( m = 9 \).
  Then \( f_0 + f_1 = 4 = h \). We have \( v = 7 \), since
  \[
    \sum_{i = 2}^{6}(h-(f_{i-1} + f_i)) = 8 < 9, \qand \sum_{i = 2}^{7}(h-(f_{i-1} + f_i)) = 12 \geq 9.
  \]
  By the explicit formula \eqref{eq:f_bar},
  \[
    \overline{f}_i =
    \begin{cases}
      f_{i+2} & \mbox{if \( 0 \leq i < 5 \),} \\
      0 + 12 - 9 = 3 & \mbox{if \( i = 5 \),} \\
      0 + 9 - 8 = 1 & \mbox{if \( i = 6 \),} \\
      0 & \mbox{if \( i \geq 7 \).}
    \end{cases}
  \]
  Hence, we obtain
  \[
    \ppm_0^{(9)}((4,0,2,0,3,1,0,0,\dots)) = (2,0,3,1,0,3,1,0,\dots),
  \]
  which coincides with the result in \Cref{exa:particle_motion}.
\end{exam}

Now we reformulate the explicit map \( \Lambda \) defined in \cite{DJK} in terms of particle motions.

\subsection*{The map \( \Lambda \)}
We now construct the map
\( \Lambda : \mathcal{P}_k \to \mathcal{A}_k \). For a
\( k \)-multipartition
\( \bla = (\lambda^{(1)},\dots,\lambda^{(k)}) \), define a sequence
\( (s_1,\dots,s_k) \) of non-negative integers with
\( s_1 \ge \cdots \ge s_k \geq 0 \) such that, for all \( i \),
\( \lambda^{(i)} \) has a length \( s_i - s_{i+1} \) where we set
\( s_{k+1} := 0 \). Recall that the frame sequence \( \fs(\bla) \)
consists of \( s_i - s_{i+1} \) pairs equal to \( (i,0) \) for
\( i = 1,\dots,k \). The map \( \Lambda \) produces the frequency
sequence from \( \fs(\bla) \) by applying the particle motion starting
from \( (1,0) \) with step sizes given by the parts of
\( \lambda^{(1)} \), from \( (2,0) \) with step sizes given by the
parts of \( \lambda^{(2)} \), \dots, and from \( (k,0) \) with step
sizes given by the parts of \( \lambda^{(k)} \). We give a formal
definition of this map.

Let \( \lambda = (\lambda_0,\dots,\lambda_{s_1-1}) \) be the
sequence of non-negative integers defined by
\begin{equation}\label{eq:lambda}
  (\lambda_{s_1-1},\lambda_{s_1 -2},\dots,\lambda_1,\lambda_0) =
  (\lambda_1^{(1)},\dots,\lambda_{s_1-s_2}^{(1)},\dots,\lambda_{1}^{(k)},\dots,\lambda_{s_k}^{(k)}).
\end{equation}
That is, let
\( (\lambda_{s_i-1},\lambda_{s_i-2}\dots,\lambda_{s_{i+1}}) =
(\lambda^{(i)}_1,\dots,\lambda^{(i)}_{s_i - s_{i+1}}) \) for each
\( i = 1,\dots,k \). Since \( \bla \) can be obtained directly from
\( \fs(\bla) \) and \( \lambda \), we regard the pairs
\( (\bla, \fs(\bla)) \) and \( (\lambda,\fs(\bla)) \) as essentially
the same.

The map \( \Lambda \) associates to a pair \( (\bla, \fs(\bla)) \) ,
where \( \bla \) is a \( k \)-multipartition and \( \fs(\bla) \) is
the frequency sequence corresponding to \( \bla \), the frequency
sequence
\[
  \Lambda( \bla, \fs(\bla)) =\left(\ppm_{0}^{(\lambda_0)} \circ \ppm_{2}^{(\lambda_1)} \circ \cdots \circ \ppm_{2(s_1 -2)}^{(\lambda_{s_1 -2})} \circ \ppm_{2(s_1 -1)}^{(\lambda_{s_1 -1})}\right) \left(\fs(\bla)\right).
\]
This map \( \Lambda \) can also be described, as was done in \cite{DJK}, by using a sequence
of intermediate frequency sequences, constructed recursively by
\begin{equation}\label{eq:theta}
   \fs(\bla) =: \theta^{(s_1)} , \theta^{(s_1-1)} , \dots  , \theta^{(1)}, \theta^{(0)} := \Lambda(\bla,\fs(\bla)),
\end{equation}
where each \( \theta^{(i)} \) is obtained from \( \theta^{(i+1)} \) by
\begin{equation}\label{eq:recursion_theta}
  \theta^{(i)} = \ppm_{2i}^{(\lambda_{i})}\left(\theta^{(i+1)}\right) \quad\mbox{for \( i = s_1 - 1 , \dots,1,0 \)}.
\end{equation}

We index the parts \( \lambda_{s_1-1},\dots,\lambda_0 \) and the
recursive steps \( \theta^{(s_1)},\dots,\theta^{(0)} \) in reverse
order to ensure consistency with the starting positions
\( 2(s_1 - 1),\dots,2,0 \) of the particle motions. Note
that if \( (\theta^{(i+1)}_{2i},\theta^{(i+1)}_{2i+1}) = (h,0) \),
then \( \lambda_{i} \) is an entry of the partition
\( \lambda^{(h)} \).

\begin{exam}\label{exa:2}
  Let
  \( \bla = (\lambda^{(1)}, \lambda^{(2)}, \lambda^{(3)},
  \lambda^{(4)}) = ((3,1), \emptyset, (6,6,5,3), (19,0)) \) be a
  \( 4 \)-multipartition. Then the corresponding frame sequence is
  \( \fs(\bla) = (4,0,4,0,3,0,3,0,3,0,3,0,1,0,1,0, \dots ) =
  \theta^{(8)} \). See~\Cref{fig:1} for the process of applying the
  map \( \Lambda \) to \( (\fs(\bla), \bla) \).
  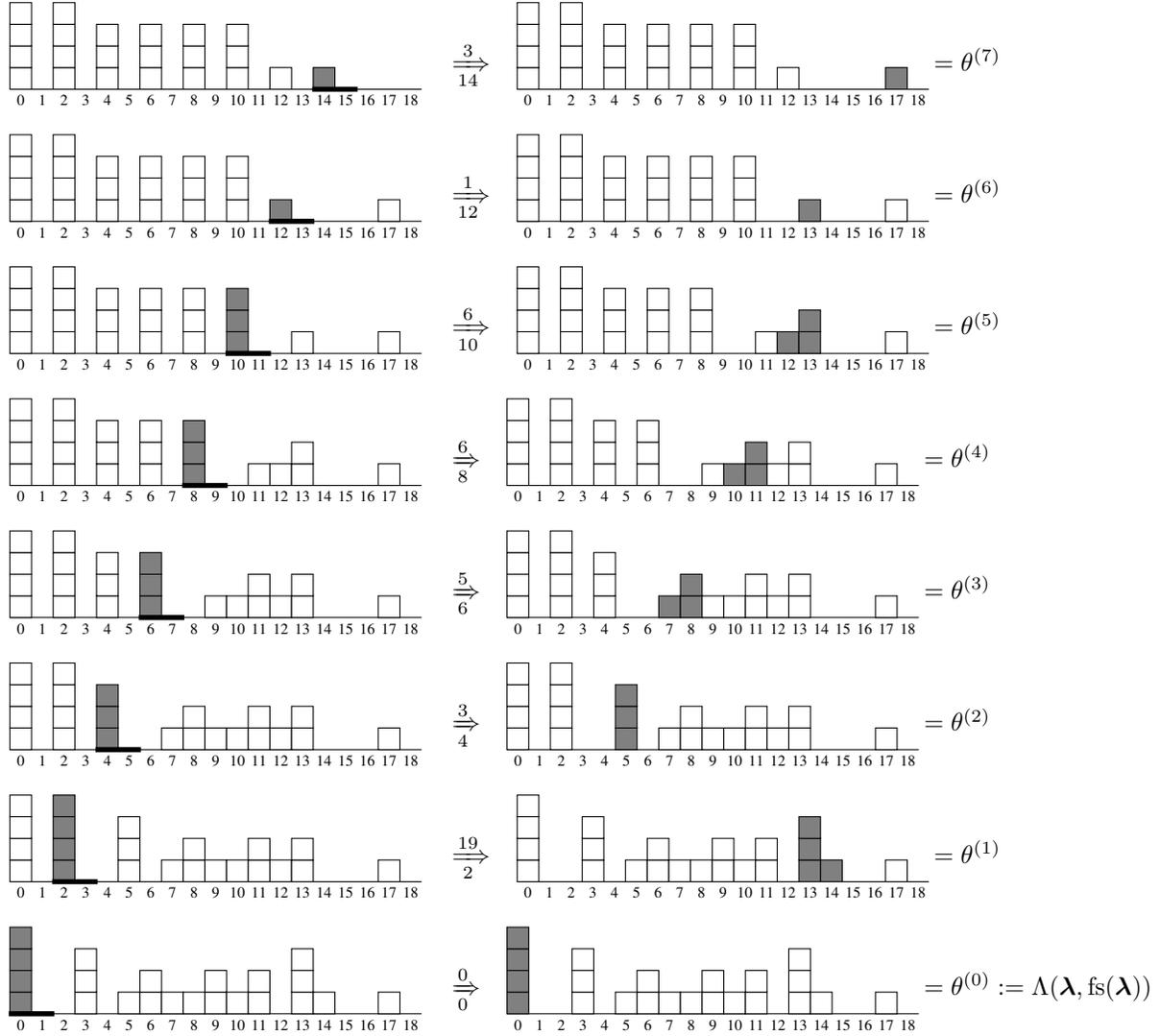
\begin{figure}
    \centering
\begin{align*}
    \begin{tikzpicture}[scale = .3]
      \draw (1,0) -- (20,0);
      \foreach \x in {1,...,19}
      \node at (\x+ .5, -0.5) {\tiny \number\numexpr\x - 1\relax};
      \boxes{1}{4}   
      \boxes{3}{4}   
      \boxes{5}{3}   
      \boxes{7}{3}   
      \boxes{9}{3}   
      \boxes{11}{3}   
      \boxes{13}{1}   
      \boxes[gray]{15}{1}   
      \draw[line width = 2pt] (15- .05,0) -- (17 + .05,0);
    \end{tikzpicture}
    \quad&\raisebox{+.6cm}{\( \xRightarrow[14]{3} \)}\quad
    \begin{tikzpicture}[scale = .3]
      \draw (1,0) -- (20,0);
      \foreach \x in {1,...,19}
      \node at (\x+ .5, -0.5) {\tiny \number\numexpr\x - 1\relax};
      \boxes{1}{4}   
      \boxes{1+2}{4}   
      \boxes{3+2}{3}   
      \boxes{5+2}{3}   
      \boxes{7+2}{3}   
      \boxes{9+2}{3}   
      \boxes{11+2}{1}   
      \boxes[gray]{16+2}{1}   
    \end{tikzpicture}
    \raisebox{+.6cm}{\( = \theta^{(7)} \)} \\
    \begin{tikzpicture}[scale = .3]
      \draw (1,0) -- (20,0);
      \foreach \x in {1,...,19}
      \node at (\x+ .5, -0.5) {\tiny \number\numexpr\x - 1\relax};
      \boxes{1}{4}   
      \boxes{1+2}{4}   
      \boxes{3+2}{3}   
      \boxes{5+2}{3}   
      \boxes{7+2}{3}   
      \boxes{9+2}{3}   
      \boxes[gray]{11+2}{1}   
      \boxes{16+2}{1}   
      \draw[line width = 2pt] (11+2- .05,0) -- (13+2 + .05,0);
    \end{tikzpicture}
    \quad &\raisebox{+.6cm}{\( \xRightarrow[12]{1} \)} \quad
    \begin{tikzpicture}[scale = .3]
      \draw (1,0) -- (20,0);
      \foreach \x in {1,...,19}
      \node at (\x+ .5, -0.5) {\tiny \number\numexpr\x - 1\relax};
      \boxes{1}{4}   
      \boxes{1+2}{4}   
      \boxes{3+2}{3}   
      \boxes{5+2}{3}   
      \boxes{7+2}{3}   
      \boxes{9+2}{3}   
      \boxes[gray]{12+2}{1}   
      \boxes{16+2}{1}   
    \end{tikzpicture}
    \raisebox{+.6cm}{\( = \theta^{(6)} \)} \\
    \begin{tikzpicture}[scale = .3]
      \draw (1,0) -- (20,0);
      \foreach \x in {1,...,19}
      \node at (\x+ .5, -0.5) {\tiny \number\numexpr\x - 1\relax};
      \boxes{1}{4}   
      \boxes{1+2}{4}   
      \boxes{3+2}{3}   
      \boxes{5+2}{3}   
      \boxes{7+2}{3}   
      \boxes[gray]{9+2}{3}   
      \boxes{12+2}{1}   
      \boxes{16+2}{1}   
      \draw[line width = 2pt] (9+2- .05,0) -- (11+2 + .05,0);
    \end{tikzpicture}
    \quad &\raisebox{+.6cm}{\( \xRightarrow[10]{6} \)} \quad
    \begin{tikzpicture}[scale = .3]
      \draw (1,0) -- (20,0);
      \foreach \x in {1,...,19}
      \node at (\x+ .5, -0.5) {\tiny \number\numexpr\x - 1\relax};
      \boxes{1}{4}   
      \boxes{1+2}{4}   
      \boxes{3+2}{3}   
      \boxes{5+2}{3}   
      \boxes{7+2}{3}   
      \boxes{10+2}{1}   
      \boxes[gray]{11+2}{1}   
      \boxes[gray]{12+2}{2}   
      \boxes{16+2}{1}   
    \end{tikzpicture}
    \raisebox{+.6cm}{\( = \theta^{(5)} \)} \\
    \begin{tikzpicture}[scale = .3]
      \draw (1,0) -- (20,0);
      \foreach \x in {1,...,19}
      \node at (\x+ .5, -0.5) {\tiny \number\numexpr\x - 1\relax};
      \boxes{1}{4}   
      \boxes{1+2}{4}   
      \boxes{3+2}{3}   
      \boxes{5+2}{3}   
      \boxes[gray]{7+2}{3}   
      \boxes{10+2}{1}   
      \boxes{11+2}{1}   
      \boxes{12+2}{2}   
      \boxes{16+2}{1}   
      \draw[line width = 2pt] (7+2- .05,0) -- (9+2 + .05,0);
    \end{tikzpicture}
    \quad &\raisebox{+.6cm}{\( \xRightarrow[8]{6} \)} \quad
    \begin{tikzpicture}[scale = .3]
      \draw (1,0) -- (20,0);
      \foreach \x in {1,...,19}
      \node at (\x+ .5, -0.5) {\tiny \number\numexpr\x - 1\relax};
      \boxes{1}{4}   
      \boxes{1+2}{4}   
      \boxes{3+2}{3}   
      \boxes{5+2}{3}   
      \boxes{8+2}{1}   
      \boxes[gray]{9+2}{1}   
      \boxes[gray]{10+2}{2}   
      \boxes{11+2}{1}   
      \boxes{12+2}{2}   
      \boxes{16+2}{1}   
    \end{tikzpicture}
    \raisebox{+.6cm}{\( = \theta^{(4)} \)} \\
    \begin{tikzpicture}[scale = .3]
      \draw (1,0) -- (20,0);
      \foreach \x in {1,...,19}
      \node at (\x+ .5, -0.5) {\tiny \number\numexpr\x - 1\relax};
      \boxes{1}{4}   
      \boxes{1+2}{4}   
      \boxes{3+2}{3}   
      \boxes[gray]{5+2}{3}   
      \boxes{8+2}{1}   
      \boxes{9+2}{1}   
      \boxes{10+2}{2}   
      \boxes{11+2}{1}   
      \boxes{12+2}{2}   
      \boxes{16+2}{1}   
      \draw[line width = 2pt] (5+2- .05,0) -- (7+2 + .05,0);
    \end{tikzpicture}
    \quad &\raisebox{+.6cm}{\( \xRightarrow[6]{5} \)} \quad
    \begin{tikzpicture}[scale = .3]
      \draw (1,0) -- (20,0);
      \foreach \x in {1,...,19}
      \node at (\x+ .5, -0.5) {\tiny \number\numexpr\x - 1\relax};
      \boxes{1}{4}   
      \boxes{1+2}{4}   
      \boxes{3+2}{3}   
      \boxes[gray]{6+2}{1}   
      \boxes[gray]{7+2}{2}   
      \boxes{8+2}{1}   
      \boxes{9+2}{1}   
      \boxes{10+2}{2}   
      \boxes{11+2}{1}   
      \boxes{12+2}{2}   
      \boxes{16+2}{1}   
    \end{tikzpicture}
    \raisebox{+.6cm}{\( = \theta^{(3)} \)} \\
    \begin{tikzpicture}[scale = .3]
      \draw (1,0) -- (20,0);
      \foreach \x in {1,...,19}
      \node at (\x+ .5, -0.5) {\tiny \number\numexpr\x - 1\relax};
      \boxes{1}{4}   
      \boxes{1+2}{4}   
      \boxes[gray]{3+2}{3}   
      \boxes{6+2}{1}   
      \boxes{7+2}{2}   
      \boxes{8+2}{1}   
      \boxes{9+2}{1}   
      \boxes{10+2}{2}   
      \boxes{11+2}{1}   
      \boxes{12+2}{2}   
      \boxes{16+2}{1}   
      \draw[line width = 2pt] (3+2- .05,0) -- (5+2 + .05,0);
    \end{tikzpicture}
    \quad &\raisebox{+.6cm}{\( \xRightarrow[4]{3} \)} \quad
    \begin{tikzpicture}[scale = .3]
      \draw (1,0) -- (20,0);
      \foreach \x in {1,...,19}
      \node at (\x+ .5, -0.5) {\tiny \number\numexpr\x - 1\relax};
      \boxes{1}{4}   
      \boxes{1+2}{4}   
      \boxes[gray]{4+2}{3}   
      \boxes{6+2}{1}   
      \boxes{7+2}{2}   
      \boxes{8+2}{1}   
      \boxes{9+2}{1}   
      \boxes{10+2}{2}   
      \boxes{11+2}{1}   
      \boxes{12+2}{2}   
      \boxes{16+2}{1}   
    \end{tikzpicture}
    \raisebox{+.6cm}{\( = \theta^{(2)} \)} \\
    \begin{tikzpicture}[scale = .3]
      \draw (1,0) -- (20,0);
      \foreach \x in {1,...,19}
      \node at (\x+ .5, -0.5) {\tiny \number\numexpr\x - 1\relax};
      \boxes{1}{4}   
      \boxes[gray]{1+2}{4}   
      \boxes{4+2}{3}   
      \boxes{6+2}{1}   
      \boxes{7+2}{2}   
      \boxes{8+2}{1}   
      \boxes{9+2}{1}   
      \boxes{10+2}{2}   
      \boxes{11+2}{1}   
      \boxes{12+2}{2}   
      \boxes{16+2}{1}   
      \draw[line width = 2pt] (1+2- .05,0) -- (3+2 + .05,0);
    \end{tikzpicture}
    \quad &\raisebox{+.6cm}{\( \xRightarrow[2]{19} \)} \quad
    \begin{tikzpicture}[scale = .3]
      \draw (1,0) -- (20,0);
      \foreach \x in {1,...,19}
      \node at (\x+ .5, -0.5) {\tiny \number\numexpr\x - 1\relax};
      \boxes{1}{4}   
      \boxes{2+2}{3}   
      \boxes{4+2}{1}   
      \boxes{5+2}{2}   
      \boxes{6+2}{1}   
      \boxes{7+2}{1}   
      \boxes{8+2}{2}   
      \boxes{9+2}{1}   
      \boxes{10+2}{2}   
      \boxes[gray]{12+2}{3}   
      \boxes[gray]{13+2}{1}   
      \boxes{16+2}{1}   
    \end{tikzpicture}
            \raisebox{+.6cm}{\( = \theta^{(1)} \)}  \\
    \begin{tikzpicture}[scale = .3]
      \draw (1,0) -- (20,0);
      \foreach \x in {1,...,19}
      \node at (\x+ .5, -0.5) {\tiny \number\numexpr\x - 1\relax};
      \boxes[gray]{1}{4}   
      \boxes{2+2}{3}   
      \boxes{4+2}{1}   
      \boxes{5+2}{2}   
      \boxes{6+2}{1}   
      \boxes{7+2}{1}   
      \boxes{8+2}{2}   
      \boxes{9+2}{1}   
      \boxes{10+2}{2}   
      \boxes{12+2}{3}   
      \boxes{13+2}{1}   
      \boxes{16+2}{1}   
      \draw[line width = 2pt] (1 - .05,0) -- (3 + .05,0);
    \end{tikzpicture}
    \quad &\raisebox{+.6cm}{\( \xRightarrow[0]{0} \)} \quad
    \begin{tikzpicture}[scale = .3]
      \draw (1,0) -- (20,0);
      \foreach \x in {1,...,19}
      \node at (\x+ .5, -0.5) {\tiny \number\numexpr\x - 1\relax};
      \boxes[gray]{1}{4}   
      \boxes{2+2}{3}   
      \boxes{4+2}{1}   
      \boxes{5+2}{2}   
      \boxes{6+2}{1}   
      \boxes{7+2}{1}   
      \boxes{8+2}{2}   
      \boxes{9+2}{1}   
      \boxes{10+2}{2}   
      \boxes{12+2}{3}   
      \boxes{13+2}{1}   
      \boxes{16+2}{1}   
    \end{tikzpicture}
    \raisebox{+.6cm}{\( = \theta^{(0)} := \Lambda( \bla, \fs(\bla)) \)} 
  \end{align*}

  \caption{The process of applying the map \( \Lambda \) to
    \( ( \bla, \fs(\bla)) \) where
    \( \bla = ((3,1), \emptyset, (6,6,5,3), (19,0)) \). Here, the
    notation \( f \xRightarrow[u]{m} \overline{f} \) indicates that
    \( \overline{f} = \ppm_u^{(m)}(f) \).}
    \label{fig:1}
  \end{figure}
  Hence, we have
  \begin{align*}
    \Lambda(\fs(\bla), \bla)
    &= \left(\ppm_0^{(0)}\circ \ppm_2^{(19)}\circ \ppm_4^{(3)}\circ \ppm_6^{(5)}\circ \ppm_8^{(6)}\circ \ppm_{10}^{(6)}\circ \ppm_{12}^{(1)}\circ \ppm_{14}^{(3)}\right)\left(\fs(\bla))\right) \\
    &= (4,0,0,3,0,1,2,1,1,2,1,2,0,3,1,0,0,1,0,\dots).
  \end{align*}
  The size of the frequency sequence \( \Lambda(\fs(\bla), \bla) \) is
  equal to
  \[
    |\fs(\bla)| + |\lambda^{(1)}| + |\lambda^{(2)}| + |\lambda^{(3)}| + |\lambda^{(4)}| = 118 + 4 + 0 + 20 + 19 = 171.
  \]
\end{exam}

By construction, \( \Lambda(\fs(\bla), \bla) \) is a frequency
sequence satisfying \( f_i + f_{i+1} \leq k \) for all \( i \geq 0 \).
Hence the map \( \Lambda : \mathcal{P}_k \to \mathcal{A}_k \) is
well-defined. In addition, the map \( \Lambda \) is in fact a
bijection from \( \mathcal{P}_k \) to \( \mathcal{A}_k \). The
following property proved in \cite{DJK} plays an important role in constructing the inverse
map of \( \Lambda \).

\begin{prop}[reformulation of Proposition 3.15 of \cite{DJK}]\label{pro:property of Lambda}
  Let \( \bla \) be a \( k \)-multipartition with
  \( \ell(\bla) = s \). Let
  \( \theta^{(s)},\dots,\theta^{(1)}, \theta^{(0)} \) be the sequence
  of frequency sequences in the construction~\eqref{eq:theta} of
  \( \Lambda(\fs(\bla), \bla) \), that is,
  \[
    \theta^{(i)} = \ppm_{2i}^{(\lambda_i)}(\theta^{(i+1)}) \qand (\theta^{(i+1)}_{2i}, \theta^{(i+1)}_{2i+1}) = (h,0),    
  \]
  for some \( 1 \leq h \leq k \). Assume that
  \( (\theta^{(i+1)}_{2i}, \theta^{(i+1)}_{2i+1}) \) moves to
  \( (\theta^{(i)}_v, \theta^{(i)}_{v+1}) \) via the map
  \( \ppm_{2i}^{(\lambda_i)} \). Then, the integer \( h \) is the
  largest value of \( \theta^{(i)}_u + \theta^{(i)}_{u+1} \) for all
  \( u \geq 2i \), and \( v \) is the smallest integer \( u \) such
  that \( u \geq 2i \) and
  \( \theta^{(i)}_u + \theta^{(i)}_{u+1} = h \).
\end{prop}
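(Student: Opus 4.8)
The plan is to argue by downward induction on $i$, from $i=s-1$ to $i=0$, establishing at each step both assertions together with the auxiliary fact that the motion $\ppm_{2i}^{(\lambda_i)}$ occurring in~\eqref{eq:theta} is admissible, i.e.\ that the hypotheses of Definition~\ref{def:pm} hold at position $2i$ for $\theta^{(i+1)}$. The assertion about $h$ is the easy half. Since the motions applied before step $i$ all start at positions $\ge 2i+2$, they leave positions $<2i+2$ untouched, so $\theta^{(i+1)}_{2i}=\fs(\bla)_{2i}=:h$ and $\theta^{(i+1)}_{2i+1}=0$. Because the nonzero even entries of $\fs(\bla)$ are weakly decreasing, we have $\fs(\bla)_{2i}\ge\fs(\bla)_{2i+2}$, and combining this with the maximality statement already proved at step $i+1$ shows that $h$ is the largest local sum $\theta^{(i+1)}_p+\theta^{(i+1)}_{p+1}$ over $p\ge 2i$; this is exactly the admissibility of the motion. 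The general properties of particle motion recorded after Definition~\ref{def:pm} then give $\theta^{(i)}_p+\theta^{(i)}_{p+1}\le h$ for all $p\ge 2i$, with equality at the landing pair $(\theta^{(i)}_v,\theta^{(i)}_{v+1})$, so $h$ is the maximum and it is attained at $v$. This also propagates the admissibility hypothesis to step $i-1$, closing that part of the induction.

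The crux is the second assertion, that $v$ is the \emph{first} index $\ge 2i$ where the value $h$ is reached, i.e.\ that $\theta^{(i)}_p+\theta^{(i)}_{p+1}<h$ for all $2i\le p<v$. Here I would switch to the explicit description of Proposition~\ref{pro:explicit_pm}. Writing $f=\theta^{(i+1)}$, $m=\lambda_i$, $L_p=f_p+f_{p+1}$, and $g(t)=\sum_{q=2i+1}^{t-1}(h-L_q)$ for the accumulated deficit, formula~\eqref{eq:f_bar} gives $\theta^{(i)}_p+\theta^{(i)}_{p+1}=L_{p+2}$ for $2i\le p\le v-2$, while a direct substitution into~\eqref{eq:f_bar} evaluates the boundary sum at $p=v-1$ as $h-m+g(v+1)$. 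Since $v$ is governed by the minimality in~\eqref{eq:u_bar}, we have $g(v+1)<m$, so this boundary sum is automatically $<h$. Everything therefore reduces to the single inequality $L_q<h$ for $2i+2\le q\le v$ (the case $m=0$ being trivial, since then $v=2i$).

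To finish I would split according to whether $\fs(\bla)_{2i}=\fs(\bla)_{2i+2}$. If $\fs(\bla)_{2i}>\fs(\bla)_{2i+2}$, the maximality half at step $i+1$ already forces every local sum of $f$ at a position $\ge 2i+2$ to be $\le\fs(\bla)_{2i+2}<h$, and the reduction is settled at once. The hard case is $\fs(\bla)_{2i}=\fs(\bla)_{2i+2}=h$, where positions $2i$ and $2i+2$ belong to the run of $\fs(\bla)$ coming from one single partition $\lambda^{(h)}$. By the induction hypothesis at step $i+1$ there is then a \emph{first} position $p_0\ge 2i+2$ with $L_{p_0}=h$ — the landing site of the previous motion — and all earlier local sums are $<h$; so it suffices to show $v<p_0$, equivalently $g(p_0)\ge m$. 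The key input is a conservation law for the deficit: feeding the previous motion $\ppm_{2i+2}^{(\lambda_{i+1})}$, which carried $\theta^{(i+2)}$ to $f$ and landed at $p_0$, into~\eqref{eq:f_bar} and telescoping the resulting shifts yields the exact identity $g(p_0)=\lambda_{i+1}$. Finally, the ordering~\eqref{eq:lambda} places $\lambda_i$ and $\lambda_{i+1}$ in the same weakly-increasing-in-index run of parts of $\lambda^{(h)}$, so $m=\lambda_i\le\lambda_{i+1}=g(p_0)$, whence $v=v_{\mathrm{prop}}-2\le p_0-2<p_0$, as required. I expect this deficit-conservation identity $g(p_0)=\lambda_{i+1}$ — together with the bookkeeping needed to reconcile the two successive left-shifts and to absorb the degenerate cases (such as $p_0=2i+2$) — to be the main obstacle; the remaining ingredients are either immediate from the particle-motion properties already in hand or routine evaluations of~\eqref{eq:f_bar}.
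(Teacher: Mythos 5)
Your proposal is correct, and note that the paper contains no proof of this proposition to compare against: it is presented as a reformulation of Proposition~3.15 of \cite{DJK} and the argument is deferred entirely to that reference, so your plan is a genuine self-contained substitute. Both halves are sound. The easy half (maximality of $h$, propagated by downward induction from step $i+1$ using $\fs(\bla)_{2i}\ge\fs(\bla)_{2i+2}$, which simultaneously yields admissibility of each motion) is fine. For the hard half, I checked your deficit-conservation identity $g(p_0)=\lambda_{i+1}$: it is true, and follows by substituting the explicit formula~\eqref{eq:f_bar} for the step $\theta^{(i+1)}=\ppm_{2i+2}^{(\lambda_{i+1})}(\theta^{(i+2)})$ into the definition of $g$ and telescoping --- equivalently, from the size identity $(v-2)\overline{f}_{v-2}+(v-1)\overline{f}_{v-1}=m+uh+2\sum_{j=u+2}^{v-1}f_j$ (beware that the displayed version of this identity inside the proof of Proposition~\ref{pro:explicit_pm} is missing the factor $2$, as \Cref{exa:3} confirms, so work from~\eqref{eq:f_bar} itself as you propose). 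The degenerate cases $p_0=2i+2$ and $p_0=2i+3$ also check out. Combined with $\lambda_i\le\lambda_{i+1}$ for consecutive indices within the same block of~\eqref{eq:lambda}, this gives that the new landing index is at most $p_0-2$, and together with your boundary estimate $h-m+g(v+1)<h$ the minimality of $v$ follows. Two small cautions for the write-up: ``$v<p_0$, equivalently $g(p_0)\ge m$'' is really only the implication you need ($g(p_0)\ge m$ is sufficient, not equivalent to $v<p_0$), and you should record explicitly that when $\fs(\bla)_{2i+2}=0$ (in particular at the base step $i=s-1$) you are automatically in your first case, so the induction starts correctly.
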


We briefly describe the construction of the inverse map \( \Gamma \)
of \( \Lambda \). Given a frequency sequence, find the leftmost pair
of adjacent entries whose sum is maximal. Apply reverse particle
motions to move this pair to the first and second positions, until the
second position becomes zero. Record the number of reverse particle
motions applied during this step. Then, excluding the first and second
entries, repeat the same procedure: find the next leftmost maximal
pair among the remaining entries, move it to the third and fourth
position using reverse particle motions, until the fourth position
becomes zero. Again, record how many reverse particle motions are
applied. Continue this process until all entries of the remaining
sequence are zero. The resulting sequence represents the frame
sequence \( \fs(\bmu) \), and the recorded numbers form the sequence
\( (\mu_{s -1},\dots,\mu_0) \), from which the multipartition
\( \bmu \) can be recovered.

Based on the above description, we now formulate the construction of
\( \Gamma \) more precisely.

\subsection*{Reverse particle motions}

We define the reverse step of the particle motions to
construct the map \( \Gamma \). Let \( f = (f_0, f_1,\dots) \) be a
frequency sequence and \( u \) be a non-negative integer such that
\( f_{u-1} = 0 \), where we set \( f_{-1} = 0 \). We now describe the
procedure for applying \emph{reverse particle motions in \( f \)
  ending at \( u \)}.

Let \( h \) be the largest value of \( f_i + f_{i+1} \) for all
\( i \geq u \). Choose the smallest index \( v \geq u \) such that
\( f_v + f_{v+1} = h \). Consider the pair \( (f_v, f_{v+1}) \). If
the reverse local condition \( f_{v-1} + f_v < h \) is satisfied, we
perform the following (single) \emph{reverse particle motion}:
\[
  (f_v, f_{v+1}) \mapsto (f_v +1, f_{v+1} -1).
\]
As long as the reverse local condition remains satisfied at the
current pair, we continue applying this reverse particle motion
repeatedly at the same pair. Once the condition is no longer
satisfied, we decrement \( v \) by \( 1 \), shift our focus to the
previous pair \( (f_{v-1}, f_v) \), and resume the same procedure.
This process continues until the focus reaches the pair
\( (f_u, f_{u+1}) \) and the condition \( f_{u+1} = 0 \) is satisfied.
Record the resulting frequency sequence and the total number of
reverse particle motions applied during the process.

\begin{defn}\label{def:rpm}
  Let \( f = (f_0, f_1,\dots) \) be a frequency sequence and \( u \)
  be a non-negative integer such that \( f_{u-1} = 0 \). We define
  \( \rpm_u(f) \) to be the frequency sequence obtained by applying
  reverse particle motions in \( f \) ending at \( u \), and
  \( \rstep_u(f) \) to be the total number of reverse particle motions
  applied, as described above.
\end{defn}

Note that in general, the reverse particle motion is not the inverse
of the particle motion. Suppose that, for a given frequency sequence
\( f \) and non-negative integer \( u \), the pair
\( (f_u, f_{u+1}) = (h,0) \) moves to
\( (\overline{f}_v, \overline{f}_{v+1}) \) by applying particle
motions multiple times in \( f \) starting from \( (f_u, f_{u+1}) \).
To recover \( f \) via the reverse particle motion of
\( \overline{f} \) ending at \( u \), the index \( v \geq u \) must be
the smallest index such that \( f_v + f_{v+1} = h \). For instance,
in~\Cref{exa:particle_motion}, we have
\( (2,0,3,1,0,3,1,0, \cdots ) = \ppm_0^{(9)}((4,0,2,0,3,1,0, \cdots ))
\), but
\[
  \rpm_0((2,0,3,1,0,3,1,0, \cdots )) = (4,0,2,0,0,3,1,0, \cdots ) \not= (4,0,2,0,3,1,0, \cdots ).
\]
In this case, the reverse particle motion is not the inverse of the
particle motion. On the other hand, in~\Cref{exa:2}, we have
\( \rpm_{2i}(\theta^{(i)}) = \theta^{(i+1)} \) for all
\( i = 0,\dots,7 \). Thus, in these eight cases, the reverse particle
motion is indeed the inverse of the particle motion. In fact, by
\Cref{pro:property of Lambda}, all particle motions appearing in
\( \Lambda \) have reverse particle motion as their inverse map.

Similar to the particle motions, the authors in
\cite{DJK} gave explicit formulas for \( \rpm_u(f) \) and
\( \rstep_u(f) \).

\begin{prop}\label{pro:explicit_rpm}
  Let \( f \) be a frequency sequence, and let \( u \) be a
  non-negative integer such that \( f_{u-1} = 0 \), where
  \( f_{-1} := 0 \). Set 
  \[
    h = \max \{f_i+ f_{i+1} : i \geq u \}, \qand v = \min \{ i \geq u+2 : f_{i-2} + f_{i-1} = h \}.
  \]
  Then the frequency sequence
  \( \rpm_u(f) = (\overline{f}_0 , \overline{f}_1, \dots) \) and the
  non-negative integer \( \rstep_u(f) \) are given explicitly by:
  \begin{align}
    \label{eq:rev_f_bar}
    \overline{f}_i =
    \begin{cases}
      f_i & \mbox{if \( 0 \leq i < u \),}\\
      f_{i-2} & \mbox{if \( u+2 \leq i < v  \),} \\
      f_i & \mbox{if \( i \geq v \),}
    \end{cases}
    \qand
    (\overline{f}_u, \overline{f}_{u+1}) = (h,0),
  \end{align}
  and
  \begin{equation}\label{eq:rstep}
    \rstep_u(f) = h - f_u + \sum_{i = u}^{v-3}(h - (f_i + f_{i+1})). 
  \end{equation}
\end{prop}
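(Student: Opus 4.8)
The plan is to follow the two-step structure of the proof of Proposition~\ref{pro:explicit_pm}: first pin down the explicit shape \eqref{eq:rev_f_bar} of $\rpm_u(f)$ by tracking the mechanics of the reverse particle motion, and then read off $\rstep_u(f)$ from a conservation-of-size argument. Throughout I write $\overline{f}=\rpm_u(f)$ and recall that the size of a frequency sequence $g$ is $|g|=\sum_{i\ge 0} i\,g_i$.

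First I would verify three facts mirroring facts (1)--(3) in the proof of Proposition~\ref{pro:explicit_pm}. (i) The quantity $h=\max\{f_i+f_{i+1}:i\ge u\}$ is invariant under every single reverse particle motion: a motion $(f_v,f_{v+1})\mapsto(f_v+1,f_{v+1}-1)$ is applied only when the focus pair already satisfies $f_v+f_{v+1}=h$ and the reverse local condition $f_{v-1}+f_v<h$ holds, so afterwards $f_{v-1}+f_v\le h$ while $f_v+f_{v+1}$ is unchanged and $f_{v+1}+f_{v+2}$ only decreases. (ii) The focus begins at the smallest index $v-2=\min\{i\ge u:f_i+f_{i+1}=h\}$ and migrates leftward, terminating with $(\overline{f}_u,\overline{f}_{u+1})=(h,0)$, while entries at positions $i<u$ and $i\ge v$ are never touched. (iii) A focus shift from $(f_w,f_{w+1})$ to $(f_{w-1},f_w)$ occurs precisely when $f_{w-1}+f_w=h=f_w+f_{w+1}$, that is, when $f_{w-1}=f_{w+1}$; once the focus leaves position $w+1$ that entry is frozen, and its frozen value equals the original $f_{w-1}$, the entry two places to its left. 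Consequently the original entries $f_u,\dots,f_{v-3}$ reappear in order, each shifted two positions to the right, which is exactly the description~\eqref{eq:rev_f_bar}.

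With \eqref{eq:rev_f_bar} in hand, the step count follows by size bookkeeping. Each single reverse particle motion transfers one unit of mass from position $v+1$ to position $v$ and hence decreases $|f|$ by exactly $1$, so $\rstep_u(f)=|f|-|\overline{f}|$. Computing $|\overline{f}|$ from \eqref{eq:rev_f_bar} gives $|f|-|\overline{f}|=\sum_{i=u}^{v-1} i f_i-uh-\sum_{i=u+2}^{v-1} i f_{i-2}$; reindexing the last sum by $j=i-2$ and using that $v-2$ is the position of the maximal pair, so $f_{v-2}+f_{v-1}=h$ and $f_{v-1}=h-f_{v-2}$, collapses the right-hand side to $(v-1-u)h-f_{v-2}-2\sum_{j=u}^{v-3} f_j$, which is then checked to coincide with the expression in~\eqref{eq:rstep}.

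The main obstacle is fact (iii): making rigorous that the focus migration reproduces the intermediate entries shifted two places to the right. This is the reverse-direction analogue of the shifting argument in Proposition~\ref{pro:explicit_pm}, and the delicate point is that the value eventually frozen at position $w+1$ is the original $f_{w-1}$ rather than some intermediate value produced along the way; equivalently, one must use that $v-2$ is genuinely the \emph{smallest} index achieving the maximum $h$, which is precisely the hypothesis under which the reverse motion inverts the corresponding forward motion, as noted after Definition~\ref{def:rpm}. Once this shifting claim is secured, the size computation and the verification of \eqref{eq:rstep} are routine.
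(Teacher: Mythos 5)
Your proposal is correct and follows essentially the same route as the paper: the paper's proof likewise establishes that the intermediate entries between $f_u$ and $f_{v-3}$ are shifted two steps to the right (by the same focus-migration argument it used for \Cref{pro:explicit_pm}) and then derives \eqref{eq:rstep} from the size conservation $|\rpm_u(f)|+\rstep_u(f)=|f|$. Your algebra for the step count, $(v-1-u)h-f_{v-2}-2\sum_{j=u}^{v-3}f_j$, does indeed match \eqref{eq:rstep}, so the outline is sound and only more detailed than the paper's own two-line proof.
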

\begin{proof}
  The proof uses a similar idea as in the proof of
  \Cref{pro:explicit_pm}. By applying reverse particle motion, the
  intermediate entries originally between \( f_{u} \) and
  \( f_{v-3} \) are shifted two steps to the right. From this fact,
  \eqref{eq:rev_f_bar} follows. The formula~\eqref{eq:rstep} then
  follows directly from \( |\rpm_u(f)| + \rstep_u(f) = |f| \).
\end{proof}

\subsection*{The map \( \Gamma \)}
We now construct the map
\( \Gamma : \mathcal{A}_k \to \mathcal{P}_k \). Let \( f \) be a
frequency sequence such that \( f_i + f_{i+1} \leq k \) for all
\( i \geq 0 \). The image of \( f \) by the map \( \Gamma \) is
defined recursively as follows: Let \( \eta^{(0)} = f \). Construct
\( \eta^{(i+1)} \) and \( \mu_i \) for \( i = 0,1,\dots \) recursively
by
\begin{equation}\label{eq:eta}
  \eta^{(i+1)} = \rpm_{2i}(\eta^{(i)}), \qand \mu_i = \rstep_{2i}(\eta^{(i)}).
\end{equation}
Since \( f \) has finitely many nonzero entries, the smallest integer
\( s \) such that \( \eta^{(s)}_i = 0 \) for all \( i \geq 2s \) is
well-defined. The sequence \( \eta^{(s)} \) is a frame sequence, and
from this together with \( (\mu_0,\dots,\mu_{s-1}) \), one can
immediately obtain the \( k \)-multipartition \( \bmu \). Therefore,
we define \( \Gamma(f) := (\bmu, \fs(\bmu)) \). As an example, let
\( f = (4,0,0,3,0,1,2,1,1,2,1,2,0,3,1,0,0,1,0, \cdots ) \in
\mathcal{A}_4 \). Then \( \Gamma(f) \) is obtained by reversing the
procedure described in \Cref{exa:2}.

\medskip
We conclude this section with the main result: the map
\( \Lambda : \mathcal{P}_k \to \mathcal{A}_k \) is a bijection and its
inverse is given by \( \Gamma \). This was proved in~\cite{DJK}, but the proof there was quite tedious. With the new formulation of this paper, the proof is only a few lines.

\begin{thm}[~\cite{DJK}]\label{thm:main bijection}
  The map \( \Lambda : \mathcal{P}_k \to \mathcal{A}_k \) is a
  size-preserving bijection, with inverse map \( \Gamma \). More
  precisely, let \( \bla \) be a multipartition with
  \( \ell(\bla) = s \), and let
  \( (\theta^{(s)},\dots,\theta^{(0)}) \) be the sequence of frequency
  sequences obtained in the process~\eqref{eq:theta} of applying
  \( \Lambda \) to \( (\bla, \fs(\bla)) \). Let
  \( (\eta^{(0)},\dots,\eta^{(t)}) \) be the sequence of frequency
  sequences obtained in the process~\eqref{eq:eta} of applying
  \( \Gamma \) to \( \Lambda(\bla, \fs(\bla))\). Then we have
  \( s = t \), and \( \theta^{(i)} = \eta^{(i)} \) for each
  \( i = 0,\dots,s \).
\end{thm}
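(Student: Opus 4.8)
The plan is to prove the detailed (``more precisely'') assertion by induction on $i$, establishing simultaneously that $\eta^{(i)} = \theta^{(i)}$ and that the step recorded by $\Gamma$ satisfies $\mu_i = \lambda_i$; the bijectivity and the equality $s = t$ then follow as consequences. The base case $i = 0$ is immediate from the definitions~\eqref{eq:theta} and~\eqref{eq:eta}, since $\eta^{(0)} = \Lambda(\bla, \fs(\bla)) = \theta^{(0)}$. The whole point is to show that a single reverse particle motion $\rpm_{2i}$ exactly inverts the forward particle motion $\ppm_{2i}^{(\lambda_i)}$ used in $\Lambda$, and for this I would lean entirely on \Cref{pro:property of Lambda} together with the explicit formulas~\eqref{eq:f_bar} and~\eqref{eq:rev_f_bar}.

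For the inductive step I would first record a well-definedness point: $\rpm_{2i}(\theta^{(i)})$ requires $\theta^{(i)}_{2i-1} = 0$, which holds because the particle motions producing $\theta^{(i)}$ from $\fs(\bla)$ all start at positions $\geq 2i$ and hence leave positions below $2i$ equal to their frame-sequence values, while $\fs(\bla)$ vanishes at every odd position. Then comes the core of the argument. Write $(\theta^{(i+1)}_{2i}, \theta^{(i+1)}_{2i+1}) = (h,0)$ and let $p \geq 2i$ be the position to which this pair moves under $\ppm_{2i}^{(\lambda_i)}$. \Cref{pro:property of Lambda} says exactly that $h = \max\{\theta^{(i)}_u + \theta^{(i)}_{u+1} : u \geq 2i\}$ and that $p$ is the smallest index $\geq 2i$ attaining this maximum, but these are precisely the data ($h$ and the leftmost maximal position $p$, appearing there as $v = p+2$) that \Cref{pro:explicit_rpm} uses to compute $\rpm_{2i}(\theta^{(i)})$. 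Comparing the forward formula~\eqref{eq:f_bar} (with $u = 2i$) against the reverse formula~\eqref{eq:rev_f_bar} region by region, namely the unchanged prefix $j < 2i$, the restored pair $(h,0)$ at positions $2i, 2i+1$, the block $2i+2 \leq j \leq p+1$ which is shifted back two steps to the right, and the unchanged tail $j \geq p+2$, gives $\rpm_{2i}(\theta^{(i)}) = \theta^{(i+1)}$, hence $\eta^{(i+1)} = \theta^{(i+1)}$. Since both the particle motion and its reverse preserve size, $\mu_i = \rstep_{2i}(\theta^{(i)}) = |\theta^{(i)}| - |\theta^{(i+1)}| = \lambda_i$.

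Finally I would settle $s = t$ and the bijectivity. Because $s = \ell(\bla) = s_1$, the frame sequence $\eta^{(s)} = \theta^{(s)} = \fs(\bla)$ consists of exactly $s$ pairs occupying positions $0, 2, \dots, 2(s-1)$, so it is supported below $2s$ and the stopping rule of $\Gamma$ is met at step $s$. Conversely, for each $s' < s$ the pair that starts at position $2s'$ is moved by $\ppm_{2s'}^{(\lambda_{s'})}$ to some position $\geq 2s'$ while retaining its nonzero sum $h \geq 1$, so $\theta^{(s')}$ still has a nonzero entry at a position $\geq 2s'$ and $\Gamma$ cannot terminate earlier; thus $t = s$. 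Together these show that $\Gamma(\Lambda(\bla, \fs(\bla)))$ returns $\fs(\bla)$ and the steps $(\mu_0, \dots, \mu_{s-1}) = (\lambda_0, \dots, \lambda_{s-1})$, i.e.\ $\Gamma \circ \Lambda = \mathrm{id}_{\mathcal{P}_k}$; combined with the symmetric computation (or with injectivity and the fact that both maps preserve size on the finite set of objects of each fixed size) this yields that $\Lambda$ is a size-preserving bijection with inverse $\Gamma$. The main obstacle, and the reason the original proof in~\cite{DJK} was lengthy, is the parameter-matching step: one must guarantee that the ``leftmost maximal pair'' rule driving $\rpm$ selects precisely the pair and target produced by the forward motion, which is exactly the content \Cref{pro:property of Lambda} was arranged to supply.
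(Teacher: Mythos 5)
Your proposal is correct and follows essentially the same route as the paper: both rest on \Cref{pro:property of Lambda} to guarantee that the height $h$ and the leftmost maximal position selected by $\rpm_{2i}$ coincide with the data produced by $\ppm_{2i}^{(\lambda_i)}$, so that each reverse step undoes the corresponding forward step and $\rstep_{2i}(\theta^{(i)})=\lambda_i$. The extra details you supply (the well-definedness check $\theta^{(i)}_{2i-1}=0$, the region-by-region comparison of \eqref{eq:f_bar} with \eqref{eq:rev_f_bar}, and the termination argument giving $s=t$) are points the paper's proof leaves implicit, but they do not change the argument.
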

\begin{proof}
  Recall that the sequence \( (\theta^{(s)},\dots,\theta^{(0)}) \) of
  frequency sequences is defined by
  \[
    \fs(\bla) =: \theta^{(s)} , \theta^{(s-1)} , \dots  , \theta^{(1)}, \theta^{(0)} := \Lambda(\bla, \fs(\bla)),
  \]
  where
  \begin{align*}
    \theta^{(i)} = \ppm_{2i}^{(\lambda_{i})}(\theta^{(i+1)}) \quad\mbox{for \( i = s -1 , \dots,1,0 \)}. 
  \end{align*}

  Assume that \( (\theta^{(i+1)}_{2i}, \theta^{(i+1)}_{2i+1}) \) moves
  to \( (\theta^{(i)}_v, \theta^{(i)}_{v+1}) \) via the map
  \( \ppm_{2i}^{(\lambda_i)} \). Then, by \Cref{pro:property of
    Lambda}, \( v \) is the smallest index \( u \geq 2i \) such that
  \( \theta^{(i)}_u + \theta^{(i)}_{u+1} = \max\{ \theta^{(i)}_u +
  \theta^{(i)}_{u+1} : u \geq 2i\} \). By the construction of
  \( \Gamma \), it follows that
  \[
    \rpm_{2i}(\theta^{(i)}) = \theta^{(i+1)}, \qand \rstep_{2i}(\theta^{(i)}) = \lambda_{i} .
  \]
  The converse follows directly from the construction. Therefore, for
  each \( i = 0,\dots,s-1 \), we have
  \[
    \theta^{(i)} = \ppm_{2i}^{(\lambda_{i})}(\theta^{(i+1)})
    \Longleftrightarrow
    \left(\rpm_{2i}(\theta^{(i)}), \rstep_{2i}(\theta^{(i)})\right)
    = \left(\theta^{(i+1)}, \lambda_{i}\right) .
  \]
  Since every step in the construction of \( \Lambda \) is invertible,
  the map \( \Lambda \) is a bijection, with the inverse map
  \( \Gamma \). Moreover, the following holds:
  \[
    |\Lambda(\bla, \fs(\bla))| = |\fs(\bla)| + \sum_{i = 0 }^{s-1} \lambda_i = |\fs(\bla)| + |\bla| = |(\bla, \fs(\bla))|.
  \]
  Hence, \( \Lambda \) is size-preserving, which completes the
  proof.
\end{proof}

\section{A combinatorial proof of \Cref{thm:Sta3.2}}
\label{sec:combi-proof-1.2}

Our strategy is as follows. We first define a subset
\( \mathcal{X}_{j,r,k} \subseteq \mathcal{P}_k \) whose generating
function corresponds to the sum side of \Cref{thm:Sta3.2}
(\Cref{pro:gf X}). On the other hand, we define a subset
\( \mathcal{Y}_{j,r,k} \subseteq \mathcal{A}_k \) whose generating
function corresponds to the product side of \Cref{thm:Sta3.2}
(\Cref{pro:gf Y}), using the Andrews--Gordon identities. However, the
set \( \mathcal{Y}_{j,r,k} \) is an artifact introduced solely
to match the desired generating function. To connect
\( \mathcal{X}_{j,r,k} \) and \( \mathcal{Y}_{j,r,k} \), we define a
new subset \( \mathcal{Z}_{j,r,k} \subseteq \mathcal{A}_k \), and show
that there exists a size-preserving bijection between
\( \mathcal{Y}_{j,r,k} \) and
\( \mathcal{Z}_{j,r,k} \subseteq \mathcal{A}_k \)
(\Cref{prop:bij_YZ}). We then prove that the map \( \Lambda \) gives a
bijection between \( \mathcal{X}_{j,r,k} \) and
\( \mathcal{Z}_{j,r,k} \) (\Cref{pro:bij_XZ}). Combining these results
gives a partition interpretation of the identity, and this also
provides a combinatorial proof of \Cref{thm:Sta3.2}.

\subsection{The sum side of \Cref{thm:Sta3.2}}
\label{sec:sum-side}

\begin{defn}\label{def:X}
  Let \( j,r \), and \( k \) be non-negative integers with
  \( j+r \leq k \). Define \( \mathcal{X}_{j,r,k} \) to be the set of
  all pairs \( (\bla, \fs(\bla))\), where
  \( \bla = (\lambda^{(1)},\dots,\lambda^{(k)}) \) is a
  \( k \)-multipartition and \( \fs(\bla) \) is the frame sequence
  corresponding to \( \bla \), subject to the condition that each part
  of \( \lambda^{(m)} \) is at least \( m - j + \max\{m-(k-r), 0\} \)
  for each \( m = 1,\dots,k \).
\end{defn}
The size of \( (\bla,(f_i)_{i \geq 0}) \in \mathcal{X}_{j,r,k} \) is
defined by \( \sum_{i=1}^{k}|\lambda^{(i)}| + \sum_{i \geq 0}if_i  \).
We give a combinatorial model for the left-hand side of the equation
in \Cref{thm:Sta3.2}. We use the following facts. A simple calculation
(as shown in \cite[(2.15)]{DJK}) shows that the weight of
\( \fs(s_1,\dots,s_k) \) is
\begin{equation}\label{eq:wt_fs}
  |\fs(s_1,\dots,s_k)| = s_1^2 + \cdots + s_k^2 - (s_1 + \cdots + s_k).
\end{equation}
Let \( P_{\ell,m}(n) \) be the number of partitions of \( n \) of
length \( \ell \) into parts at least \( d \). Then
\begin{equation}\label{eq:p_lm}
  \sum_{n \geq 0}P_{\ell, d}(n) q^n = \frac{q^{d \ell}}{(q)_\ell}.
\end{equation}

\begin{prop}\label{pro:gf X}
  For non-negative integers \( j,r \), and \( k \) with
  \( j+r \leq k \), we have
  \[
    \sum_{(\bla, \fs(\bla)) \in \mathcal{X}_{j,r,k}} q^{|(\bla,\fs(\bla))|} = \sum_{s_1 \geq \dots \geq s_k \geq 0}\frac{q^{s_1^2 + \cdots + s_k^2 - (s_1 + \cdots + s_j) + (s_{k-r+1} + \cdots + s_k)}}{(q)_{s_1-s_2} \cdots (q)_{s_{k-1}-s_k}(q)_{s_k}}.
  \]
\end{prop}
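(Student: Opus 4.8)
The plan is to parametrise the elements of $\mathcal{X}_{j,r,k}$ and evaluate the generating function factor by factor. An element $(\bla,\fs(\bla)) \in \mathcal{X}_{j,r,k}$ is determined by a choice of integers $s_1 \geq \cdots \geq s_k \geq 0$ (with $s_{k+1} := 0$), which fixes the frame sequence $\fs(\bla) = \fs(s_1,\dots,s_k)$ through the lengths $\ell(\lambda^{(m)}) = s_m - s_{m+1}$, together with, for each $m = 1,\dots,k$, a partition $\lambda^{(m)}$ of length $s_m - s_{m+1}$ all of whose parts are at least $d_m := m - j + \max\{m-(k-r),0\}$. First I would invoke \eqref{eq:wt_fs} to record the frame contribution $|\fs(s_1,\dots,s_k)| = s_1^2 + \cdots + s_k^2 - (s_1 + \cdots + s_k)$, so that the sum over $\mathcal{X}_{j,r,k}$ factorises as
\[
  \sum_{s_1 \geq \cdots \geq s_k \geq 0} q^{s_1^2 + \cdots + s_k^2 - (s_1 + \cdots + s_k)} \prod_{m=1}^k \left( \sum_{\lambda^{(m)}} q^{|\lambda^{(m)}|} \right),
\]
where the inner sum ranges over partitions of length $s_m - s_{m+1}$ with the prescribed lower bound on their parts.

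The key point, and the one most easily overlooked, is that in our convention parts are non-negative, so a \emph{negative} lower bound imposes no constraint at all: the effective minimum part of $\lambda^{(m)}$ is $\max\{d_m,0\}$ rather than $d_m$ itself. With this correction, \eqref{eq:p_lm} gives the inner sum as $q^{\max\{d_m,0\}\,(s_m - s_{m+1})}/(q)_{s_m - s_{m+1}}$, and the product of the denominators is exactly $(q)_{s_1-s_2}\cdots(q)_{s_{k-1}-s_k}(q)_{s_k}$, matching the right-hand side.

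It then remains to identify the total linear exponent $\sum_{m=1}^k \max\{d_m,0\}\,(s_m - s_{m+1})$. Here I would use the elementary identity $\max\{d_m,0\} = \max\{m-j,0\} + \max\{m-(k-r),0\}$, verified by splitting into the three ranges $m \leq j$, $j < m \leq k-r$, and $m > k-r$ (the inequality $j \leq k-r$ needed to make these ranges consistent is guaranteed by $j+r \leq k$). Each of the two summands telescopes by an Abel summation: since $m \mapsto \max\{m-j,0\}$ has successive differences $0$ for $m \leq j$ and $1$ for $m > j$, one gets $\sum_m \max\{m-j,0\}\,(s_m - s_{m+1}) = s_{j+1} + \cdots + s_k$, and likewise $\sum_m \max\{m-(k-r),0\}\,(s_m - s_{m+1}) = s_{k-r+1} + \cdots + s_k$. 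Adding the frame contribution, the linear terms combine as $-(s_1 + \cdots + s_k) + (s_{j+1} + \cdots + s_k) + (s_{k-r+1} + \cdots + s_k) = -(s_1 + \cdots + s_j) + (s_{k-r+1} + \cdots + s_k)$, which is precisely the exponent on the right-hand side of \Cref{pro:gf X}. The only genuine obstacle is the $\max\{d_m,0\}$ truncation; once it is in place everything reduces to routine telescoping, and the boundary cases $j=0$ and $r=0$ require no separate treatment since the telescoped formulas remain valid there.
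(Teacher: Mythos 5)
Your proof is correct and follows essentially the same route as the paper's: factorise over the choice of $(s_1,\dots,s_k)$, use \eqref{eq:wt_fs} for the frame weight and \eqref{eq:p_lm} for each component $\lambda^{(m)}$ with effective minimum part $\max\{d_m,0\}$, then collect the linear exponents. The only cosmetic difference is that you obtain the linear exponent via the identity $\max\{d_m,0\}=\max\{m-j,0\}+\max\{m-(k-r),0\}$ and Abel summation, whereas the paper splits the product into the three ranges $m\le j$, $j<m\le k-r$, $m>k-r$ and telescopes directly.
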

\begin{proof}
  For non-negative integers \( s_1,\dots,s_k \) with
  \( s_1 \ge \cdots \ge s_k \geq 0 \), define
  \( X_{j,r}(s_1,\dots,s_k) \) to be the set of
  \( k \)-multipartitions
  \( \bla = (\lambda^{(1)},\dots,\lambda^{(k)}) \) such that, for each
  \( m \),
  \begin{itemize}
  \item the length of \( \lambda^{(m)} \) is \( s_m - s_{m+1} \), and
  \item each part of \( \lambda^{(m)} \) is at least
    \( m - j + \max\{m-(k-r), 0\} \); that is,
    \[
      \lambda^{(m)}_1 \ge \cdots \ge \lambda^{(m)}_{s_m - s_{m+1}} \geq m - j + \max\{m-(k-r), 0\}.
    \]
  \end{itemize}
  Let \( \fs(s_1,\dots,s_k) \) be the frequency sequence corresponding
  to \( k \)-multipartitions in \( X_{j,r}(s_1,\dots,s_k) \). We can
  express \( \mathcal{X}_{j,r,k} \) as
  \[
    \mathcal{X}_{j,r,k}=\bigsqcup_{s_1 \geq \dots \geq s_k \geq 0} \{\fs(s_1,\dots,s_k)\} \times X_{j,r}(s_1,\dots,s_k).
  \]
  By \eqref{eq:p_lm}, we have
  \begin{align*}
    \sum_{\bla \in X_{j,r}(s_1,\dots,s_k)} q^{|\lambda|}
    &= \prod_{m=1}^j \frac{1}{(q)_{s_m - s_{m+1}}} \prod_{m = j+1}^{k-r} \frac{q^{(m-j)(s_m - s_{m+1})}}{(q)_{s_m - s_{m+1}}} \prod_{m = k-r+1}^{k-1} \frac{q^{(2m-k+r-j)(s_m - s_{m+1})}}{(q)_{s_m - s_{m+1}}} \frac{q^{(k+r-j)s_k}}{(q)_{s_k}} \\
    &= \frac{q^{(s_{j+1} + \cdots + s_{k-r}) + 2(s_{k-r+1} + \cdots + s_k)}}{(q)_{s_1-s_2} \cdots (q)_{s_{k-1}-s_k}(q)_{s_k}}.
  \end{align*}
  This, together with \eqref{eq:wt_fs}, completes the proof.
\end{proof}

\subsection{The product side of \Cref{thm:Sta3.2}}
\label{sec:prod-side}

\begin{defn}\label{def:Y}
  For non-negative integers \( j,r \), and \( k \) with
  \( j+r \leq k \), we define \( \mathcal{Y}_{j,r,k} \) to be the set
  of all frequency sequences \( (f_i)_{i \geq 0} \) such that
  \( f_i + f_{i+1} \leq k \) for all \( i \geq 0 \), subject to the
  condition that
  \[
    f_0 \in \{\ell+\max\{\ell-(j-r),0 \} : 0 \leq \ell \leq j\}.
  \]
  We also define \( Y_{s,k} \) to be the set of all frequency
  sequences \( (f_i)_{i \geq 0} \) such that
  \( f_i + f_{i+1} \leq k \) for all \( i \), and \( f_0 = s \).
\end{defn}
Using Theorem \ref{thm:AG}, we immediately have the following.

\begin{lem}\label{lem:Y}
  For non-negative integers \( s \) and \( k \) with
  \( 0 \leq s \leq k \), we have
  \[
    \sum_{f \in Y_{s,k}}q^{|f|} = \frac{(q^{2k+3},q^{k+1-s},q^{k+2+s};q^{2k+3})_\infty}{(q)_\infty}.
  \]
\end{lem}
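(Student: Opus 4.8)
The plan is to reduce the statement directly to the Andrews--Gordon identities of \Cref{thm:AG} by exhibiting a weight-preserving bijection between \( Y_{s,k} \) and the set of Gordon difference-condition partitions recorded in \eqref{eq:comb_AG}. The guiding observation is that in the extended framework of this paper the parts equal to \(0\) carry no weight, so the boundary frequency \(f_0\) acts only as a constraint on \(f_1\), precisely in the way the Andrews--Gordon parameter \(r\) does.

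First I would note that if \( f = (f_i)_{i\ge 0}\in Y_{s,k} \) with \( f_0 = s \), then \( |f| = \sum_{i\ge 0} i f_i = \sum_{i\ge 1} i f_i \), so the \( s \) parts equal to \(0\) are invisible to the generating function. Deleting them produces a frequency sequence \( (f_i)_{i\ge 1} \) of positive parts of the same size. The inequalities \( f_i + f_{i+1}\le k \) survive for all \( i\ge 1 \), while the single constraint involving \(f_0\), namely \( f_0 + f_1 \le k \), becomes exactly \( f_1 \le k-s \). This identifies the image with the set in \eqref{eq:comb_AG} for the parameter \( r = s \), and the map \( f \mapsto (f_i)_{i\ge 1} \) is a size-preserving bijection onto it.

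Finally, by Gordon's theorem---equivalently, by the product side of the Andrews--Gordon identity \eqref{eq:AG} with \( r = s \), which the introduction records as the generating function for precisely these partitions---the generating function of this set equals
\[
  \frac{(q^{2k+3},q^{k+1-s},q^{k+2+s};q^{2k+3})_\infty}{(q)_\infty},
\]
which is the claim. The argument is essentially bookkeeping, and the only point requiring care is tracking how \(f_0 = s\) encodes the Andrews--Gordon parameter: one must check that \( f_0 = s \) together with \( f_0 + f_1 \le k \) yields \( f_1 \le k-s \), so that the correct specialisation is \( r = s \) (and not, say, \( r = k-s \)), and confirm that no weight is lost when the zero parts are removed. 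Once this correspondence is in place, the lemma follows immediately from \Cref{thm:AG}.
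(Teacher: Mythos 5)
Your proposal is correct and matches the paper's (implicit) argument: the paper derives \Cref{lem:Y} as an immediate consequence of the Andrews--Gordon identities via exactly the observation you make, namely that the $s$ weightless zero parts encode the constraint $f_1\le k-s$, so that $Y_{s,k}$ is in weight-preserving bijection with the set \eqref{eq:comb_AG} for $r=s$. Your care in checking that the parameter is $r=s$ rather than $k-s$ is the right point to verify, and it checks out.
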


We give a combinatorial model for the right-hand side of the equation
in \Cref{thm:Sta3.2}.
\begin{prop}\label{pro:gf Y}
  For non-negative integers \( j,r \), and \( k \) with
  \( j+r \leq k \), we have
  \[
    \sum_{f \in \mathcal{Y}_{j,r,k}} q^{|f|}  
    = \sum_{s=0}^j \frac{(q^{2k+3},q^{k+1-r+j-2s}, q^{k+2+r-j+2s} ; q^{2k+3})_\infty}{(q)_\infty}.
  \]
\end{prop}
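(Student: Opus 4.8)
The plan is to decompose $\mathcal{Y}_{j,r,k}$ according to the value of the first entry $f_0$ and to recognise each resulting block as one of the sets $Y_{s,k}$ governed by Lemma~\ref{lem:Y}. Writing $S := \{\ell + \max\{\ell-(j-r),0\} : 0\le \ell \le j\}$ for the set of admissible values of $f_0$, the defining conditions give immediately the disjoint decomposition $\mathcal{Y}_{j,r,k} = \bigsqcup_{s\in S} Y_{s,k}$, since a frequency sequence with $f_i+f_{i+1}\le k$ for all $i$ lies in $\mathcal{Y}_{j,r,k}$ precisely when $f_0\in S$, and $f_0$ determines which block $Y_{s,k}$ it sits in. Every element of $S$ lies in $[0,k]$, the largest being attained at $\ell=j$ and equal to $j+r\le k$; hence Lemma~\ref{lem:Y} applies to each block and yields
\[
\sum_{f\in\mathcal{Y}_{j,r,k}} q^{|f|} = \sum_{s\in S}\frac{(q^{2k+3},q^{k+1-s},q^{k+2+s};q^{2k+3})_\infty}{(q)_\infty}.
\]

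It then remains to match this sum with the right-hand side. I would set $F(x):=\frac{(q^{2k+3},q^{k+1-x},q^{k+2+x};q^{2k+3})_\infty}{(q)_\infty}$ and record the reflection symmetry $F(x)=F(-x-1)$, which holds because $x\mapsto -x-1$ merely swaps the two arguments $q^{k+1-x}$ and $q^{k+2+x}$. The right-hand side of the proposition is exactly $\sum_{s=0}^{j}F(2s-j+r)$, upon checking $k+1-(2s-j+r)=k+1-r+j-2s$ and $k+2+(2s-j+r)=k+2+r-j+2s$. Using the symmetry, each summand equals $F(\tilde s)$, where $\tilde s:=\max\{2s-j+r,\,j-r-2s-1\}\ge 0$ is the reflection of $2s-j+r$ into the non-negative integers.

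The crux of the argument, and the step I expect to be the main obstacle, is the purely combinatorial set identity $S=\{\tilde s : 0\le s\le j\}$. First I would note that the $j+1$ values $2s-j+r$ are distinct and all of the same parity, so no two of them can satisfy $x_1=-x_2-1$ (which would force the even sum $x_1+x_2=-1$); hence their reflections $\tilde s$ are pairwise distinct, and $\{\tilde s\}$ has exactly $j+1$ elements, matching $|S|=j+1$. To prove the sets coincide I would split into the cases $r\ge j$ and $r<j$. When $r\ge j$ one has $2s-j+r\ge 0$ throughout, so no reflection occurs and both sets equal the progression $\{r-j,r-j+2,\dots,r+j\}$. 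When $r<j$, writing $d=j-r\ge 1$, the explicit description gives $S=\{0,1,\dots,d-1\}\cup\{d,d+2,\dots,j+r\}$, and a short computation separating the parity of $d$ confirms that the non-negative values of $2s-j+r$ together with the reflections of the negative ones reproduce $S$ exactly. Substituting $S=\{\tilde s\}$ and invoking $F(\tilde s)=F(2s-j+r)$ then converts the block sum above into $\sum_{s=0}^{j}F(2s-j+r)$, which is the desired product side; the whole proof uses no $q$-series input beyond Lemma~\ref{lem:Y} and the elementary symmetry of $F$.
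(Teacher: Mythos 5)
Your proof is correct and follows essentially the same route as the paper: decompose $\mathcal{Y}_{j,r,k}$ according to the value of $f_0$, apply Lemma~\ref{lem:Y} to each block $Y_{s,k}$, and use the symmetry of the triple product under swapping its two middle arguments (your $F(x)=F(-x-1)$ is exactly the paper's ``switching the first two terms in the Pochhammer symbol'') to rewrite the sum over admissible values of $f_0$ as $\sum_{s=0}^{j}F(2s-j+r)$. Your packaging via the reflection $\tilde s=\max\{2s-j+r,\,j-r-2s-1\}$ together with the parity/cardinality argument is a slightly cleaner way of organising the paper's explicit two-case split ($j\le r$ versus $j>r$), but it is the same computation.
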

\begin{proof}
  If \( j \leq r \), then the values
  \( \ell+\max\{\ell-(j-r),0 \} = r-j+ 2 \ell \) for
  \( 0 \leq \ell \leq j \), are between \( 0 \) and \( k \). The proof
  follows immediately from \Cref{lem:Y} and
  \[
    \{\ell+\max\{\ell-(j-r),0 \} : 0 \leq \ell \leq j\} = \{ r-j+2s: 0 \leq s \leq j\}.
  \]
  Now suppose \( j>r \).
  Then the set
  \[
    \{\ell+\max\{\ell-(j-r),0 \} : 0 \leq \ell \leq j\}
    = \{ 0, 1,\dots,j-r,j-r+2,\dots,j+r \} 
  \]
  can be expressed as the disjoint union of the two sets
  \[
    \{ j+r, j+r-2, j+r-4 , \cdots  \} \sqcup \{ j-r-1, j-r-3, j-r-5 , \cdots  \}.
  \]
  The first set is given by
  \[
    \{r-j+2s : s = \lfloor (j-r)/2 \rfloor,\lfloor (j-r)/2 \rfloor+1,\dots,j\},
  \]
  and the second set is
  \[
    \{j-r-1-2s: s = 0,1,\dots,\lfloor (j-r)/2 \rfloor -1\}.
  \]
  We divide \( \mathcal{Y}_{j,r,k} \) into two parts depending on
  whether \( f_0 \) belongs to the first set or the second set. By
  \Cref{lem:Y}, the corresponding generating functions for these two
  parts are, respectively,
  \begin{align}
\label{eq:first}    \sum_{s=\lfloor (j-r)/2 \rfloor}^j \frac{(q^{2k+3}, q^{k+1-(r-j+2s)}, q^{k+2+(r-j+2s)}; q^{2k+3})_\infty}{(q)_\infty},
  \end{align}
  and
  \begin{multline}\label{eq:second}
    \sum_{s=0}^{\lfloor (j-r)/2 \rfloor -1} \frac{(q^{2k+3}, q^{k+1-(j-r-1-2s)}, q^{k+2+(j-r-1-2s)} ; q^{2k+3})_\infty}{(q)_\infty}\\
    =
    \sum_{s=0}^{\lfloor (j-r)/2 \rfloor -1} \frac{(q^{2k+3}, q^{k+1-r+j-2s}, q^{k+2+r-j+2s} ; q^{2k+3})_\infty}{(q)_\infty},
  \end{multline}
  where the equality in \eqref{eq:second} follows from switching the
  first two terms in the Pochhammer symbol in the numerator. Adding
  \eqref{eq:first} and \eqref{eq:second} completes the proof.
\end{proof}

\subsection{A combinatorial proof of \Cref{thm:Sta3.2}}
\label{sec:proof3.2}

\begin{defn}\label{def:Z}
  For non-negative integers \( j,r \), and \( k \) with
  \( j+r \leq k \), we define \( \mathcal{Z}_{j,r,k} \) to be the set
  of all frequency sequences \( (f_i)_{i \geq 0} \) such that
  \( f_i + f_{i+1} \leq k \) for all \( i \geq 0 \), subject to the
  condition that
  \[
    f_0 \leq j - \max\{f_0 + f_1 - (k-r),0\}.
  \]
\end{defn}

Note that this condition is equivalent to
\[
  f_0 \leq j \qand 2f_0 + f_1 \leq k-r+j.
\]

\begin{prop}\label{prop:bij_YZ}
  There exists a size-preserving bijection from
  \( \mathcal{Y}_{j,r,k} \) to \( \mathcal{Z}_{j,r,k} \).
\end{prop}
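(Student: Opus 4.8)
The plan is to exhibit an explicit weight-preserving bijection $\phi\colon \mathcal{Y}_{j,r,k}\to\mathcal{Z}_{j,r,k}$ that leaves the tail $(f_1,f_2,\dots)$ of a frequency sequence untouched and merely relabels its first entry $f_0$. The point that makes this possible is that $f_0$ sits in position $0$ and therefore contributes $0\cdot f_0=0$ to the size $|f|=\sum_{i\ge 0}i f_i$; consequently any modification of $f_0$ alone is automatically size-preserving, and the entire statement reduces to matching the \emph{admissible values} of $f_0$ in the two sets, while the common constraint $f_i+f_{i+1}\le k$ is preserved by construction.

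First I would record the admissible first entries. For $\mathcal{Y}_{j,r,k}$ they form the set $S_Y:=\{\,s_\ell : 0\le\ell\le j\,\}$ with $s_\ell:=\ell+\max\{\ell-(j-r),0\}$, and exactly as in the proof of \Cref{pro:gf Y} the map $\ell\mapsto s_\ell$ is strictly increasing, hence a bijection from $\{0,\dots,j\}$ onto $S_Y$. For $\mathcal{Z}_{j,r,k}$ I would use the equivalent form of its defining condition noted after \Cref{def:Z}, namely $f_0\le j$ together with $2f_0+f_1\le k-r+j$; in particular every occurring $f_0$ satisfies $0\le f_0\le j$. I then define $\phi$ by sending $f=(s_\ell,f_1,f_2,\dots)\in\mathcal{Y}_{j,r,k}$ to $(\ell,f_1,f_2,\dots)$, and its candidate inverse by sending $f'=(\ell,f_1,f_2,\dots)\in\mathcal{Z}_{j,r,k}$ to $(s_\ell,f_1,f_2,\dots)$.

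The verification, which is the only real work, comes down to three short inequalities, handled by splitting into the regimes $\ell\le j-r$ and $\ell>j-r$. The two useful identities are $s_\ell\ge\ell$ and $2\ell-s_\ell=\min\{\ell,j-r\}\le j-r$. For well-definedness of $\phi$ one checks $\ell+f_1\le s_\ell+f_1\le k$, so that $f_0+f_1\le k$ is preserved, together with $2\ell+f_1\le k+(2\ell-s_\ell)\le k-r+j$, while $\ell\le j$ is immediate; for the inverse one checks $s_\ell+f_1\le k$ using whichever of $\ell+f_1\le k$ or $2\ell+f_1\le k-r+j$ is binding in the given regime. Since both maps act only on the weight-$0$ coordinate $f_0$ and fix $f_1,f_2,\dots$, size preservation is automatic and the two maps are visibly mutually inverse. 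The main (mild) obstacle is pure bookkeeping: ensuring that the case split on $\ell$ versus $j-r$ (equivalently $j>r$ versus $j\le r$, including $j<r$ where $j-r<0$) is carried out consistently so that $\phi$ and its inverse genuinely land in the prescribed sets. I expect no deeper difficulty, since the weight-$0$ observation trivializes the size-preservation requirement entirely.
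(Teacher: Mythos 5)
Your proposal is correct and is essentially the paper's own proof: the paper also fixes the tail $(f_1,f_2,\dots)$, relabels only the weight-zero entry $f_0$ (sending the $\ell$-th admissible value $\ell+\max\{\ell-(j-r),0\}$ of $\mathcal{Y}_{j,r,k}$ to $\ell$), and checks exactly the inequalities you list in both directions. The only difference is cosmetic — you package the two regimes $j\ge r$ and $j<r$ into the single formula $s_\ell$, whereas the paper writes the map out separately in each case.
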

\begin{proof}
  The additional conditions on \( \mathcal{Y}_{j,r,k} \) and
  \( \mathcal{Z}_{j,r,k} \) are given respectively by
  \[
    f_0 \in \{\ell+\max\{\ell-(j-r),0 \} : 0 \leq \ell \leq j\}, \qand f_0 \leq j - \max\{f_0 + f_1 - (k-r),0\}.
  \]
  We construct a bijection between \( \mathcal{Y}_{j,r,k} \) and
  \( \mathcal{Z}_{j,r,k} \) by modifying only the value of \( f_0 \).

  Suppose that \( j \geq r \). Then the condition on
  \( \mathcal{Y}_{j,r,k} \) is
  \( f_0 \in \{ 0,1,\dots,j-r, j-r+2,\dots,j+r\} \). Define a map
  \( \phi \) on \( \mathcal{Y}_{j,r,k} \) by
  \[
    (f_0,f_1,f_2, \dots) \mapsto (f_0', f_1, f_2,\dots),
  \]
  where
  \[
    f_0' =
    \begin{cases}
      f_0 & \mbox{if \( f_0 \leq j-r \)},\\
      j-r+\ell & \mbox{if \( f_0 = j-r + 2 \ell \) for some \( \ell \in \{1,\dots,r\} \).}
    \end{cases}
  \]
  We first claim that
  \( \phi(\mathcal{Y}_{j,r,k}) \subseteq \mathcal{Z}_{j,r,k} \). Since
  all entries except the first one remain unchanged,
  \( f_i + f_{i+1} \leq k \) for \( i \geq 1 \). For \( i = 0 \), we
  have \( f_0' + f_1 \leq f_0 + f_1 \leq k \). It remains to show that
  \( f_0' \leq j - \max\{f_0' + f_1 - (k-r), 0 \} \). We verify this
  inequality case by case. If \( f_0' + f_1 \leq k-r \), then
  \( \max\{f_0' + f_1 - (k-r), 0 \} = 0 \). By construction of
  \( \phi \), we have \( f_0' \leq j \), hence,
  \( f_0' \leq j - \max\{f_0' + f_1 - (k-r), 0 \} \) holds. Suppose
  \( f_0' + f_1 > k-r \). If \( f_0 \leq j-r \), then \( f_0' = f_0 \)
  and we have
  \begin{align*}
    f_0' + \max\{f_0' + f_1 - (k-r), 0 \}
    &= 2f_0 + f_1 - (k-r) \\
    &= f_0 + (f_0 + f_1) - (k-r)\\
    &\leq (j-r) + k - (k-r) = j.
  \end{align*}
  If \( f_0 > j-r \), then \( f_0 = j-r+ 2 \ell \) and
  \( f_0' = j-r+ \ell \) for some \( \ell \in \{1,\dots,r\}\). We have
  \begin{align*}
    f_0' + \max\{f_0' + f_1 - (k-r), 0 \}
    &= 2f_0' + f_1 - (k-r) \\
    &= 2(j-r+ \ell) + f_1 - (k-r)\\
    &= f_0 + (j-r) + f_1 - (k-r)\\
    &= (f_0 + f_1) - k + j \\
    &\leq k-k+j = j.
  \end{align*}
  Hence, the claim follows.

  We now construct the inverse map of \( \phi \). Define a map
  \( \pi \) on \( \mathcal{Z}_{j,r,k} \) by
  \[
    (g_0,g_1,g_2, \dots) \mapsto (g_0', g_1, g_2,\dots),
  \]
  where
  \[
    g_0' =
    \begin{cases}
      g_0 & \mbox{if \( g_0 \leq j-r \)},\\
      j-r+2\ell & \mbox{if \( g_0 = j-r + \ell \) for some \( \ell \in \{1,\dots,r\}\).}
    \end{cases}
  \]
  Similarly, we claim that
  \( \pi(\mathcal{Z}_{j,r,k}) \subseteq \mathcal{Y}_{j,r,k} \). Since
  \( (g_0, g_1,\dots) \in \mathcal{Z}_{j,r,k} \), we have
  \( g_i + g_{i+1} \leq k \) for \( i \geq 1 \). If
  \( g_0 \leq j-r \), then
  \[
    g_0' + g_1 = g_0 + g_1 \leq k.
  \]
  If \( g_0 = j - r + \ell \), then \( g_0' = j - r + 2 \ell \) and
  \[
    g_0' + g_1 = (j-r+2 \ell) + g_1 = 2g_0 -(j-r) + g_1 \leq k,
  \]
  where the last equality follows from the condition of
  \( (g_0, g_1,\dots) \in \mathcal{Z}_{j,r,k} \) that
  \( 2g_0 + g_1 \geq k-r+j \). By the construction of \( \pi \), we
  have
  \[
    g_0' \in \{\ell+\max\{\ell-(j-r),0 \} : 0 \leq \ell \leq j\}.
  \]
  Hence, the claim follows. Since the map
  \( \pi : \mathcal{Z}_{j,r,k} \to \mathcal{Y}_{j,r,k} \) is the
  inverse of \( \phi \), it follows that \( \phi \) is a bijection
  from \( \mathcal{Y}_{j,r,k} \) to \( \mathcal{Z}_{j,r,k} \).
  Moreover, since this map changes only the \( 0 \)th entry, it does
  not affect the size of the partition; hence, it is
  size-preserving.
  
  The case \( j < r \) is proved in a similar way.
  Suppose that \( j < r \).
  Then \( f_0 \in \{ r-j, r-j+2,\dots, r+j\} \).
  Define a map \( \phi \) on \( \mathcal{Y}_{j,r,k} \) by
  \[
    (f_0,f_1,f_2, \dots) \mapsto (f_0', f_1, f_2,\dots),
  \]
  where \( f_0' = \ell \) for \( f_0 = r-j+2 \ell \) with
  \( \ell \in \{0,\dots,j\}\). Then, we have
  \( \phi(\mathcal{Y}_{j,r,k}) \subseteq \mathcal{Z}_{j,r,k} \), since
  \[
    f_0' + f_1 = \ell + f_1 \leq 2 \ell + (r-j) + f_1 = f_0 + f_1 \leq k,
  \]
  and
  \begin{align*}
    f_0' + \max\{f_0' + f_1 - (k-r), 0 \}
    &\leq 2f_0' + f_1 - (k-r) \\
    &= 2 \ell + f_1 - (k-r)\\
    &= f_0 - (r-j) + f_1 - (k-r)\\
    &= (f_0 + f_1) - k + j \\
    &\leq k-k+j = j.
  \end{align*}
  The inverse map \( \pi \) on \( \mathcal{Z}_{j,r,k} \) is defined by
  \[
    (g_0,g_1,g_2, \dots) \mapsto (g_0', g_1, g_2,\dots),
  \]
  where \( g_0' = r-j+2 \ell \) for \( g_0 = \ell \) with
  \( \ell \in \{0,\dots,j\}\). Then we have
  \( \pi(\mathcal{Z}_{j,r,k}) \subseteq \mathcal{Y}_{j,r,k} \) since
  \[
    g_0' \in \{ r-j, r-j+2,\dots, r+j\},
  \]
  and
  \[
    g_0' + g_1 = r-j+2 \ell + g_1 = r-j + 2g_0 + g_1 \leq k,
  \]
  where the last equality follows from the condition of
  \( (g_0, g_1,\dots) \in \mathcal{Z}_{j,r,k} \) that
  \[
    j \geq g_0 + \max\{g_0 + g_1 - (k-r) , 0\} \geq 2g_0 + g_1 - (k-r).
  \]
  The map \( \phi \) is a bijection from \( \mathcal{Y}_{j,r,k} \) to
  \( \mathcal{Z}_{j,r,k} \) in the case \( j < r \) as well.
\end{proof}

\begin{prop}\label{pro:bij_XZ}
  The map \( \Lambda \) is a size-preserving bijection from
  \( \mathcal{X}_{j,r,k} \) to \( \mathcal{Z}_{j,r,k} \).
\end{prop}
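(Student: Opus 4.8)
The plan is to exploit \Cref{thm:main bijection}, which already provides that $\Lambda\colon\mathcal{P}_k\to\mathcal{A}_k$ is a size‑preserving bijection with inverse $\Gamma$. Since $\mathcal{X}_{j,r,k}\subseteq\mathcal{P}_k$ and $\mathcal{Z}_{j,r,k}\subseteq\mathcal{A}_k$, it suffices to prove the two inclusions $\Lambda(\mathcal{X}_{j,r,k})\subseteq\mathcal{Z}_{j,r,k}$ and $\Gamma(\mathcal{Z}_{j,r,k})\subseteq\mathcal{X}_{j,r,k}$; applying $\Gamma$ to the first gives $\mathcal{X}_{j,r,k}\subseteq\Gamma(\mathcal{Z}_{j,r,k})$, which combined with the second forces $\Gamma(\mathcal{Z}_{j,r,k})=\mathcal{X}_{j,r,k}$, and size‑preservation is inherited from $\Lambda$. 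Throughout I would use the equivalent form of the condition of \Cref{def:Z}, namely $f_0\le j$ and $2f_0+f_1\le k-r+j$, together with the observation that ``each part of $\lambda^{(m)}$ is at least $c_m:=m-j+\max\{m-(k-r),0\}$'' is equivalent to ``the smallest part of every nonempty $\lambda^{(m)}$ is at least $c_m$'', since parts are weakly decreasing. The crucial algebraic identity, verified directly, is $c_m=\max\{m-j,\,2m-(k-r)-j\}$.

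For the inclusion $\Lambda(\mathcal{X}_{j,r,k})\subseteq\mathcal{Z}_{j,r,k}$ I would induct on $\ell(\bla)=s_1$ and analyse the last particle motion $\ppm_0^{(\lambda_0)}$ applied to $\theta^{(1)}$. Let $m_0=\max\{m:s_m\ge1\}$ be the top nonempty block; by \Cref{pro:property of Lambda} the pair processed at position $0$ is $(m_0,0)$, so $\lambda_0=\lambda^{(m_0)}_{s_{m_0}}\ge c_{m_0}$. If this motion leaves the pair at the bottom ($v=2$ in \eqref{eq:u_bar}), then \eqref{eq:f_bar} gives $(f_0,f_1)=(m_0-\lambda_0,\lambda_0)$, and $\lambda_0\ge c_{m_0}=\max\{m_0-j,\,2m_0-(k-r)-j\}$ yields at once $f_0=m_0-\lambda_0\le j$ and $2f_0+f_1=2m_0-\lambda_0\le k-r+j$, i.e. $f\in\mathcal{Z}_{j,r,k}$. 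If instead the pair travels past position $0$ ($v>2$), then \eqref{eq:f_bar} gives $(f_0,f_1)=(\theta^{(1)}_2,\theta^{(1)}_3)$, which is the bottom pair of $\Lambda(\bla',\fs(\bla'))$, where $\bla'$ is obtained from $\bla$ by deleting the smallest part of $\lambda^{(m_0)}$. Since the remaining parts of $\lambda^{(m_0)}$ are still $\ge c_{m_0}$, we have $\bla'\in\mathcal{X}_{j,r,k}$ with $\ell(\bla')<\ell(\bla)$, so the induction hypothesis applies and $(f_0,f_1)$ satisfies the two $\mathcal{Z}$‑inequalities.

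The reverse inclusion $\Gamma(\mathcal{Z}_{j,r,k})\subseteq\mathcal{X}_{j,r,k}$ is where I expect the main difficulty. Using \Cref{pro:explicit_rpm}, the first reverse step peels off $\mu_0=\rstep_0(f)$, the smallest part of $\mu^{(m_0)}$ where $m_0=\max_i(f_i+f_{i+1})$; the same identity shows that the two defining inequalities $f_0\le j$ and $2f_0+f_1\le k-r+j$ translate into $\mu_0\ge m_0-j$ and $\mu_0\ge 2m_0-(k-r)-j$, whose combination is exactly $\mu_0\ge c_{m_0}$, as required. The obstruction is that the residual sequence $\eta^{(1)}=\rpm_0(f)$ need \emph{not} lie in $\mathcal{Z}_{j,r,k}$ (its leading entry is $m_0$, possibly exceeding $j$), and one genuinely needs the $\mathcal{Z}$‑condition to continue to constrain the smallest parts of all lower blocks, since $\Gamma(\mathcal{A}_k)\not\subseteq\mathcal{X}_{j,r,k}$ in general. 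I would resolve this by carrying an invariant through the peeling: after fixing the bottom pair $(m_0,0)$ one recurses on the shifted tail $(f_2,f_3,\dots)$ with a position‑shifted analogue of the $\mathcal{Z}$‑inequalities (mirroring the fact that, for $m\le k-r$, the bound $c_m$ is the Bressoud‑type $\max\{m-j,0\}$ already handled in \cite{DJK}, while the regime $m>k-r$ is precisely where the extra shift $\max\{m-(k-r),0\}$ — and hence the inequality $2f_0+f_1\le k-r+j$ — is consumed). Finally I would dispose of the degenerate configurations (empty top block $s_k=0$, empty intermediate blocks) separately; these only shift the bookkeeping and leave the central cancellation $c_{m_0}=\max\{m_0-j,\,2m_0-(k-r)-j\}$ intact.
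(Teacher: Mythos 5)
Your overall architecture is exactly the paper's: invoke \Cref{thm:main bijection} and reduce to the two inclusions $\Lambda(\mathcal{X}_{j,r,k})\subseteq\mathcal{Z}_{j,r,k}$ and $\Gamma(\mathcal{Z}_{j,r,k})\subseteq\mathcal{X}_{j,r,k}$ (the paper's Lemmas~\ref{lem:Lambda} and~\ref{lem:Gamma}), with the reformulation $f_0\le j$, $2f_0+f_1\le k-r+j$ and the observation $c_m=\max\{m-j,\,2m-(k-r)-j\}$ playing the same role as there. However, in the forward inclusion your case analysis has a genuine error: when the last pair stops one step above the bottom, i.e.\ $v=3$ in the notation of \eqref{eq:u_bar}, formula \eqref{eq:f_bar} gives $(f_0,f_1)=(\theta^{(1)}_2,\,2h-\lambda_0-2\theta^{(1)}_2)$, \emph{not} $(\theta^{(1)}_2,\theta^{(1)}_3)$; in the $v=3$ regime one has $h-\theta^{(1)}_2<\lambda_0\le 2h-2\theta^{(1)}_2-\theta^{(1)}_3$, so $f_1\ge\theta^{(1)}_3$ with equality only at the boundary (e.g.\ $\theta^{(1)}=(5,0,2,0,\dots)$, $\lambda_0=4$ gives $f=(2,2,3,0,\dots)$ while $(\theta^{(1)}_2,\theta^{(1)}_3)=(2,0)$). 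Consequently the induction hypothesis on $\bla'$ alone does not bound $2f_0+f_1=2h-\lambda_0$; you must additionally invoke $\lambda_0\ge 2m_0-(k-r)-j$, exactly as in the middle case $v=2i+3$ of the paper's \Cref{lem:Lambda}. Your two cases $v=2$ and $v>3$ are correct and coincide with the paper's outer cases; the missing third case is where both the induction hypothesis and the part bound are needed simultaneously.

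The reverse inclusion is where your proposal stops short of a proof. You correctly identify the first peeling step ($\mu_0=\rstep_0(f)\ge c_{m_0}$ follows from the two $\mathcal{Z}$-inequalities via \eqref{eq:rstep}) and correctly diagnose that $\eta^{(1)}=\rpm_0(f)$ need not lie in $\mathcal{Z}_{j,r,k}$, so a position-shifted invariant must be carried. But the entire technical content of the paper's \Cref{lem:Gamma} is the verification that this invariant, namely $\eta^{(i)}_{2i}\le j$ and $2\eta^{(i)}_{2i}+\eta^{(i)}_{2i+1}\le k-r+j$, actually propagates from $\eta^{(i)}$ to $\eta^{(i+1)}$ under the reverse particle motion; this again requires a three-way case analysis on $v$ from \eqref{eq:rev_f_bar} together with the inequalities $\eta^{(i)}_{u}+\eta^{(i)}_{u+1}\le h$ defining the peeled pair. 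You assert that you ``would resolve this by carrying an invariant'' but never establish the inductive step, and the accompanying remark about the regimes $m\le k-r$ versus $m>k-r$ does not substitute for it. So the proposal names the right invariant (the same one as the paper) but leaves unproved precisely the part of the argument that is not routine.
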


It is shown in \Cref{thm:main bijection} that the map \( \Lambda \) is
size-preserving and has an inverse map \( \Gamma \). To complete the
proof, it suffices to show that
\( \Lambda(\mathcal{X}_{j,r,k}) \subseteq \mathcal{Z}_{j,r,k} \) and
\( \Gamma(\mathcal{Z}_{j,r,k}) \subseteq \mathcal{X}_{j,r,k} \), which
we prove in Lemmas~\ref{lem:Lambda} and \ref{lem:Gamma}.

\begin{lem}\label{lem:Lambda}
  Let \( (\bla, \fs(\bla)) \in \mathcal{X}_{j,r,k} \) with
  \( \ell(\bla) = s \). Suppose that
  \( (\theta^{(s)},\dots,\theta^{(0)}) \) denotes the sequence of
  frequency sequences obtained recursively from
  \( (\bla, \fs(\bla))\) via the map \( \Lambda \), as in
  \eqref{eq:theta}. Then, for all \( i \in \{s,\dots,0\} \), we have
  \begin{equation}\label{eq:cond_theta}
    \theta_{2i}^{(i)} \leq j - \max\{\theta_{2i}^{(i)} + \theta_{2i+1}^{(i)} - (k-r), 0 \}.
  \end{equation}
  Moreover, \( \Lambda(\mathcal{X}_{j,r,k}) \subseteq \mathcal{Z}_{j,r,k} \).
\end{lem}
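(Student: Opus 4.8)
The plan is to prove the inequality \eqref{eq:cond_theta} by downward induction on $i$ from $i=s$ to $i=0$; the ``moreover'' assertion is then simply the $i=0$ instance combined with the fact, already recorded in \Cref{thm:main bijection}, that $\Lambda$ lands in $\mathcal{A}_k$, so that $\theta^{(0)}=\Lambda(\bla,\fs(\bla))$ satisfies both $f_i+f_{i+1}\le k$ and the extra condition defining $\mathcal{Z}_{j,r,k}$. Before the induction I would record two structural facts that feed it. First, since a particle motion $\ppm_{2i'}$ only redistributes mass at positions $\ge 2i'$, the motions $\ppm_{2(i+1)},\dots$ producing $\theta^{(i+1)}$ leave positions $2i$ and $2i+1$ untouched, so $(\theta^{(i+1)}_{2i},\theta^{(i+1)}_{2i+1})=(h,0)$ with $h$ the frame value at position $2i$. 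Second, by \Cref{pro:property of Lambda} this $h$ is the maximal pair sum of $\theta^{(i)}$ over positions $\ge 2i$ and $\lambda_i$ is a part of $\lambda^{(h)}$, so the defining condition of $\mathcal{X}_{j,r,k}$ reads $\lambda_i\ge h-j+\max\{h-(k-r),0\}$; this is the only place membership in $\mathcal{X}_{j,r,k}$ enters.

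For the base case $i=s$: as $\ell(\bla)=s$, the frame sequence occupies positions $0,\dots,2s-1$, hence $\theta^{(s)}_{2s}=\theta^{(s)}_{2s+1}=0$ and \eqref{eq:cond_theta} reduces to $0\le j$. For the inductive step I would apply the explicit description \eqref{eq:f_bar} of $\ppm_{2i}^{(\lambda_i)}$ and split on the final position $w$ (with $w\ge 2i$) of the moved pair. If $w\ge 2i+2$, both positions $2i$ and $2i+1$ lie in the left-shifted block, so $(\theta^{(i)}_{2i},\theta^{(i)}_{2i+1})=(\theta^{(i+1)}_{2i+2},\theta^{(i+1)}_{2i+3})$ and \eqref{eq:cond_theta} at level $i$ is verbatim the induction hypothesis at level $i+1$. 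If $w=2i$, the pair stays in place as $(h-\lambda_i,\lambda_i)$, and the required bound $h-\lambda_i\le j-\max\{h-(k-r),0\}$ is exactly the $\mathcal{X}_{j,r,k}$ inequality on $\lambda_i$.

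The delicate case, which I expect to be the main obstacle, is $w=2i+1$: then position $2i$ is shifted, giving $\theta^{(i)}_{2i}=\theta^{(i+1)}_{2i+2}=:A$, while position $2i+1$ carries the relocated pair. Using \eqref{eq:f_bar} and $\theta^{(i+1)}_{2i+1}=0$ one computes $\theta^{(i)}_{2i+1}=2h-2A-\lambda_i$, hence $\sigma_i:=\theta^{(i)}_{2i}+\theta^{(i)}_{2i+1}=2h-A-\lambda_i$. The target $A\le j-\max\{\sigma_i-(k-r),0\}$ then splits in two: when $\sigma_i\le k-r$ it reduces to $A\le j$, which follows from the induction hypothesis at level $i+1$; when $\sigma_i>k-r$ it reduces to $\lambda_i\ge 2h-(k-r)-j$, which follows from the $\mathcal{X}_{j,r,k}$ bound via $\max\{h-(k-r),0\}\ge h-(k-r)$. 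This case is genuinely mixed, using both the induction hypothesis and membership in $\mathcal{X}_{j,r,k}$; moreover the sum $\sigma_i$ here turns out to be at least $\theta^{(i+1)}_{2i+2}+\theta^{(i+1)}_{2i+3}$, so it cannot simply be folded into the $w\ge 2i+2$ case. With the three cases settled, taking $i=0$ yields $\theta^{(0)}\in\mathcal{Z}_{j,r,k}$ and hence $\Lambda(\mathcal{X}_{j,r,k})\subseteq\mathcal{Z}_{j,r,k}$.
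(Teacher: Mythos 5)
Your proposal is correct and follows essentially the same route as the paper's proof: backward induction on $i$, with the base case $\theta^{(s)}_{2s}=\theta^{(s)}_{2s+1}=0$, and a three-way case split on the landing position of the moved pair (your $w=2i,\,2i+1,\,\ge 2i+2$ are exactly the paper's $v=2i+2,\,2i+3,\,>2i+3$), using the explicit formula \eqref{eq:f_bar} together with the bound $\lambda_i\ge h-j+\max\{h-(k-r),0\}$ from membership in $\mathcal{X}_{j,r,k}$. The only cosmetic difference is that the paper rewrites \eqref{eq:cond_theta} as the equivalent pair of inequalities $\theta^{(i)}_{2i}\le j$ and $2\theta^{(i)}_{2i}+\theta^{(i)}_{2i+1}\le k-r+j$, whereas you split directly on the value of the $\max$; the estimates are identical.
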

\begin{proof}
  The proof follows a similar approach to that of
  \cite[Proposition~4.1]{DJK}, using backward induction on
  \( i \in \{ s,\dots,0\} \). The base case \( i = s \) holds
  trivially, since \( \theta^{(s)} = \fs(\bla) \) and the both entries
  at positions \( 2s \) and \( 2s+1 \) in the sequence \( \fs(\bla) \)
  are zero. Assume that
  \( \theta_{2i+2}^{(i+1)} \leq j - \max\{\theta_{2i+2}^{(i+1)} +
  \theta_{2i+3}^{(i+1)} - (k-r), 0 \} \) holds. Recall
  \eqref{eq:recursion_theta} that
  \[
    \theta^{(i)} = \ppm_{2i}^{(\lambda_i)} \left( \theta^{(i+1)} \right).
  \]
  Suppose that
  \( (\theta^{(i+1)}_{2i},\theta^{(i+1)}_{2i+1}) = (h,0) \) for some
  \( h \geq 1 \). Then \( \lambda_i \) is a part of the partition
  \( \lambda^{(h)} \). By the condition of \( \mathcal{X}_{j,r,k} \),
  we have \( \lambda_i \geq h - j + \max\{h - (k-r),0\} \). It
  suffices to prove that
  \( \theta_{2i}^{(i)} \leq j - \max\{\theta_{2i}^{(i)} +
  \theta_{2i+1}^{(i)} - (k-r), 0 \} \), which is equivalent to showing
  that \( \theta_{2i}^{(i)} \leq j \) and
  \( 2\theta_{2i}^{(i)} + \theta_{2i+1}^{(i)} \leq k-r+j \).

  Let \( v \) be the value defined in \eqref{eq:u_bar}, so that
  \( (\theta^{(i+1)}_{2i},\theta^{(i+1)}_{2i+1}) \) moves to
  \( (\theta^{(i)}_{v-2},\theta^{(i)}_{v-1}) \). We use the explicit
  formula \eqref{eq:f_bar} for \( \theta^{(i)} \) to determine
  \( \theta^{(i)}_{2i} \) and \( \theta^{(i)}_{2i+1} \) .

\begin{itemize}
\item If \( v > 2i+3 \), then
  \( (\theta_{2i}^{(i)}, \theta_{2i+1}^{(i)}) =
  (\theta_{2i+2}^{(i+1)}, \theta_{2i+3}^{(i+1)}) \), and the claim
  follows immediately from the induction hypothesis.
\item If \( v = 2i+3 \), then 
  \begin{align*}
    \theta_{2i}^{(i)} &= \theta_{2i+2}^{(i+1)}, \\
    \theta_{2i+1}^{(i)} &= \theta^{(i+1)}_{2i+3} + (h-\theta_{2i+1}^{(i+1)}-\theta_{2i+2}^{(i+1)}) + (h-\theta_{2i+2}^{(i+1)}-\theta_{2i+3}^{(i+1)}) - \lambda_i\\
                      &= 2h - \lambda_i - \theta_{2i+1}^{(i+1)} - 2\theta_{2i+2}^{(i+1)}.
  \end{align*}
  We have \( \theta_{2i}^{(i)} = \theta_{2i+2}^{(i+1)} \leq j \), by
  the induction hypothesis. Using two facts that
  \( \theta_{2i+1}^{(i+1)} = 0 \) and
  \( \lambda_i \geq h - j + \max\{ h - (k-r), 0 \} \), we have
  \begin{align*}
    2\theta_{2i}^{(i)} + \theta_{2i+1}^{(i)}
    &= 2\theta_{2i+2}^{(i+1)} + (2h - \lambda_i - \theta_{2i+1}^{(i+1)} - 2\theta_{2i+2}^{(i+1)}) \\
    &= 2h - \lambda_i \\
    &\leq 2h - (h - j + \max\{ h - (k-r), 0 \}) \\
    &\leq k-r+j.
  \end{align*}
\item If \( v = 2i + 2 \), then
  \begin{align*}
    \theta_{2i}^{(i)} &= \theta_{2i+2}^{(i+1)} + (h-\theta_{2i+1}^{(i+1)}-\theta_{2i+2}^{(i+1)}) - \lambda_i = h - \lambda_i, \\
    \theta_{2i+1}^{(i)} &= \theta_{2i+1}^{(i+1)} + \lambda_i = \lambda_i.
  \end{align*}
  Similarly, we have
  \[
    \theta_{2i}^{(i)} = h - \lambda_i \leq h - (h - j + \max\{ h - (k-r), 0 \}) \leq j,
  \]
  and
  \[
    2\theta_{2i}^{(i)} + \theta_{2i+1}^{(i)} = 2h - \lambda_i \leq k-r+j.
  \]
\end{itemize}
Therefore, in all cases, we have \( \theta^{(i)}_{2i} \leq j \) and
\( 2\theta^{(i)}_{2i} + \theta^{(i)}_{2i+1} \leq k-r+j \). This
completes the proof by induction. Moreover, by \eqref{eq:cond_theta}
with \( i = 0 \), we obtain
\( \Lambda(\mathcal{X}_{j,r,k}) \subseteq \mathcal{Z}_{j,r,k} \).

\end{proof}

\begin{lem}\label{lem:Gamma}
  Let \( f \in \mathcal{Z}_{j,r,k} \). Suppose that
  \( (\eta^{(0)},\dots,\eta^{(s)}) \) denotes the sequence of
  frequency sequences obtained recursively from \( f \) via the map
  \( \Gamma \), as in \eqref{eq:eta}.
  Then, for all \( i \in \{0,\dots,s\} \), we have
  \begin{equation}\label{eq:condition eta}
    \eta_{2i}^{(i)} \leq j - \max\{\eta_{2i}^{(i)} + \eta_{2i+1}^{(i)} - (k-r), 0 \}.
  \end{equation}
  Moreover,
  \( \Gamma(\mathcal{Z}_{j,r,k}) \subseteq \mathcal{X}_{j,r,k} \).
\end{lem}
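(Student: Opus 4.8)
The plan is to prove \eqref{eq:condition eta} by forward induction on $i$, exactly mirroring the backward induction used for \Cref{lem:Lambda} but now driven by reverse particle motions. The base case $i=0$ is immediate: $\eta^{(0)}=f\in\mathcal{Z}_{j,r,k}$, so $\eta_0^{(0)}\le j-\max\{\eta_0^{(0)}+\eta_1^{(0)}-(k-r),0\}$ is precisely the defining condition of $\mathcal{Z}_{j,r,k}$. For the inductive step I would use $\eta^{(i+1)}=\rpm_{2i}(\eta^{(i)})$ together with the explicit formula \eqref{eq:rev_f_bar}, writing $h=\eta_{2i}^{(i+1)}=\max\{\eta_\ell^{(i)}+\eta_{\ell+1}^{(i)}:\ell\ge 2i\}$ and $v=\min\{\ell\ge 2i+2:\eta_{\ell-2}^{(i)}+\eta_{\ell-1}^{(i)}=h\}$, and then split into the three cases $v\ge 2i+4$, $v=2i+3$, $v=2i+2$, which correspond to the three cases in \Cref{lem:Lambda}.

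Concretely, in each case I would read off the next pair $(\eta_{2i+2}^{(i+1)},\eta_{2i+3}^{(i+1)})$ from \eqref{eq:rev_f_bar} and verify the equivalent form $\eta_{2i+2}^{(i+1)}\le j$ and $2\eta_{2i+2}^{(i+1)}+\eta_{2i+3}^{(i+1)}\le k-r+j$ of \eqref{eq:condition eta}. When $v\ge 2i+4$ the pair equals $(\eta_{2i}^{(i)},\eta_{2i+1}^{(i)})$ and the claim is just the induction hypothesis. When $v=2i+3$ the pair is $(\eta_{2i}^{(i)},\eta_{2i+3}^{(i)})$, and since $v=2i+3$ forces $\eta_{2i+1}^{(i)}+\eta_{2i+2}^{(i)}=h$ while $\eta_{2i+2}^{(i)}+\eta_{2i+3}^{(i)}\le h$, one gets $\eta_{2i+3}^{(i)}\le\eta_{2i+1}^{(i)}$, so the pair sum only decreases and the induction hypothesis still applies. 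When $v=2i+2$ the pair is $(\eta_{2i+2}^{(i)},\eta_{2i+3}^{(i)})$ and $\eta_{2i}^{(i)}+\eta_{2i+1}^{(i)}=h$; then $\eta_{2i+1}^{(i)}+\eta_{2i+2}^{(i)}\le h$ gives $\eta_{2i+2}^{(i)}\le\eta_{2i}^{(i)}$, and combining with $\eta_{2i+2}^{(i)}+\eta_{2i+3}^{(i)}\le h$ yields $2\eta_{2i+2}^{(i)}+\eta_{2i+3}^{(i)}\le 2\eta_{2i}^{(i)}+\eta_{2i+1}^{(i)}\le k-r+j$, again by the induction hypothesis.

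Once \eqref{eq:condition eta} holds for every $i$, I would establish the membership $\bmu=\Gamma(f)\in\mathcal{X}_{j,r,k}$ by bounding the recorded step $\mu_i=\rstep_{2i}(\eta^{(i)})$ from below, recalling that this step is a part of $\mu^{(h)}$ with $h=\eta_{2i}^{(i+1)}$, so that the required inequality is $\mu_i\ge h-j+\max\{h-(k-r),0\}$, i.e.\ $\mu_i\ge h-j$ and $\mu_i\ge 2h-(k-r)-j$. From \eqref{eq:rstep}, $\mu_i=h-\eta_{2i}^{(i)}+\sum_{\ell=2i}^{v-3}(h-(\eta_\ell^{(i)}+\eta_{\ell+1}^{(i)}))$ with every summand nonnegative, so $\mu_i\ge h-\eta_{2i}^{(i)}\ge h-j$ using $\eta_{2i}^{(i)}\le j$ from the induction hypothesis. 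For the second inequality, if $v=2i+2$ the sum is empty and $\mu_i=\eta_{2i+1}^{(i)}=h-\eta_{2i}^{(i)}$, whence $\mu_i\ge 2h-(k-r)-j$ is equivalent to $2\eta_{2i}^{(i)}+\eta_{2i+1}^{(i)}\le k-r+j$; if $v>2i+2$ then $\eta_{2i}^{(i)}+\eta_{2i+1}^{(i)}<h$, so keeping only the $\ell=2i$ term gives $\mu_i\ge 2h-2\eta_{2i}^{(i)}-\eta_{2i+1}^{(i)}$, and the same induction hypothesis $2\eta_{2i}^{(i)}+\eta_{2i+1}^{(i)}\le k-r+j$ finishes the bound.

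I expect the main obstacle to be the bookkeeping in the propagation step, specifically the case $v=2i+2$, where the pair under consideration is genuinely new (it is not the image of the pair controlled by the hypothesis) and one must exploit the maximality of $h$ twice---through both $\eta_{2i+1}^{(i)}+\eta_{2i+2}^{(i)}\le h$ and $\eta_{2i+2}^{(i)}+\eta_{2i+3}^{(i)}\le h$---to transfer the bound. Everything else is a direct transcription of \eqref{eq:rev_f_bar} and \eqref{eq:rstep}, and the conclusion $\Gamma(\mathcal{Z}_{j,r,k})\subseteq\mathcal{X}_{j,r,k}$ then follows since each $\mu_i$, being a part of $\mu^{(h)}$, meets the lower bound $h-j+\max\{h-(k-r),0\}$ defining $\mathcal{X}_{j,r,k}$.
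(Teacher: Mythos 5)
Your proposal is correct and follows essentially the same route as the paper's proof: the same induction on $i$ with the three cases $v\ge 2i+4$, $v=2i+3$, $v=2i+2$ read off from \eqref{eq:rev_f_bar}, the same reformulation of \eqref{eq:condition eta} as $\eta_{2i+2}^{(i+1)}\le j$ and $2\eta_{2i+2}^{(i+1)}+\eta_{2i+3}^{(i+1)}\le k-r+j$, and the same lower bound $\mu_i\ge 2h-2\eta_{2i}^{(i)}-\eta_{2i+1}^{(i)}$ from \eqref{eq:rstep} to conclude $\Gamma(\mathcal{Z}_{j,r,k})\subseteq\mathcal{X}_{j,r,k}$.
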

\begin{proof}
  The proof follows a similar approach to \cite[Proposition~4.3 and
  Corollary~4.4]{DJK}, using induction on
  \( i \in \{0,\dots,s\} \). The base case \( i = 0 \) holds clearly
  from \( \eta^{(0)} = f \in \mathcal{Z}_{j,r,k} \). Assume that
  \( \eta_{2i}^{(i)} \leq j - \max\{\eta_{2i}^{(i)} +
  \eta_{2i+1}^{(i)} - (k-r), 0 \} \) holds. To obtain
  \( \eta^{(i+1)} \) from \( \eta^{(i)} \), set
  \[
    h = \max\{\eta^{(i)}_j + \eta^{(i)}_{j+1} : j \geq 2i\}, \qand v = \min\{j \geq 2i+2 : \eta^{(i)}_{j-2} + \eta^{(i)}_{j-1} = h \},
  \]
  and recall from \eqref{eq:eta} that
  \( \eta^{(i+1)} = \rpm_{2i} ( \eta^{(i)} )\) and
  \( \mu_i = \rstep_{2i} ( \eta^{(i)} ) \). We first prove that
  \( \eta_{2i+2}^{(i+1)} \leq j - \max\{\eta_{2i+2}^{(i+1)} +
  \eta_{2i+3}^{(i+1)} - (k-r), 0 \}\), which is equivalent to showing
  that \( \eta_{2i+2}^{(i+1)} \leq j \) and
  \( 2\eta_{2i+2}^{(i+1)} + \eta_{2i+3}^{(i+1)} \leq k-r+j \). We use
  the explicit formula \eqref{eq:rev_f_bar} to compute
  \( \eta^{(i+1)}_{2i+2} \) and \( \eta^{(i+1)}_{2i+3} \).
  \begin{itemize}
  \item If \( v > 2i+3 \), then
    \( (\eta_{2i+2}^{(i+1)}, \eta_{2i+3}^{(i+1)}) = (\eta_{2i}^{(i)},
    \eta_{2i+1}^{(i)}) \), and the claim follows immediately from the
    induction hypothesis.
  \item If \( v = 2i+3 \), then
    \( (\eta_{2i+2}^{(i+1)}, \eta_{2i+3}^{(i+1)}) = (\eta_{2i}^{(i)},
    \eta_{2i+3}^{(i)}) \). The inequality
    \( \eta_{2i+2}^{(i)} + \eta_{2i+3}^{(i)} \leq h =
    \eta_{2i+1}^{(i)} + \eta_{2i+2}^{(i)} \) implies
    \( \eta_{2i+3}^{(i)} \leq \eta_{2i+1}^{(i)} \). Using this
    together with the induction hypothesis, we have
    \begin{align*}
      \eta_{2i+2}^{(i+1)} &= \eta_{2i}^{(i)} \leq j, \\
      2\eta_{2i+2}^{(i+1)} + \eta_{2i+3}^{(i+1)} &\leq 2\eta_{2i}^{(i)} + \eta_{2i+1}^{(i)} \leq k-r+j.
    \end{align*} 
  \item If \( v = 2i+2 \),
    then\( (\eta_{2i+2}^{(i+1)}, \eta_{2i+3}^{(i+1)}) =
    (\eta_{2i+2}^{(i)}, \eta_{2i+3}^{(i)}) \). Using inequalities
    \( \eta_{2i+1}^{(i)} + \eta_{2i+2}^{(i)} \leq h = \eta_{2i}^{(i)}
    + \eta_{2i+1}^{(i)} \) and
    \( \eta_{2i+2}^{(i)} + \eta_{2i+3}^{(i)} \leq h = \eta_{2i}^{(i)}
    + \eta_{2i+1}^{(i)} \) together with the induction hypothesis, we
    have
    \begin{align*}
      \eta_{2i+2}^{(i+1)} &= \eta_{2i+2}^{(i)} \leq \eta_{2i}^{(i)} \leq j, \\
      2\eta_{2i+2}^{(i+1)} + \eta_{2i+3}^{(i+1)} &= 2\eta_{2i+2}^{(i)}
      + \eta_{2i+3}^{(i)} = (\eta_{2i+2}^{(i)} + \eta_{2i+3}^{(i)}) +
      \eta_{2i+2}^{(i)} \leq 2\eta_{2i}^{(i)} + \eta_{2i+1}^{(i)}
      \leq k-r+j.
    \end{align*} 
  \end{itemize}

  Let \( \bmu = (\mu^{(1)},\dots,\mu^{(k)}) \) be the
  \( k \)-partition obtained from \( f \) via the map \( \Gamma \). It
  remains to prove that
  \( (\fs(\bmu), \bmu) \in \mathcal{X}_{j,r,k} \). Since \( \mu_i \)
  is a part of the partition \( \mu^{(h)} \), it suffices to show that
  \( \mu_i = \rstep_{2i}(\eta^{(i)}) \geq h-j + \max\{ h -(k-r),0 \}
  \). We use the formula \eqref{eq:rstep} to compute \( \mu_i \), and
  consider two cases: when \( v \geq 2i+3 \) and when \( v = 2i+2 \).
  If \( v \geq 2i+3 \), then
  \[
    \mu_i = \rstep_{2i}(\eta^{(i)})
    = h - \eta^{(i)}_{2i} + \sum_{j = 2i}^{v-3}(h - (\eta^{(i)}_j + \eta^{(i)}_{j+1}))
    \geq 2h - 2 \eta_{2i}^{(i)} - \eta_{2i+1}^{(i)}.
  \]
  Using \( h \geq \eta_{2i}^{(i)} + \eta_{2i+1}^{(i)} \), and the
  induction hypotheses \( \eta^{(i)}_{2i} \leq j \) and
  \( 2\eta^{(i)}_{2i} + \eta^{(i)}_{2i+1} \leq k-r+j \), we obtain the
  following two inequalities:
  \begin{align*}
    \mu_i &\geq 2h - 2 \eta_{2i}^{(i)} - \eta_{2i+1}^{(i)} \geq h - \eta_{2i}^{(i)} \geq h - j, \quad\text{and}\\
    \mu_i &\geq 2h - 2 \eta_{2i}^{(i)} - \eta_{2i+1}^{(i)} \geq 2h - (k-r+j) = h - j + (h - (k-r)).
  \end{align*}
  Combining the two inequalities above, we obtain
  \( \mu_i \geq h-j+\max\{h-(k-r),0\} \). If \( v = 2i+2 \), then
  \( \mu_i = h - \eta_{2i}^{(i)} \) and
  \( h = \eta_{2i}^{(i)} + \eta_{2i+1}^{(i)} \). Hence, by
  \eqref{eq:condition eta}, we have
  \begin{align*}
    \mu_i
    &= h - \eta_{2i}^{(i)}\\
    &\geq h -j + \max\{ \eta_{2i}^{(i)} + \eta_{2i+1}^{(i)} - (k-r), 0 \} \\
    &=  h -j + \max\{ h - (k-r), 0 \},
  \end{align*}
  which completes the proof.
\end{proof}

\section{A combinatorial proof of Theorems~\ref{thm:Sta4.2} and~\ref{thm:new_Kur}}
\label{sec:proof3.4}

The combinatorial proof of \Cref{thm:Sta4.2} follows the same approach
as that of \Cref{thm:Sta3.2}, but with Bressoud's identity in place of
Andrews--Gordon's identity as key ingredient. The sum and product
sides are obtained as the generating functions of new sets
\( \mathcal{X}_{j,r,k}' \) and \( \mathcal{Z}_{j,r,k}' \),
respectively. We then define a new set \( \mathcal{Y}_{j,r,k}' \), and
give bijections between \( \mathcal{X}_{j,r,k}' \) and
\( \mathcal{Y}_{j,r,k}' \), and between \( \mathcal{Y}_{j,r,k}' \) and
\( \mathcal{Z}_{j,r,k}' \). However, the case of the Kur\c sung\"oz
type identities is somewhat different. One can define
\( \widetilde{\mathcal{X}}_{j,r,k}' \),
\( \widetilde{\mathcal{Y}}_{j,r,k}' \), and
\( \widetilde{\mathcal{Z}}_{j,r,k}' \) each satisfying the opposite
parity condition, as analogues of \( \mathcal{X}_{j,r,k}' \),
\( \mathcal{Y}_{j,r,k}' \), and \( \mathcal{Z}_{j,r,k}' \). While the
sum side arises as the generating function of
\( \widetilde{\mathcal{X}}_{j,r,k}' \), the product side does not
correspond to the generating function of
\( \widetilde{\mathcal{Z}}_{j,r,k}' \). Unlike in the previous cases,
\( \widetilde{\mathcal{Z}}_{j,r,k}' \) is not in bijection with
\( \widetilde{\mathcal{Y}}_{j,r,k}' \). However, by using its relation
to \( \mathcal{Z}_{j,r,k}' \), we can determine the generating
function for \( \widetilde{\mathcal{Z}}_{j,r,k}' \), which in turn
yields a new identity of the Kur\c sung\"oz type, namely Theorem
\ref{thm:new_Kur}.

\begin{defn}\label{def:X'}
  Let \( j,r \), and \( k \) be non-negative integers with
  \( j+r \leq k \). Define \( \mathcal{X}_{j,r,k}' \) (resp.
  \( \widetilde{\mathcal{X}}_{j,r,k}' \)) to be the set of all pairs
  \( (\bla,\fs(\bla)) \), where
  \( \bla = (\lambda^{(1)},\dots,\lambda^{(k)}) \) is a
  \( k \)-multipartition and \( \fs(\bla) \) is the frame sequence
  corresponding to \( \bla \), subject to the conditions that
  \begin{itemize}
  \item each part of the partition \( \lambda^{(m)} \) is at least
    \( m - j + \max\{m-(k-r), 0\} \) for each \( m = 1,\dots,k \), and
  \item each part of the last partition \( \lambda^{(k)} \) has the same
    parity as \( k+r-j \) (resp. \( k+r-j+1 \)).
  \end{itemize}
\end{defn}

\begin{prop}\label{pro:gf X'}
  Let \( j,r \), and \( k \) be non-negative integers with
  \( j+r \leq k \).
  Then we have
  \begin{align}
    \label{eq:gfX'}
    \sum_{(\mu,\lambda) \in \mathcal{X}_{j,r,k}'} q^{|(\mu,\lambda)|}
    &= \sum_{s_1 \geq \dots \geq s_k \geq 0}\frac{q^{s_1^2 + \cdots + s_k^2 - (s_1 + \cdots + s_j) + (s_{k-r+1} + \cdots + s_k)}}{(q)_{s_1-s_2} \cdots (q)_{s_{k-1}-s_k}(q^2;q^2)_{s_k}}, \\
    \label{eq:gfX'til}
    \sum_{(\mu,\lambda) \in \widetilde{\mathcal{X}}_{j,r,k}'} q^{|(\mu,\lambda)|}
    &= \sum_{s_1 \geq \dots \geq s_k \geq 0}\frac{q^{s_1^2 + \cdots + s_k^2 - (s_1 + \cdots + s_j) + (s_{k-r+1} + \cdots + s_{k-1} + 2s_k)}}{(q)_{s_1-s_2} \cdots (q)_{s_{k-1}-s_k}(q^2;q^2)_{s_k}}.
  \end{align}
\end{prop}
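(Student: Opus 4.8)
The plan is to follow the proof of Proposition~\ref{pro:gf X} almost verbatim, changing only the contribution of the last partition $\lambda^{(k)}$, which is precisely where the new parity condition and the modulus-two denominator $(q^2;q^2)_{s_k}$ enter. First I would fix a tuple $(s_1,\dots,s_k)$ with $s_1\ge\cdots\ge s_k\ge0$ and set $s_{k+1}:=0$, declaring $\lambda^{(m)}$ to have length $s_m-s_{m+1}$; exactly as in Proposition~\ref{pro:gf X}, this decomposes each of $\mathcal{X}_{j,r,k}'$ and $\widetilde{\mathcal{X}}_{j,r,k}'$ as a disjoint union $\bigsqcup_{s_1\ge\cdots\ge s_k\ge0}\{\fs(s_1,\dots,s_k)\}\times(\cdots)$ over the corresponding sets of multipartitions. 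The frame-sequence weight $|\fs(s_1,\dots,s_k)|=s_1^2+\cdots+s_k^2-(s_1+\cdots+s_k)$ from~\eqref{eq:wt_fs} is untouched by the new conditions and is carried along unchanged.

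The factors coming from $\lambda^{(1)},\dots,\lambda^{(k-1)}$ are literally the same as in Proposition~\ref{pro:gf X}: for $1\le m\le k-1$ the parts of $\lambda^{(m)}$ are at least $d_m:=m-j+\max\{m-(k-r),0\}$, so by~\eqref{eq:p_lm} the associated generating function is $q^{d_m(s_m-s_{m+1})}/(q)_{s_m-s_{m+1}}$ (reading $d_m$ as $0$ when $d_m\le0$). The one genuinely new ingredient is a parity-restricted analogue of~\eqref{eq:p_lm}: the generating function for partitions of length $\ell$ into parts $\ge d$ all congruent to $d$ modulo $2$ is $q^{d\ell}/(q^2;q^2)_\ell$. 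I would prove this by writing each part as $\mu_i=d+2\nu_i$, which identifies such partitions with ordinary length-$\ell$ partitions (parts $\ge0$) enumerated in the variable $q^2$, giving $q^{d\ell}\cdot\tfrac{1}{(q^2;q^2)_\ell}$; this is what produces the denominator $(q^2;q^2)_{s_k}$.

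Applying this lemma to $\lambda^{(k)}$ is then the crux. For $\mathcal{X}_{j,r,k}'$ the parts of $\lambda^{(k)}$ are $\ge k+r-j$ and $\equiv k+r-j\pmod 2$, so $d=k+r-j$ and the last factor is $q^{(k+r-j)s_k}/(q^2;q^2)_{s_k}$: the same power of $q$ as in Proposition~\ref{pro:gf X}, with only the denominator changed. Hence the exponent bookkeeping (the telescoping that collapses the product of powers into $(s_{j+1}+\cdots+s_{k-r})+2(s_{k-r+1}+\cdots+s_k)$) is identical, and adding $|\fs|$ yields $s_1^2+\cdots+s_k^2-(s_1+\cdots+s_j)+(s_{k-r+1}+\cdots+s_k)$, which is~\eqref{eq:gfX'}. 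For $\widetilde{\mathcal{X}}_{j,r,k}'$ the parts of $\lambda^{(k)}$ are $\ge k+r-j$ but of the opposite parity, so the smallest admissible part is $k+r-j+1$ and $d=k+r-j+1$; the last factor becomes $q^{(k+r-j+1)s_k}/(q^2;q^2)_{s_k}$, carrying one extra $q^{s_k}$. This raises the coefficient of $s_k$ in the final exponent from $1$ to $2$, turning $(s_{k-r+1}+\cdots+s_k)$ into $(s_{k-r+1}+\cdots+s_{k-1}+2s_k)$, which is~\eqref{eq:gfX'til}.

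The step I expect to require the most care is the $\widetilde{\mathcal{X}}'$ case: one must check that imposing parity $\equiv k+r-j+1$ on parts that are nonetheless required to be $\ge k+r-j$ genuinely raises the minimal admissible part to $k+r-j+1$, so that $d=k+r-j+1$ is the correct value to feed into the parity-restricted lemma, and one should confirm that nothing goes wrong at the boundary $s_k=0$, where $\lambda^{(k)}$ is empty and the last factor degenerates to $1$. Everything else is a routine transcription of the computation in Proposition~\ref{pro:gf X}.
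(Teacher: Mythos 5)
Your proposal is correct and follows essentially the same route as the paper: the paper likewise reuses the computation of Proposition~\ref{pro:gf X} verbatim for $\lambda^{(1)},\dots,\lambda^{(k-1)}$ and replaces only the last factor, using exactly the parity-restricted generating functions $q^{d\ell}/(q^2;q^2)_\ell$ and $q^{(d+1)\ell}/(q^2;q^2)_\ell$ that you derive (stated in the paper as~\eqref{eq:p_lmm}). Your extra care about the minimal admissible part being $k+r-j+1$ in the $\widetilde{\mathcal{X}}'$ case and about $d_m\le 0$ matches the paper's conventions, so nothing further is needed.
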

\begin{proof}
  We use the following identity. Let \( P_{\ell, d, s}(n) \) be the
  number of partitions \( \lambda \) of \( n \) of length \( \ell \) into
  parts at least \( d \), such that each part of \( \lambda \) has the
  same parity as \( s \). Then
  \begin{equation}\label{eq:p_lmm}
    \sum_{n \geq 0} P_{\ell,d,d}(n) q^n = \frac{q^{d \ell}}{(q^2;q^2)_\ell}, \qand \sum_{n \geq 0} P_{\ell,d,d+1}(n) q^n = \frac{q^{(d+1) \ell}}{(q^2;q^2)_\ell}.
  \end{equation}
  The proofs are the same as that of \Cref{pro:gf X}, except for the
  last term in the generating functions. By \eqref{eq:p_lm} and
  \eqref{eq:p_lmm}, we have
  \begin{align*}
    \sum_{\lambda \in X_{j,r}'(s_1,\dots,s_k)} q^{|\lambda|}
    &= \prod_{m=1}^j \frac{1}{(q)_{s_m - s_{m+1}}} \prod_{m = j+1}^{k-r} \frac{q^{(m-j)(s_m - s_{m+1})}}{(q)_{s_m - s_{m+1}}} \prod_{m = k-r+1}^{k-1} \frac{q^{(2m-k+r-j)(s_m - s_{m+1})}}{(q)_{s_m - s_{m+1}}} \frac{q^{(k+r-j)s_k}}{(q^2;q^2)_{s_k}} \\
    &= \frac{q^{(s_{j+1} + \cdots + s_{k-r}) + 2(s_{k-r+1} + \cdots + s_k)}}{(q)_{s_1-s_2} \cdots (q)_{s_{k-1}-s_k}(q^2;q^2)_{s_k}},
  \end{align*}
  and
  \begin{align*}
    \sum_{\lambda \in \widetilde{X}_{j,r}'(s_1,\dots,s_k)} q^{|\lambda|}
    &= \prod_{m=1}^j \frac{1}{(q)_{s_m - s_{m+1}}} \prod_{m = j+1}^{k-r} \frac{q^{(m-j)(s_m - s_{m+1})}}{(q)_{s_m - s_{m+1}}} \prod_{m = k-r+1}^{k-1} \frac{q^{(2m-k+r-j)(s_m - s_{m+1})}}{(q)_{s_m - s_{m+1}}} \frac{q^{(k+r-j+1)s_k}}{(q^2;q^2)_{s_k}} \\
    &= \frac{q^{(s_{j+1} + \cdots + s_{k-r}) + 2(s_{k-r+1} + \cdots + s_{k-1})+3s_k}}{(q)_{s_1-s_2} \cdots (q)_{s_{k-1}-s_k}(q^2;q^2)_{s_k}}.
  \end{align*}
  This, together with \eqref{eq:wt_fs}, completes the proof.
\end{proof}

\begin{defn}\label{def:Y'}
  For non-negative integers \( j,r \), and \( k \) with
  \( j+r \leq k \), we define \( \mathcal{Y}_{j,r,k}' \) (resp.
  \( \widetilde{\mathcal{Y}}_{j,r,k}' \)) to be the set of all
  frequency sequences \( (f_i)_{i \geq 0} \) such that
  \( f_i + f_{i+1} \leq k \) for all \( i \geq 0 \), subject to the
  conditions that
  \begin{itemize}
  \item \( f_0 \in \{\ell+\max\{\ell-(j-r),0 \} : 0 \leq \ell \leq j\} \), and
  \item if \( f_u + f_{u+1} = k \), then \( u f_u + (u+1) f_{u+1} \) has the same parity as \( k+r-j \) (resp. \( k+r-j+1 \)).
  \end{itemize}
\end{defn}

The following lemma was obtained in \cite{DJK} using Theorem \ref{thm:2}.

\begin{lem}[{\cite[Equation~(2.12) and (2.13)]{DJK}}]\label{lem:Y'}
  Let \( Y_{s,k}' \) (resp. \( \widetilde{Y}_{s,k}' \)) be the set of
  all frequency sequences \( (f_i)_{i \geq 0} \) such that
  \( f_i + f_{i+1} \leq k \) for all \( i \geq 0 \), \( f_0 = s \),
  and if \( f_u + f_{u+1} = k \), then \( u f_u + (u+1) f_{u+1} \) has
  the same parity as \( k-s \) (resp. \( k-s+1 \)).
  Then
  \begin{align}
\label{eq:gf Y'_sk}    \sum_{\lambda \in Y_{s,k}'}q^{|\lambda|} &= \frac{(q^{2k+2}, q^{k+1-s},q^{k+1+s};q^{2k+2})_\infty}{(q)_\infty}, \\
 \label{eq:gf Y'_sk_til}   \sum_{\lambda \in \widetilde{Y}_{s,k}'}q^{|\lambda|} &= \frac{(q^{2k+2}, q^{k-s},q^{k+2+s};q^{2k+2})_\infty}{(q)_\infty}.
  \end{align}
\end{lem}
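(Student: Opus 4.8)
The plan is to reduce both \eqref{eq:gf Y'_sk} and \eqref{eq:gf Y'_sk_til} to Bressoud's identity (Theorem~\ref{thm:2}) read combinatorially, using the trivial device of deleting the prescribed block of zeros. Since every sequence in \( Y'_{s,k} \) and \( \widetilde{Y}'_{s,k} \) has \( f_0=s \) fixed and the entry \( f_0 \) contributes \( 0\cdot f_0=0 \) to the size, the deletion map \( (f_i)_{i\ge 0}\mapsto (g_i)_{i\ge 1}:=(f_i)_{i\ge 1} \) is size-preserving onto frequency sequences supported on positive indices. The entire task is then to identify the image of this map with one of the sets whose generating function is supplied by the combinatorial form of Theorem~\ref{thm:2}, namely the set of \( (g_i)_{i\ge 1} \) with \( g_i+g_{i+1}\le k \), \( g_1\le k-r \), and \( ig_i+(i+1)g_{i+1}\equiv k-r\pmod 2 \) whenever \( g_i+g_{i+1}=k \), whose generating function is \( (q^{2k+2},q^{k+1-r},q^{k+1+r};q^{2k+2})_\infty/(q)_\infty \).

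For \eqref{eq:gf Y'_sk} I would take \( r=s \). The constraints transfer cleanly: \( f_0+f_1\le k \) becomes \( g_1\le k-s \), while \( f_i+f_{i+1}\le k \) and the parity conditions for \( i\ge 1 \) pass verbatim to \( g \). The only new point is the parity condition at \( u=0 \): if \( f_0+f_1=k \) then \( f_1=k-s \) and \( 0\cdot f_0+1\cdot f_1=k-s\equiv k-s\pmod 2 \), so this boundary condition is automatically satisfied and imposes nothing beyond \( g_1\le k-s \). Hence the image is exactly the Bressoud set with \( r=s \), and Theorem~\ref{thm:2} gives \eqref{eq:gf Y'_sk}; the inverse map (prepend \( f_0=s \)) is checked the same way.

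The tilde case \eqref{eq:gf Y'_sk_til} is where the one genuinely interesting point appears. Here the boundary condition at \( u=0 \) would require \( f_1\equiv k-s+1\pmod 2 \) whenever \( f_0+f_1=k \); but \( f_0+f_1=k \) forces \( f_1=k-s \), and \( k-s\not\equiv k-s+1\pmod 2 \), so the case \( f_0+f_1=k \) simply cannot occur. Thus membership in \( \widetilde{Y}'_{s,k} \) secretly tightens the bound to \( f_0+f_1<k \), i.e. \( g_1\le k-s-1=k-(s+1) \). Combined with the congruence \( k-s+1\equiv k-(s+1)\pmod 2 \), the surviving parity conditions for \( i\ge 1 \) become exactly the standard Bressoud parity for the parameter \( r=s+1 \). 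Deleting the \( f_0=s \) block therefore identifies \( \widetilde{Y}'_{s,k} \) with the Bressoud set of parameter \( r=s+1 \), and Theorem~\ref{thm:2} with \( r=s+1 \) yields \( (q^{2k+2},q^{k-s},q^{k+2+s};q^{2k+2})_\infty/(q)_\infty \), as claimed. The extremal value \( s=k \) (where \( r=s+1 \) leaves the admissible range of Theorem~\ref{thm:2}) must be noted separately, but there the factor \( (q^{k-s};q^{2k+2})_\infty=(1;q^{2k+2})_\infty \) vanishes while \( \widetilde{Y}'_{k,k}=\emptyset \), so both sides are \( 0 \).

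The main obstacle—indeed essentially the only substantive step—is recognizing that the opposite parity condition at the boundary \( u=0 \) does not merely flip a parity but outright forbids \( f_0+f_1=k \), thereby shifting the effective Bressoud parameter from \( s \) to \( s+1 \). Once this is observed, both identities follow directly from Theorem~\ref{thm:2}, and I expect that neither Kur\c sung\"oz's formula nor any fresh sum--product manipulation will be needed; everything else is routine bookkeeping of the transfer of the difference and parity conditions across the deletion of the prescribed zero block.
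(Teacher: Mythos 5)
Your proof is correct and takes essentially the approach the paper relies on: the lemma is not reproved here but cited from \cite{DJK}, where (as the paper notes just before the statement) it is obtained from Bressoud's identities (\Cref{thm:2}). Your reduction --- deleting the weight-zero entry \(f_0=s\) and identifying the image with Bressoud's combinatorial model with parameter \(r=s\), respectively \(r=s+1\) after the key observation that the opposite parity condition at \(u=0\) forbids \(f_0+f_1=k\) --- is exactly that derivation, and your separate treatment of the degenerate case \(s=k\) is also right.
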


Using this lemma, we deduce expressions for the generating functions
of \(\mathcal{Y}_{j,r,k}' \) and
\( \widetilde{\mathcal{Y}}_{j,r,k}' \) as sums of infinite products.

\begin{prop}\label{pro:gf Y'}
  For non-negative integers \( j,r \), and \( k \) with
  \( j+r \leq k \), we have
  \begin{align}
    \label{eq:gf Y'}
    \sum_{\lambda \in \mathcal{Y}_{j,r,k}'} q^{|\lambda|}  
    &= \sum_{s=0}^j \frac{(q^{2k+2}, q^{k+1-r+j-2s}, q^{k+1+r-j+2s} ; q^{2k+2})_\infty}{(q)_\infty}, \\
    \label{eq:gf Y'til}
    \sum_{\lambda \in \widetilde{\mathcal{Y}}_{j,r,k}'} q^{|\lambda|}  
    &= \sum_{s=0}^j \frac{(q^{2k+2}, q^{k-r+j-2s}, q^{k+2+r-j+2s} ; q^{2k+2})_\infty}{(q)_\infty}.
  \end{align}
\end{prop}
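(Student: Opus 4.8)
The plan is to prove both identities at once by fibering the sets over the value of $f_0$ and recognising each fibre as one of the sets $Y_{s,k}'$ or $\widetilde Y_{s,k}'$ from \Cref{lem:Y'}. For a fixed value $f_0=s$, the fibre $\{f\in\mathcal{Y}_{j,r,k}':f_0=s\}$ is cut out by $f_i+f_{i+1}\le k$ together with the requirement $uf_u+(u+1)f_{u+1}\equiv k+r-j\pmod 2$ whenever $f_u+f_{u+1}=k$; this is exactly the defining data of $Y_{s,k}'$ when $k+r-j\equiv k-s\pmod 2$ and of $\widetilde Y_{s,k}'$ when $k+r-j\equiv k-s+1\pmod 2$ (and symmetrically, with $k+r-j$ replaced by $k+r-j+1$, for $\widetilde{\mathcal{Y}}_{j,r,k}'$). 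Since $j+r\le k$ forces every admissible $f_0$ to lie in $[0,j+r]\subseteq[0,k]$, \Cref{lem:Y'} applies to each fibre. Write $P(A):=(q^{2k+2},q^{A},q^{2k+2-A};q^{2k+2})_\infty/(q)_\infty$ and use repeatedly the symmetry $P(A)=P(2k+2-A)$: a fibre of type $Y_{s,k}'$ contributes $P(k+1-s)$, one of type $\widetilde Y_{s,k}'$ contributes $P(k-s)$.

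First I would dispose of the easy case $j\le r$, where the admissible values $f_0\in\{r-j,r-j+2,\dots,r+j\}$ are all congruent to $j-r\pmod 2$, so for $\mathcal{Y}_{j,r,k}'$ every fibre is of type $Y_{s,k}'$ (resp.\ $\widetilde Y_{s,k}'$ for $\widetilde{\mathcal{Y}}_{j,r,k}'$). Writing $s=r-j+2t$ with $t=0,\dots,j$ turns $\sum_t P(k+1-s)$ into $\sum_t P(k+1-r+j-2t)$, which is exactly the right-hand side of \eqref{eq:gf Y'}; the same substitution applied to $\sum_t P(k-s)$ yields \eqref{eq:gf Y'til}.

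The main work is the case $j>r$, where $f_0$ ranges over $\{0,1,\dots,j-r\}\cup\{j-r+2,j-r+4,\dots,j+r\}$ and both parities occur. Here I would split the sum according to the parity of $f_0$: the values $f_0\equiv j-r\pmod 2$ fill out $\{\epsilon,\epsilon+2,\dots,j+r\}$, where $\epsilon\in\{0,1\}$ is the residue of $j-r$, and contribute for $\mathcal{Y}_{j,r,k}'$ the terms $P(k+1-s)$; the values $f_0\equiv j-r+1\pmod 2$ fill out $\{\epsilon',\epsilon'+2,\dots,j-r-1\}$, where $\epsilon'=1-\epsilon$, and contribute $P(k-s)=P(k+2+s)$ after the symmetry. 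A direct computation then shows that the first family realises the exponents $\{k+1-(j+r),\dots,k+1-\epsilon\}$ and the second realises $\{k+3-\epsilon,\dots,k+1+(j-r)\}$; since $\epsilon'=1-\epsilon$, these two arithmetic progressions of common difference $2$ are adjacent and disjoint, and their union is precisely $\{k+1-r+j-2s:s=0,\dots,j\}$, giving \eqref{eq:gf Y'}. The identity \eqref{eq:gf Y'til} follows from the same bookkeeping with the two parity classes exchanged (so that $Y_{s,k}'$ is now used on the fibres with $f_0\equiv j-r+1\pmod 2$), landing on the exponents $\{k-r+j-2s:s=0,\dots,j\}$.

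The only delicate point is this last step: after converting the $\widetilde Y$-contributions via $P(A)=P(2k+2-A)$, one must check that the two progressions abut exactly, neither overlapping nor leaving a gap. This hinges entirely on the relation $\epsilon'=1-\epsilon$ between the two residues and on matching the endpoints, and it is most safely verified by separating the subcases $j-r$ even and $j-r$ odd (so that $\epsilon,\epsilon'$ are pinned down numerically). The small instance $k=3$, $j=2$, $r=1$, where both sides of \eqref{eq:gf Y'} equal $2P(3)+P(1)$, provides a reassuring sanity check before writing out the general count.
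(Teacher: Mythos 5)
Your proposal is correct and follows essentially the same route as the paper: it fibres $\mathcal{Y}_{j,r,k}'$ (resp.\ $\widetilde{\mathcal{Y}}_{j,r,k}'$) over the admissible values of $f_0$, identifies each fibre with $Y_{s,k}'$ or $\widetilde Y_{s,k}'$ according to the parity of $k-s$ versus $k+r-j$, applies \Cref{lem:Y'}, and reindexes the resulting products using the symmetry of the triple product. The only cosmetic difference is that you describe the final bookkeeping as the union of two abutting arithmetic progressions of exponents, whereas the paper indexes the two parity classes by $s\in\{\lfloor(j-r)/2\rfloor,\dots,j\}$ and $s\in\{0,\dots,\lfloor(j-r)/2\rfloor-1\}$ and adds the two sums directly; these are the same computation.
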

\begin{proof}
  We use a similar idea to the proof of \Cref{pro:gf Y}, with the key
  difference that, in this case, the parity of \( f_0 \) needs to be
  taken into account. The proof of the second identity \eqref{eq:gf
    Y'til} is essentially the same as that of the first \eqref{eq:gf
    Y'}, so we prove only the first identity.

  If \( j \leq r \), then
  \( \ell+\max\{\ell-(j-r),0 \} = r-j+ 2 \ell \) for
  \( 0 \leq \ell \leq j \). The values of \( f_0 \) are
  \[
    \{\ell+\max\{\ell-(j-r),0 \} : 0 \leq \ell \leq j\} = \{ r-j+2s: 0 \leq s \leq j\},
  \]
  and they all have same parity as \( r-j \). Hence, the proof follows
  immediately from \eqref{eq:gf Y'_sk}. Now suppose \( j>r \). Then
  the set
  \[
    \{\ell+\max\{\ell-(j-r),0 \} : 0 \leq \ell \leq j\}
    = \{ 0, 1,\dots,j-r,j-r+2,\dots,j+r \} 
  \]
  can be expressed as the disjoint union of the two sets
  \[
    \{ j+r, j+r-2, j+r-4 , \cdots  \} \sqcup \{ j-r-1, j-r-3, j-r-5 , \cdots  \}.
  \]
  Every element of the first set
  \( \{r-j+2s : s = \lfloor (j-r)/2 \rfloor,\lfloor (j-r)/2
  \rfloor+1,\dots,j\} \) has the same parity as \( r-j \). On the
  other hand, every element of the second set
  \( \{j-r-1-2s: s = 0,1,\dots,\lfloor (j-r)/2 \rfloor -1\} \) has the
  same parity as \( r-j+1 \). We apply \eqref{eq:gf Y'_sk} to the
  first set and \eqref{eq:gf Y'_sk_til} to the second set. The
  corresponding generating functions for these two parts are,
  respectively,
  \begin{align}
\label{eq:first'}    \sum_{s=\lfloor (j-r)/2 \rfloor}^j \frac{(q^{2k+2}, q^{k+1-(r-j+2s)}, q^{k+1+(r-j+2s)} ; q^{2k+2})_\infty}{(q)_\infty},
  \end{align}
  and
  \begin{multline}\label{eq:second'}
    \sum_{s=0}^{\lfloor (j-r)/2 \rfloor -1} \frac{(q^{2k+2}, q^{k-(j-r-1-2s)}, q^{k+2+(j-r-1-2s)}; q^{2k+2})_\infty}{(q)_\infty}\\
    =
    \sum_{s=0}^{\lfloor (j-r)/2 \rfloor -1} \frac{(q^{2k+2}, q^{k+1-r+j-2s}, q^{k+1+r-j+2s}; q^{2k+2})_\infty}{(q)_\infty}.
  \end{multline}
  Therefore, adding \eqref{eq:first'} and \eqref{eq:second'} completes the proof.
\end{proof}

\begin{defn}\label{def:Z'}
  For non-negative integers \( j,r \), and \( k \) with
  \( j+r \leq k \), we define \( \mathcal{Z}_{j,r,k}' \) (resp.
  \( \widetilde{\mathcal{Z}}_{j,r,k}' \)) to be the set of all
  frequency sequences \( (f_i)_{i \geq 0} \) such that
  \( f_i + f_{i+1} \leq k \) for all \( i \geq 0 \), subject to the
  conditions that
  \begin{itemize}
  \item \( f_0 \leq j - \max\{f_0 + f_1 - (k-r),0\} \), and
  \item if \( f_u + f_{u+1} = k \), then \( u f_u + (u+1) f_{u+1} \) has the same parity as \( k+r-j \) (resp. \( k+r-j+1 \)).
  \end{itemize}
\end{defn}

\begin{prop}\label{pro:bij X' and Z'}
  The map \( \Lambda \) gives a bijection between
  \( \mathcal{X}_{j,r,k}' \) (resp.
  \( \widetilde{\mathcal{X}}_{j,r,k}' \)) and
  \( \mathcal{Z}_{j,r,k}' \) (resp.
  \( \widetilde{\mathcal{Z}}_{j,r,k}' \)).
\end{prop}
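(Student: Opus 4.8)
The plan is to piggy-back on \Cref{pro:bij_XZ}, which already establishes that \(\Lambda\) is a size-preserving bijection from \(\mathcal{X}_{j,r,k}\) onto \(\mathcal{Z}_{j,r,k}\) with inverse \(\Gamma\). Comparing \Cref{def:X'} with \Cref{def:X}, the sets \(\mathcal{X}_{j,r,k}'\) and \(\widetilde{\mathcal{X}}_{j,r,k}'\) are exactly the elements of \(\mathcal{X}_{j,r,k}\) satisfying the extra parity condition on the parts of \(\lambda^{(k)}\); likewise \(\mathcal{Z}_{j,r,k}'\) and \(\widetilde{\mathcal{Z}}_{j,r,k}'\) are the elements of \(\mathcal{Z}_{j,r,k}\) satisfying the parity condition on the maximal pairs (those with \(f_u+f_{u+1}=k\)). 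Hence it suffices to prove that, for \((\bla,\fs(\bla))\in\mathcal{X}_{j,r,k}\) and \(f=\Lambda(\bla,\fs(\bla))\), all parts of \(\lambda^{(k)}\) have the same parity as \(k+r-j\) (resp.\ \(k+r-j+1\)) if and only if every pair with \(f_u+f_{u+1}=k\) satisfies \(uf_u+(u+1)f_{u+1}\equiv k+r-j\) (resp.\ \(k+r-j+1\)) modulo \(2\). Since the bound in \Cref{def:X} forces every part of \(\lambda^{(k)}\) to be at least \(k+r-j\), the two residues above are precisely the two admissible parities, so the statement is really a \emph{multiset} identity between part-parities and maximal-pair quantities.

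\textbf{The parity-tracking step.} First I would record the structural fact that the pairs with \(f_u+f_{u+1}=k\) in \(\Lambda(\bla,\fs(\bla))\) are exactly the \(s_k\) landing positions of the particle motions issued from the \((k,0)\) pairs, i.e.\ those carrying the parts of \(\lambda^{(k)}\) as step sizes. Indeed, by \Cref{pro:property of Lambda} a particle issued from an \((m,0)\) pair lands at a pair of sum \(m\) (the running maximum at that stage equals \(m\), since the heavier blocks sit to the left and are processed later), and the landing rule always leaves a gap, so no pair of sum \(k\) is created spuriously by lighter particles. Next comes the key computation: tracking the quantity \(uf_u+(u+1)f_{u+1}\) through the explicit formula \eqref{eq:f_bar}. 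Reusing the size bookkeeping in the proof of \Cref{pro:explicit_pm}, a \(k\)-particle moved by \(m\) steps from the even position \(u=2i\) lands at \((\overline{f}_{v-2},\overline{f}_{v-1})\) with
\[
  (v-2)\overline{f}_{v-2}+(v-1)\overline{f}_{v-1}=m+uk+2\sum_{\ell=u+2}^{v-1}f_\ell\equiv m\pmod 2,
\]
because \(u=2i\) is even. Thus each landing quantity has the same parity as its step. Finally, every later particle motion shifts an already-placed maximal pair leftwards by \(2\), changing its quantity by \(-2k\) and hence preserving its parity, so these parities survive to the final configuration.

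\textbf{Conclusion and main obstacle.} Combining the two facts, the multiset \(\{\,(uf_u+(u+1)f_{u+1})\bmod 2 : f_u+f_{u+1}=k\,\}\) attached to \(\Lambda(\bla,\fs(\bla))\) coincides with the multiset of parities of the parts of \(\lambda^{(k)}\). Consequently \(\Lambda(\mathcal{X}_{j,r,k}')\subseteq\mathcal{Z}_{j,r,k}'\) and \(\Lambda(\widetilde{\mathcal{X}}_{j,r,k}')\subseteq\widetilde{\mathcal{Z}}_{j,r,k}'\); applying the same reasoning to \(\Gamma\) via \Cref{pro:explicit_rpm} (a reverse motion shifts by even amounts and records a step congruent to the quantity it removes) gives the reverse inclusions, so \(\Lambda\) restricts to the two claimed bijections. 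I expect the genuine difficulty to be entirely in the \emph{structural} step: rigorously ruling out maximal pairs of sum \(k\) arising from lighter blocks, and keeping track that the landing positions of the \(k\)-particles remain in one-to-one correspondence with the parts of \(\lambda^{(k)}\) after all the intervening shifts. Once that bookkeeping is in place, the mod-\(2\) reduction of the explicit formula is routine.
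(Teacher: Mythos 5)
Your proposal is correct and follows essentially the same route as the paper's proof: restrict the bijection of \Cref{pro:bij_XZ}, observe that every sum-\(k\) pair of \(\Lambda(\bla,\fs(\bla))\) is (or shares its parity with) the landing site of a \(k\)-particle whose quantity \(uf_u+(u+1)f_{u+1}\) is congruent mod \(2\) to the corresponding part of \(\lambda^{(k)}\) (since shifting an entry by two positions preserves parity), and argue symmetrically for \(\Gamma\). The ``structural step'' you flag as the main obstacle is handled in the paper exactly as you suggest, via \Cref{pro:property of Lambda} and the non-interference of consecutive particle motions.
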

\begin{proof}
  The proof is similar to the arguments in \cite[(4.2) and
  (4.3)]{DJK}. Let
  \( \theta^{(s)},\dots,\theta^{(1)}, \theta^{(0)} \) be the sequence
  of frequency sequences in the construction of
  \( \Lambda(\bla,\fs(\bla)) \). We have
  \[
    \theta^{(i)} = \ppm_{2i}^{(\lambda_i)}(\theta^{(i+1)}) \qand (\theta^{(i+1)}_{2i}, \theta^{(i+1)}_{2i+1}) = (h,0),    
  \]
  for some \( h \geq 1 \), and the pair
  \( (\theta^{(i+1)}_{2i}, \theta^{(i+1)}_{2i+1}) \) moves to
  \( (\theta^{(i)}_v,\theta^{(i)}_{v+1}) \) for some \( v \geq 2i \).
  We have the explicit formula \eqref{eq:f_bar} of \( \theta^{(i)} \)
  from \( \theta^{(i+1)} \) and \( \lambda_i \), with the property
  that
  \[
    | \theta^{(i)} | = | \theta^{(i+1)} | + \lambda_i.
  \]
  An entry in the sequence that remains unchanged or is shifted two
  steps to the left does not affect the parity. Hence, we obtain
  \begin{align*}
    v\theta^{(i)}_v + (v+1)\theta^{(i)}_{v+1}
    &\equiv (2i)\cdot h + (2i+1)\cdot 0 + \lambda_i \pmod{2} \\
    & \equiv  \lambda_i \pmod{2}.
  \end{align*}

  In~\Cref{pro:property of Lambda}, we showed that consecutive
  particle motions starting from pairs of the form \( (h,0) \) do not
  interfere with each other by verifying that \( v+2 \leq v_0 \). Let
  \( \lambda^{(k)} = (\lambda^{(k)}_1,\dots,\lambda^{(k)}_\ell) \).
  Then, the last \( \ell \) steps in the construction of \( \Lambda \)
  consist of \( \ell \) particle motions starting from the pair
  \( (k,0) \). The pairs moved in these steps remain unchanged for the
  rest of the process, until the final frequency sequence
  \( f = \Lambda(\bla,\fs(\bla)) \) is obtained. From this, we
  immediately deduce the following: for any pair \( (f_i, f_{i+1}) \)
  satisfying \( f_i + f_{i+1} = k \), there exists a part
  \( \lambda^{(k)}_u \) of \( \lambda^{(k)} \) such that
  \( i \cdot f_i + (i+1)\cdot f_{i+1} \equiv \lambda^{(k)}_u \pmod{2}
  \).

  Conversely, the first \( \ell \) steps in the construction of
  \( \Gamma \) satisfy the corresponding parity condition. More
  precisely, for any pair \( (f_i, f_{i+1}) \) satisfying
  \( f_i + f_{i+1} = k \), there exists \( \lambda^{(k)}_{u+1} \) for
  some \( u = 0,\dots,\ell -1 \) such that
  \( \rstep_{2u}(\eta^{(u)}) = \lambda^{(k)}_{u+1} \equiv i\cdot f_i +
  (i+1)\cdot f_{i+1} \pmod{2} \).

  It follows that the bijection
  \( \Lambda : \mathcal{X}_{j,r,k} \to \mathcal{Y}_{j,r,k} \)
  naturally restricts to bijections
  \( \Lambda : \mathcal{X}_{j,r,k}' \to \mathcal{Y}_{j,r,k}' \) and
  \( \Lambda : \widetilde{\mathcal{X}}_{j,r,k}' \to
  \widetilde{\mathcal{Z}}_{j,r,k}' \).
\end{proof}

\begin{prop}\label{pro:gf Z'}
  There exists a size-preserving bijection from
  \( \mathcal{Y}_{j,r,k}' \) to \( \mathcal{Z}_{j,r,k}' \).
\end{prop}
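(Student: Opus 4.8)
The plan is to reuse verbatim the bijection $\phi$ (with inverse $\pi$) constructed in \Cref{prop:bij_YZ}, which maps $\mathcal{Y}_{j,r,k}$ to $\mathcal{Z}_{j,r,k}$ by altering only the $0$th entry $f_0$, and to show that it restricts to a bijection between the primed sets. Since $\mathcal{Y}_{j,r,k}'$ and $\mathcal{Z}_{j,r,k}'$ are obtained from $\mathcal{Y}_{j,r,k}$ and $\mathcal{Z}_{j,r,k}$ by imposing the \emph{same} extra parity condition, namely that $u f_u + (u+1) f_{u+1} \equiv k+r-j \pmod 2$ whenever $f_u + f_{u+1} = k$, it suffices to establish $\phi(\mathcal{Y}_{j,r,k}') \subseteq \mathcal{Z}_{j,r,k}'$ and $\pi(\mathcal{Z}_{j,r,k}') \subseteq \mathcal{Y}_{j,r,k}'$. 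Size-preservation is already guaranteed by \Cref{prop:bij_YZ}.

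First I would observe that because $\phi$ and $\pi$ leave $f_i$ unchanged for every $i \geq 1$, the parity condition attached to any pair $(f_u, f_{u+1})$ with $u \geq 1$ is automatically preserved: both the trigger $f_u + f_{u+1} = k$ and the weighted value $u f_u + (u+1) f_{u+1}$ are untouched. Hence the only pair requiring attention is $(f_0, f_1)$, whose weighted value is $0\cdot f_0 + 1\cdot f_1 = f_1$. This depends on $f_1$ alone and is therefore invariant under the map, so the entire question reduces to tracking whether the trigger $f_0 + f_1 = k$ is switched on or off when $f_0$ changes. For the forward inclusion I would use that $\phi$ never increases $f_0$ (a one-line check of its definition gives $f_0' \leq f_0$ in every branch): if $f_0' + f_1 = k$, then combining $f_0' \leq f_0$ with the constraint $f_0 + f_1 \leq k$ forces $f_0' = f_0$, hence $f_0 + f_1 = k$, and the defining condition of $\mathcal{Y}_{j,r,k}'$ then yields $f_1 \equiv k+r-j \pmod 2$, exactly what $\mathcal{Z}_{j,r,k}'$ demands.

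The delicate direction, which I expect to be the main obstacle, is the backward inclusion, because $\pi$ can strictly \emph{increase} $g_0$; thus a pair $(g_0,g_1)$ with $g_0 + g_1 < k$ in $\mathcal{Z}_{j,r,k}'$ may be sent to a pair $(g_0',g_1)$ with $g_0' + g_1 = k$, switching the parity condition \emph{on} in $\mathcal{Y}_{j,r,k}'$ even though it was vacuous before. To resolve this I would invoke the sharpened constraint $2g_0 + g_1 \leq k-r+j$ equivalent to membership in $\mathcal{Z}_{j,r,k}'$, and run through the cases defining $\pi$. For instance, when $j \geq r$ and $g_0 = j-r+\ell$ one has $g_0' = j-r+2\ell$, and $2g_0 + g_1 \leq k-r+j$ gives $g_1 \leq k+r-j-2\ell$; then $g_0' + g_1 = k$ can occur only at the maximal value $g_1 = k+r-j-2\ell \equiv k+r-j \pmod 2$. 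The case $j < r$ is entirely analogous, with $g_0' = r-j+2\ell$ and equality $g_0' + g_1 = k$ forcing $g_1 = k-r+j-2\ell \equiv k+r-j \pmod 2$ since $2r$ is even, while the branch $g_0' = g_0$ triggers the $\mathcal{Z}_{j,r,k}'$ condition directly. In each instance the required parity emerges automatically, so $\pi$ lands in $\mathcal{Y}_{j,r,k}'$. Verifying these short case computations is the crux; once done, the restriction of $\phi$ provides the desired size-preserving bijection.
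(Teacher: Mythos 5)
Your proposal is correct and follows essentially the same route as the paper: reuse the map $\phi$ and its inverse $\pi$ from \Cref{prop:bij_YZ}, note that only the pair $(f_0,f_1)$ can affect the parity condition, handle the forward inclusion via $f_0'\leq f_0$ forcing $f_0'=f_0$ when $f_0'+f_1=k$, and handle the backward inclusion by checking in each branch that $g_0'\equiv j-r\pmod 2$, so $g_1=k-g_0'\equiv k+r-j\pmod 2$. The appeal to the constraint $2g_0+g_1\leq k-r+j$ in the backward direction is a harmless redundancy — the equality $g_1=k-g_0'$ already settles the parity — but the argument is sound.
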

\begin{proof}
  By \Cref{pro:gf Y'}, the right-hand side is the generating function
  for \( \mathcal{Y}_{j,r,k}' \). Consider the bijection \( \phi \)
  from \( \mathcal{Y}_{j,r,k} \) to \( \mathcal{Z}_{j,r,k} \) and its
  inverse map \( \pi \), described in the proof of \Cref{prop:bij_YZ}.
  We now prove that the bijection
  \( \phi : \mathcal{Y}_{j,r,k} \to \mathcal{Z}_{j,r,k} \) restricts
  to a bijection from \( \mathcal{Y}_{j,r,k}' \) to
  \( \mathcal{Z}_{j,r,k}' \). In other words, the parity condition is
  preserved under the map \( \phi \).

  Suppose \( f = (f_i)_{i \geq 0} \in \mathcal{Y}_{j,r,k}' \). Recall
  that the map \( \phi \) is defined by
  \( (f_0,f_1,f_2, \dots) \mapsto (f_0', f_1, f_2,\dots) \), where
  \[
    f_0' =
    \begin{cases}
      f_0 & \mbox{if \( f_0 \leq j-r \)},\\
      j-r+\ell & \mbox{if \( f_0 = j-r + 2 \ell \) for \( \ell = 1,\dots,r \),}
    \end{cases}
  \]
  if \( j \geq r \), and \( f_0' = \ell \) if \( f_0 = r-j+2 \ell \)
  for some \( \ell \in \{0,\dots,j\} \) if \( j < r \). We show that
  \( \phi(f) \in \mathcal{Z}_{j,r,k}' \). Since \( \phi \) modifies
  only the \( 0 \)th entry, it suffices to show that if
  \( f_0' + f_1 = k \), then
  \( 0\cdot f_0 + 1\cdot f_1 = f_1 \equiv k-r+j \pmod{2} \). By the
  definition of \( \phi \), we have \( f_0 \geq f_0' \), and hence
  \( f_0' + f_1 \leq f_0 + f_1 \leq k \). Therefore, if
  \( f_0' + f_1 = k \), then \( f_0' \) and \( f_0 \) are equal. Since
  \( f \in \mathcal{Y}_{j,r,k}' \), we have
  \( \phi(f) \in \mathcal{Z}_{j,r,k}' \), as required.

  Suppose \( g = (g_i)_{i \geq 0}\in \mathcal{Z}_{j,r,k}' \). Recall
  that the inverse map \( \pi \) is defined by
  \( (g_0,g_1,g_2, \dots) \mapsto (g_0', g_1, g_2,\dots) \), where
  \[
    g_0' =
    \begin{cases}
      g_0 & \mbox{if \( g_0 \leq j-r \)},\\
      j-r+2\ell & \mbox{if \( g_0 = j-r + \ell \) for \( \ell = 1,\dots,r \).}
    \end{cases}
  \]
  if \( j \geq r \), and \( g_0' = r-j+2 \ell \) if \( g_0 = \ell \)
  for some \( \ell \in \{0,\dots,j\} \) if \( j < r \). It suffices to
  show that if \( g_0' + g_1 = k \), then
  \( g_1 \equiv k-r+j \pmod{2} \). By construction, we have
  \( g_0 \leq g_0' \). If \( g_0 = g_0' \), then we have
  \( \pi(g) \in \mathcal{Y}_{j,r,k}' \) by the assumption that
  \( g \in \mathcal{Z}_{j,r,k}' \). Now suppose \( g_0 < g_0' \). Then
  the pair \( (g_0, g_0') \) is either of the form
  \[
    (j-r + \ell, j-r+ 2 \ell) \quad\text{or}\quad (\ell, r-j+2 \ell),
  \]
  for some \( \ell \), depending on whether \( j \geq r \) or
  \( j < r \), respectively. In both cases, we have
  \( g_0' \equiv j-r \pmod{2} \). Hence, if \( g_0' + g_1 = k \), then
  \( g_1 = k - g_0' \equiv k+r-j \pmod{2} \). Therefore,
  \( \pi(g) \in \mathcal{Y}_{j,r,k}' \), as desired.
\end{proof}

The generating function for \( \mathcal{X}_{j,r,k}' \) is given in
\Cref{pro:gf X'}. By \Cref{pro:gf Z'}, the generating function for
\( \mathcal{Z}_{j,r,k}' \) is given by
\[
  \sum_{\lambda \in \mathcal{Z}_{j,r,k}'} q^{|\lambda|} = 
  \sum_{s=0}^j \frac{(q^{2k+2}, q^{k+1-r+j-2s}, q^{k+1+r-j+2s} ; q^{2k+2})_\infty}{(q)_\infty}.
\]
The result then follows from \Cref{pro:bij X' and Z'}. Therefore,
this gives a combinatorial proof of \Cref{thm:Sta4.2}.

\begin{prop}\label{thm:gf Z tilde'}
  For non-negative integers \( j,r \), and \( k \) with
  \( j+r \leq k \), we have
\begin{multline*}
(1+q)\sum_{\lambda \in \widetilde{\mathcal{Z}}_{j,r,k}'} q^{|\lambda|} = 
  \sum_{s=0}^j \frac{(q^{2k+2}, q^{k+2-r+j-2s}, q^{k+r-j+2s} ; q^{2k+2})_\infty}{(q)_\infty}\\
    + q\sum_{s=0}^j \frac{(q^{2k+2}, q^{k-r+j-2s}, q^{k+2+r-j+2s} ; q^{2k+2})_\infty}{(q)_\infty}.
\end{multline*} 
\end{prop}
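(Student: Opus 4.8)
The plan is to compute the generating function of \(\widetilde{\mathcal{Z}}'_{j,r,k}\) directly as a sum of infinite products, after which the stated identity follows by recognizing the right-hand side. Write \(p=k+r-j+1\) for the parity that \(\widetilde{\mathcal{Z}}'_{j,r,k}\) imposes on every pair \((f_u,f_{u+1})\) with \(f_u+f_{u+1}=k\). The only difference between \(\widetilde{\mathcal{Z}}'_{j,r,k}\) and \(\mathcal{Z}'_{j,r,k}\) is this parity target (\(p\) versus \(p-1=k+r-j\)), and the generating function of \(\mathcal{Z}'_{j,r,k}\) is already known: it equals that of \(\mathcal{Y}'_{j,r,k}\) by \Cref{pro:gf Z'}, hence a sum of Bressoud products by \Cref{pro:gf Y'}. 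As in the proof of \Cref{pro:gf Y'}, I would decompose by the value of \(f_0\) and apply \Cref{lem:Y'} slice by slice. The obstruction is that \Cref{lem:Y'} ties the parity of the admissible configurations to the value \(f_0=s\) (it is \(\equiv k-s\) in \eqref{eq:gf Y'_sk} and \(\equiv k-s+1\) in \eqref{eq:gf Y'_sk_til}), whereas in \(\widetilde{\mathcal{Z}}'_{j,r,k}\) the target parity \(p\) is fixed while the admissible \(f_0\) are governed by the coupled inequality \(f_0\le j-\max\{f_0+f_1-(k-r),0\}\) of \Cref{def:Z'} rather than held fixed.

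To decouple \(f_0\) I would pass through the inverse \(\pi:\mathcal{Z}_{j,r,k}\to\mathcal{Y}_{j,r,k}\) of the size-preserving bijection \(\phi\) of \Cref{prop:bij_YZ} (the map already used in \Cref{pro:gf Z'}), which turns the coupled condition into membership in the index set \(\{\ell+\max\{\ell-(j-r),0\}:0\le\ell\le j\}\) defining \(\mathcal{Y}'\). Since \(\pi\) alters only the \(0\)-th entry, it preserves the parity of every pair at position \(\ge 1\), and for \(f_0\le j-r\) it preserves the parity of the \(0\)-th pair as well; on those slices \(\widetilde{\mathcal{Z}}'_{j,r,k}\) coincides with a genuine \(Y'_{s,k}\) or \(\widetilde{Y}'_{s,k}\) set (the two cases alternating with the parity of \(s\), exactly as in \Cref{pro:gf Y'}), and \Cref{lem:Y'} applies. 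A useful guide is that the target right-hand side factors as \(\mathrm{GF}(\mathcal{Y}'_{j,r-1,k})+q\,\mathrm{GF}(\widetilde{\mathcal{Y}}'_{j,r,k})\): the first sum is \eqref{eq:gf Y'} with \(r\) replaced by \(r-1\), and the second is \eqref{eq:gf Y'til}. Thus the goal reduces to the single combinatorial identity
\[ (1+q)\sum_{\lambda\in\widetilde{\mathcal{Z}}'_{j,r,k}}q^{|\lambda|}=\mathrm{GF}(\mathcal{Y}'_{j,r-1,k})+q\,\mathrm{GF}(\widetilde{\mathcal{Y}}'_{j,r,k}), \]
which can then be expanded into products via \Cref{pro:gf Y'} and the symmetry of the triple products.

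The hard part will be the boundary slices \(f_0>j-r\), which are precisely where \(\pi\) doubles the offset \(\ell\mapsto 2\ell\) and thereby flips the parity of the \(0\)-th pair; this is the reason \(\widetilde{\mathcal{Z}}'_{j,r,k}\) is \emph{not} in size-preserving bijection with \(\widetilde{\mathcal{Y}}'_{j,r,k}\), in contrast to the untilded case. On these slices the \(0\)-th pair reaches \(k\) exactly on \(2f_0+f_1=k-r+j\), and the resulting configurations carry the \emph{wrong} parity \(p-1\) for \(\widetilde{\mathcal{Y}}'\), i.e.\ they behave like \(Y'\)-configurations. My plan is to isolate this defect set, match it against the corresponding boundary slices of \(\mathcal{Z}'_{j,r,k}\) (whose total generating function is the known Bressoud sum — this is the promised ``relation to \(\mathcal{Z}'_{j,r,k}\)''), and show by an explicit size-preserving map on the pair \((f_0,f_1)\) that the defect accounts for both the parameter shift \(r\mapsto r-1\) and the factor \(1+q\). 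Carrying out this matching termwise, while tracking which residue class of \(f_0\) invokes \eqref{eq:gf Y'_sk} versus \eqref{eq:gf Y'_sk_til}, is the delicate bookkeeping at the heart of the argument; the degenerate case \(r=0\) (where the two sums coincide after \(s\mapsto j-s\) and the identity collapses to \eqref{fij}, i.e.\ \Cref{thm:new_Kur} at \(r=0\)) should be verified separately.
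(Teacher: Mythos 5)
Your reduction of the statement to the identity
\[
(1+q)\sum_{\lambda\in\widetilde{\mathcal{Z}}_{j,r,k}'}q^{|\lambda|}
=\sum_{\lambda\in\mathcal{Y}_{j,r-1,k}'}q^{|\lambda|}+q\sum_{\lambda\in\widetilde{\mathcal{Y}}_{j,r,k}'}q^{|\lambda|}
\]
is a correct identification of the target (and, via \Cref{pro:gf Y'} and \Cref{pro:gf Z'}, the right-hand side equals the generating function of $\mathcal{Z}_{j,r-1,k}'$ plus $q$ times that of $\mathcal{Z}_{j,r+1,k}'$). But the proposal stops exactly where the proof has to begin: the ``defect set'' analysis, the matching against boundary slices, and the explicit size-preserving map on $(f_0,f_1)$ are announced as a plan rather than carried out, so the reduction is never actually established. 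Moreover, the route you sketch --- slicing by $f_0$, transporting through $\pi$, and repairing parity flips on the slices $f_0>j-r$ --- attacks the problem at the wrong joint: the discrepancy between $\widetilde{\mathcal{Z}}_{j,r,k}'$ and the relevant $\mathcal{Z}'$-sets has nothing to do with decomposing by the value of $f_0$ or with \Cref{lem:Y'}.

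The missing idea is a sandwich in the parameter $r$. The three sets $\mathcal{Z}_{j,r+1,k}'$, $\widetilde{\mathcal{Z}}_{j,r,k}'$ and $\mathcal{Z}_{j,r-1,k}'$ impose the \emph{same} parity condition, since $k+(r\pm1)-j\equiv k+r-j+1\pmod 2$, and differ only in the constraint on the initial pair, which reads $2f_0+f_1\le k-r+j-1$, $\le k-r+j$ and $\le k-r+j+1$ respectively (together with $f_0\le j$). Hence $\mathcal{Z}_{j,r+1,k}'\subseteq\widetilde{\mathcal{Z}}_{j,r,k}'\subseteq\mathcal{Z}_{j,r-1,k}'$, and a unit shift of $f_1$ gives a size-shifting bijection between the two set differences $\widetilde{\mathcal{Z}}_{j,r,k}'\setminus\mathcal{Z}_{j,r+1,k}'$ and $\mathcal{Z}_{j,r-1,k}'\setminus\widetilde{\mathcal{Z}}_{j,r,k}'$. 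Splitting $(1+q)\sum_{\widetilde{\mathcal{Z}}_{j,r,k}'}q^{|\lambda|}$ as $\sum_{\widetilde{\mathcal{Z}}_{j,r,k}'}q^{|\lambda|}+q\sum_{\mathcal{Z}_{j,r+1,k}'}q^{|\lambda|}+\sum_{\widetilde{\mathcal{Z}}_{j,r,k}'\setminus\mathcal{Z}_{j,r+1,k}'}q^{|\lambda|+1}$ and converting the last sum via this bijection immediately yields $\sum_{\mathcal{Z}_{j,r-1,k}'}q^{|\lambda|}+q\sum_{\mathcal{Z}_{j,r+1,k}'}q^{|\lambda|}$, after which \Cref{pro:gf Z'} and \Cref{pro:gf Y'} finish the proof; no slice-by-slice product computation, no bookkeeping of which of \eqref{eq:gf Y'_sk} or \eqref{eq:gf Y'_sk_til} applies, and no separate degenerate case is needed. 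Without this (or an equivalent) mechanism, your argument has a genuine gap at its central step.
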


\begin{proof}
  Observe that
  \begin{itemize}
  \item \( \mathcal{Z}_{j,r+1,k}' \subseteq \widetilde{\mathcal{Z}}_{j,r,k}' \subseteq \mathcal{Z}_{j,r-1,k}' \), and
  \item The map \( (f_0, f_1,f_2, \dots) \mapsto (f_0, f_1-1,f_2, \cdots ) \) is a bijection from \( \widetilde{\mathcal{Z}}_{j,r,k}' \setminus \mathcal{Z}_{j,r+1,k}'\) to \( \mathcal{Z}_{j,r-1,k}' \setminus \widetilde{\mathcal{Z}}_{j,r,k}' \).
  \end{itemize}
  We have
  \begin{align*}
    (1+q) \sum_{\lambda \in \widetilde{\mathcal{Z}}_{j,r,k}'}q^{|\lambda|}
    &= \sum_{\lambda \in \widetilde{\mathcal{Z}}_{j,r,k}'}q^{|\lambda|} + \sum_{\lambda \in \widetilde{\mathcal{Z}}_{j,r,k}' \setminus \mathcal{Z}_{j,r+1,k}'}q^{|\lambda|+1} + q\sum_{\lambda \in \mathcal{Z}_{j,r+1,k}'}q^{|\lambda|}\\
    &= \sum_{\lambda \in \widetilde{\mathcal{Z}}_{j,r,k}'}q^{|\lambda|} + \sum_{\lambda \in \mathcal{Z}_{j,r-1,k}' \setminus \widetilde{\mathcal{Z}}_{j,r,k}'}q^{|\lambda|} + q\sum_{\lambda \in \mathcal{Z}_{j,r+1,k}'}q^{|\lambda|} \\
    &= \sum_{\lambda \in \mathcal{Z}_{j,r-1,k}'}q^{|\lambda|} + q\sum_{\lambda \in \mathcal{Z}_{j,r+1,k}'}q^{|\lambda|},
  \end{align*}
  which completes the proof.

\end{proof}

Together with the bijection \( \Lambda \) between
\( \widetilde{\mathcal{X}}_{j,r,k}' \) and
\( \widetilde{\mathcal{Z}}_{j,r,k}' \) in~\Cref{pro:bij X' and Z'},
and the generating functions for these sets given in~\Cref{pro:gf X'}
and \Cref{thm:gf Z tilde'}, this yields a combinatorial proof of
\Cref{thm:new_Kur}.

\section{Final remarks}
\label{sec:final-rmk}

We conclude with a few final remarks.

\begin{enumerate}
\item Stanton~\cite{Stant} proved \Cref{thm:Sta3.2} using
  \Cref{thm:Sta3.1}, and similarly showed that \Cref{thm:Sta4.1}
  implies \Cref{thm:Sta4.2}. In contrast, our proofs rely on different
  Bailey pair constructions. From the perspective of Bailey pairs, it
  would be interesting to investigate whether \Cref{thm:Sta3.1}
  implies \Cref{thm:Sta3.2}, or \Cref{thm:Sta4.1} implies
  \Cref{thm:Sta4.2}, and similarly regarding Theorems~\ref{thm:binom_BGG} and~\ref{thm:GGrj}.
\item As mentioned in \Cref{prob:stanton}, Stanton posed the challenge
  of finding a partition-theoretic interpretation not only for the
  non-binomial extension but also for the binomial extension. While we
  have treated only the non-binomial case, the question of a
  partition-theoretic interpretation for the binomial case remains
  open and intriguing.
\item There are combinatorial interpretations for certain
  generalisations of the G\"ollnitz--Gordon identities, see for instance~\cite{Br80} or~\cite{HZ}.
  However, no study has applied the particle motion framework to these
  identities. Exploring such an approach may lead to new identities or
  provide combinatorial proofs of the Bresssoud--G\"ollnitz--Gordon
  identities~\eqref{eq:BGG}, \Cref{thm:binom_BGG} and
  \Cref{thm:GGrj}. We have not yet explored this direction in depth.
\item One could also wonder if there could exist binomial versions of Theorems~\ref{thm:newSlater} and~\ref{thm:newSlater2}.
\end{enumerate}

\section*{Acknowledgements}

The first two authors are supported by the SNSF Eccellenza grant PCEFP2 202784.

\bibliographystyle{alpha}

\end{document}